\PassOptionsToPackage{unicode}{hyperref}
\PassOptionsToPackage{hyphens}{url}
\PassOptionsToPackage{dvipsnames,svgnames,x11names}{xcolor}
\documentclass[
  12pt]{article}

\usepackage{amsmath,amssymb}
\usepackage{tikz}
\usepackage{algorithm}
\usepackage{setspace}
\usepackage{amsmath,amssymb,amsfonts,bbm,verbatim,multirow,booktabs}
\usepackage{epic,eepic,psfrag,epsfig}
\usepackage{graphicx}
\usepackage{algorithm}
\usepackage{algpseudocode}
\usepackage{amsthm,breakcites}
\usepackage{amscd}
\usepackage{epsfig}
\usepackage{mathtools}
\usepackage{lipsum} 
\usepackage{tabularx}
\usepackage{graphicx}
\usepackage{array}
\usepackage{multirow}
\usepackage{longtable}
\usepackage{booktabs}
\usepackage[title,toc,titletoc,page]{appendix}
\usepackage{rotating}
\usetikzlibrary{decorations.pathreplacing,calc, patterns}
\usepackage{fontawesome}
\usepackage{tabularx}
\usepackage{array}

\usepackage{iftex}
\ifPDFTeX
  \usepackage[T1]{fontenc}
  \usepackage[utf8]{inputenc}
  \usepackage{textcomp} 
\else 
  \usepackage{unicode-math}
  \defaultfontfeatures{Scale=MatchLowercase}
  \defaultfontfeatures[\rmfamily]{Ligatures=TeX,Scale=1}
\fi
\usepackage{lmodern}
\ifPDFTeX\else  
\fi
\IfFileExists{upquote.sty}{\usepackage{upquote}}{}
\IfFileExists{microtype.sty}{
  \usepackage[]{microtype}
  \UseMicrotypeSet[protrusion]{basicmath} 
}{}
\makeatletter
\@ifundefined{KOMAClassName}{
  \IfFileExists{parskip.sty}{%
    \usepackage{parskip}
  }{
    \setlength{\parindent}{0pt}
    \setlength{\parskip}{6pt plus 2pt minus 1pt}}
}{
  \KOMAoptions{parskip=half}}
\makeatother
\usepackage{xcolor}
\makeatletter
\ifx\paragraph\undefined\else
  \let\oldparagraph\paragraph
  \renewcommand{\paragraph}{
    \@ifstar
      \xxxParagraphStar
      \xxxParagraphNoStar
  }
  \newcommand{\xxxParagraphStar}[1]{\oldparagraph*{#1}\mbox{}}
  \newcommand{\xxxParagraphNoStar}[1]{\oldparagraph{#1}\mbox{}}
\fi
\ifx\subparagraph\undefined\else
  \let\oldsubparagraph\subparagraph
  \renewcommand{\subparagraph}{
    \@ifstar
      \xxxSubParagraphStar
      \xxxSubParagraphNoStar
  }
  \newcommand{\xxxSubParagraphStar}[1]{\oldsubparagraph*{#1}\mbox{}}
  \newcommand{\xxxSubParagraphNoStar}[1]{\oldsubparagraph{#1}\mbox{}}
\fi
\makeatother

\usepackage{longtable,booktabs,array}
\usepackage{calc} 
\usepackage{etoolbox}
\makeatletter
\patchcmd\longtable{\par}{\if@noskipsec\mbox{}\fi\par}{}{}
\makeatother
\IfFileExists{footnotehyper.sty}{\usepackage{footnotehyper}}{\usepackage{footnote}}
\makesavenoteenv{longtable}
\usepackage{graphicx}
\makeatletter
\def\maxwidth{\ifdim\Gin@nat@width>\linewidth\linewidth\else\Gin@nat@width\fi}
\def\maxheight{\ifdim\Gin@nat@height>\textheight\textheight\else\Gin@nat@height\fi}
\makeatother
\setkeys{Gin}{width=\maxwidth,height=\maxheight,keepaspectratio}
\makeatletter
\def\fps@figure{htbp}
\makeatother

\makeatletter
\@ifpackageloaded{caption}{}{\usepackage{caption}}
\AtBeginDocument{%
\ifdefined\contentsname
  \renewcommand*\contentsname{Table of contents}
\else
  \newcommand\contentsname{Table of contents}
\fi
\ifdefined\listfigurename
  \renewcommand*\listfigurename{List of Figures}
\else
  \newcommand\listfigurename{List of Figures}
\fi
\ifdefined\listtablename
  \renewcommand*\listtablename{List of Tables}
\else
  \newcommand\listtablename{List of Tables}
\fi
\ifdefined\figurename
  \renewcommand*\figurename{Figure}
\else
  \newcommand\figurename{Figure}
\fi
\ifdefined\tablename
  \renewcommand*\tablename{Table}
\else
  \newcommand\tablename{Table}
\fi
}
\@ifpackageloaded{float}{}{\usepackage{float}}
\floatstyle{ruled}
\@ifundefined{c@chapter}{\newfloat{codelisting}{h}{lop}}{\newfloat{codelisting}{h}{lop}[chapter]}
\floatname{codelisting}{Listing}

\makeatother
\makeatletter
\@ifpackageloaded{caption}{}{\usepackage{caption}}
\@ifpackageloaded{subcaption}{}{\usepackage{subcaption}}
\makeatother

\ifLuaTeX
  \usepackage{selnolig}  
\fi
\usepackage[]{natbib}
\usepackage{bookmark}

\IfFileExists{xurl.sty}{\usepackage{xurl}}{} 
\hypersetup{
  pdftitle={Title},
  pdfauthor={Author 1; Author 2},
  pdfkeywords={3 to 6 keywords, that do not appear in the title},
  colorlinks=true,
  linkcolor={blue},
  filecolor={Maroon},
  citecolor={Blue},
  urlcolor={Blue},
  pdfcreator={LaTeX via pandoc}}

\newcommand{\anon}{1}

\newtheorem{thm}{Theorem}

\newtheorem{lemma}[thm]{Lemma}

\newtheorem{prop}[thm]{Proposition}
\newtheorem{cor}[thm]{Corollary}

\newtheorem{assump}{Assumption}

\begin{document}

\def\spacingset#1{\renewcommand{\baselinestretch}%
{#1}\small\normalsize} \spacingset{1}


\if1\anon
{
  \title{\bf Estimation of Linear Functionals in Multilayer Panels under Staggered Adoption}
  \author{Alberto Bordino \hspace{.2cm}\\
    Department of Statistics, University of Warwick,\\
    Coventry, United Kingdom, CV4 7AL, \\
Alberto.Bordino@warwick.ac.uk \\  \\
     Thomas B.~Berrett \\
    Department of Statistics, University of Warwick,  \\
    Coventry, United Kingdom, CV4 7AL \\ \\
    Olga Klopp \\
    ESSEC Business School, \\
    3 Av. Bernard Hirsch, 95021 Cergy, France }
  \maketitle
} \fi

\if0\anon
{
  \bigskip
  \bigskip
  \bigskip
  \begin{center}
    {\LARGE\bf Estimation of Linear Functionals in
Multilayer Panels under Staggered Adoption}
\end{center}
  
} \fi

\bigskip
\begin{abstract}
We study the estimation of bilinear forms from noisy, partially observed multilayer data. The signal follows a Tucker2 model, with shared unit and time factors across tensor layers and slice-specific cores. The missingness pattern is structured and motivated by staggered adoption designs, which are common in causal inference and related applications. We first analyze the four-block missingness pattern, the basic building block for general staggered adoption, and propose a spectral algorithm that pools information across layers and targets the functional directly. We prove a non-asymptotic mean squared error bound that exhibits a phase transition in the number of layers, showing when pooling improves estimation, and match it with a local minimax lower bound up to constants {when ranks and logarithmic factors are treated as constant-order quantities}. We then extend the construction to general staggered adoption designs via an anchored four-block reduction, and derive analogous theoretical guarantees. Finally, we validate our theoretical findings using synthetic and real-world data on Castle Doctrine laws and COVID-19 policies.
\end{abstract}

\noindent%
{\it Keywords:} {Bilinear forms; Common-subspace models; Low-rank tensor models; Missing not at random; Spectral methods}.

\vfill

\newpage
\spacingset{1.8} 
\setlength{\parskip}{0pt}
\setlength{\parindent}{15pt}

\section{Introduction}

In modern statistics, data often depart from the idealized i.i.d.~setting and, in many applications, take the form of multiple incomplete panels. For instance, we may observe different responses for each individual over time, with data recorded at some time points but not others. Rather than recovering all missing values, practitioners may instead aim to estimate a simple functional of a target layer, such as a \textit{bilinear form}, using its observed entries and possibly incorporating information from additional layers. This naturally raises two questions: how should information from auxiliary panels be used to estimate the target functional, and when does adding further panels cease to improve accuracy?

\begin{figure}[!ht]
\centering
\includegraphics[width=0.8\linewidth]{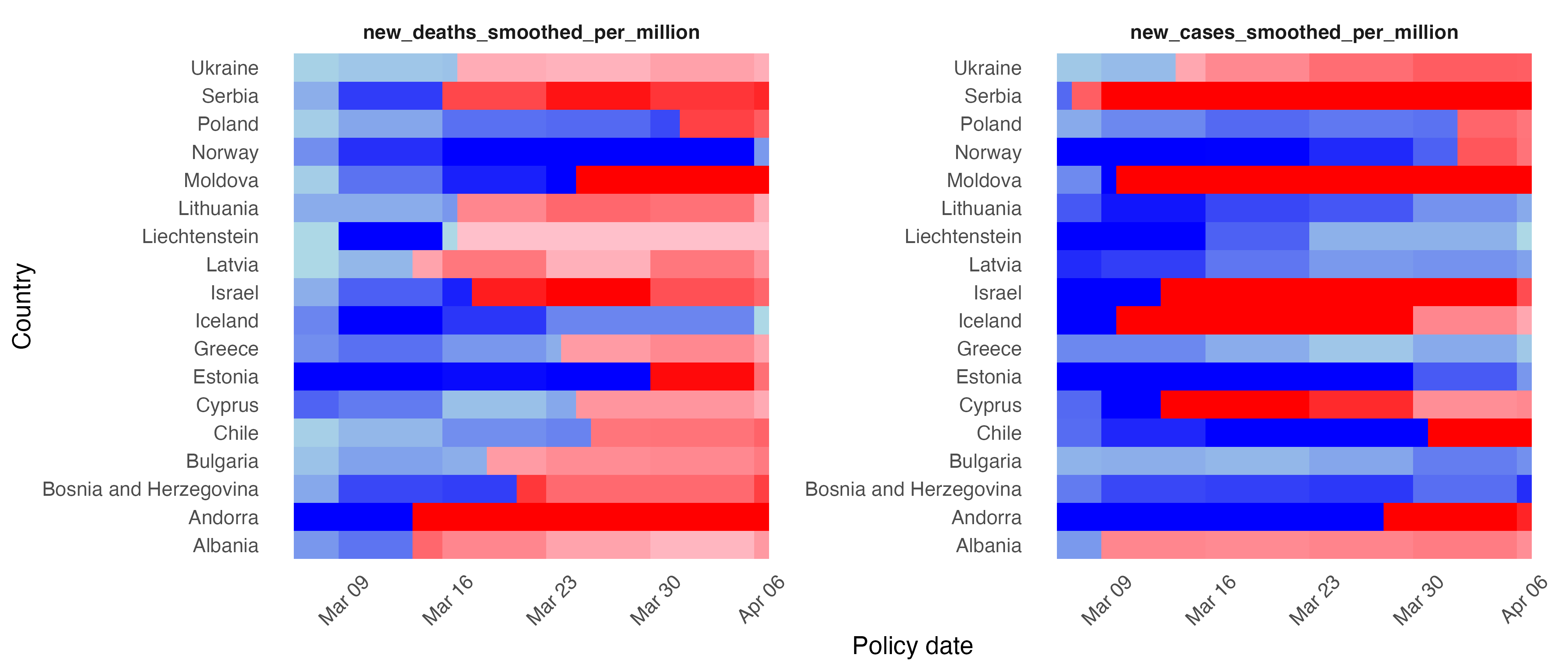}
\captionsetup{
font={scriptsize,stretch=0.8},
skip=10pt
}
\caption{Outcome tensor from the merged OxCGRT--OWID panel. Rows are countries, columns are dates from March 5 to April 5, 2020, and panels are outcome--policy pairs. Outcomes are measured at a 28-day delay, so date~$t$ shows the value at $t+28$. Blue denotes policy-off entries, red policy-on entries, and darker shades larger outcome values.}
\label{fig:covid-y-outcome-policy}
\label{fig:CovidY}
\end{figure}
\vspace{-15pt}

We study these questions under a \textit{Tucker2 model}~\citep[][Section~4]{Kolda2009Tensor} in which the panels share unit and time factors but have layer-specific cores, and the missingness follows a structured missing-not-at-random (MNAR) pattern. In particular, we focus on \textit{staggered adoption} designs~\citep{AtheyImbens2022DesignBasedDiD}, which often arise in causal inference and related applications, where units are observed over multiple periods and, once treated, remain treated thereafter. From the perspective of untreated potential outcomes, observations are therefore available for all units before adoption and missing for treated units after adoption. The simplest instance of this structure is the four-block setting illustrated in Fig.~\ref{fig:4BlockSlices} and analyzed in Section~\ref{sec:4block}. A concrete example of a multilayer panel with a  more general staggered missingness is provided  by the COVID-19 policy application studied in Section~\ref{sec:real-data-oxcgrt-owid}. Fig.~\ref{fig:CovidY} shows the two panels corresponding to stay-at-home policies and contact tracing, with red and blue indicating whether the policy is on or off, respectively, and color intensity representing the number of new deaths and new cases.

In applications of this kind, our contribution is a spectral method for estimating bilinear forms of a target layer under staggered missingness while borrowing information from related auxiliary layers. This class of functionals arises in many applications \citep[e.g.][Section~3]{callaway2021difference}, and includes averages, contrasts, individual components, and temporal trends. Our theory shows when such pooling helps and when its benefits saturate. More precisely, since additional layers improve estimation of the shared row and column spaces but do not reduce uncertainty in the layer-specific core of the target panel, the estimation error initially decreases as more auxiliary information becomes available, but eventually reaches a target-specific floor. This \textit{phase transition} is illustrated in Fig.~\ref{fig:phaseTransition}.

\textbf{Related work.} An alternative way to estimate a functional of a partially observed panel is to impute the missing entries and use the completed layer to construct plug-in estimators of these quantities. Low-rank matrix completion provides a natural framework for this task. Although such methodologies were developed mainly for missing completely at random observation patterns \citep[e.g.,][]{candes_recht_2009, Keshavan_Montanari_Oh, Negahban_Wainwright,Koltchinskii_Lounici_Tsybakov,klopp_general,
ChiLuChen2019NonconvexOptimization}, they can be repurposed in causal panels by treating unobserved untreated outcomes as missing entries of an approximately low-rank matrix. \citet{athey2021matrix} formalized this connection by relating low-rank matrix completion to unconfoundedness-based methods \citep[e.g.,][]{ImbensRubin2015} and synthetic-control methods \citep[e.g.,][]{Abadie2021}, and proposed estimating the missing counterfactual entries through nuclear-norm penalized least squares.

A subsequent literature has developed matrix-completion methods for MNAR settings with structured missingness. \citet{choi2024matrix} study staggered adoption designs and extend the nuclear-norm approach of \citet{athey2021matrix} by partitioning the missing entries into groups and applying convex relaxation within each group. They prove $\ell_\infty$ estimation error bounds that improve on the Frobenius-norm bound obtained in \citet{athey2021matrix}. Similarly, \citet{agarwal2026robustmatrixestimation} study a related problem with row and column side information, providing Frobenius-norm guarantees for an estimator based on sieve projection and nuclear-norm penalization. Alongside optimization-based approaches, \citet{yan2024entrywise} consider panels with staggered adoption and propose a spectral algorithm based on singular value decomposition, with non-asymptotic entrywise guarantees and Gaussian approximations. Related factor-based approaches, including \citet{BaiNg2021MatrixCompletionCounterfactuals} and \citet{CahanBaiNg2023FactorBasedImputation}, exploit tall and wide observed blocks to estimate latent factors and impute missing entries. Finally, \citet{Agarwal2023CausalMatrixCompletion} developed a completion method based on synthetic nearest neighbors for a broad class of MNAR patterns, with $\ell_\infty$ error bounds and asymptotic normality.

When the data are naturally organized as multiple related panels, as in our COVID-19 application, this matrix-completion perspective extends to tensors, allowing dependence across layers to be exploited. Recent work has begun to develop tensor-completion methods under structured missingness, but the theory remains less developed than in the matrix case. \citet{Auerbach2022TensorCompletionCausalInference} arrange multivariate longitudinal outcomes as a unit-by-time-by-outcome tensor and use nuclear-norm penalization to impute the missing entries and study COVID-19 mandates. \citet{Agarwal2025SyntheticInterventions} extend synthetic-control ideas to multiple treatments using a low-rank tensor factor model. \citet{Mandal2019WeightedTensorCompletion} and \citet{Gao2025CausalInferenceSequentialTreatmentsTensor} consider tensor formulations for longitudinal causal problems, where treatment histories are stacked along an additional tensor mode. In particular, \citet{Gao2025CausalInferenceSequentialTreatmentsTensor} estimate the latent tensor using an inverse-probability-weighted low-rank Tucker formulation, implemented by projected gradient descent. Their main guarantee is a non-asymptotic Frobenius-norm bound for tensor recovery (Theorem 1). In Remark 2, they relate their framework to \citet{athey2021matrix}, noting that a special case reduces to a staggered-adoption panel setting with two potential-outcome matrices.

The primary target in much of this literature, however, remains recovery of the missing matrix or tensor, either as a whole or entry by entry. The problem of estimating general bilinear forms is briefly mentioned by \citet[][Appendix~C]{xia2025inferencestaggeredadoptioncase}, but no theoretical guarantees are provided for this target. Instead, existing error bounds are typically stated for full recovery, for instance in Frobenius norm or entrywise $\ell_\infty$ norm, even when the ultimate object of interest is only a particular functional of the missing entries. Compared with these plug-in approaches, our method estimates the requested bilinear form without first materializing the entire missing block, thereby reducing computational cost and allowing a functional-specific analysis.

Moreover, pooling information across panel slices can improve statistical estimation. In the COVID-19 application in Section~\ref{sec:real-data-oxcgrt-owid}, our method is more accurate than approaches using only the target layer. This is likely due to the limited information available in the first layer, which contains just two fully observed rows (Fig.~\ref{fig:CovidY}), making estimation particularly challenging and highlighting the importance of exploiting shared structure across slices.

\textbf{Notation.} For $\mathcal X\in\mathbb R^{n_1\times n_2\times n_3}$ and index sequences $I^{(t)}=\{i^{(t)}_1,\ldots,i^{(t)}_{|I^{(t)}|}\}\subseteq[n_t]$, $t\in\{1,2,3\}$, let $\mathcal X_{I^{(1)},I^{(2)},I^{(3)}}\in\mathbb R^{|I^{(1)}|\times|I^{(2)}|\times|I^{(3)}|}$ denote the subtensor with $(\mathcal X_{I^{(1)},I^{(2)},I^{(3)}})_{k_1,k_2,k_3}=\mathcal X_{i^{(1)}_{k_1},i^{(2)}_{k_2},i^{(3)}_{k_3}}$. The symbol $\bullet$ denotes the full index set in the corresponding mode, e.g., $\mathcal X_{\bullet,I^{(2)},\bullet}:=\mathcal X_{[n_1],I^{(2)},[n_3]}$; singleton sets are written without braces, and the same convention applies to matrices and vectors. We write $\boldsymbol 0_d$, $\boldsymbol 1_d$, $\boldsymbol I_d$, and $\boldsymbol e_j^{(d)}$ for the zero vector, all-one vector, identity matrix, and $j$th canonical basis vector in $\mathbb R^d$, omitting $d$ when clear, and set $\boldsymbol O_{d_1\times d_2}:=\boldsymbol 0_{d_1}\boldsymbol 0_{d_2}^\top$ and $\boldsymbol 1_{d_1\times d_2}:=\boldsymbol 1_{d_1}\boldsymbol 1_{d_2}^\top$. For symmetric $\boldsymbol A,\boldsymbol B\in\mathbb R^{d\times d}$, $\boldsymbol A\succeq0$ means that $\boldsymbol A$ is positive semidefinite, and $\boldsymbol A\succeq\boldsymbol B$ means $\boldsymbol A-\boldsymbol B\succeq0$. We write $\operatorname{tr}(\boldsymbol A)$ for the trace and $\operatorname{diag}(\boldsymbol v)$ for the diagonal matrix with diagonal~$\boldsymbol v$. The minimum, maximum, and $j$th largest eigenvalues of a symmetric matrix $\boldsymbol A$ are denoted by $\lambda_{\min}(\boldsymbol A)$, $\lambda_{\max}(\boldsymbol A)$, and $\lambda_j(\boldsymbol A)$; for a general matrix $\boldsymbol B$, the analogous singular values are $\sigma_{\min}(\boldsymbol B)$, $\sigma_{\max}(\boldsymbol B)$, and $\sigma_j(\boldsymbol B)$. We define $P_{\boldsymbol\Omega}(\boldsymbol M):=\boldsymbol\Omega\odot\boldsymbol M$, where $\odot$ denotes the Hadamard product. Moreover, $\operatorname{SVD}_r(\boldsymbol A)=(\boldsymbol U,\boldsymbol\Sigma,\boldsymbol V)$ denotes the rank-$r$ truncated SVD, where $\boldsymbol U\in\mathbb R^{n_1\times r}$ and $\boldsymbol V\in\mathbb R^{n_2\times r}$ contain the top-$r$ left and right singular vectors and $\boldsymbol\Sigma\in\mathbb R^{r\times r}$ contains the corresponding singular values. If $\boldsymbol A=\boldsymbol U\operatorname{diag}(\sigma_1,\ldots,\sigma_r)\boldsymbol V^\top$ with $\sigma_i>0$, then $\boldsymbol A^\dagger:=\boldsymbol V\operatorname{diag}(\sigma_1^{-1},\ldots,\sigma_r^{-1})\boldsymbol U^\top$ is its Moore--Penrose pseudoinverse. We use $\|\cdot\|_p$ for vector $\ell_p$-norms, $\|\cdot\|_{\mathrm{op}}$ and $\|\cdot\|_F$ for matrix spectral and Frobenius norms, and $\langle\cdot,\cdot\rangle$ for the Euclidean inner product. Also, $\mathcal S^{d-1}:=\{\boldsymbol x\in\mathbb R^d:\|\boldsymbol x\|_2=1\}$ is the unit sphere, and $\mathcal O(x)$ denotes a quantity whose absolute value is bounded by $Cx$ for some constant~$C>0$.

\section{Bilinear form estimation with $4$-block~missingness}\label{sec:4block}
\subsection{Statistical setting and main result}\label{sec:4BlockUB}

In this section and the next, we study bilinear-form estimation in the four-block setting of Fig.~\ref{fig:4BlockSlices}, the simplest nontrivial building block for the staggered adoption design in Section~\ref{sec:bilinearStaggered}. For fixed dimensions $N,T,K\ge 1$, we define the deterministic missingness-pattern tensor $\Omega \in \{0,1\}^{N \times T \times K}$ where, for all $j \in [K]$,
\begin{align}\label{eq:Omega4block}
    \Omega_{\bullet, \bullet, j}
=
\begin{pmatrix}
\boldsymbol{1}_{N_{1j} \times T_{1j}} & \boldsymbol{1}_{N_{1j} \times T_{2j}}\\
\boldsymbol{1}_{N_{2j}\times T_{1j}} & \boldsymbol{O}_{N_{2j}\times T_{2j}}
\end{pmatrix} \in \mathbb{R}^{N \times T},
\end{align}
with $N=N_{1j} +N_{2j}$ and $T=T_{1j} + T_{2j}$. The tensor $\Omega$ will be fixed throughout this section. We observe $\mathcal{Y}\in\mathbb{R}^{N\times T\times K}$ with 
\begin{align}\label{eq:Observation4block}
    \mathcal{Y}_{\bullet, \bullet, j} := P_{\Omega_{\bullet, \bullet, j}}\!\big(\mathcal M_{\bullet, \bullet, j}+\mathcal E_{\bullet, \bullet, j}\big)
=
\begin{pmatrix}
\mathcal M_{\bullet, \bullet, j}^{(a)} + \mathcal E_{\bullet, \bullet, j}^{(a)} & \mathcal M_{\bullet, \bullet, j}^{(b)} + \mathcal E_{\bullet, \bullet, j}^{(b)} \\
\mathcal M_{\bullet, \bullet, j}^{(c)} + \mathcal E_{\bullet, \bullet, j}^{(c)} & \texttt{NA}
\end{pmatrix},
\end{align}
where $a,b,c$ refer to the observed blocks in~\eqref{eq:Omega4block}, and $\mathcal{E}\in\mathbb{R}^{N\times T\times K}$ is such that $\mathcal  E_{i, t, j} \overset{\mathrm{i.i.d.}}{\sim} {\cal N}(0, \sigma^2)$ for all $(i, t, j) \in  [N] \times [T] \times [K]$. Fig.~\ref{fig:4BlockSlices} illustrates the observed tensor $\mathcal{Y}$ for $K=3$. The bottom-right block need not be missing in every layer. It suffices that each row is either fully observed or missing from some slice-specific time onward. We nevertheless use the four-block design in~\eqref{eq:Omega4block} to simplify the exposition. 

\begin{figure}[htbp]
\vspace{-10pt}
\centering
\begin{tikzpicture}[
    scale=0.7,
    transform shape,
    x=0.75cm,
    y=0.75cm,
    grid/.style={black!70, line width=0.35pt},
    heavy/.style={black, line width=0.9pt},
    arrowlbl/.style={font=\small},
    blocklbl/.style={font=\small},
    matlbl/.style={font=\large},
    blueblock/.style={fill=blue!25},
    pinkblock/.style={fill=pink!45},
    greenblock/.style={fill=green!25},
    grayblock/.style={fill=gray!25},
]

\def\n{6}

\begin{scope}[shift={(0,0)}]

\def\xcut{2.2}
\def\ycut{3.8}

\fill[blueblock]  (0,\ycut) rectangle (\xcut,\n);
\fill[pinkblock]  (\xcut,\ycut) rectangle (\n,\n);
\fill[greenblock] (0,0) rectangle (\xcut,\ycut);
\fill[grayblock]  (\xcut,0) rectangle (\n,\ycut);

\draw[heavy] (0,0) rectangle (\n,\n);
\draw[grid] (\xcut,0) -- (\xcut,\n);
\draw[grid] (0,\ycut) -- (\n,\ycut);

\node[blocklbl] at ({\xcut/2},{(\ycut+\n)/2}) {$\mathcal Y_{\bullet,\bullet,1}^{(a)}$};
\node[blocklbl] at ({(\xcut+\n)/2},{(\ycut+\n)/2}) {$\mathcal Y_{\bullet,\bullet,1}^{(b)}$};
\node[blocklbl] at ({\xcut/2},{\ycut/2}) {$\mathcal Y_{\bullet,\bullet,1}^{(c)}$};
\node[blocklbl] at ({(\xcut+\n)/2},{\ycut/2}) {\texttt{NA}};

\draw[<->] (-0.45,\ycut) -- (-0.45,\n)
    node[midway,left,arrowlbl] {$N_{11}$};
\draw[<->] (-0.45,0) -- (-0.45,\ycut)
    node[midway,left,arrowlbl] {$N_{21}$};

\draw[<->] (-1.61,0) -- (-1.61,\n)
    node[midway,left,arrowlbl] {$N$};

\draw[<->] (0,\n+0.45) -- (\xcut,\n+0.45)
    node[midway,above,arrowlbl] {$T_{11}$};
\draw[<->] (\xcut,\n+0.45) -- (\n,\n+0.45)
    node[midway,above,arrowlbl] {$T_{21}$};

\draw[<->] (0,\n+1.15) -- (\n,\n+1.15)
    node[midway,above,arrowlbl] {$T$};

\node[matlbl] at (3,-0.75) {$\mathcal Y_{\bullet,\bullet,1}$};

\end{scope}

\begin{scope}[shift={(8.4,0)}]

\def\xcut{3.4}
\def\ycut{2.4}

\fill[blueblock]  (0,\ycut) rectangle (\xcut,\n);
\fill[pinkblock]  (\xcut,\ycut) rectangle (\n,\n);
\fill[greenblock] (0,0) rectangle (\xcut,\ycut);
\fill[grayblock]  (\xcut,0) rectangle (\n,\ycut);

\draw[heavy] (0,0) rectangle (\n,\n);
\draw[grid] (\xcut,0) -- (\xcut,\n);
\draw[grid] (0,\ycut) -- (\n,\ycut);

\node[blocklbl] at ({\xcut/2},{(\ycut+\n)/2}) {$\mathcal Y_{\bullet,\bullet,2}^{(a)}$};
\node[blocklbl] at ({(\xcut+\n)/2},{(\ycut+\n)/2}) {$\mathcal Y_{\bullet,\bullet,2}^{(b)}$};
\node[blocklbl] at ({\xcut/2},{\ycut/2}) {$\mathcal Y_{\bullet,\bullet,2}^{(c)}$};
\node[blocklbl] at ({(\xcut+\n)/2},{\ycut/2}) {\texttt{NA}};

\draw[<->] (-0.45,\ycut) -- (-0.45,\n)
    node[midway,left,arrowlbl] {$N_{12}$};
\draw[<->] (-0.45,0) -- (-0.45,\ycut)
    node[midway,left,arrowlbl] {$N_{22}$};

\draw[<->] (0,\n+0.45) -- (\xcut,\n+0.45)
    node[midway,above,arrowlbl] {$T_{12}$};
\draw[<->] (\xcut,\n+0.45) -- (\n,\n+0.45)
    node[midway,above,arrowlbl] {$T_{22}$};

\node[matlbl] at (3,-0.75) {$\mathcal Y_{\bullet,\bullet,2}$};

\end{scope}

\begin{scope}[shift={(16.8,0)}]

\def\xcut{1.6}
\def\ycut{4.6}

\fill[blueblock]  (0,\ycut) rectangle (\xcut,\n);
\fill[pinkblock]  (\xcut,\ycut) rectangle (\n,\n);
\fill[greenblock] (0,0) rectangle (\xcut,\ycut);
\fill[grayblock]  (\xcut,0) rectangle (\n,\ycut);

\draw[heavy] (0,0) rectangle (\n,\n);
\draw[grid] (\xcut,0) -- (\xcut,\n);
\draw[grid] (0,\ycut) -- (\n,\ycut);

\node[blocklbl] at ({\xcut/2},{(\ycut+\n)/2}) {$\mathcal Y_{\bullet,\bullet,3}^{(a)}$};
\node[blocklbl] at ({(\xcut+\n)/2},{(\ycut+\n)/2}) {$\mathcal Y_{\bullet,\bullet,3}^{(b)}$};
\node[blocklbl] at ({\xcut/2},{\ycut/2}) {$\mathcal Y_{\bullet,\bullet,3}^{(c)}$};
\node[blocklbl] at ({(\xcut+\n)/2},{\ycut/2}) {\texttt{NA}};

\draw[<->] (-0.45,\ycut) -- (-0.45,\n)
    node[midway,left,arrowlbl] {$N_{13}$};
\draw[<->] (-0.45,0) -- (-0.45,\ycut)
    node[midway,left,arrowlbl] {$N_{23}$};

\draw[<->] (0,\n+0.45) -- (\xcut,\n+0.45)
    node[midway,above,arrowlbl] {$T_{13}$};
\draw[<->] (\xcut,\n+0.45) -- (\n,\n+0.45)
    node[midway,above,arrowlbl] {$T_{23}$};

\node[matlbl] at (3,-0.75) {$\mathcal Y_{\bullet,\bullet,3}$};

\end{scope}

\end{tikzpicture}
\captionsetup{
  font={scriptsize,stretch=0.8},
  skip=0pt
}
\caption{Layer-specific four-block structure for $K=3$. In slice $\mathcal Y_{\bullet,\bullet,j}$, $N_{1j}$ rows are fully observed, while $N_{2j}$ rows are observed only in the first $T_{1j}$ columns. Here, $N$ and $T$ denote the total numbers of rows and columns; $a,b,c$ are observed blocks and $d$ is missing.}
\label{fig:4BlockSlices}
\vspace{-15pt}
\end{figure}

As for the signal tensor $\mathcal{M}\in\mathbb{R}^{N\times T\times K}$, for $r \geq 1$ with $r\le \min(N,T)$, we assume that $\mathcal{M}$ admits a Tucker2 decomposition of rank $(r,r,K)$, meaning that there exist factor matrices $\boldsymbol U\in\mathbb{R}^{N\times r}$ and $\boldsymbol V\in\mathbb{R}^{T\times r}$ satisfying $\boldsymbol U^\top \boldsymbol U=\boldsymbol V^\top \boldsymbol V=\boldsymbol I_r$, and a core tensor $\mathcal C \in\mathbb R^{r\times r\times K}$ such that $\mathcal{M} = \mathcal C \times_1 \boldsymbol U \times_2 \boldsymbol V \times_3 \boldsymbol I_{K}$. Equivalently, each layer-specific signal matrix has rank at most~$r$ and admits the factorization $\mathcal{M}_{\bullet,\bullet,j} = \boldsymbol U \, \mathcal C_{\bullet,\bullet,j}\, \boldsymbol V^\top \in~\mathbb{R}^{N \times T}$; a proof of the equivalence between these two formulations is given in Section~\ref{appendix:tensor}  in the Supplement. This condition permits heterogeneity across slices while borrowing strength through shared latent row and column spaces. Similar modeling assumptions, often referred to as \textit{common-subspace models}, have been studied 
in statistical settings under complete observation \citep{agterberg2026statistically,arroyo2021inference} and are motivated by biological applications, including neuroscience and RNA sequencing \citep{semedo2019cortical,ma2026optimal}. 

It will also be convenient to partition $\boldsymbol U=(\boldsymbol U_{1j}\,;\,\boldsymbol U_{2j})$ and~$\boldsymbol V=(\boldsymbol V_{1j}\,;\,\boldsymbol V_{2j})$ according to the $(N_{1j},N_{2j})$ and $(T_{1j},T_{2j})$ splits induced by slice $j$, where the semicolon denotes vertical stacking. Formally, for each $j\in[K]$ we define $\boldsymbol U_{1j} = \boldsymbol U_{[N_{1j}] \,, \, \bullet}\in\mathbb{R}^{N_{1j}\times r}$ and $\boldsymbol U_{2j} =  \boldsymbol U_{\{N_{1j} + 1, \ldots, N\} \, , \, \bullet}\in\mathbb{R}^{N_{2j}\times r}$; we define $\boldsymbol V_{1j}$ and $\boldsymbol V_{2j}$ analogously. Under this notation, the unobserved bottom-right block satisfies $\mathcal M_{\bullet, \bullet, j}^{(d)}=\boldsymbol U_{2j} \, \mathcal C_{\bullet, \bullet, j}\,  \boldsymbol V_{2j}^\top$.  

Our focus is on estimating general bilinear forms of the missing $d$-blocks. Formally, fix $k\in[K]$ and unit vectors $\boldsymbol x \in \mathcal S^{N_{2k}-1}, \, \boldsymbol y \in \mathcal S^{T_{2k}-1}$, and define
\begin{equation}\label{eq:mu4block}
\mu_{\boldsymbol x \boldsymbol y}^{(k)} := \boldsymbol x^\top \mathcal M_{\bullet, \bullet, k}^{(d)} \,\, \boldsymbol y .
\end{equation}
This target class includes both average and individual counterfactual components, depending on whether $\boldsymbol x, \,\boldsymbol y$ are constant or canonical basis vectors. Our proposed estimator, given in Algorithm~\ref{alg:bilinear4block} in Section~\ref{sec:methodology4Block}, predicts the missing $d$-block from the $b$-block by regressing the $c$-block on the $a$-block, exploiting the shared-subspace structure of the Tucker2 model. Its mean squared error is illustrated in Fig.~\ref{fig:phaseTransition}.

\begin{figure}
    \centering
\begin{tikzpicture}[
  >=stealth,
  every node/.style={font=\small},
  axis/.style={thick,->},
  curve/.style={very thick}, scale=0.6, transform shape]
  \def\xstar{5.25}
  \def\yfloor{1.75}

  \draw[axis] (0,0) -- (12.7,0) node[below left=2pt] {$K$};
  \draw[axis] (0,-0.35) -- (0,7.15);
  \node[rotate=90,anchor=south] at (-1.05,3.55)
    {$\displaystyle \mathbb{E}_{\mathcal M}\left[\bigl(\widehat\mu_{\boldsymbol x \boldsymbol y}^{(k)}-\mu_{\boldsymbol x \boldsymbol y}^{(k)}\bigr)^2\right]$};

  \draw[densely dashed,gray] (0.25,\yfloor) -- (\xstar,\yfloor);
  \draw[densely dotted,gray] (\xstar,0) -- (\xstar,\yfloor);

  \draw[curve]
    (1.15,6.25)
    .. controls (1.45,4.75) and (2.20,3.05) .. (3.25,2.30)
    .. controls (4.00,1.88) and (4.65,1.77) .. (\xstar,\yfloor)
    -- (11.85,\yfloor);
  \fill (\xstar,\yfloor) circle (2.2pt);
  \node[below] at (\xstar,0) {$K_\star$};

  \node[anchor=north west,align=left] at (1.85,6.70) {
    $\displaystyle
      \frac{\sigma^2 N}{\sum_{j=1}^{K}N_{1j}}
      \lVert \boldsymbol U_{2k}^{\top} \boldsymbol x\rVert_2^2
      +
      \frac{\sigma^2 T}{\sum_{j=1}^{K}T_{1j}}
      \lVert  \boldsymbol V_{2k}^{\top} \boldsymbol y\rVert_2^2$};
      
  \node[anchor=west,align=left] at (6.45,3.15) {
    $\displaystyle
      \sigma^2\min\!\left\{\frac{N}{N_{1k}},\frac{T}{T_{1k}}\right\}
      \lVert \boldsymbol U_{2k}^{\top}\boldsymbol x\rVert_2^2
      \lVert \boldsymbol V_{2k}^{\top}\boldsymbol y\rVert_2^2$
  };
\end{tikzpicture}
\captionsetup{
  font={scriptsize,stretch=0.8},
  skip=0pt
}
    \caption{Pictorial illustration of the MSE of the estimator in Algorithm~\ref{alg:bilinear4block} based on Theorem~\ref{thm:4BlockUBLinear}.}
    \label{fig:phaseTransition}
    \vspace{-15pt}
\end{figure}

In particular, under certain assumptions discussed in Section~\ref{sec:assumptions}, Theorem~\ref{thm:4BlockUBLinear} in Section~\ref{sec:upperBound} gives a precise upper bound on the estimation error that reveals the effect of pooling across layers. Treating ranks and logarithmic factors as constant-order quantities, the mean squared error is bounded up to constants by $\sigma^2 N \, \|\boldsymbol U_{2k}^\top \boldsymbol x\|_2^2/\sum_{j=1}^{K}N_{1j}+\sigma^2 T\,\|\boldsymbol V_{2k}^\top \boldsymbol y\|_2^2 / \sum_{j=1}^{K}T_{1j}$ in the small-$K$ regime, and by $\sigma^2\min\left(\frac{N}{N_{1k}},\frac{T}{T_{1k}}\right)\|\boldsymbol U_{2k}^\top \boldsymbol x\|_2^2\|\boldsymbol V_{2k}^\top \boldsymbol y\|_2^2$ in the large-$K$ regime. The~first two terms arise from estimating the shared factors $\boldsymbol U$ and $\boldsymbol V$, and therefore decrease with~$K$. The third is an irreducible layer-specific error that does not decrease with $K$, reflecting the difficulty of estimating $\mathcal C_{\bullet,\bullet,k}$. This phase transition in the rate as a function~of~$K$ is illustrated by the
simulation study in Fig.~\ref{fig:DecayWithK}, and complemented by a matching local minimax lower bound in Theorem~\ref{thm:CombinedLB} in Section~\ref{sec:MinimaxLowerBounds}.

\subsection{Proposed methodology for four-block missingness}\label{sec:methodology4Block}
We now introduce Algorithm~\ref{alg:bilinear4block} to estimate quantities of the form~\eqref{eq:mu4block}. To simplify the presentation of the method and its analysis, we introduce some notation, summarized in Table~\ref{tab:block-notation-main}. We recall that parentheses with a space denote horizontal concatenation; semicolons denote vertical concatenation.

\begin{table}[!ht]
    \centering
    \scriptsize
    \renewcommand{\arraystretch}{1.05}
    \setlength\extrarowheight{4pt}

    \resizebox{\textwidth}{!}{%
    \begin{tabular}{|
        >{\centering\arraybackslash}m{0.13\textwidth}|
        >{\raggedright\arraybackslash}m{0.35\textwidth}|
        !{\vrule width 1.2pt}
        >{\centering\arraybackslash}m{0.14\textwidth}|
        >{\raggedright\arraybackslash}m{0.33\textwidth}|}
        \hline
        \textbf{Notation} & \textbf{Definition}
        & \textbf{Notation} & \textbf{Definition} \\
        \hline

        $\boldsymbol{M}_{\mathrm{up}}^{(j)}$ &
        $(\mathcal{M}_{\bullet,\bullet,j}^{(a)}
        \ \mathcal{M}_{\bullet,\bullet,j}^{(b)})
        \in \mathbb{R}^{N_{1j}\times T}$
        &
        $\boldsymbol{M}_{\mathrm{left}}^{(j)}$ &
        $(\mathcal{M}_{\bullet,\bullet,j}^{(a)}
        \,;\,
        \mathcal{M}_{\bullet,\bullet,j}^{(c)})
        \in \mathbb{R}^{N\times T_{1j}}$ \\
        \hline

        $\boldsymbol{E}_{\mathrm{up}}^{(j)}$ &
        $(\mathcal{E}_{\bullet,\bullet,j}^{(a)}
        \ \mathcal{E}_{\bullet,\bullet,j}^{(b)})
        \in \mathbb{R}^{N_{1j}\times T}$
        &
        $\boldsymbol{E}_{\mathrm{left}}^{(j)}$ &
        $(\mathcal{E}_{\bullet,\bullet,j}^{(a)}
        \,;\,
        \mathcal{E}_{\bullet,\bullet,j}^{(c)})
        \in \mathbb{R}^{N\times T_{1j}}$ \\
        \hline

        $\boldsymbol{Y}_{\mathrm{up}}^{(j)}$ &
        $(\mathcal{Y}_{\bullet,\bullet,j}^{(a)}
        \ \mathcal{Y}_{\bullet,\bullet,j}^{(b)})
        \in \mathbb{R}^{N_{1j}\times T}$
        &
        $\boldsymbol{Y}_{\mathrm{left}}^{(j)}$ &
        $(\mathcal{Y}_{\bullet,\bullet,j}^{(a)}
        \,;\,
        \mathcal{Y}_{\bullet,\bullet,j}^{(c)})
        \in \mathbb{R}^{N\times T_{1j}}$ \\
        \hline

        $\boldsymbol{M}_{\mathrm{up}}^{\mathrm{p}}$ &
        $(\boldsymbol{M}_{\mathrm{up}}^{(1)}
        \,;\,\cdots\,;\,
        \boldsymbol{M}_{\mathrm{up}}^{(K)})
        \in \mathbb{R}^{N_{1,\mathrm{p}}\times T}$
        &
        $\boldsymbol{M}_{\mathrm{left}}^{\mathrm{p}}$ &
        $(\boldsymbol{M}_{\mathrm{left}}^{(1)}
        \ \cdots\
        \boldsymbol{M}_{\mathrm{left}}^{(K)})
        \in \mathbb{R}^{N\times T_{1,\mathrm{p}}}$ \\
        \hline

        $\boldsymbol{E}_{\mathrm{up}}^{\mathrm{p}}$ &
        $(\boldsymbol{E}_{\mathrm{up}}^{(1)}
        \,;\,\cdots\,;\,
        \boldsymbol{E}_{\mathrm{up}}^{(K)})
        \in \mathbb{R}^{N_{1,\mathrm{p}}\times T}$
        &
        $\boldsymbol{E}_{\mathrm{left}}^{\mathrm{p}}$ &
        $(\boldsymbol{E}_{\mathrm{left}}^{(1)}
        \ \cdots\
        \boldsymbol{E}_{\mathrm{left}}^{(K)})
        \in \mathbb{R}^{N\times T_{1,\mathrm{p}}}$ \\
        \hline

        $\boldsymbol{Y}_{\mathrm{up}}^{\mathrm{p}}$ &
        $(\boldsymbol{Y}_{\mathrm{up}}^{(1)}
        \,;\,\cdots\,;\,
        \boldsymbol{Y}_{\mathrm{up}}^{(K)})
        \in \mathbb{R}^{N_{1,\mathrm{p}}\times T}$
        &
        $\boldsymbol{Y}_{\mathrm{left}}^{\mathrm{p}}$ &
        $(\boldsymbol{Y}_{\mathrm{left}}^{(1)}
        \ \cdots\
        \boldsymbol{Y}_{\mathrm{left}}^{(K)})
        \in \mathbb{R}^{N\times T_{1,\mathrm{p}}}$ \\
        \hline

        $\boldsymbol{W}_{\mathrm{up}}$ &
        $(\boldsymbol{U}_{11}\mathcal{C}_{\bullet,\bullet,1}
        \,;\,\cdots\,;\,
        \boldsymbol{U}_{1K}\mathcal{C}_{\bullet,\bullet,K})
        \in \mathbb{R}^{N_{1,\mathrm{p}}\times r}$
        &
        $\boldsymbol{W}_{\mathrm{left}}$ &
        $(\boldsymbol{V}_{11}\mathcal{C}_{\bullet,\bullet,1}^{\top}
        \,;\,\cdots\,;\,
        \boldsymbol{V}_{1K}\mathcal{C}_{\bullet,\bullet,K}^{\top})
        \in \mathbb{R}^{T_{1,\mathrm{p}}\times r}$ \\
        \hline

        $(\boldsymbol{U}_{\mathrm{up}},
        \boldsymbol{\Sigma}_{\mathrm{up}},
        \boldsymbol{V}_{\mathrm{up}})$ &
        $\mathrm{SVD}_{r}
        (\boldsymbol{M}_{\mathrm{up}}^{\mathrm{p}})$
        &
        $(\boldsymbol{U}_{\mathrm{left}},
        \boldsymbol{\Sigma}_{\mathrm{left}},
        \boldsymbol{V}_{\mathrm{left}})$ &
        $\mathrm{SVD}_{r}
        (\boldsymbol{M}_{\mathrm{left}}^{\mathrm{p}})$ \\
        \hline

        $(\widehat{\boldsymbol{U}}_{\mathrm{up}},
        \widehat{\boldsymbol{\Sigma}}_{\mathrm{up}},
        \widehat{\boldsymbol{V}}_{\mathrm{up}})$ &
        $\mathrm{SVD}_{r}
        (\boldsymbol{Y}_{\mathrm{up}}^{\mathrm{p}})$
        &
        $(\widehat{\boldsymbol{U}}_{\mathrm{left}},
        \widehat{\boldsymbol{\Sigma}}_{\mathrm{left}},
        \widehat{\boldsymbol{V}}_{\mathrm{left}})$ &
        $\mathrm{SVD}_{r}
        (\boldsymbol{Y}_{\mathrm{left}}^{\mathrm{p}})$ \\
        \hline

        $N_{1,\mathrm{p}}$ &
        $\sum_{j=1}^{K}N_{1j}$
        &
        $T_{1,\mathrm{p}}$ &
        $\sum_{j=1}^{K}T_{1j}$ \\
        \hline

        $\rho_N$ &
        $N_{1,\mathrm{p}}/N$
        &
        $\rho_T$ &
        $T_{1,\mathrm{p}}/T$ \\
        \hline

        $p_N$ &
        $\max\{N_{1,\mathrm{p}},T\}$
        &
        $p_T$ &
        $\max\{N,T_{1,\mathrm{p}}\}$ \\
        \hline

        $\widehat{\boldsymbol{U}}_{1k}$ &
        $(\widehat{\boldsymbol{U}}_{\mathrm{left}})_{[N_{1k}],\bullet}
        \in \mathbb{R}^{N_{1k}\times r}$
        &
        $\widehat{\boldsymbol{U}}_{2k}$ &
        $(\widehat{\boldsymbol{U}}_{\mathrm{left}})
        _{\{N_{1k}+1,\ldots,N\},\bullet}
        \in \mathbb{R}^{N_{2k}\times r}$ \\
        \hline

        $\widehat{\boldsymbol{U}}_{\mathrm{up}}^{(k)}$ &
        $(\widehat{\boldsymbol{U}}_{\mathrm{up}})
        _{\{s_k+1,\ldots,s_k+N_{1k}\},\bullet}
        \in \mathbb{R}^{N_{1k}\times r}$
        &
        $\widehat{\boldsymbol{V}}_{2k}$ &
        $(\widehat{\boldsymbol{V}}_{\mathrm{up}})
        _{\{T_{1k}+1,\ldots,T\},\bullet}
        \in \mathbb{R}^{T_{2k}\times r}$ \\
        \hline

        $\zeta_N$ &
        $\log(N_{1,\mathrm{p}}+T)$
        &
        $\zeta_T$ &
        $\log(N+T_{1,\mathrm{p}})$ \\
        \hline

        $s_k$ &
        $\sum_{j=1}^{k-1}N_{1j}$
        &
        $\zeta$ &
        $\max(\zeta_N,\zeta_T)$ \\
        \hline
    \end{tabular}%
    }
\captionsetup{
  font={scriptsize,stretch=0.8},
  skip=10pt
}
    \caption{Notation used throughout the paper.}
    \vspace{-20pt}
    \label{tab:block-notation-main}
\end{table}

We now provide some intuition by considering the noiseless case. The key observation is that $\mathcal M_{\bullet,\bullet,k}^{(d)}
=
\boldsymbol U_{2k}(\boldsymbol U_{1k}^\top \boldsymbol U_{1k})^{-1} \boldsymbol U_{1k}^\top \, \mathcal M_{\bullet,\bullet,k}^{(b)}$, so the missing $d$-block can be recovered by mapping the observed $b$-block through the linear operator $\boldsymbol U_{2k}(\boldsymbol U_{1k}^\top \boldsymbol U_{1k})^{-1} \boldsymbol U_{1k}^\top$. For arbitrary unit vectors $\boldsymbol x$ and $\boldsymbol  y$, this yields
$
\mu_{\boldsymbol x \boldsymbol y}^{(k)}
=
\boldsymbol x^\top \mathcal M_{\bullet,\bullet,k}^{(d)} \, \boldsymbol y
=
\langle \boldsymbol U_{2k}^\top \, \boldsymbol x,\,
(\boldsymbol U_{1k}^\top \boldsymbol U_{1k})^{-1} \boldsymbol U_{1k}^\top \, \mathcal M_{\bullet,\bullet,k}^{(b)} \, \boldsymbol y
\rangle$.
This motivates a two-step procedure that exploits the shared-subspace assumption by first using the pooled left matrix $\boldsymbol M_\mathrm{left}^\mathrm{p}=\boldsymbol U\boldsymbol W_{\mathrm{left}}^\top$ to estimate the left singular subspace and associated least-squares map, and then using the pooled upper matrix $\boldsymbol M_\mathrm{up}^\mathrm{p}=\boldsymbol W_{\mathrm{up}}\boldsymbol V^\top$ to obtain a low-rank denoised estimate of the $b$-block, but only through its action on $\boldsymbol y$. 

More precisely, if $\boldsymbol W_{\mathrm{left}}$ has rank $r$, we have $\operatorname{SVD}_r(\boldsymbol M_\mathrm{left}^\mathrm{p}) = (\boldsymbol U_{\mathrm{left}},\boldsymbol \Sigma_{\mathrm{left}},\boldsymbol V_{\mathrm{left}})$ with $\boldsymbol U_{\mathrm{left}}=\boldsymbol U \boldsymbol Q_\mathrm{left}$ for some orthogonal $\boldsymbol Q_\mathrm{left}\in\mathbb{R}^{r\times r}$. Importantly, the operator $\boldsymbol U_{2k}(\boldsymbol U_{1k}^\top \boldsymbol U_{1k})^{-1}\boldsymbol U_{1k}^\top$ is rotationally invariant. On the other hand, if $\boldsymbol W_{\mathrm{up}}$ has rank $r$, for the pooled upper matrix we have $ \operatorname{SVD}_r(\boldsymbol M_\mathrm{up}^\mathrm{p}) = (\boldsymbol U_{\mathrm{up}},\boldsymbol \Sigma_{\mathrm{up}},\boldsymbol V_{\mathrm{up}})$, where $\boldsymbol V_{\mathrm{up}}=\boldsymbol V \boldsymbol Q_\mathrm{up}$ for some orthogonal $\boldsymbol Q_\mathrm{up}\in\mathbb{R}^{r\times r}$. Writing $\boldsymbol U_{\mathrm{up}}=\big(\boldsymbol U_{\mathrm{up}}^{(1)};\ \cdots;\ \boldsymbol U_{\mathrm{up}}^{(K)}\big)$ with $\boldsymbol U_{\mathrm{up}}^{(j)}\in\mathbb{R}^{N_{1j}\times r}$, we have $\boldsymbol U_{\mathrm{up}} \boldsymbol \Sigma_{\mathrm{up}} = \boldsymbol W_{\mathrm{up}} \boldsymbol Q_\mathrm{up}$ and $
\boldsymbol U_{\mathrm{up}}^{(k)}\Sigma_{\mathrm{up}} = \boldsymbol U_{1k}\mathcal C_{\bullet,\bullet,k}\boldsymbol  Q_\mathrm{up}$, hence $\boldsymbol U_{\mathrm{up}}^{(k)}\boldsymbol  \Sigma_{\mathrm{up}}(\boldsymbol V_{\mathrm{up}})_{\{T_{1k} + 1, \ldots, T\}, \bullet}^\top
=
\boldsymbol U_{1k}\mathcal C_{\bullet,\bullet,k}\boldsymbol Q_\mathrm{up} \boldsymbol Q_\mathrm{up}^\top \boldsymbol V_{2k}^\top
=
\boldsymbol U_{1k}\mathcal C_{\bullet,\bullet,k} \boldsymbol V_{2k}^\top
=~\mathcal{M}_{\bullet, \bullet, k}^{(b)}$. This also shows that this quantity is rotationally invariant, and further ensures that no cross-alignment between the two SVDs is required.

\begin{algorithm}[!ht] \captionsetup{font=scriptsize,skip=2pt} \caption{\textsc{BilinearTensor4Block} for estimating $\mu_{\boldsymbol x \boldsymbol y}^{(k)} =\boldsymbol x^\top\mathcal{M}_{\bullet,\bullet,k}^{(d)}\boldsymbol y$ in slice $k$ of a tensor with four-block missingness} \label{alg:bilinear4block}
\begingroup \footnotesize \linespread{0.92}\selectfont \algrenewcommand{\algorithmicindent}{1em} \setlength{\abovedisplayskip}{2pt} \setlength{\belowdisplayskip}{2pt} \setlength{\abovedisplayshortskip}{1pt} \setlength{\belowdisplayshortskip}{1pt}
 \begin{algorithmic}[1] \Require integer $k\in[K]$, rank $r$, unit vectors $\boldsymbol x\in\mathcal S^{N_{2k}-1}$, $\boldsymbol y\in\mathcal S^{T_{2k}-1}$, data $\mathcal Y$, block sizes $\{(N_{1j},N_{2j},T_{1j},T_{2j})\}_{j=1}^K$ satisfying $N=N_{1j}+N_{2j}$ and $T=T_{1j}+T_{2j}$ for all $j\in[K]$, parameter~$\tau > 0$. 
 \State Form pooled left matrix $\boldsymbol Y_{\mathrm{left}}^{\mathrm{p}}\gets (\boldsymbol Y^{(1)}_{\mathrm{left}}\ \ \cdots\ \ \boldsymbol Y^{(K)}_{\mathrm{left}})\in\mathbb R^{N\times T_{1, \mathrm{p}}}$. 
 \State Compute rank-$r$ truncated singular value decomposition $(\hat{\boldsymbol U}_{\mathrm{left}},\hat{\boldsymbol \Sigma}_{\mathrm{left}},\hat{\boldsymbol V}_{\mathrm{left}})\gets \mathrm{SVD}_r(\boldsymbol Y_{\mathrm{left}}^{\mathrm{p}})$. 
\State Set $\hat{\boldsymbol U}_{1k}\gets(\hat{\boldsymbol U}_{\mathrm{left}})_{[N_{1k}],\bullet}$ and $\hat{\boldsymbol U}_{2k}\gets(\hat{\boldsymbol U}_{\mathrm{left}})_{\{N_{1k}+1,\ldots,N\},\bullet}$. 
\State Compute $\hat{\boldsymbol H}_k\gets\hat{\boldsymbol U}_{1k}^\top\hat{\boldsymbol U}_{1k}\in\mathbb R^{r\times r}$, take the eigendecomposition $\hat{\boldsymbol H}_k=\boldsymbol  Q\operatorname{diag}(\lambda_1,\ldots,\lambda_r) \boldsymbol Q^\top$, and set \[ \hat{\boldsymbol H}_{k,\tau}^\mathrm{inv} \gets \boldsymbol Q\operatorname{diag}\left(\left\{\frac{1}{\max[\lambda_i,\tau]}\right\}_{i=1}^r\right) \, \boldsymbol Q^\top . \] \State Compute $\hat{\boldsymbol \alpha}_x^{(k)}\gets\hat{\boldsymbol U}_{2k}^\top \boldsymbol x\in\mathbb R^r$. \State Form pooled upper matrix $\boldsymbol Y_{\mathrm{up}}^{\mathrm{p}}\gets (\boldsymbol Y^{(1)}_{\mathrm{up}}\ ;\ \cdots\ ;\ \boldsymbol Y^{(K)}_{\mathrm{up}})\in\mathbb R^{ N_{1,\mathrm{p}}\times T}$. \State Compute rank-$r$ truncated singular value decomposition $(\hat{\boldsymbol U}_{\mathrm{up}},\hat{\boldsymbol \Sigma}_{\mathrm{up}},\hat{\boldsymbol  V}_{\mathrm{up}})\gets \mathrm{SVD}_r(\boldsymbol Y_{\mathrm{up}}^{\mathrm{p}})$. \State Compute $s_k \gets \sum_{j = 1}^{k-1}N_{1j}$, and extract $\hat{\boldsymbol U}_{\mathrm{up}}^{(k)}\gets(\hat{\boldsymbol U}_{\mathrm{up}})_{\{s_k+1,\ldots,s_k+N_{1k}\},\bullet}$, \,$\hat{\boldsymbol V}_{2k}\gets(\hat{\boldsymbol V}_{\mathrm{up}})_{\{T_{1k}+1,\ldots,T\},\bullet}$. \State Compute $\boldsymbol T_y\gets\hat{\boldsymbol V}_{2k}^\top \boldsymbol y\in\mathbb R^r$, $\boldsymbol W_y\gets\hat{\boldsymbol \Sigma}_{\mathrm{up}} \boldsymbol T_y\in\mathbb R^r$, and $\boldsymbol X_y\gets\hat{\boldsymbol U}_{\mathrm{up}}^{(k)}\boldsymbol W_y\in\mathbb R^{N_{1k}}$. \State Compute $\hat{\boldsymbol \beta}_y^{(k)}\gets \, \hat{\boldsymbol H}_{k,\tau}^\mathrm{inv} \hat{\boldsymbol U}_{1k}^\top \boldsymbol X_y\in\mathbb R^r$. \State \Return $\hat\mu_{\boldsymbol x \boldsymbol y}^{(k)}\gets\langle\hat{\boldsymbol \alpha}_x^{(k)},\hat{\boldsymbol \beta}_y^{(k)}\rangle$. \end{algorithmic} \endgroup 
\end{algorithm}
\vspace{-15pt}

In the noisy setting, Algorithm~\ref{alg:bilinear4block} follows the same principle, but applied to the observed tensor~$\mathcal Y$ rather than the signal tensor $\mathcal M$. Our method computes the rank-$r$ truncated SVDs of $\boldsymbol Y_\mathrm{left}^\mathrm{p}$ and $\boldsymbol Y_\mathrm{up}^\mathrm{p}$, formed by horizontally stacking the blue/green blocks and vertically stacking the blue/pink blocks in Fig.~\ref{fig:4BlockSlices}, respectively. This step, often referred to as \emph{Stack-SVD}, exploits the singular subspaces shared across panels in order to improve subspace estimation; see \cite{ma2026optimal,BaharavNicolIrizarryMa2025StackedSVD} for theoretical guarantees and comparisons with alternative aggregation schemes. 

We also notice that our denoise-then-regress procedure targets the linear functional directly rather than reconstructing the entire missing $d$-block. Specifically, instead of reconstructing the whole missing block, in Steps 9-10 we compute only $\hat{\boldsymbol \beta}_y^{(k)} \in \mathbb R^r$, and then return its inner product with $\hat{\boldsymbol \alpha}_x^{(k)} \in \mathbb R^r$ calculated in Step 5. We further elaborate on the computational complexity of our procedure in Appendix~\ref{appendix:compCost}.

{Finally, we use the clipped inverse $\hat{\boldsymbol H}_{k,\tau}^{\mathrm{inv}}$ to estimate $({\boldsymbol U}_{1k}^{\top}{\boldsymbol U}_{1k})^{-1}$ and safeguard against ill-conditioning. For sufficiently small $\tau$, clipping is inactive with high probability, while stabilizing the inverse on the complementary event. In the numerical experiments of Section~\ref{sec:simul}, the procedure appears stable for values of $\tau$ below $10^{-2}$, with little need for careful tuning; see Fig.~\ref{fig:StaggeredSynthetic}(a) for further details.}

\section{Theoretical analysis}\label{sec:theory}
\subsection{Upper bound}\label{sec:upperBound}

We now present the theoretical analysis of Algorithm~\ref{alg:bilinear4block}. Because the masks $\Omega_{\bullet,\bullet,j}$ may be adversarial and induce pathological MNAR patterns, assumptions are required to ensure that $\mu_{\boldsymbol x \boldsymbol y}^{(k)}$ is identifiable from the observed data. Building on prior work on MNAR matrix completion \citep{BaiNg2021MatrixCompletionCounterfactuals, yan2024entrywise, choi2024matrix, agarwal2026robustmatrixestimation}, we impose a condition controlling the spectra of the restricted Gram matrices $\boldsymbol U_{1j}^\top \boldsymbol U_{1j}$ and $\boldsymbol V_{1j}^\top \boldsymbol V_{1j}$.

\begin{assump}\label{assump:subblock-conditioning}
There exist $0 < c_\ell \le c_u$ such that for all $j\in[K]$ we have
\[
c_\ell\,\frac{N_{1j}}{N}\,I_r \preceq \boldsymbol U_{1j}^\top \boldsymbol U_{1j} \preceq c_u\,\frac{N_{1j}}{N}\,I_r,
\qquad
c_\ell\,\frac{T_{1j}}{T}\,I_r \preceq \boldsymbol V_{1j}^\top \boldsymbol V_{1j} \preceq c_u\,\frac{T_{1j}}{T}\,I_r.
\]
\end{assump}

Our theoretical result also requires the following three conditions, where $c_0>0$ and $c_{\mathrm{dim}}>0$ are fixed, sufficiently small absolute constants, while $c_{\mathrm{blk}}>0$ is a fixed absolute constant.  We use the additional notation in Table~\ref{tab:block-notation-main}.

\begin{assump}\label{assump:sampleSize}
We have $ r+\zeta
\le c_{\mathrm{dim}} \,
\min (
N-r,\,
T-r,\,
N_{1k}, T_{1k})$ and $c_{\mathrm{blk}} \min(\zeta_N, \zeta_T) \leq r$.
\end{assump}

\begin{assump}\label{assump:smallNoise}
Define $\theta:= \sigma \, \gamma_{\min}^{-1}
\max\left(
\sqrt{N},\,
\sqrt{T},\,
\sqrt{N / \rho_T},\,
\sqrt{N T / N_{1k}}
\right)$, and assume this signal-to-noise ratio satisfies $\theta \le c_0$.
\end{assump}

\begin{assump}\label{assump:Incoherence}
    We define the incoherence parameters $\nu_x:=\sqrt{N/r} \, \|\boldsymbol U_{2k}^\top \boldsymbol x\|_2 $ and $
\nu_y:=\sqrt{T/r} \, \|\boldsymbol V_{2k}^\top \boldsymbol y\|_2$, and assume they are of constant order.
\end{assump}

A detailed discussion of these assumptions is deferred to Section~\ref{sec:assumptions}. We collect all admissible signal tensors into the following class. For fixed $r,N,T,K,\gamma_{\min},\gamma_{\max}, \Omega$ and $0 < c_\ell \leq c_u$,  we define
\[
\begin{aligned}\label{eq:calF_class}
\mathcal F(c_\ell,c_u)
:= \bigl\{\, \mathcal M & \in \mathbb R^{N \times T \times K} :\;
\mathcal M =\mathcal C \times_1 \boldsymbol U \times_2 \boldsymbol V \times_3 \boldsymbol I_K, \\
& \mathcal C \in \mathbb R^{r \times r \times K},\quad
\boldsymbol U \in \mathbb R^{N \times r},\quad
\boldsymbol V \in \mathbb R^{T \times r},  \boldsymbol U^\top \boldsymbol U = \boldsymbol V^\top \boldsymbol V = \boldsymbol I_r, \\
& 0 < \gamma_{\min}
\leq \sigma_{\min}(\mathcal C_{\bullet, \bullet, j})
\leq \sigma_{\max}(\mathcal C_{\bullet, \bullet, j})
\leq \gamma_{\max} < \infty
\quad \text{for all } j \in [K], \\
& \text{Assumption \eqref{assump:subblock-conditioning} holds with constants } c_\ell,c_u
\text{ for the fixed design } \Omega
\,\bigr\}.
\end{aligned}
\]

We now state our main result. 
We write $\kappa := {\gamma_\mathrm{max}}/{\gamma_\mathrm{min}}$ for the condition number of~$\mathcal{C}$.
\begin{thm}\label{thm:4BlockUBLinear}
     Fix absolute constants $0 < c_\ell  \le c_u$,  a tensor $\mathcal{M} \in \mathcal{F}(c_\ell, c_u)$, an index $k \in [K]$, and unit vectors $\boldsymbol x \in \mathcal S^{N_{2k}-1}, \boldsymbol  y \in \mathcal S^{T_{2k}-1}$. Let $\mu_{\boldsymbol x \boldsymbol y}^{(k)}$ be as in~\eqref{eq:mu4block}, and define~$\hat \mu_{\boldsymbol x \boldsymbol y}^{(k)}$ to be the output of Algorithm~\ref{alg:bilinear4block} run with $0<\tau \leq \frac{c_\ell  N_{1k}}{{2 N}}$. Assume~\eqref{assump:sampleSize},~\eqref{assump:smallNoise}, and~\eqref{assump:Incoherence} with $\nu_x \neq 0, \nu_y \neq 0$. Let 
     \[
      \Upsilon_{xy} := \frac{\sigma^2 (r + \zeta_N)}{\rho_N} \, \|\boldsymbol U_{2k}^\top \boldsymbol x\|_2^2 +\frac{\sigma^2 (r + \zeta_T)}{\rho_T} \, \|\boldsymbol V_{2k}^\top \boldsymbol y\|_2^2 +   \frac{\sigma^2 N}{N_{1k}} \, \|\boldsymbol U_{2k}^\top \boldsymbol x\|_2^2 \, \|\boldsymbol V_{2k}^\top \boldsymbol y\|_2^2, 
     \]
     and further suppose that 
\begin{align}\label{eq:negligibilityForUB4Block}
   \frac{\gamma_{\max}^2}{\tau}\frac{N_{1k}}{N}
\left(p_N^{-10}+p_T^{-10}\right)
+
\frac{\sigma^2}{\tau}\left(N_{1k}+T\right)
\left(p_N^{-5}+p_T^{-5}\right) \leq c_0 \Upsilon_{xy}.
\end{align}
There exists $c_1 \equiv c_1(c_\ell, c_u, c_0,  c_{\mathrm{dim}}, c_{\mathrm{blk}}, \kappa, \nu_x, \nu_y)> 0$ such that $\mathbb E_\mathcal{M}[
\{
\hat \mu_{\boldsymbol x \boldsymbol y}^{(k)}-\mu_{\boldsymbol x \boldsymbol y}^{(k)}
\}^2]
\le~c_1\Upsilon_{xy}$.
\end{thm}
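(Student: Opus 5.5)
We split the expectation on a high-probability event $\mathcal G$ on which both Stack-SVDs are accurate and the clipping in Step~4 is inactive. On $\mathcal G^c$ we use crude deterministic bounds, which produce exactly the terms in \eqref{eq:negligibilityForUB4Block}.

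\emph{Defining $\mathcal G$.} First, apply Gaussian matrix concentration to the pooled noise matrices: $\|\boldsymbol E_{\mathrm{left}}^{\mathrm p}\|_{\mathrm{op}}\lesssim \sigma\sqrt{p_T}$ and $\|\boldsymbol E_{\mathrm{up}}^{\mathrm p}\|_{\mathrm{op}}\lesssim\sigma\sqrt{p_N}$, each with probability $1-p^{-10}$. Second, use the fact that $\sigma_r(\boldsymbol W_{\mathrm{left}})^2\ge \gamma_{\min}^2 c_\ell\rho_T$ and $\sigma_r(\boldsymbol W_{\mathrm{up}})^2\ge\gamma_{\min}^2c_\ell\rho_N$. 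This holds because $\boldsymbol W_{\mathrm{left}}^\top\boldsymbol W_{\mathrm{left}}=\sum_j\mathcal C_j\boldsymbol V_{1j}^\top\boldsymbol V_{1j}\mathcal C_j^\top\succeq c_\ell\gamma_{\min}^2\sum_j T_{1j}/T$, by Assumption~\ref{assump:subblock-conditioning}. Together with Assumption~\ref{assump:smallNoise}, Wedin's theorem gives $\|\hat{\boldsymbol U}_{\mathrm{left}}\hat{\boldsymbol U}_{\mathrm{left}}^\top-\boldsymbol U\boldsymbol U^\top\|_{\mathrm{op}}\lesssim\theta$. Third, combine this with the restricted bound $\|\boldsymbol U_{1k}^\top\boldsymbol U_{1k}\|\asymp N_{1k}/N$ and with $\theta\sqrt{N/N_{1k}}$ small, which is encoded in the $\sqrt{NT/N_{1k}}$ term of $\theta$. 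This yields $\lambda_{\min}(\hat{\boldsymbol H}_k)\ge c_\ell N_{1k}/(2N)\ge\tau$ (after aligning with an orthogonal $\boldsymbol O$). So on $\mathcal G$ the clipped inverse equals the true inverse. On $\mathcal G^c$ we have $\|\hat{\boldsymbol H}^{\mathrm{inv}}_{k,\tau}\|\le\tau^{-1}$, and $|\hat\mu|\le \tau^{-1}\|\boldsymbol X_y\|_2$. Bounding $\mathbb E[\|\boldsymbol X_y\|^2\mathbf 1_{\mathcal G^c}]$ by Cauchy--Schwarz with $\|\boldsymbol X_y\|\le\|\boldsymbol Y^{\mathrm p}_{\mathrm{up}}\|_{\mathrm{op}}$ gives the $\gamma_{\max}^2 N_{1k}/(N\tau)\,p^{-10}$ and $\sigma^2(N_{1k}+T)p^{-5}/\tau$ contributions. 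These are $\le c_0\Upsilon_{xy}$ by \eqref{eq:negligibilityForUB4Block}.

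\emph{Error decomposition on $\mathcal G$.} Write the target as $\langle \boldsymbol\alpha,\boldsymbol\beta\rangle$, where $\boldsymbol\alpha=\boldsymbol U_{2k}^\top\boldsymbol x$ and $\boldsymbol\beta=(\boldsymbol U_{1k}^\top\boldsymbol U_{1k})^{-1}\boldsymbol U_{1k}^\top\mathcal M^{(b)}_k\boldsymbol y$. Both factors are rotation-covariant, so we align $\hat{\boldsymbol U}_{\mathrm{left}}$ with $\boldsymbol U\boldsymbol O$. Note that the estimator depends on the left SVD only through the projector-type operator $\hat{\boldsymbol U}_{2k}\hat{\boldsymbol H}_k^{-1}\hat{\boldsymbol U}_{1k}^\top$. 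Hence the error splits into three pieces.
\begin{itemize}
\item[(i)] The left-subspace error, $\boldsymbol x^\top[\hat{\boldsymbol U}_{2k}\hat{\boldsymbol H}_k^{-1}\hat{\boldsymbol U}_{1k}^\top-\boldsymbol U_{2k}\boldsymbol H_k^{-1}\boldsymbol U_{1k}^\top]\mathcal M^{(b)}_k\boldsymbol y$.
\item[(ii)] The denoising error of the $b$-block through $\boldsymbol y$, $\hat{\boldsymbol U}^{(k)}_{\mathrm{up}}\hat{\boldsymbol\Sigma}_{\mathrm{up}}\hat{\boldsymbol V}_{2k}^\top\boldsymbol y-\mathcal M^{(b)}_k\boldsymbol y$, mapped through the true operator.
\item[(iii)] The product of these two errors.
\end{itemize}
For (ii), use the first-order expansion of the rank-$r$ projection $\hat{\boldsymbol U}_{\mathrm{up}}\hat{\boldsymbol\Sigma}_{\mathrm{up}}\hat{\boldsymbol V}_{\mathrm{up}}^\top=\boldsymbol P_{\hat U}\boldsymbol Y^{\mathrm p}_{\mathrm{up}}$. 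Its leading terms are $\boldsymbol U_{\mathrm{up}}\boldsymbol U_{\mathrm{up}}^\top\boldsymbol E^{\mathrm p}_{\mathrm{up}}$ and $\boldsymbol E^{\mathrm p}_{\mathrm{up}}\boldsymbol V\boldsymbol V^\top$ restricted to block $k$ and columns $T_{1k}+1..T$, applied to $\boldsymbol y$. The piece $\boldsymbol E\boldsymbol V\boldsymbol V_{2k}^\top\boldsymbol y$ on the $N_{1k}$ rows, mapped by $\boldsymbol U_{2k}\boldsymbol H_k^{-1}\boldsymbol U_{1k}^\top$ and tested with $\boldsymbol x$, has variance $\sigma^2\|\boldsymbol V_{2k}^\top\boldsymbol y\|^2\|\boldsymbol U_{1k}\boldsymbol H_k^{-1}\boldsymbol U_{2k}^\top \boldsymbol x\|^2\lesssim\sigma^2 (N/N_{1k})\|\boldsymbol U_{2k}^\top\boldsymbol x\|^2\|\boldsymbol V_{2k}^\top\boldsymbol y\|^2$, which is the third term of $\Upsilon_{xy}$. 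The piece $\boldsymbol U_{\mathrm{up}}\boldsymbol U_{\mathrm{up}}^\top\boldsymbol E\boldsymbol y$ is a projection of a Gaussian vector onto $r$ directions, with block-$k$ weight controlled by $\|\boldsymbol U^{(k)}_{\mathrm{up}}\|^2\lesssim N_{1k}/(N\rho_N)$. It is analysed symmetrically, and after the linear map contributes $\sigma^2 r\|\boldsymbol U_{2k}^\top\boldsymbol x\|^2/\rho_N$-type terms. Symmetrically, (i) expands via $\hat{\boldsymbol U}_{\mathrm{left}}\boldsymbol O-\boldsymbol U\approx \boldsymbol E^{\mathrm p}_{\mathrm{left}}\boldsymbol W_{\mathrm{left}}(\boldsymbol W_{\mathrm{left}}^\top\boldsymbol W_{\mathrm{left}})^{-1}$. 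Tested against $\boldsymbol x$ and the signal $\mathcal M^{(b)}_k\boldsymbol y=\boldsymbol U_{1k}\mathcal C_k\boldsymbol V_{2k}^\top\boldsymbol y$, it gives Gaussian linear forms. Their variance is $\lesssim \sigma^2\|\boldsymbol V_{2k}^\top \boldsymbol y\|^2/\rho_T$ and, through the $\hat{\boldsymbol U}_{2k}$ part, $\|\boldsymbol U_{2k}^\top \boldsymbol x\|^2$-weighted terms. The $r+\zeta$ factors come from union bounds over $r$ directions and the high-probability events. The incoherence Assumption~\ref{assump:Incoherence} is used to compare mixed terms such as $\sigma^2\|\boldsymbol U_{2k}^\top\boldsymbol x\|^2 r/N$ with the displayed ones.

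\textbf{Main obstacle.} The hardest step is controlling the second-order remainders, both in the SVD perturbation expansions and in the product term (iii), tightly enough that they are bounded by $\Upsilon_{xy}$ rather than by cruder $\theta^2$-type norms. The plan is to bound them with Assumption~\ref{assump:smallNoise}, which gives a factor $\theta\le c_0$ multiplying a first-order term, and with Assumption~\ref{assump:sampleSize}, which guarantees $r+\zeta$ small relative to the block sizes. A leave-one-block-out or $\boldsymbol x,\boldsymbol y$-specific refinement may be needed where the remainders would otherwise lose the $\|\boldsymbol U_{2k}^\top\boldsymbol x\|$ weighting. Collecting (i)--(iii) on $\mathcal G$ and adding the $\mathcal G^c$ contribution gives $\mathbb E[(\hat\mu-\mu)^2]\le c_1\Upsilon_{xy}$.
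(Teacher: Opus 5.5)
\textbf{Verdict: genuine gap.} Your skeleton matches the paper's. The three-way split is exactly its decomposition:
\begin{itemize}
\item the left-operator error $x^\top L\,\mathcal M^{(b)}_{\bullet,\bullet,k}y$, with $L:=\hat U_{2k}\hat H_k^{-1}\hat U_{1k}^\top-U_{2k}(U_{1k}^\top U_{1k})^{-1}U_{1k}^\top$;
\item the $b$-block denoising error mapped through $U_{2k}(U_{1k}^\top U_{1k})^{-1}U_{1k}^\top$;
\item their product.
\end{itemize}
Your leading Gaussian terms are its $Z^{(2)}_{xy},Z^{(3)}_{xy},Z^{(4)}_{xy}$, with $Z^{(1)}_{xy}$ inside the product term, and the good/bad-event split is the same. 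The gap is that the step you defer as the ``main obstacle'' is the whole substance of the proof. The tool you propose for it, operator-norm (Wedin-type) perturbation bounds combined with $\theta\le c_0$, does not suffice.

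\emph{Your first step already fails.} Wedin gives $\|\hat U_{1k}O-U_{1k}\|_{\mathrm{op}}\le\|\hat U_{\mathrm{left}}O-U\|_{\mathrm{op}}\lesssim\sigma\gamma_{\min}^{-1}(\sqrt{N/\rho_T}+\sqrt T)$. To conclude $\lambda_{\min}(\hat H_k)\ge c_\ell N_{1k}/(2N)$ you need this to be $\ll\sqrt{N_{1k}/N}$, i.e.\ additionally $\sigma^2N^2\ll\gamma_{\min}^2\rho_TN_{1k}$. The $\sqrt{NT/N_{1k}}$ entry of $\theta$ only covers the $\sqrt T$ part. For example, take $K=1$, $T=N$, $N_{11}=T_{11}=\sqrt N$ and $r\asymp\log N$. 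The theorem's assumptions then allow $\sigma^2/\gamma_{\min}^2\asymp N^{-3/2}$, and $\sigma^2N^2/(\gamma_{\min}^2\rho_TN_{1k})\asymp\sqrt N$.

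\emph{The remainders fail the same way.} Let $\mathcal I_k=\{N_{1k}+1,\dots,N\}$ and
$\Psi_{U,2k}:=\hat U_{2k}H_U-U_{2k}-(E^{\mathrm p}_{\mathrm{left}})_{\mathcal I_k,\bullet}W_{\mathrm{left}}(W_{\mathrm{left}}^\top W_{\mathrm{left}})^{-1}$.
Its off-subspace part has operator norm of order $\sigma^2\sqrt{NT_{1,\mathrm p}}/(\gamma_{\min}^2\rho_T)$, driven by $U_\perp^\top E^{\mathrm p}_{\mathrm{left}}(E^{\mathrm p}_{\mathrm{left}})^\top U$. So $x^\top\Psi_{U,2k}\mathcal C_{\bullet,\bullet,k}V_{2k}^\top y$ is dominated by the matching term $\sigma\sqrt{(r+\zeta_T)/\rho_T}\,\|V_{2k}^\top y\|_2$ of $\Upsilon_{xy}^{1/2}$ only if $\sigma\sqrt{NT}\lesssim\gamma_{\min}\sqrt{r+\zeta_T}$. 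In the example above this fails by a factor $N^{1/4}/\sqrt{\log N}$. The leave-one-block-out fallback you mention is, as the paper notes, effective only under SNR conditions that deteriorate with $K$, so it would not give the theorem as stated either.

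\textbf{The missing idea} is direction-specific control of the left subspace error, via the Haar--Stiefel compression of Lemma~\ref{lemma13_centered}.
\begin{itemize}
\item Work with the centered matrix $\hat S=Y^{\mathrm p}_{\mathrm{left}}(Y^{\mathrm p}_{\mathrm{left}})^\top-\sigma^2T_{1,\mathrm p}I_N$, which has the same top eigenvectors.
\item Split the noise into the independent pieces $G_1=U^\top E^{\mathrm p}_{\mathrm{left}}$ and $G_2=U_\perp^\top E^{\mathrm p}_{\mathrm{left}}$, with $[U\;U_\perp]$ orthogonal.
\item The off-subspace component $S=U_\perp^\top\hat U_{\mathrm{left}}H_U$ lies in $\operatorname{col}(G_2)$. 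Writing $G_2=Q\Sigma_2V_2^\top$ gives $S=QR$, where $R$ is measurable in $(G_1,\Sigma_2,V_2)$ and $Q$ is Haar on the Stiefel manifold and independent of $(G_1,\Sigma_2,V_2)$.
\item Hence, for any fixed $\Pi$ with $p$ orthonormal columns, $\|\Pi^\top U_\perp S\|_{\mathrm{op}}\lesssim\sqrt{(p+r+\zeta_T)/N}\,\|S\|_{\mathrm{op}}$. This gives $\|\Pi^\top(\hat U_{\mathrm{left}}H_U-U)\|_{\mathrm{op}}\lesssim\sigma\gamma_{\min}^{-1}\sqrt{(p+r+\zeta_T)/\rho_T}+\sigma^2N\gamma_{\min}^{-2}\rho_T^{-1}\|\Pi^\top U\|_{\mathrm{op}}$.
\end{itemize}
Taking $\Pi$ to select the first $N_{1k}$ rows yields $\|\hat U_{1k}H_U-U_{1k}\|_{\mathrm{op}}\lesssim\theta\sqrt{N_{1k}/N}$, so the clipping is inactive. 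Applying the same compression to $S-G_2W_{\mathrm{left}}(W_{\mathrm{left}}^\top W_{\mathrm{left}})^{-1}$ along the zero-extension $\bar x$ of $x$ bounds $\|\Psi_{U,2k}^\top x\|_2$ by $\theta$ times a first-order term, plus first-order terms weighted by $\|U_{2k}^\top x\|_2$ (Corollary~\ref{cor:centered-left-perturbation}). Three ingredients together make every remainder $\lesssim\Upsilon_{xy}^{1/2}$: these bounds, their transposed analogues for $Y^{\mathrm p}_{\mathrm{up}}$, and a split of the second-order part of the product term along $\operatorname{col}(U_{1k})$ and its orthogonal complement, where the population part of $L$ vanishes.

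\textbf{A separate, smaller error} is in your bad-event estimate, which does not produce the terms in \eqref{eq:negligibilityForUB4Block}. The bound $|\hat\mu|\le\tau^{-1}\|Y^{\mathrm p}_{\mathrm{up}}\|_{\mathrm{op}}$ yields $\tau^{-2}$ and pooled dimensions. Instead use:
\begin{itemize}
\item $\|\hat H^{\mathrm{inv}}_{k,\tau}\hat U_{1k}^\top\|_{\mathrm{op}}^2=\max_i\lambda_i/(\lambda_i\vee\tau)^2\le\tau^{-1}$;
\item $\|X_y\|_2\le\|Y^{(k)}_{\mathrm{up}}\|_{\mathrm{op}}$, which holds since $\hat U^{(k)}_{\mathrm{up}}\hat\Sigma_{\mathrm{up}}=Y^{(k)}_{\mathrm{up}}\hat V_{\mathrm{up}}$;
\item the term $\mu^2\,\mathbb P(\mathcal G^{c})$, handled by $|\mu|\le\gamma_{\max}$ and $\tau\le c_\ell N_{1k}/(2N)$.
\end{itemize}
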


We note that the term $\sigma^2 \frac{N}{N_{1k}}\|\boldsymbol U_{2k}^\top \boldsymbol x\|_2^2\|\boldsymbol V_{2k}^\top \boldsymbol y\|_2^2$ is asymmetric because Algorithm~\ref{alg:bilinear4block} uses vertical regression to predict the missing $d$-block from the observed $b$-block; see Section~\ref{sec:methodology4Block}. However, applying the same construction to the transposed tensor gives~$\frac{T}{T_{1k}}$ in place of $\frac{N}{N_{1k}}$, so choosing the better orientation yields the symmetric quantity $\sigma^2\min\left(\frac{N}{N_{1k}},\frac{T}{T_{1k}}\right)\|\boldsymbol U_{2k}^\top \boldsymbol x\|_2^2\|\boldsymbol V_{2k}^\top \boldsymbol y\|_2^2$. 

The interpretation of this result is illustrated in Fig.~\ref{fig:phaseTransition}, and discussed immediately~after. A useful comparison is with~\citet{yan2024entrywise}, who study entrywise inference for causal panel data under staggered adoption in the special case $K=1$, $
\boldsymbol x=\boldsymbol e_i$, and $\boldsymbol y=\boldsymbol e_t$. Both methods build on the spectral approach of \citet{BaiNg2021MatrixCompletionCounterfactuals}, but differ in their inferential target and estimation procedure. \citet[][Algorithm~1]{yan2024entrywise} estimate missing entries of $\boldsymbol M_d$ by completing the full missing $d$-block, whereas we directly estimate a general bilinear form. More generally, however, our setting also allows $K\geq 1$ and extends this direct regress-then-denoise approach to tensor data with four-block missingness.

The theoretical comparison is also transparent in the matrix entrywise case. In this case, the upper bound in Theorem~\ref{thm:4BlockUBLinear} matches their Equation~4.7 up to constants, apart from the term proportional to $ \|\boldsymbol U_{2k}^\top \boldsymbol e_i\|_2^2 \|\boldsymbol V_{2k}^\top \boldsymbol e_t\|_2^2$. Since $\|\boldsymbol V_{2k}^\top \boldsymbol e_t\|_2^2 \leq 1$, this term is lower order and can be absorbed into the first one. In our tensor setting, however, it is important to keep this term explicit, as it captures the error component that does not decrease under pooling across layers and thereby characterizes the phase transition in the rate as $K$ grows.

At the proof level, both the leave-one-block-out method of \citet{yan2024entrywise}~and ours expand $\hat \mu_{\boldsymbol x \boldsymbol y}^{(k)}-\mu_{\boldsymbol x \boldsymbol y}^{(k)}$ into Gaussian terms and remainders. The key difference is how the remainders are controlled. Their approach could be adapted to our tensor setting, but yields negligible remainders only under signal-to-noise (SNR) conditions that worsen with~$K$. The proof of Theorem~\ref{thm:4BlockUBLinear} instead applies the Haar compression bounds from Section~\ref{appendix:tecnicalLemmas} of~the Supplement to a centered version of $\boldsymbol Y_\mathrm{left}^\mathrm{p}(\boldsymbol Y_\mathrm{left}^\mathrm{p})^\top$, following the opening paragraph of the proof of Lemma~\ref{lemma13_centered}, and obtains the same type of expansion under weaker assumptions. Moreover, beyond allowing general unit vectors,
Lemma~\ref{lemma13_centered} improves on \citet[][Equation~B.4]{yan2024entrywise} by replacing the second-order term $\frac{\sigma^2(N+T_{1,\mathrm{p}})}{\gamma_{\min}^{2}\rho_T}\|\boldsymbol U^\top \boldsymbol x\|_2$ with $\frac{\sigma^2 N}{\gamma_{\min}^{2}\rho_T}\|\boldsymbol U^\top \boldsymbol x\|_2$, while leaving the first-order term unchanged. This is an independent
contribution of interest, and ensures that the SNR requirement improves with~$K$.

\subsection{Discussion of the Assumptions}\label{sec:assumptions}
We now discuss the four assumptions in Theorem~\ref{thm:4BlockUBLinear}. Assumption~\eqref{assump:subblock-conditioning} requires $\boldsymbol U_{1j}^\top\boldsymbol U_{1j}$ and $\boldsymbol V_{1j}^\top\boldsymbol V_{1j}$ to be uniformly well-conditioned across $j\in[K]$, with eigenvalues proportional to $\frac{N_{1j}}{N}$ and $\frac{T_{1j}}{T}$. Propositions~\ref{prop:hard1}--\ref{prop:hard2} in the Supplement show that $c_\ell>0$ is necessary for \eqref{eq:mu4block} to be identifiable. This condition also implies $r\leq\min(N_{1j},T_{1j})$ for all $j\in[K]$, and hence $r\leq\min(N,T,N_{1,\mathrm{p}},T_{1,\mathrm{p}})$, which is the minimal dimensional requirement for the rank-$r$ SVDs in Algorithm~\ref{alg:bilinear4block}. Assumption~\eqref{assump:sampleSize} consists of mild dimension-regularity conditions, which are introduced to simplify the statement of the final result and make it more transparent, while~\eqref{assump:Incoherence} is a standard incoherence condition adapted to the directions $\boldsymbol x$ and $\boldsymbol y$ of interest.

Finally, \eqref{assump:smallNoise} is a SNR condition analogous to that in \cite{agterberg2026statistically} for estimating~$\boldsymbol U$ in a shared-subspace model with complete observations under the loss $\|\sin\Theta(\widehat{\boldsymbol U},\boldsymbol U)\|_F^2$. By analogy, their Theorem~1 suggests that estimating~\eqref{eq:mu4block} would be information-theoretically impossible without \eqref{assump:smallNoise}. Furthermore, compared to \citet[][Assumption 4.3]{yan2024entrywise}, \eqref{assump:smallNoise} becomes progressively less stringent as the number of layers~$K$ increases, reflecting the benefit of pooling information across layers. This improvement continues until the requirement saturates at a local term which cannot be further reduced by pooling, thereby highlighting that a sufficiently strong slice-specific signal is still needed to learn~$\mathcal C_{\bullet, \bullet, k}$.

\subsection{Local minimax lower bound}\label{sec:MinimaxLowerBounds}
We next complement the upper bound in Theorem~\ref{thm:4BlockUBLinear} by establishing a local minimax lower bound in neighborhoods of fixed tensors $\mathcal M_0$ that satisfy suitable conditions. For $\mathcal M\in\mathcal F(c_\ell, c_u)$, we write $\mu_{\boldsymbol x \boldsymbol y}^{(k)}(\mathcal M):=\boldsymbol x^\top \mathcal M_{\bullet,\bullet,k}^{(d)} \, \boldsymbol y$ and $Z_\Omega:=\{\,\mathcal M_{itj}+\mathcal E_{itj}:\Omega_{i,t,j}=~1,\ (i,t,j)\in[N]\times [T] \times [K]\,\}$ for the observed entries, and use $\mathbb P_\mathcal M$ and $\mathbb E_\mathcal M$ for probability and expectation under the law of~$Z_\Omega$. We also suppose for simplicity that $N_{1j}=N_1$ and $T_{1j}=T_1$ for all
$j\in[K]$. Under this assumption, $\boldsymbol U_{0,1j_1}=\boldsymbol U_{0,1j_2}$ and $\boldsymbol U_{0,2j_1}=\boldsymbol U_{0,2j_2}$ for all $j_1,j_2\in[K]$. We denote these common matrices by $\boldsymbol U_{0,1}$ and~$\boldsymbol U_{0,2}$, respectively, and adopt the analogous convention for $\boldsymbol V_{0,1}$ and $\boldsymbol V_{0,2}$.
\begin{thm}\label{thm:CombinedLB}
Fix $k\in[K]$ and unit vectors $\boldsymbol x \in \mathcal S^{N_{2k}-1}, \boldsymbol y \in \mathcal S^{T_{2k}-1}$, and set $N_{1j}=N_1$ and $T_{1j}=T_1$ for all
$j\in[K]$. Let $\mathcal M_0=\mathcal C_0\times_1 \boldsymbol U_0\times_2 \boldsymbol V_0\times_3 \boldsymbol I_K \in \mathcal F(c_\ell, c_u)$, and suppose that~\eqref{assump:subblock-conditioning}
holds with margin $0< \delta_{\mathrm{A1}} < (c_u - c_\ell)/2$, in the sense that
\begin{align*} &\left(c_\ell+\delta_{\mathrm{A1}}\right)\frac{N_1}{N}\boldsymbol I_r
    \preceq
    \boldsymbol U_{0,1}^{\top}\boldsymbol U_{0,1}
    \preceq
    \left(c_u-\delta_{\mathrm{A1}}\right)\frac{N_1}{N}\boldsymbol I_r, \\ &\left(c_\ell+\delta_{\mathrm{A1}}\right)\frac{T_1}{T}\boldsymbol I_r
    \preceq
    \boldsymbol V_{0,1}^{\top}\boldsymbol V_{0,1}
    \preceq
    \left(c_u-\delta_{\mathrm{A1}}\right)\frac{T_1}{T}\boldsymbol I_r.
\end{align*}
Assume that the singular values of the $k$-th core matrix are separated from the boundary of the
admissible interval, in the sense that $\delta_{\gamma}:=
    \min\{\sigma_{\min}((\mathcal C_{0})_{\bullet, \bullet, k})-\gamma_{\min},
    \gamma_{\max}-\sigma_{\max}((\mathcal C_{0})_{\bullet, \bullet, k})\}>0$. Also suppose that $c_u \max(\frac{N_1}{N}, \frac{T_1}{T}) \leq 1 - {\delta_\mathrm{sep}}$ with ${\delta_\mathrm{sep}} > 0$, and that ${\delta_\mathrm{sep}}^{-1} \max( \| \boldsymbol U_{0,2}^{\top}\, \boldsymbol x\|_2^2,  \| \boldsymbol V_{0,2}^{\top}\, \boldsymbol y\|_2^2) \leq 1/2$.
For $\varsigma > 0$ define $\mathcal{F}_\mathrm{loc}(\mathcal{M}_0, \varsigma) := \{\mathcal M\in\mathcal F(c_\ell,c_u):
\|\mathcal M-\mathcal M_0\|_F\le \varsigma\}$.
 There exists a constant $c \equiv c(c_u, \gamma_\mathrm{min}, \gamma_\mathrm{max})>0$ such~that
\begin{align*}
    \inf_{\phi}
    \sup_{\mathcal{F}_\mathrm{loc}(\mathcal{M}_0, \varsigma)}
    &\mathbb E_{\mathcal M}
    \left[
    \left\{\phi(Z_\Omega)-\mu_{\boldsymbol x \boldsymbol y}^{(k)}(\mathcal M)\right\}^2
    \right]  \\
    & \geq c \, \min\left( \frac{\sigma^{2} N}{K N_1}, \, \varepsilon_V^{2}\right) \, \|\boldsymbol U_{0,2}^{\top}\, x\|_2^2 + c \, \min\left( \frac{\sigma^{2} T}{K T_1}, \, \varepsilon_U^{2}\right) \, \| \boldsymbol V_{0,2}^{\top}\, \boldsymbol y\|_2^2 \\
    &  \qquad + c\,\min\left\{ \sigma^2 \min\left(\frac{N}{N_{1k}},
    \frac{T}{T_{1k}} \right), \varsigma^2, \delta_{\gamma}^2\right\} \,  \|\boldsymbol U_{0, 2k}^\top \, \boldsymbol x\|_2^{2} \, \|\boldsymbol V_{0, 2k}^\top \, \boldsymbol y\|_2^{2},
\end{align*}
where  $\varepsilon_U
:=
\min(8^{-1/2},\,
\sqrt{T_1/T},\,
\varsigma \gamma_{\max}^{-1}  K^{-1/2},\,
\sqrt{\delta_{\mathrm{A1}}/c_\ell})$, $\varepsilon_V
:=
\min(
8^{-1/2},
\sqrt{N_1/N},
\varsigma \gamma_{\max}^{-1}  K^{-1/2},$ $
\sqrt{\delta_{\mathrm{A1}}/c_\ell})$, and the infimum is over all Borel-measurable functions $\phi$ of $Z_\Omega$.
\end{thm}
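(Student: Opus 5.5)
The lower bound is a sum of three nonnegative terms, and a supremum dominates each of them, so it suffices to prove each term separately, up to the constant $c$, and then average. For each term I would use a two-point (Le Cam) argument. I construct $\mathcal M_0$ and a perturbation $\mathcal M_1\in\mathcal F_{\mathrm{loc}}(\mathcal M_0,\varsigma)$ such that two things hold. First, the KL divergence between the Gaussian laws of $Z_\Omega$ is $\|P_\Omega(\mathcal M_1-\mathcal M_0)\|_F^2/(2\sigma^2)\le 1$. Second, the separation $|\mu^{(k)}_{\boldsymbol x\boldsymbol y}(\mathcal M_1)-\mu^{(k)}_{\boldsymbol x\boldsymbol y}(\mathcal M_0)|$ is of the order of the square root of the target term. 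The standard bound $\inf_\phi\max_{i}\mathbb E_i(\phi-\mu_i)^2\ge \tfrac{1}{4}(\mu_1-\mu_0)^2 e^{-\mathrm{KL}}/2$ then gives each term.

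\emph{Core term.} Perturb only the $k$-th core: $\mathcal C_{1,\bullet,\bullet,k}=(\mathcal C_0)_{\bullet,\bullet,k}+h\,\boldsymbol a\boldsymbol b^\top$, with unit vectors $\boldsymbol a\propto \boldsymbol U_{0,2}^\top\boldsymbol x$ and $\boldsymbol b\propto \boldsymbol V_{0,2}^\top\boldsymbol y$.
\begin{itemize}
\item Membership. $\boldsymbol U,\boldsymbol V$ are unchanged, so Assumption~\ref{assump:subblock-conditioning} still holds. For $h\le\delta_\gamma$, Weyl's inequality keeps the singular values in $[\gamma_{\min},\gamma_{\max}]$. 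The Frobenius change is $h\le\varsigma$.
\item Separation. The change in the functional is $h\|\boldsymbol U_{0,2}^\top\boldsymbol x\|_2\|\boldsymbol V_{0,2}^\top\boldsymbol y\|_2$.
\item KL. The observed part of the change is the $(a,b,c)$ blocks of $h\,\boldsymbol U_0\boldsymbol a\boldsymbol b^\top\boldsymbol V_0^\top$. Its squared norm is $h^2(1-\|\boldsymbol U_{0,2}\boldsymbol a\|^2\|\boldsymbol V_{0,2}\boldsymbol b\|^2)$, which is $\le h^2$. This is too crude, because it gives only $\sigma^2$ and not $\sigma^2\min(N/N_1,T/T_1)$.
\end{itemize}
To gain the factor I instead choose the direction to minimise the observed energy. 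Suppose $N/N_1\le T/T_1$, so the target rate is $\sigma^2 N/N_1$. Replace $\boldsymbol a$ by $\boldsymbol a'=(\boldsymbol U_{0,1}^\top\boldsymbol U_{0,1})^{-1/2}$-type direction, chosen so that $\|\boldsymbol U_{0,1}\boldsymbol a'\|^2\asymp (N_1/N)\|\boldsymbol a'\|^2$ while $\boldsymbol x^\top\boldsymbol U_{0,2}\boldsymbol a'$ stays of order $\|\boldsymbol U_{0,2}^\top\boldsymbol x\|_2\|\boldsymbol a'\|$. The observed energy is then
\[
\|\boldsymbol U_{0,1}\boldsymbol a'\|^2\|\boldsymbol V_0\boldsymbol b\|^2+\|\boldsymbol U_{0,2}\boldsymbol a'\|^2\|\boldsymbol V_{0,1}\boldsymbol b\|^2 \lesssim h^2 (N_1/N) + h^2\|\boldsymbol V_{0,1}\boldsymbol b\|^2 .
\]
I pick $\boldsymbol b$ in the span where $\|\boldsymbol V_{0,1}\boldsymbol b\|^2\lesssim T_1/T$, using the separation hypothesis $c_u\max(N_1/N,T_1/T)\le 1-\delta_{\mathrm{sep}}$. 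I also need $\boldsymbol b^\top\boldsymbol V_{0,2}^\top\boldsymbol y$ to stay proportional to $\|\boldsymbol V_{0,2}^\top\boldsymbol y\|$, which is where $\delta_{\mathrm{sep}}^{-1}\|\boldsymbol V_{0,2}^\top\boldsymbol y\|^2\le 1/2$ enters. Taking $h^2\asymp\min\{\sigma^2 N/N_1,\varsigma^2,\delta_\gamma^2\}$ then gives the third term; the transposed construction handles the case $T/T_1<N/N_1$.

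\emph{Subspace terms.} For the first term I perturb $\boldsymbol U_0$ in a direction that changes $\boldsymbol U_{0,2}$ along $\boldsymbol x$. Concretely, set $\boldsymbol U_1=\boldsymbol U_0\cos\epsilon+\boldsymbol w\boldsymbol e^\top\sin\epsilon$ (made orthonormal), where $\boldsymbol w\perp\operatorname{col}(\boldsymbol U_0)$ is a unit vector supported mostly on the bottom $N_2$ rows and aligned with $\boldsymbol x$. All $K$ cores are kept the same.
\begin{itemize}
\item Observed change. It lives in the $c$-blocks and a small part of the $a,b$-blocks, of size $\epsilon\,\|\boldsymbol e^\top\mathcal C_{0,j}\boldsymbol V_{0,1}^\top\|\lesssim \epsilon\gamma_{\max}\sqrt{T_1/T}$ per layer. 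This gives KL $\lesssim K\epsilon^2\gamma_{\max}^2 T_1/(T\sigma^2)$.
\item Functional change. The functional changes by roughly $\epsilon\,\|\boldsymbol U_{0,2}^\top\boldsymbol x\|\cdot\|\mathcal C_{0,k}\boldsymbol V_{0,2}^\top\boldsymbol y\|$. This is where I expect the paper's pairing to be: the $\boldsymbol U$-perturbation produces the $T$-factor term multiplying $\|\boldsymbol V_{0,2}^\top\boldsymbol y\|^2$, and vice versa, matching $\varepsilon_U$ paired with the $\boldsymbol V$ term.
\item Constraints on $\epsilon$. Besides the KL budget, $\epsilon$ is capped by $\varepsilon_U$, namely by $\varsigma\gamma_{\max}^{-1}K^{-1/2}$ (Frobenius budget over $K$ layers), by $\sqrt{\delta_{\mathrm{A1}}/c_\ell}$ (preserving Assumption~\ref{assump:subblock-conditioning} with margin), and by $8^{-1/2}$ and $\sqrt{T_1/T}$ (geometric validity).
\end{itemize}
With these choices the first two terms follow.

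\emph{Main obstacle.} I expect the hardest step to be the core-term construction that attains the improved factor $\min(N/N_1,T/T_1)$ rather than $1$. It requires choosing $\boldsymbol a,\boldsymbol b$ so that the perturbation is nearly invisible on the observed blocks yet still moves the functional. This must be done using only the margin hypotheses on $\boldsymbol U_{0,2}^\top\boldsymbol x$ and $\boldsymbol V_{0,2}^\top\boldsymbol y$, and without breaking the singular-value box. I would first try the generalized-eigenvector choice described above, and check the constants using $\delta_{\mathrm{sep}}$.
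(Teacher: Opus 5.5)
Your architecture is right and close to the paper's. You use one-parameter submodels that perturb only $(\mathcal C_0)_{\bullet,\bullet,k}$ for the local term. For the two global terms you rotate one shared factor inside the Stiefel manifold, along a direction supported on the missing indices. The three bounds combine because a supremum dominates their average, and Le Cam's two-point bound with the Gaussian KL is a legitimate substitute for the paper's van Trees inequality.

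However, you have misplaced both the difficulty and the role of the $\delta_{\mathrm{sep}}$ hypotheses. The core term has no obstacle. Your first choice $\boldsymbol a\propto\boldsymbol U_{0,2}^\top\boldsymbol x$, $\boldsymbol b\propto\boldsymbol V_{0,2}^\top\boldsymbol y$ is exactly the paper's $G_k\propto F_k$. With it, your own observed-energy formula equals $h^2(\alpha+\beta-\alpha\beta)$, where $\alpha=\|\boldsymbol U_{0,1}\boldsymbol a\|_2^2\le c_uN_1/N$ and $\beta=\|\boldsymbol V_{0,1}\boldsymbol b\|_2^2\le c_uT_1/T$. These bounds come from the upper half of \eqref{assump:subblock-conditioning}, which holds for \emph{every} unit vector. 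Hence the KL is at most $c_uh^2(N_1/N+T_1/T)/(2\sigma^2)$ and the factor $\min(N/N_1,T/T_1)$ is immediate. Your ``generalized-eigenvector'' directions work only because every direction does. $\delta_{\mathrm{sep}}$ plays no role there: with $\boldsymbol b\propto\boldsymbol V_{0,2}^\top\boldsymbol y$, the proportionality you worry about is an identity.

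The genuine gap is in the subspace terms, where your separation is miscomputed. Take the $\boldsymbol U_0$-rotation; it yields the \emph{second} displayed term, with $T/(KT_1)$ and $\varepsilon_U$, while the first comes from rotating $\boldsymbol V_0$, so your opening sentence has the pairing backwards.
\begin{itemize}
\item To keep the KL at $K\epsilon^2\gamma_{\max}^2T_1/(T\sigma^2)$, $\boldsymbol w=(\boldsymbol 0;\boldsymbol w_2)$ must be supported on the missing rows. Otherwise its top-row part is observed on all $T$ columns of all $K$ layers.
\item Together with $\boldsymbol w\perp\operatorname{col}(\boldsymbol U_0)$, this forces $\boldsymbol U_{0,2}^\top\boldsymbol w_2=\boldsymbol 0$.
\item The first-order change of the functional is then $\epsilon\,(\boldsymbol x^\top\boldsymbol w_2)\,\boldsymbol e^\top(\mathcal C_0)_{\bullet,\bullet,k}\boldsymbol V_{0,2}^\top\boldsymbol y$. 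It is not $\epsilon\,\|\boldsymbol U_{0,2}^\top\boldsymbol x\|_2\,\|(\mathcal C_0)_{\bullet,\bullet,k}\boldsymbol V_{0,2}^\top\boldsymbol y\|_2$.
\item Your expression would produce a term with the extra factor $\|\boldsymbol U_{0,2}^\top\boldsymbol x\|_2^2$, of order $r/N$ under incoherence, which is not the claimed bound.
\end{itemize}

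The fix is as follows.
\begin{itemize}
\item Since $\boldsymbol x^\top\boldsymbol w_2\le\omega_U:=\|P^{\perp}_{\boldsymbol U_{0,2}}\boldsymbol x\|_2$, take $\boldsymbol w_2=P^{\perp}_{\boldsymbol U_{0,2}}\boldsymbol x/\omega_U$ and $\boldsymbol e\propto(\mathcal C_0)_{\bullet,\bullet,k}\boldsymbol V_{0,2}^\top\boldsymbol y$. The separation is then $\gtrsim\epsilon\,\omega_U\gamma_{\min}\|\boldsymbol V_{0,2}^\top\boldsymbol y\|_2$.
\item This is exactly where the $\delta_{\mathrm{sep}}$ hypotheses are needed. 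We have $\|\boldsymbol U_{0,2}^\top\boldsymbol x\|_2^2\ge\sigma_{\min}^2(\boldsymbol U_{0,2})(1-\omega_U^2)\ge\delta_{\mathrm{sep}}(1-\omega_U^2)$, and $\delta_{\mathrm{sep}}^{-1}\|\boldsymbol U_{0,2}^\top\boldsymbol x\|_2^2\le 1/2$ then forces $\omega_U^2\ge 1/2$. Without them $\omega_U$ can vanish and the term is lost.
\item You also need the cap $\epsilon\le\omega_U/2$. This, not ``geometric validity,'' is the origin of $8^{-1/2}=2^{-1/2}/2$ in $\varepsilon_U$. It ensures that the second-order re-orthonormalization term $\boldsymbol x^\top\boldsymbol U_{0,2}\{(\boldsymbol I_r+\epsilon^2\boldsymbol e\boldsymbol e^\top)^{-1/2}-\boldsymbol I_r\}(\mathcal C_0)_{\bullet,\bullet,k}\boldsymbol V_{0,2}^\top\boldsymbol y$ cannot cancel the linear one.
\end{itemize}
The $\boldsymbol V_0$-rotation is symmetric.
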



This result shows the necessity of the elbow behavior in the rate as a function of~$K$. In particular, when $\sigma^2$ is sufficiently small, the right-hand side of the preceding display matches the upper bound in Theorem~\ref{thm:4BlockUBLinear} up to constants, rank factors, and logarithmic factors. Moreover, although the results above are local and stated around a fixed $\mathcal M_0$, they also imply global minimax lower bounds by taking $\varsigma$ sufficiently large so that it contains the entire parameter~space.

Finally, we observe that the assumption  ${\delta_\mathrm{sep}}^{-1} \max( \| \boldsymbol U_{0,2}^{\top}\, \boldsymbol x\|_2^2,  \| \boldsymbol V_{0,2}^{\top}\, \boldsymbol y\|_2^2) \leq 1/2$ is mild in the regime relevant for comparison with Theorem~\ref{thm:4BlockUBLinear}. Indeed, under~\eqref{assump:Incoherence} we have $\|\boldsymbol U_{0,2}^{\top}\boldsymbol x\|_2^2\asymp \frac{r}{N}$ and $\|\boldsymbol V_{0,2}^{\top}\boldsymbol y\|_2^2\asymp \frac{r}{T}$, hence, when $r$ and ${\delta_\mathrm{sep}}$ are of constant order, the condition holds for sufficiently large $N$ and $T$.

\section{Bilinear form estimation under staggered adoption}\label{sec:bilinearStaggered}

\subsection{Proposed methodology for staggered missingness}\label{sec:methodStaggered}
We now extend the four-block setting to general staggered-adoption designs. Our methodology reduces the staggered missingness problem to simpler four-block missingness patterns by constructing pooled upper and left matrices in the same spirit as before. The main additional challenge is to accommodate the more complex missingness structure induced by staggered adoption. In particular, for a fixed layer $j \in [K]$, staggered adoption means that missingness is irreversible, i.e.~for each unit $i\in[N]$, there is an adoption time $A_{ij}$ such that $\Omega_{i,t,j}=\mathbbm 1\{t<A_{ij}\}$. For completeness, we set $A_{ij} = \infty$ for never-adopters. 

It is useful to note that each layer has its own natural row ordering under which the corresponding missingness mask is a  staircase; these orderings need not agree across layers. Since our target will be a bilinear form in layer $k$, we use the adoption-time ordering of the target layer as the common row ordering for all slices. In other words, we permute the rows so that $A_{1k}\ge \cdots \ge A_{Nk}$. This entails no loss of generality, since applying a common row permutation to all layers preserves the Tucker2 structure. Under this convention, the mask $\Omega_{\bullet, \bullet, k}$ admits an equivalent staircase characterization: there exists an integer $o_k \geq 2$ and ordered non-empty contiguous partitions $[N]=R_{1k}\cup\cdots\cup R_{o_k,k}$ and $[T]=C_{1k}\cup\cdots\cup C_{o_k,k}$, with $|R_{ak}|=N_{ak}$ and $|C_{bk}|=T_{bk}$, such that $\Omega_{i,t,k}=\mathbbm 1\{(i,t)\in R_{ak}\times C_{bk}\text{ for some }a,b\text{ with }a+b\le o_k+1\}$. This block representation will be useful in what follows, as it allows us to describe the observed and missing regions of the target layer in terms of the staircase partitions $\{R_{ak}\}_{a=1}^{o_k}$ and $\{C_{bk}\}_{b=1}^{o_k}$. 

As is apparent from the staircase representation above, the assumption we make on~$\Omega$ is that the missingness pattern for the target slice $\Omega_{\bullet,\bullet,k}$ contains fully observed rows, corresponding to never-adopting units, as well as an initial time period during which no unit in that slice has adopted. This is the basic requirement that allows us to reuse the methodology developed for the four-block design. An example of a staggered-adoption pattern covered by our framework is illustrated in Fig.~\ref{fig:Staggered}. 
 
Having specified the structure of the missingness masks, we now introduce the signal and noise model. As in the previous sections, we assume that the signal tensor $\mathcal M$ admits a Tucker2 decomposition of rank $(r,r,K)$, so that $\mathcal M=\mathcal C\times_1 \boldsymbol U\times_2 \boldsymbol V\times_3 \boldsymbol I_K$, where $\boldsymbol U\in\mathbb R^{N\times r}$ and $\boldsymbol V\in\mathbb R^{T\times r}$ have orthonormal columns. The noise tensor $\mathcal E$ has independent Gaussian entries with mean zero and variance $\sigma^2$, and for each $j\in[K]$ we observe $\mathcal Y_{\bullet,\bullet,j}=P_{\Omega_{\bullet,\bullet,j}}(\mathcal M_{\bullet,\bullet,j}+\mathcal E_{\bullet,\bullet,j})$. Our goal here is to estimate bilinear forms over all missing entries in layer $k\in[K]$. For simplicity, we first focus on a specific missing block, since this is the key step needed for the general case. In this regard, choose indices $(a,b)$ such that $a+b>o_k+1$, so that the block $R_{ak}\times C_{bk}$ is unobserved. For unit vectors $\boldsymbol x\in\mathcal S^{N_{ak}-1}$ and $\boldsymbol y\in\mathcal S^{T_{bk}-1}$, our target estimand is the bilinear form
\begin{align}\label{eq:muStaggered}
   \mu_{\boldsymbol x \boldsymbol y}^{(k,a,b)} 
:=
\boldsymbol x^\top \mathcal M_{\bullet,\bullet,k}^{(a,b)} \, \boldsymbol y, 
\end{align}
where $\mathcal M_{\bullet,\bullet,k}^{(a,b)}:=\boldsymbol U_{ak} \, \mathcal C_{\bullet,\bullet,k}\boldsymbol V_{bk}^{\top}\in\mathbb R^{N_{ak}\times T_{bk}}$, with $\boldsymbol U_{ak}:=\boldsymbol U_{R_{ak},\bullet}\in\mathbb R^{N_{ak}\times r}, \, \boldsymbol V_{bk}:=\boldsymbol V_{C_{bk},\bullet}\in\mathbb R^{T_{bk}\times r}$ denoting the restrictions of the Tucker2 factors to specified row and column blocks.

\begin{figure}[htbp]
\vspace{-10pt}
\centering
\begin{tikzpicture}[
    scale=0.8,
    transform shape,
    x=0.72cm,
    y=0.72cm,
    grid/.style={black!70, line width=0.35pt},
    heavy/.style={black, line width=0.9pt},
    stair/.style={red, line width=1.2pt},
    brace/.style={decorate, decoration={brace, amplitude=5pt}},
    lbl/.style={font=\small},
    matlbl/.style={font=\large},
    blueblock/.style={fill=blue!25},
    pinkblock/.style={fill=pink!45},
    greenblock/.style={fill=green!25},
    grayblock/.style={fill=gray!25},
    whiteblock/.style={fill=white},
    orangebox/.style={orange, dotted, line width=1.8pt},
]

\def\n{6}
\def\SoneTop{6}
\def\SoneBot{1}


\begin{scope}[shift={(0,0)}]

\fill[blueblock]  (0,3) rectangle (2,6);

\fill[pinkblock]  (2,4) rectangle (4,6);
\fill[pinkblock]  (3,3) rectangle (4,4);

\fill[greenblock] (0,1) rectangle (2,3);
\fill[grayblock]  (2,1) rectangle (4,3);

\fill[pinkblock] (2,3) rectangle (3,4);

\draw[grid] (0,0) rectangle (\n,\n);
\foreach \x in {1,...,5} {
    \draw[grid] (\x,0) -- (\x,\n);
}
\foreach \y in {1,...,5} {
    \draw[grid] (0,\y) -- (\n,\y);
}

\draw[orangebox] (0,1) rectangle (4,6);

\draw[stair]
    (1,0) -- (1,1)
    -- (2,1) -- (2,2)
    -- (3,2) -- (3,3)
    -- (4,3) -- (4,4)
    -- (5,4) -- (5,5)
    -- (6,5);

\node[lbl] at (1.5,0.5) {NA};

\node[lbl] at (2.5,0.5) {NA};
\node[lbl] at (2.5,1.5) {NA};

\node[lbl] at (3.5,0.5) {NA};
\node[lbl] at (3.5,1.5) {?};
\node[lbl] at (3.5,2.5) {NA};

\node[lbl] at (4.5,0.5) {NA};
\node[lbl] at (4.5,1.5) {NA};
\node[lbl] at (4.5,2.5) {NA};
\node[lbl] at (4.5,3.5) {NA};

\node[lbl] at (5.5,0.5) {NA};
\node[lbl] at (5.5,1.5) {NA};
\node[lbl] at (5.5,2.5) {NA};
\node[lbl] at (5.5,3.5) {NA};
\node[lbl] at (5.5,4.5) {NA};

\draw[brace, decoration={brace, amplitude=4pt}]
    (-0.15,3) -- (-0.15,6)
    node[midway,left=4pt,lbl] {$S^+_{1,5,4}$};

\draw[brace, decoration={brace, amplitude=4pt}]
    (-0.15,1) -- (-0.15,3)
    node[midway,left=4pt,lbl] {$S^-_{1,5,4}$};

\draw[brace, decoration={brace, amplitude=5pt}]
    (-1.75,\SoneBot) -- (-1.75,\SoneTop)
    node[midway,left=5pt,lbl] {$S_{1,5}$};

\draw[brace] (0,\n+1.2) -- (4,\n+1.2)
    node[midway,above=5pt,lbl] {$Q_{1,4}$};

\draw[brace, decoration={brace, amplitude=4pt}]
    (0,\n+0.15) -- (2,\n+0.15)
    node[midway,above=4pt,lbl] {$Q^+_{1,5,4}$};

\draw[brace, decoration={brace, amplitude=4pt}]
    (2,\n+0.15) -- (4,\n+0.15)
    node[midway,above=4pt,lbl] {$Q^-_{1,5,4}$};

\node[matlbl] at (3,-0.9) {$\mathcal Y_{\bullet, \bullet, 1}$};

\end{scope}


\begin{scope}[shift={(8.2,0)}]

\fill[blueblock] (0,4) rectangle (2,6);

\fill[pinkblock] (2,4) rectangle (4,6);

\fill[greenblock] (0,1) rectangle (2,4);

\fill[blueblock] (0,1) rectangle (2,2);

\fill[pinkblock] (2,1) rectangle (4,2);
\fill[grayblock]  (2,2) rectangle (4,4);

\draw[grid] (0,0) rectangle (\n,\n);
\foreach \x in {1,...,5} {
    \draw[grid] (\x,0) -- (\x,\n);
}
\foreach \y in {1,...,5} {
    \draw[grid] (0,\y) -- (\n,\y);
}

\draw[orangebox] (0,1) rectangle (4,6);

\draw[stair]
    (1,0) -- (1,1)
    -- (5,1) -- (5,2)
    -- (3,2) -- (3,3)
    -- (2,3) -- (2,4)
    -- (4,4) -- (4,5)
    -- (6,5);

\node[lbl] at (1.5,0.5) {NA};

\node[lbl] at (2.5,0.5) {NA};
\node[lbl] at (2.5,3.5) {NA};

\node[lbl] at (3.5,0.5) {NA};
\node[lbl] at (3.5,2.5) {NA};
\node[lbl] at (3.5,3.5) {NA};

\node[lbl] at (4.5,0.5) {NA};
\node[lbl] at (4.5,2.5) {NA};
\node[lbl] at (4.5,3.5) {NA};
\node[lbl] at (4.5,4.5) {NA};

\node[lbl] at (5.5,0.5) {NA};
\node[lbl] at (5.5,1.5) {NA};
\node[lbl] at (5.5,2.5) {NA};
\node[lbl] at (5.5,3.5) {NA};
\node[lbl] at (5.5,4.5) {NA};

\draw[brace, decoration={brace, amplitude=5pt}]
    (-0.35,4) -- (-0.35,6)
    node[left=13pt,lbl,rotate=90] {$\mathsf{RowAnc}_1(2)$};

\draw[brace, decoration={brace, amplitude=5pt}]
    (-0.35,1) -- (-0.35,2)
    node[left=13pt,lbl,rotate=90] {$\mathsf{RowAnc}_1(2)$};

\draw[brace] (0,\n+0.35) -- (2,\n+0.35)
    node[midway,above=5pt,lbl] {$\mathsf{ColAnc}_1(2)$};

\node[matlbl] at (3,-0.9) {$\mathcal Y_{\bullet, \bullet, 2}$};

\end{scope}


\begin{scope}[shift={(16.4,0)}]

\fill[greenblock] (0,1) rectangle (1,5);

\fill[blueblock] (0,5) rectangle (1,6);

\fill[pinkblock] (1,5) rectangle (4,6);

\fill[grayblock]  (1,3) rectangle (4,5);
\fill[grayblock]  (1,1) rectangle (4,2);

\fill[blueblock] (0,2) rectangle (1,3);

\fill[pinkblock] (1,2) rectangle (4,3);

\draw[grid] (0,0) rectangle (\n,\n);
\foreach \x in {1,...,5} {
    \draw[grid] (\x,0) -- (\x,\n);
}
\foreach \y in {1,...,5} {
    \draw[grid] (0,\y) -- (\n,\y);
}

\draw[orangebox] (0,1) rectangle (4,6);

\draw[stair]
    (4, 0) -- (4,1) 
    -- (2,1) -- (2, 2)
    -- (5,2) -- (5,3) --
    (3,3) -- (3,4) -- (1,4)
    -- (1,5) -- (6,5);

\node[lbl] at (4.5,0.5) {NA};
\node[lbl] at (5.5,0.5) {NA};

\node[lbl] at (2.5,1.5) {NA};
\node[lbl] at (3.5,1.5) {NA};
\node[lbl] at (4.5,1.5) {NA};
\node[lbl] at (5.5,1.5) {NA};

\node[lbl] at (5.5,2.5) {NA};

\node[lbl] at (3.5,3.5) {NA};
\node[lbl] at (4.5,3.5) {NA};
\node[lbl] at (5.5,3.5) {NA};

\node[lbl] at (1.5,4.5) {NA};
\node[lbl] at (2.5,4.5) {NA};
\node[lbl] at (3.5,4.5) {NA};
\node[lbl] at (4.5,4.5) {NA};
\node[lbl] at (5.5,4.5) {NA};

\draw[brace, decoration={brace, amplitude=5pt}]
    (-0.35,5) -- (-0.35,6)
    node[left=13pt,lbl,rotate=90] {$\mathsf{RowAnc}_1(3)$};

\draw[brace, decoration={brace, amplitude=5pt}]
    (-0.35,2) -- (-0.35,3)
    node[left=13pt,lbl,rotate=90] {$\mathsf{RowAnc}_1(3)$};

\draw[brace] (0,\n+0.35) -- (1,\n+0.35)
    node[midway,above=5pt,lbl] {$\mathsf{ColAnc}_1(3)$};

\node[matlbl] at (3,-0.9) {$\mathcal Y_{\bullet, \bullet,3}$};

\end{scope}

\end{tikzpicture}
\captionsetup{
  font={scriptsize,stretch=0.8},
  skip=10pt
}
\caption{Layer-specific staggered adoption for $K=3$, target layer $k=1$, and target block $(a,b)=(5,4)$. Panel 1 orders the target layer by its own rows, yielding a staircase missingness pattern; the orange dotted rectangle identifies the target rows $S_{1,5}$ and columns $Q_{1,4}$ used to form auxiliary matrices. Panels 2--3 show non-target layers under the same target-layer ordering, with general staggered missingness. These layers provide fully observed anchor rows $\mathsf{RowAnc}_1(j)$ and anchor columns $\mathsf{ColAnc}_1(j)$: blue/pink blocks form the upper pooled matrix, blue/green blocks form the left pooled matrix, and gray blocks are discarded.}
\label{fig:Staggered}
\end{figure}
\vspace{-15pt}

\noindent The following definitions are needed to present our algorithm. We set
\[
S_{k, a,b}^+:=\bigcup_{a^\prime=1}^{o_k+1-b}R_{a^\prime k},\,
S_{k, a,b}^-:=\bigcup_{a^\prime=o_k+2-b}^{a}R_{a^\prime k},\,
Q_{k, a,b}^+:=\bigcup_{b^\prime =1}^{o_k+1-a}C_{b^\prime k},\,
Q_{k, a,b}^-:=\bigcup_{b^\prime =o_k+2-a}^{b}C_{b^\prime k}.
\]
Since $a+b>o_k+1$, the lower limits
$o_k+2-b$ and $o_k+2-a$ are at most $a$ and $b$, respectively, so the sets $S^-_{k,a,b}$ and $Q^-_{k,a,b}$ are non-empty and contain $R_{ak}$ and $C_{bk}$. We also write $S_{k, a}:=S_{k, a,b}^+\cup S_{k, a,b}^-$, $Q_{k, b}:=Q_{k,a,b}^+\cup Q_{k, a,b}^-$, and observe that $S_{k,a}$ and $Q_{k,a,b}^+$ depend only on $a$, whereas $Q_{k,b}$ and $S_{k,a,b}^+$ depend only on $b$.

The four index sets in the display define a four-block structure in which the target missing block $(a,b)$ is contained in $S^-_{k,a,b}\times Q^-_{k,a,b}$. Fig.~\ref{fig:Staggered} illustrates this construction, with the target block marked by a question mark. Some entries of $S^-_{k,a,b}\times Q^-_{k,a,b}$ may be observed under the original staggered pattern, but we discard them to obtain a literal four-block construction. Furthermore, to leverage the shared subspaces $\boldsymbol U$ and $\boldsymbol V$ in the Tucker2 model, we define anchor sets that identify auxiliary fully observed rows and columns. For $j \neq k$, we write $  \mathsf{ColAnc}_k(j)\!:=
\!\{t\in Q_{k,b}:\Omega_{i,t, j} = 1\text{ for all } i\in S_{k, a}\}$ and $\mathsf{RowAnc}_k(j):=\{i\in S_{k,a}:\Omega_{i,t, j}=1\text{ for all }t\in Q_{k, b}\}$. These correspond to periods in~$Q_{k,b}$ that are observed for all units in $S_{k,a}$, and to units in $S_{k,a}$ that are observed throughout all periods in $Q_{k,b}$, respectively. For completeness, we also set $\mathsf{RowAnc}_k(k) := S_{k,a, b}^+$ and $\mathsf{ColAnc}_k(k) := Q_{k,a,b}^+$. These sets identify the auxiliary submatrices used to construct the upper and left pooled matrices, as outlined in the following algorithm.

\begin{algorithm}[!ht]
\captionsetup{font=scriptsize,skip=2pt} \caption{\textsc{BilinearTensorStaggered} for the estimation of 
$\mu_{\boldsymbol x \boldsymbol y}^{(k,a,b)}=\boldsymbol x^\top \mathcal M_{\bullet,\bullet,k}^{(a,b)} \,\boldsymbol y$ 
for a fixed missing block $(a,b)$ in slice~$k$ of a tensor with staggered adoption missingness}\label{alg:staggered-wrapper}
\begingroup \footnotesize \linespread{0.92}\selectfont \algrenewcommand{\algorithmicindent}{1em} \setlength{\abovedisplayskip}{2pt} \setlength{\belowdisplayskip}{2pt} \setlength{\abovedisplayshortskip}{1pt} \setlength{\belowdisplayshortskip}{1pt}
\begin{algorithmic}[1]
\Require index $k\in[K]$, missing block $(a,b)$ with $a+b>o_k+1$, rank $r$, unit vectors $\boldsymbol x\in\mathcal S^{N_{ak}-1}$ and $\boldsymbol y\in\mathcal S^{T_{bk}-1}$, data $\mathcal Y$, parameter $\tau > 0$.
\State Permute rows so that $\Omega_{\bullet, \bullet, k}$ is in staircase form.
\State Compute $S_{k, a, b}^+, S_{k, a}$, $Q_{k, a, b}^+, Q_{k, b}$, and $\mathsf{RowAnc}_k(j), \mathsf{ColAnc}_k(j)$ for all $j \in [K]$.
\State Form pooled left matrix $\boldsymbol Y_{\mathrm{left}}^{\mathrm{p}}\gets (\mathcal Y_{S_{k,a},\mathsf{ColAnc}_k(1),1}\ \ \cdots\ \ \mathcal Y_{S_{k,a},\mathsf{ColAnc}_k(K),K})\in\mathbb R^{|S_{k,a}|\times \sum_{j=1}^K |\mathsf{ColAnc}_k(j)|}$.
\State Compute rank-$r$ truncated singular value decomposition $(\hat{\boldsymbol U}_{\mathrm{left}},\hat{\boldsymbol \Sigma}_{\mathrm{left}},\hat{\boldsymbol V}_{\mathrm{left}})\gets \mathrm{SVD}_r(\boldsymbol Y_{\mathrm{left}}^{\mathrm{p}})$.
\State Set $\hat{\boldsymbol U}_{+k}\gets(\hat{\boldsymbol U}_{\mathrm{left}})_{S^+_{k,a,b},\bullet}$ and $\hat{\boldsymbol U}_{ak}\gets (\hat{\boldsymbol U}_{\mathrm{left}})_{R_{ak},\bullet}$.
\State Compute $\hat{\boldsymbol H}_k\gets\hat{\boldsymbol U}_{+k}^\top\hat{\boldsymbol U}_{+k}\in\mathbb R^{r\times r}$, take the eigendecomposition $\hat{\boldsymbol H}_k=\boldsymbol Q \,\operatorname{diag}(\lambda_1,\ldots,\lambda_r) \, \boldsymbol Q^\top$, and set
\[
\hat{\boldsymbol H}_{k,\tau}^\mathrm{inv}
\gets
\boldsymbol Q\operatorname{diag}\left(\left\{\frac{1}{\max[\lambda_i,\tau]}\right\}_{i=1}^r\right) \, \boldsymbol Q^\top .
\]
\State Compute $\hat{\boldsymbol \alpha}_x^{(k,a,b)}\gets\hat{\boldsymbol U}_{ak}^\top \, \boldsymbol x\in\mathbb R^r$.
\State Form pooled upper matrix $\boldsymbol Y_{\mathrm{up}}^{\mathrm{p}}\gets (\mathcal Y_{\mathsf{RowAnc}_k(1),Q_{k,b},1}\ ;\ \cdots\ ;\ \mathcal Y_{\mathsf{RowAnc}_k(K),Q_{k,b},K})\in~\mathbb R^{\sum_{j=1}^K |\mathsf{RowAnc}_k(j)|\times |Q_{k,b}|}$.
\State Compute rank-$r$ truncated singular value decomposition $(\hat{\boldsymbol U}_{\mathrm{up}},\hat{\boldsymbol \Sigma}_{\mathrm{up}},\hat{\boldsymbol V}_{\mathrm{up}})\gets \mathrm{SVD}_r(\boldsymbol Y_{\mathrm{up}}^{\mathrm{p}})$.
\State Let $s_k \gets \sum_{j=1}^{k-1}|\mathsf{RowAnc}_k(j)|$, and extract $\hat{\boldsymbol U}_{\mathrm{up}}^{(k)}\gets(\hat{\boldsymbol U}_{\mathrm{up}})_{\{s_k+1,\ldots,s_k+|S^+_{k,a,b}|\},\bullet}$, $\hat{\boldsymbol V}_{bk}\gets(\hat{\boldsymbol V}_{\mathrm{up}})_{C_{bk},\bullet}$.
\State Compute $\boldsymbol T_y\gets\hat{\boldsymbol V}_{{bk}}^\top \,\boldsymbol y\in\mathbb R^r$, $\boldsymbol W_y\gets\hat{\boldsymbol \Sigma}_{\mathrm{up}}\boldsymbol T_y\in\mathbb R^r$, and $\boldsymbol X_y\gets\hat{\boldsymbol U}_{\mathrm{up}}^{(k)}\boldsymbol W_y\in\mathbb R^{|S^+_{k,a,b}|}$.
\State Compute $\hat{\boldsymbol \beta}_y^{(k,a,b)}\gets \, \hat{\boldsymbol H}_{k,\tau}^\mathrm{inv} \, \hat{\boldsymbol U}_{+k}^\top \boldsymbol X_y\in\mathbb R^r$.
\State \Return $\hat\mu_{\boldsymbol x \boldsymbol y}^{(k,a,b)}\gets\langle\hat{\boldsymbol \alpha}_x^{(k,a,b)},\hat{\boldsymbol \beta}_y^{(k,a,b)}\rangle$.
\end{algorithmic}
\endgroup
\end{algorithm}
\vspace{-15pt}

Algorithm~\ref{alg:staggered-wrapper} extends Algorithm~\ref{alg:bilinear4block} to the staggered-adoption setting, and reduces to it when the missingness pattern has four-block form. For a fixed missing block $(a,b)$ in layer $k$, the algorithm restricts attention to $\mathcal Y_{S_{k,a}, Q_{k,b}, \bullet}$, and uses the observations lying in $\bigcup_{j=1}^K(S_{k,a}\times \mathsf{ColAnc}_k(j)\times\{j\})$ and $\bigcup_{j=1}^K(\mathsf{RowAnc}_k(j)\times Q_{k,b}\times\{j\})$ to construct an auxiliary four-block problem, discarding all remaining entries. The left pooled matrix is formed from the anchor-column blocks $\mathcal Y_{S_{k,a},\mathsf{ColAnc}_k(j),j}$, while the upper pooled matrix is formed from the anchor-row blocks $\mathcal Y_{\mathsf{RowAnc}_k(j),Q_{k,b},j}$. This construction is illustrated in Fig.~\ref{fig:Staggered}.

Our procedure generalizes~\citet[][Algorithm~2]{yan2024entrywise} to the tensor setting and targets the bilinear form directly, rather than reconstructing the entire missing block. Related denoising techniques, using anchor sets and combined with PCA, were employed in~\cite{liu2026representation} for a different statistical problem, where the goal is to recover the global left subspace from matrix data with blockwise missingness, with error measured with $\|\cdot\|_F$.

The auxiliary four-block construction also imposes a basic dimensional feasibility condition. In applications, the rank must satisfy $r\le \min(\sum_{j\in[K]} |\mathsf{RowAnc}_k(j)|, \, \sum_{j\in[K]} |\mathsf{ColAnc}_k(j)|, $ \, $ |S_{k,a}|,$ $|Q_{k,b}|)$, so that the two rank-$r$ truncated SVDs are well defined. This is only a minimal requirement for running the procedure. Even when this condition holds, additional assumptions are needed to guarantee that the resulting estimator is accurate; the theoretical analysis of Algorithm~\ref{alg:staggered-wrapper} is the object of the next section.

Finally, when aggregate quantities over multiple missing blocks of the same slice are required, such as those introduced in Section~\ref{sec:real-data-castle}, the most direct strategy is to apply Algorithm~\ref{alg:staggered-wrapper} separately to each missing block and then aggregate the resulting estimates. This blockwise implementation recomputes two rank-$r$ SVDs for every missing block, leading to the quadratic-cost procedure described in Algorithm~\ref{alg:qudraticStaggeredAggregate} in Appendix~\ref{app:simulEstimands}. We also propose a reduced-anchor variant that reuses computations by caching the pooled left SVD once for each active row block $a$, and the pooled upper SVD once for each active column block $b$. This yields the linear-SVD-cost procedure in Algorithm~\ref{alg:LinearStaggeredAggregate}, at the price of some loss in statistical efficiency. Appendix~\ref{app:simulEstimands} provides an extensive discussion of this computational--statistical tradeoff, and Fig.~\ref{fig:StaggeredSynthetic} in Section~\ref{sec:simulSynthetic} compares the two procedures in simulation.

\subsection{Theoretical analysis} \label{sec:analysisStaggered}
Algorithm~\ref{alg:staggered-wrapper} inherits the desirable properties of
Algorithm~\ref{alg:bilinear4block} under analogous assumptions, with
particular care needed in adapting Assumption~\eqref{assump:subblock-conditioning}
to the auxiliary four-block reduction. To state these assumptions and the resulting corollary, we suppress the dependence on $(k,a,b)$ for simplicity, and set $\mathfrak{n}:=|S|$ and $\mathfrak{t}:=|Q|$. For the target layer, define $\mathfrak{n}_{1k}:=|S^+|$ and $\mathfrak{t}_{1k}:=|Q^+|$. For each $j\neq k$, define $\mathfrak{n}_{1j}:=|\mathsf{RowAnc}_k(j)|$ and $\mathfrak{t}_{1j}:=|\mathsf{ColAnc}_k(j)|$, and set $\mathfrak{n}_{2j}:=\mathfrak{n}-\mathfrak{n}_{1j}$ and $\mathfrak{t}_{2j}:=\mathfrak{t}-\mathfrak{t}_{1j}$. Finally, let $\mathfrak{n}_{1,\mathrm{p}}:=\sum_{j=1}^K\mathfrak{n}_{1j}$, $\mathfrak{t}_{1,\mathrm{p}}:=\sum_{j=1}^K\mathfrak{t}_{1j}$, $\rho_{\mathfrak{n}}:=\mathfrak{n}_{1,\mathrm{p}}/\mathfrak{n}$, $\rho_{\mathfrak{t}}:=\mathfrak{t}_{1,\mathrm{p}}/\mathfrak{t}$, $p_{\mathfrak{n}}:=\max(\mathfrak{n}_{1,\mathrm{p}},\mathfrak{t})$, $p_{\mathfrak{t}}:=\max(\mathfrak{n},\mathfrak{t}_{1,\mathrm{p}})$, $\zeta_{\mathfrak{n}}:=\log(\mathfrak{n}_{1,\mathrm{p}}+\mathfrak{t})$, $\zeta_{\mathfrak{t}}:=\log(\mathfrak{n}+\mathfrak{t}_{1,\mathrm{p}})$, $\widetilde\gamma_{\min}:=c_\ell \, \gamma_{\min}\sqrt{\mathfrak{n}\mathfrak{t}/NT}$ and $\widetilde\gamma_{\max}:=c_u \,\gamma_{\max}\sqrt{\mathfrak{n}\mathfrak{t}/NT}$. We assume the following.
\begin{assump}\label{ass:subBlockAuxilliary}
There exist constants
$0<c_\ell\le c_u$ such that
\[
c_\ell \frac{\mathfrak{n}_{1k}}{N} \boldsymbol I_r
\preceq
\boldsymbol U_{S^+}^\top \boldsymbol U_{S^+}
\preceq
c_u \frac{\mathfrak{n}_{1k}}{N} \boldsymbol I_r,
\qquad
c_\ell \frac{\mathfrak{n}}{N} \boldsymbol I_r
\preceq
\boldsymbol U_S^\top \boldsymbol U_S
\preceq
c_u \frac{\mathfrak{n}}{N} \boldsymbol I_r,
\]
and, for every $j\neq k$,
\[
c_\ell \frac{\mathfrak{n}_{1j}}{N} \boldsymbol I_r
\preceq
\boldsymbol U_{\mathsf{RowAnc}_k(j)}^\top
\boldsymbol U_{\mathsf{RowAnc}_k(j)}
\preceq
c_u \frac{\mathfrak{n}_{1j}}{N}\boldsymbol I_r.
\]
We require that the column factors satisfy the analogous conditions with
$\boldsymbol V,Q,Q^+,\mathsf{ColAnc}_k(j), \mathfrak{t}_{1j}$ and $T$ in place of
$\boldsymbol U,S,S^+,\mathsf{RowAnc}_k(j), \mathfrak{n}_{1j}$ and $N$, respectively.
\end{assump}

\begin{assump}\label{ass:aux-extra}
We have $r+\max(\zeta_{\mathfrak{n}},\zeta_{\mathfrak{t}}) \leq c_{\mathrm{dim}} \, \min(\mathfrak{n}-r,\mathfrak{t}-r,\mathfrak{n}_{1k},\mathfrak{t}_{1k})$ and $c_{\mathrm{blk}} \min(\zeta_{\mathfrak{n}},\zeta_{\mathfrak{t}})\leq r$. Also, the noise level satisfies
\[
    \widetilde\theta
    :=
    \frac{\sigma}{\widetilde\gamma_{\min}}
    \max\left\{
        \sqrt{\mathfrak{n}},
        \sqrt{\mathfrak{t}},
        \sqrt{\frac{\mathfrak{n}}{\rho_{\mathfrak{t}}}},
        \sqrt{\frac{\mathfrak{n}\mathfrak{t}}{\mathfrak{n}_{1k}}}
    \right\}
    \leq c_0.
\]
Finally, we write $\widetilde{\boldsymbol U}:=\boldsymbol U_S(\boldsymbol U_S^{\top}\boldsymbol U_S)^{-1/2}$ and
$\widetilde{\boldsymbol V}:=\boldsymbol V_Q(\boldsymbol V_Q^{\top}\boldsymbol V_Q)^{-1/2}$, and assume that
$\widetilde\nu_x:=\sqrt{\mathfrak{n}/r}\,
\|\widetilde{\boldsymbol U}_{ R_{ak}}^{\top} \boldsymbol x\|_2$ and
$\widetilde\nu_y:=\sqrt{\mathfrak{t}/r}\,
\|\widetilde{\boldsymbol V}_{C_{bk}}^{\top} \boldsymbol y\|_2$ are of constant order.
\end{assump}

Assumption~\eqref{ass:aux-extra} is the adaptation of~\eqref{assump:sampleSize}, \eqref{assump:smallNoise} and~\eqref{assump:Incoherence} for staggered designs. Moreover, the first and third conditions in~\eqref{ass:subBlockAuxilliary} are the direct analogues of~\eqref{assump:subblock-conditioning}, and require the observed row and column blocks retained in the auxiliary four-block problem to contain all~$r$ latent directions in a well-conditioned way. The middle condition is the restricted analogue of the orthonormality condition $\boldsymbol U^\top \boldsymbol U=\boldsymbol V^\top \boldsymbol V=\boldsymbol I_r$, and requires that the Gram matrices associated to $\boldsymbol U_S$ and~$\boldsymbol V_Q$ remain full rank and well conditioned. 

\begin{cor}\label{corollary:StaggeredUB}
Consider a tensor $\mathcal{M}$ satisfying $\mathcal{M}=\mathcal{C}\times_1 \boldsymbol U\times_2 \boldsymbol V\times_3 \boldsymbol I_K$, where $\boldsymbol U\in\mathbb{R}^{N\times r}$ and $\boldsymbol V\in\mathbb{R}^{T\times r}$ have orthonormal columns, and the core tensor is such that $0 < \gamma_{\min} \leq \sigma_{\min}(\mathcal{C}_{\bullet, \bullet, j}) \leq \sigma_{\max}(\mathcal{C}_{\bullet, \bullet, j}) \leq \gamma_{\max} < \infty$ for all $j \in [K]$. Choose $k \in [K]$, indices $(a,b)$ such that $a+b>o_k+1$, and unit vectors $\boldsymbol x\in\mathcal{S}^{N_{ak}-1}$ and $\boldsymbol y\in\mathcal{S}^{T_{bk}-1}$. Let $\mu_{\boldsymbol x \boldsymbol y}^{(k, a, b)}$ be as in~\eqref{eq:muStaggered}, and define~$\hat{\mu}_{\boldsymbol x \boldsymbol y}^{(k, a, b)}$ to be the output of Algorithm~\ref{alg:staggered-wrapper} run with $0<\tau \leq \frac{c_\ell \mathfrak{n}_{1k}}{2 c_u  \mathfrak{n}}$. Fix also absolute constants $0 < c_\ell  \le c_u <\infty$,  and assume~\eqref{ass:subBlockAuxilliary}, \eqref{ass:aux-extra} with $\widetilde \nu_x \neq 0, \widetilde \nu_y \neq 0$. Let 
\[
    \widetilde{\Upsilon}_{xy}
    :=\;
    \frac{\sigma^2(r+\zeta_{\mathfrak{n}})}{\rho_{\mathfrak{n}}}
    \|\widetilde{\boldsymbol U}_{R_{ak}}^{\top} \boldsymbol x\|_2^2
    +
    \frac{\sigma^2(r+\zeta_{\mathfrak{t}})}{\rho_{\mathfrak{t}}}
    \|\widetilde{\boldsymbol V}_{C_{bk}}^{\top} \boldsymbol y\|_2^2 +
    \frac{\sigma^2 \mathfrak{n}}{\mathfrak{n}_{1k}}
    \|\widetilde{\boldsymbol U}_{R_{ak}}^{\top}\boldsymbol  x\|_2^2
    \|\widetilde{\boldsymbol V}_{C_{bk}}^{\top}\boldsymbol  y\|_2^2,
\]
and further assume that
\begin{align}\label{eq:negligibilityForUBStaggared}
\frac{\widetilde{\gamma}_{\max}^2}{\tau}
    \frac{\mathfrak{n}_{1k}}{\mathfrak{n}}
    \left(p_{\mathfrak{n}}^{-10}+p_{\mathfrak{t}}^{-10}\right)
    +
    \frac{\sigma^2}{\tau}
    (\mathfrak{n}_{1k}+\mathfrak{t})
    \left(p_{\mathfrak{n}}^{-5}+p_{\mathfrak{t}}^{-5}\right)
    \leq
    c_0\widetilde{\Upsilon}_{xy}.
\end{align}
There exists a constant
$c_1=
    c_1\left(
        c_\ell,c_u,c_0,c_{\mathrm{dim}},  c_{\mathrm{blk}},
        \kappa,
        \widetilde{\nu}_x,\widetilde{\nu}_y
    \right)<\infty$
such that $\mathbb{E}_\mathcal{M}
    [\{
            \widehat{\mu}_{xy}^{(k,a ,b)}
            -
            \mu_{xy}^{(k, a,b)}
    \}^2]
    \leq
    c_1 \, \widetilde{\Upsilon}_{xy}$.
\end{cor}
This result follows directly from  Theorem~\ref{thm:4BlockUBLinear}, with the original dimensions and block sizes replaced by their auxiliary counterparts. To see why, observe that $\boldsymbol U_S^\top \boldsymbol U_S$ and~$\boldsymbol V_Q^\top \boldsymbol V_Q$ are invertible under~\eqref{ass:subBlockAuxilliary}, hence $\widetilde{\boldsymbol U}:=\boldsymbol U_S(\boldsymbol U_S^\top \boldsymbol U_S)^{-1/2}$, $\widetilde{\boldsymbol V}:=\boldsymbol V_Q(\boldsymbol V_Q^\top \boldsymbol V_Q)^{-1/2}$, and $\widetilde{\mathcal{C}}_{\bullet,\bullet,j}:=(\boldsymbol U_S^\top \boldsymbol U_S)^{1/2} \mathcal C_{\bullet,\bullet,j}(\boldsymbol V_Q^\top \boldsymbol V_Q)^{1/2}$ give an orthonormal Tucker2 representation of the auxiliary signal $\mathcal M_{S,Q,j}=\widetilde{\boldsymbol U} \widetilde{\mathcal{C}}_{\bullet,\bullet,j}\widetilde{\boldsymbol V}^\top$. The auxiliary core tensor satisfies $0<\tilde \gamma_\mathrm{min} \leq \sigma_{\min}(\widetilde C_{\bullet,\bullet,j})
\le
\sigma_{\max}(\widetilde C_{\bullet,\bullet,j})
\le \tilde \gamma_\mathrm{max}$, while $\tilde {\boldsymbol U}$ and $\tilde{\boldsymbol V}$ satisfy~\eqref{assump:subblock-conditioning} with $c_\ell/c_u$ and $c_u/c_\ell$ in place of $c_\ell$ and $c_u$, respectively; see the proof of Corollary~\ref{corollary:StaggeredUB} for the precise details. This, together with~\eqref{ass:aux-extra}, implies that the pooled upper and left matrices obey the same conditions as in the four-block setting, with $N,T,N_{1k},T_{1k},\rho_N,\rho_T,p_N,p_T,\zeta_N,\zeta_T,\gamma_{\min},\gamma_{\max}, c_\ell, c_u$ replaced by $\mathfrak n,\mathfrak t,\mathfrak n_{1k},\mathfrak t_{1k},\rho_{\mathfrak n},\rho_{\mathfrak t},p_{\mathfrak n},p_{\mathfrak t},\zeta_{\mathfrak n},\zeta_{\mathfrak t},\widetilde\gamma_{\min},\widetilde\gamma_{\max}$, $c_\ell/c_u, c_u/c_\ell$.  This is precisely what is needed to apply Theorem~\ref{thm:4BlockUBLinear}, even though the auxiliary observation pattern need not consist of four contiguous blocks, and with these substitutions the stated result follows.

\section{Numerical results}\label{sec:simul}
\subsection{Synthetic data}\label{sec:simulSynthetic}

\if1\anon{
The code and datasets for reproducing the simulations are available on \href{https://github.com/abordino/LinearMultilayerStaggered}{GitHub}.
} \fi
In this subsection we empirically validate our theoretical claims on synthetic data. We verify that pooling improves performance for moderate values of $K$, while saturation occurs for large $K$, thereby confirming the phase transitions predicted by Theorem~\ref{thm:4BlockUBLinear}. We include robustness checks with respect to rank misspecification, SNR levels, vector inputs, and mask dimensions. For staggered adoption designs, we show that pooling across layers improves accuracy and demonstrate how to lower computational costs when computing an average counterfactual component over multiple~blocks.

In Fig.~\ref{fig:DecayWithK}(a), we fix $N=100, \, T=80, \, r=6$ and vary $K\in\{1,2,5,10,20,50,200\}$. The matrices $\boldsymbol U$ and~$\boldsymbol V$ are generated by drawing Gaussian random matrices and orthonormalizing their columns. For each slice $j \in [K]$, we generate
$\mathcal C_{\bullet, \bullet, j} = \boldsymbol O_{j} \operatorname{diag}(\sigma_1,\dots,\sigma_r) \, \tilde {\boldsymbol O_{j}}^\top$, where $\boldsymbol O_j, \tilde {\boldsymbol O}_j \in \mathbb R^{r \times r}$ are independent random orthonormal matrices, and the singular values are fixed at $(\sigma_1,\dots,\sigma_r) = (2,\,1.72,\,1.44,\,1.16,\,0.88,\,0.60)$. The observations are generated according to~\eqref{eq:Observation4block}, with noise variance $\sigma^2$ chosen so that $\mathrm{SNR}^{-1} := \sigma^2 N/\sigma_r^2$ and $\mathrm{SNR}=1$, and with block sizes $N_{1j}=70$ and $T_{1j}=60$ for all $j\in[K]$. We fix $k=1$ and consider unit vectors $\boldsymbol x \in \mathbb{R}^{N_{21}}$ and $\boldsymbol y \in \mathbb{R}^{T_{21}}$ independently drawn from standard Gaussian distributions and then normalized to have unit norm. We compare five procedures for estimating~\eqref{eq:mu4block}:
\begin{enumerate}
    \item \textsc{Estimated pooled}: Algorithm~\ref{alg:bilinear4block} with $\tau = 0.01$;
    \item  \textsc{Estimated no-pool}: Algorithm~\ref{alg:bilinear4block} with $\tau = 0.01$, applied to the target slice $\mathcal{Y}_{\bullet, \bullet, k}$;
    \item \textsc{Oracle pooled}: returns $
        \boldsymbol x^\top \boldsymbol U_{2k} \, \mathcal{C}_{\bullet, \bullet, k} \, (\boldsymbol W_{\rm up}^\top \boldsymbol W_{\rm up})^{-1}\boldsymbol W_{\rm up}^\top \, (\boldsymbol E_{\rm up}^{\rm p})_{\bullet,\{T_{1k}+1,\ldots,T\}} \, \boldsymbol  y +\boldsymbol x^\top  (\boldsymbol E_\mathrm{left}^\mathrm{p})_{\{N_{1k} +1, \ldots, N\}, \bullet} \,\, \boldsymbol W_\mathrm{left}(\boldsymbol W_\mathrm{left}^\top \boldsymbol W_\mathrm{left})^{-1}\mathcal C_{\bullet, \bullet, k} \, \boldsymbol V_{2k}^\top  \, \boldsymbol y +  \boldsymbol x^\top \mathcal M_{\bullet, \bullet, k}^{(d)}\boldsymbol y$;
    \item \textsc{Oracle no-pool}: layer-specific counterpart of \textsc{Oracle pooled}, which returns $\boldsymbol x^\top \boldsymbol U_{2k} ( \boldsymbol U_{1k}^\top  \boldsymbol U_{1k})^{-1} \, \boldsymbol U_{1k}^\top \mathcal E_{\bullet, \bullet, k}^{(b)} \, \boldsymbol y + \boldsymbol x^\top \mathcal E_{\bullet, \bullet, k}^{(c)} \boldsymbol V_{1k}  ( \boldsymbol V_{1k}^\top  \boldsymbol V_{1k})^{-1} \boldsymbol V_{2k}^\top \, \boldsymbol y+  \boldsymbol x^\top \mathcal M_{\bullet, \bullet, k}^{(d)}\boldsymbol y$; 
    \item \textsc{Oracle local}: $  \boldsymbol x^\top \boldsymbol U_{2k} (\boldsymbol U_{1k}^\top  \boldsymbol U_{1k})^{-1} \, \boldsymbol U_{1k}^\top (\boldsymbol E_{\rm up}^{\rm p})_{\{s_k + 1, \ldots, s_k + N_{1k}\}, \bullet}
            \,\boldsymbol V \, \boldsymbol V_{2k}^\top \,\boldsymbol  y + \boldsymbol x^\top \mathcal M_{\bullet, \bullet, k}^{(d)}\boldsymbol y$.
\end{enumerate}
The oracle quantities correspond to the Gaussian terms in the expansion of $\hat \mu_{\boldsymbol x \boldsymbol y}^{(k)} - \mu_{\boldsymbol x \boldsymbol y}^{(k)}$ given in Lemma~\ref{lemma1} in the Supplement, and capture the leading contribution to the stochastic error of our procedure. For each value of $K$ and for each of these estimators, we run $500$ replications and report the average squared error. The results show that the \textsc{no-pool} estimators do not decay with $K$, as expected. By contrast, the two \textsc{pooled} estimators exhibit an approximate $1/K$ decay and appear to approach the line corresponding to \textsc{Oracle local}. This line is much lower than the others because it is related to both $\|\boldsymbol U_{2k}^\top \, \boldsymbol x\|_2^2$ and $\|\boldsymbol V_{2k}^\top \, \boldsymbol y\|_2^2$, yielding an additional factor of roughly $r/T  = 6/80 = 0.075$. To better understand the relationship among these latter three methods, in Fig.~\ref{fig:DecayWithK}(b) we repeat the same simulation study for $K \in \{50, 150, 300, 500, 1000\}$. The results show that \textsc{Oracle pooled} continues to decay with $K$, whereas Algorithm~\ref{alg:bilinear4block} saturates at approximately the level of \textsc{Oracle local}. This is in accordance with Theorem~\ref{thm:4BlockUBLinear}. 

Fig.~\ref{fig:DecayWithK}(c) uses the same setup as Fig.~\ref{fig:DecayWithK}(a), with a rank-$6$ signal tensor, but draws the mask dimensions randomly over $N_{1j} \in \{30,\ldots,70\}$ and $T_{1j} \in \{30,\ldots,60\}$. The query vectors are chosen to be aligned with the leading eigenspaces of $\boldsymbol U_{2k}$ and $\boldsymbol V_{2k}$, thereby increasing $\|\boldsymbol U_{2k}^\top \boldsymbol x\|_2$ and $\|\boldsymbol V_{2k}^\top \boldsymbol y\|_2$ and deliberately violating the incoherence condition in~\eqref{assump:Incoherence}. In addition, all estimators are run with misspecified rank $r+5=11$. The results show that \textsc{Estimated pooled} still exhibits a similar phase transition as in Fig.~\ref{fig:DecayWithK}(a), despite the misspecification and incoherence violation. By contrast, \textsc{Estimated no-pool} is more sensitive to both sources of misspecification and has larger error than \textsc{Oracle no-pool}.

Finally, Fig.~\ref{fig:DecayWithK}(d) uses the same setup as Fig.~\ref{fig:DecayWithK}(b), but varies $\mathrm{SNR} \in \{1,10^{-2},10^{-4}\}$ to examine sensitivity under increasingly noisy regimes. The results indicate that the proposed methodology remains reasonably stable even at lower signal levels, with \textsc{Estimated pooled} appearing close to \textsc{Oracle local} across the considered SNR values. 

\begin{figure}[!ht]
    \centering
    \includegraphics[width=0.75\linewidth]{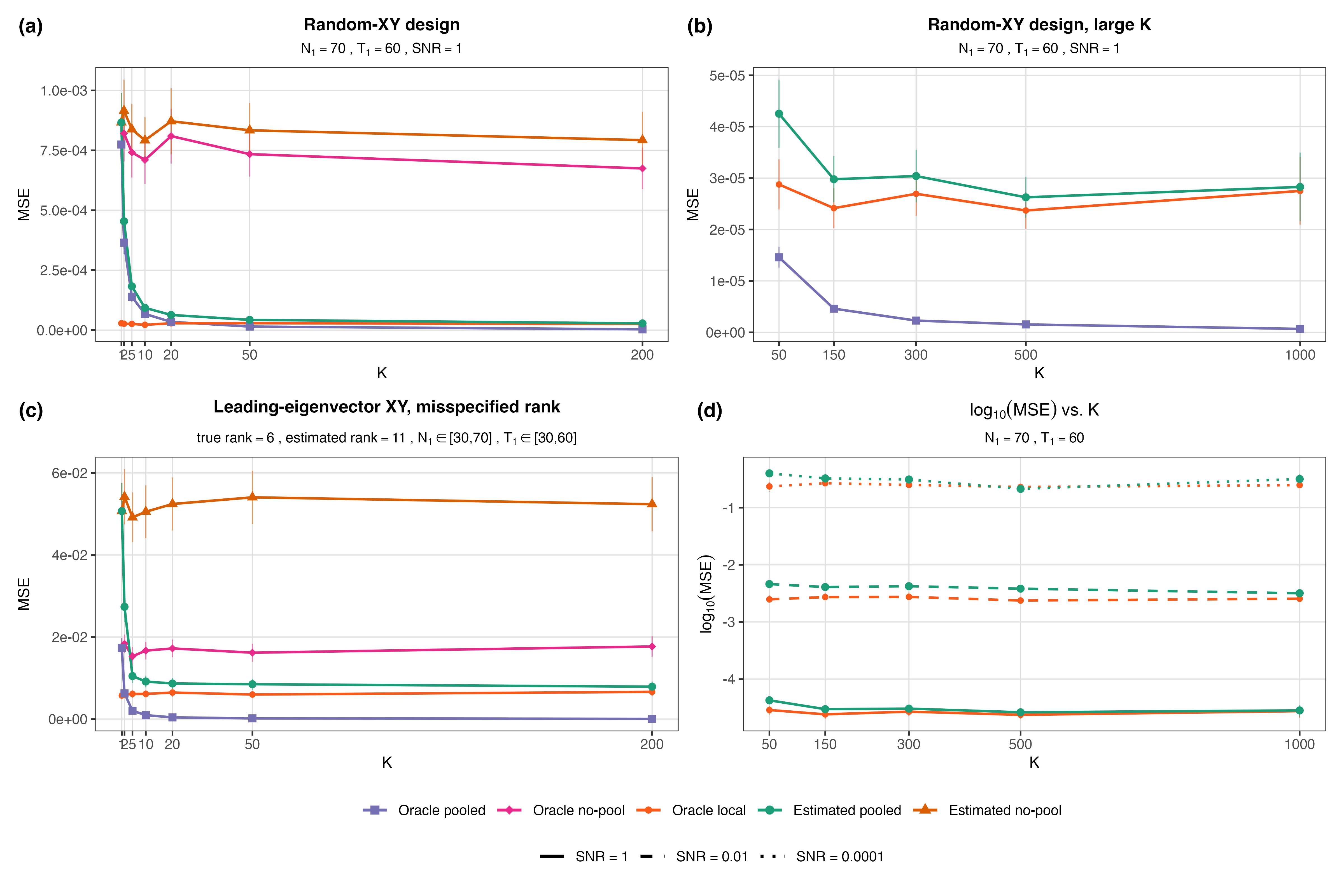}
   
    \captionsetup{
  font={scriptsize,stretch=0.8},
  skip=10pt
}
\caption{
(a) Mean squared error of the five estimators as a function of $K \in \{1,2,5,10,20,50,200\}$, with $N=100$, $T=80$, $r=6$, $\mathrm{SNR}=1$, and block sizes $N_{1j}=70$, $T_{1j}=60$ for all $j\in[K]$. Results are averaged over $500$ replications.
(b) Mean squared error of the pooled and local estimators for $K \in \{50,150,300,500,1000\}$ under the same simulation setting.  (c) Analogue of (a) with violation of incoherence, rank-misspecification and mask sizes $N_{1j} \in \{30,\ldots,70\}$ and $T_{1j} \in \{30,\ldots,60\}$. (d) Sensitivity analysis with different SNR values. Error bars show $\pm 1.96$ standard errors.}
    \label{fig:DecayWithK}
    \vspace{-15pt}
\end{figure}

We also empirically evaluate Algorithm~\ref{alg:staggered-wrapper} in a synthetic staggered-adoption design. We generate a rank-$5$ Tucker2 signal tensor with $N=150$, $T=200$, and $K=10$, where $\boldsymbol U\in\mathbb R^{N\times r}$ and $\boldsymbol V\in\mathbb R^{T\times r}$ are orthonormal and generated as before, and the entries of the core matrices $\mathcal C_{\bullet, \bullet, j}\in\mathbb R^{r\times r}$ are independent standard normal variables. We observe $\mathcal Y_{\bullet,\bullet,j}
    =
    P_{\Omega_{\bullet,\bullet,j}}
    (\mathcal M_{\bullet,\bullet,j}+\mathcal E_{\bullet,\bullet,j})$,
where the entries of $\mathcal E$ are independent $\mathcal N(0, 0.03^2)$. The missingness masks $\Omega_{\bullet, \bullet, j}$ are generated by an irreversible adoption process in which, for each unit $i$ and layer $j$, the adoption time $A_{ij}$ is sampled independently from a layer-specific grid of adoption times, with probability $0.20$ of never adopting. We then set $\Omega_{i,t,j}=\mathbbm 1\{t<A_{ij}\}$. We take $k=1$ as the target layer and use the adoption ordering in this slice to reorder the units across all layers. Under this ordering, $\Omega_{\bullet,\bullet,1}$ is a staircase and satisfies  $\Omega_{i,t,1}=\mathbbm 1\{(i,t)\in R_{a1}\times C_{b1}\text{ for some }a,b\text{ with }a+b\le 5\}$. We then target all six missing blocks and, for each of them, we generate $100$ independent query pairs $\boldsymbol x\in\mathcal S^{N_{a1}-1}$ and $\boldsymbol y\in\mathcal S^{T_{b1}-1}$ by drawing independent standard normal vectors and normalizing them to have unit norm. For each query we estimate~\eqref{eq:muStaggered} using both the pooled estimator outlined in Algorithm~\ref{alg:staggered-wrapper}, and its matrix-only counterpart, which runs the procedure on $\mathcal{Y}_{\bullet, \bullet, k}$ only and does not borrow information from the other layers. In particular, this algorithm uses $\mathcal Y_{S_{k,a}, Q_{k,a,b}^+,k}$ and $\mathcal Y_{S_{k,a,b}^+, Q_{k,b},k}$ in place of $\boldsymbol Y_\mathrm{left}^\mathrm{p}$ and $\boldsymbol Y_\mathrm{up}^\mathrm{p}$, respectively, and is essentially equivalent to a plug-in estimator based on the matrix completion estimator proposed in~\cite{yan2024entrywise}. Both procedures are run with $\tau \in \left\{10^{-0.5 + 0.025j} : j = 0, \ldots, 20\right\}$.

Fig.~\ref{fig:StaggeredSynthetic}(a) reports the mean absolute estimation error over the $100$ random bilinear queries for each selected block and each value of $\tau$. The pooled estimator has substantially lower estimation error than the matrix-only estimator. This is expected, since the pooled method exploits the shared row and column subspaces across layers, whereas the matrix-only method uses only the two anchor blocks available in $\mathcal{Y}_{\bullet, \bullet, k}$. In general, the fact that certain blocks have larger errors than others can be attributed to differences in the size of the missing blocks: smaller missing blocks may have a lower signal-to-noise ratio, which can in turn reduce estimation accuracy. Furthermore, we see that  both estimators are stable across values of~$\tau$, particularly when $\tau\ll 1$. This agrees with our theory: under Assumption~\eqref{assump:subblock-conditioning} with $c_\ell>0$, choosing~$\tau$ small enough ensures that clipping is inactive with high probability, while stabilizing the inverse on the complementary event. Thus, $\tau$ acts primarily as a safeguard rather than a tuning parameter, and careful tuning appears unnecessary. 

\begin{figure}[!ht]
    \centering
    \includegraphics[width=1\linewidth]{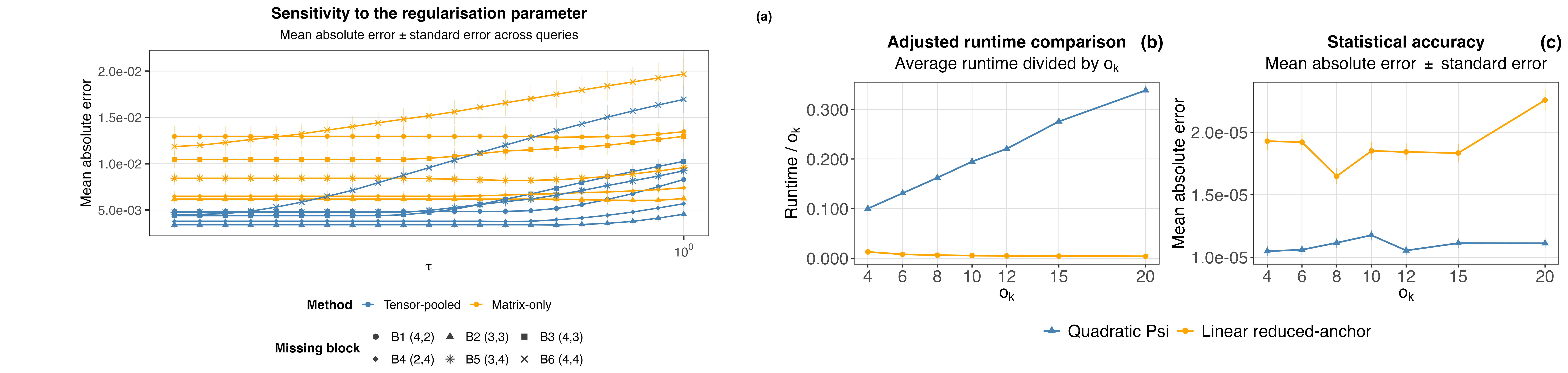}
    \captionsetup{
  font={scriptsize,stretch=0.8},
  skip=10pt
}
\caption{(a) Mean absolute estimation error over 100 random bilinear queries for each target block as a function of $\tau$ for Algorithm~\ref{alg:staggered-wrapper} and its matrix counterpart. Runtime (b) and accuracy (c) comparison for Algorithms~\ref{alg:qudraticStaggeredAggregate} and~\ref{alg:LinearStaggeredAggregate} when estimating the aggregate \textsc{Avg} functional in a synthetic staggered-adoption design. Here $o_k\in\{4,6,8,10,12,15,20\}$ denotes the number of row/time blocks induced by the staircase-adoption pattern in the target slice.}
    \label{fig:StaggeredSynthetic}
    \vspace{-15pt}
\end{figure}

Finally, we compare Algorithm~\ref{alg:qudraticStaggeredAggregate} and Algorithm~\ref{alg:LinearStaggeredAggregate} in terms of statistical efficiency and runtime when estimating the aggregate \textsc{Avg} functional defined as a weighted average of the bilinear forms in~\eqref{eq:muStaggered} across all missing blocks in the target slice; this quantity and the two procedures are presented  in detail in Appendix~\ref{app:simulEstimands}. We generate a rank-$5$ Tucker2 signal tensor with $N=150$, $T=200$, and $K=5$, using random orthonormal matrices $ \boldsymbol U\in\mathbb R^{N\times r}$ and $\boldsymbol V\in\mathbb R^{T\times r}$ and independent standard normal entries in each core matrix~$\mathcal C_{\bullet,\bullet,j}$. The error tensor $\mathcal E$ has independent $\mathcal N(0, 0.01^2)$ entries. The target layer is $k=1$ and has staircase missingness with $o_k\in\{4,6,8,10,12,15,20\}$; all non-target layers are fully observed. For each value of $o_k$, we run both procedures with $\tau=10^{-3}$ over $500$ replications and report the average runtime divided by~$o_k$ and the mean absolute deviation from the true value. 

The results in Figs.~\ref{fig:StaggeredSynthetic}(b,~c) are consistent with the discussion in Appendix~\ref{app:simulEstimands}. The runtime divided by~$o_k$ appears approximately linear for Algorithm~\ref{alg:qudraticStaggeredAggregate}, reflecting its quadratic dependence on the number of target blocks, while it is nearly constant for Algorithm~\ref{alg:LinearStaggeredAggregate}, reflecting the fact that the dominant SVD cost is linear in~$o_k$. Also, Algorithm~\ref{alg:qudraticStaggeredAggregate} has better statistical accuracy, as expected, since Algorithm~\ref{alg:LinearStaggeredAggregate} uses reduced anchor sets to improve runtime and therefore sacrifices some statistical efficiency.

\subsection{Real-data application: Castle Doctrine data}
\label{sec:real-data-castle}

In this and the next section, we consider real-data applications involving two signal tensors, $\mathcal M(0)$ and $\mathcal M(1)$, corresponding to the untreated and treated responses. Following the potential-outcomes framework \citep{rubin1974estimating}, the entries of the fully observed data tensor~$\cal Y$ satisfy
$\mathcal Y_{itj}
=
\Omega_{itj} \, \mathcal Y_{itj}(0)
+
(1-\Omega_{itj}) \,  \mathcal Y_{itj}(1)$, where $\mathcal Y(0)$ denotes the untreated potential outcome, which is observed on $\{(i,t,j) :\Omega_{itj}=1\}$ and missing on the complementary set, and $\mathcal Y(1)$ denotes the treated potential outcome, which is observed only over $\{(i,t,j) :\Omega_{itj}=0\}$. Here, we focus on estimating bilinear forms of $\mathcal M(0)$ and $\mathcal M(1)$ over the treated region. The latter problem is straightforward because $\mathcal Y(1)$ is observed on this region, so simple plug-in estimators can be used. By contrast, $\mathcal Y(0)$ is unobserved, hence bilinear functionals of $\mathcal M(0)$ require different approaches such as those introduced in the previous sections.

The empirical study uses the Castle Doctrine policy data from the \texttt{PolicyEval} repository, \href{https://github.com/guerramarcelino/PolicyEval/raw/main/Datasets/}{available on GitHub}, a standard staggered-adoption benchmark in the difference-in-differences literature \citep[e.g.][]{cheng2013castle}. After processing as described in Appendix~\ref{app:castle-processing}, we obtain tensors $\mathcal{Y}(0),\mathcal{Y}(1)\in\mathbb{R}^{50\times 11\times 4}$ for policy-off and policy-on potential outcomes. Here, $\mathcal{Y}(0)$ follows a staggered-adoption missingness pattern, while $\mathcal{Y}(1)$ is observed on the complementary policy-on region; see Fig.~\ref{fig:castle-crime-tensor-matrix}. Rows are U.S.~states, columns are years $2000$--$2010$, and slices are four logged crime rates. Importantly, because the tensor slices represent different outcomes rather than policy regimes, the staggered-adoption missingness pattern is shared across all layers of $\mathcal Y(0)$.

To assess the accuracy of our method, we conduct a masking experiment in which we artificially remove entries observed in the original data and use their known values to evaluate the resulting estimates. Starting from the original staggered observation pattern, we progressively introduce additional missingness in the target layer $k = 1$, corresponding to motor vehicle theft, while preserving the staggered structure by masking the last currently observed entry of selected rows. At each step, we estimate the average of the masked entries using the pooled estimator of Algorithm~\ref{alg:staggered-wrapper} and its matrix counterpart, as well as the nuclear-norm estimator of~\cite{choi2024matrix} and its debiased counterpart; we will refer to the latter two as CY. All rank-dependent procedures are run with $r\in\{1,2,3\}$, with $\tau=10^{-2}$ for the tensor and matrix methods. We repeat the procedure independently $10$ times, using a different masking path for each replicate, all converging to a terminal mask that retains $9$ fully observed rows and $4$ fully observed columns.

\begin{figure}[!ht]
    \centering
    \includegraphics[width=1\linewidth]{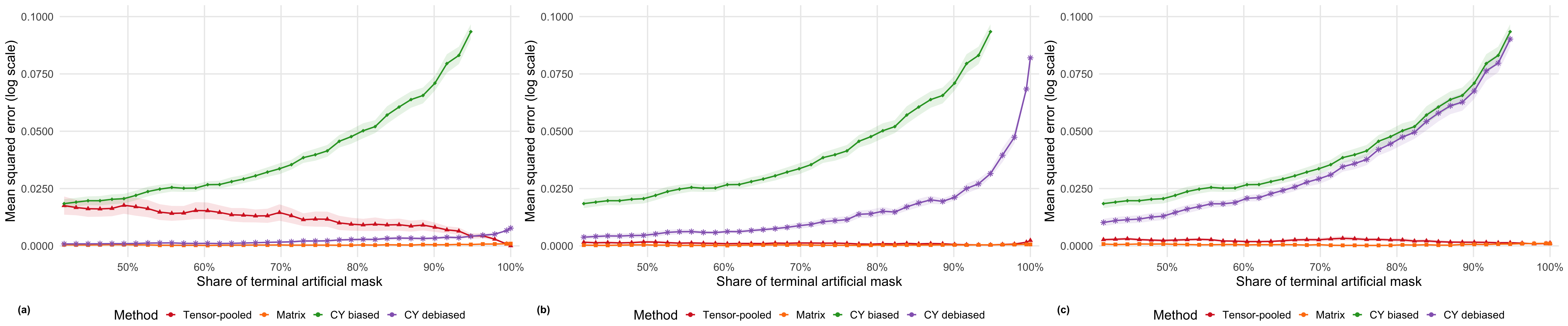}
    \captionsetup{
        font={scriptsize,stretch=0.8},
        skip=10pt
    }
    \caption{Castle masking experiment with additional missingness in the target layer $k=1$. The panels correspond to ranks $r=1,2,3$ used for the four methods, with $\tau=0.01$ for the tensor and matrix estimators. The terminal mask retains $9$ fully observed rows and $4$ fully observed columns. The curves report the average squared error over $10$ replicates, with one standard error. The percentage on the $x$-axis represents the percentage of the terminal mask reached.}
    \label{fig:Psi0ALL}
    \vspace{-15pt}
\end{figure}

Fig.~\ref{fig:Psi0ALL} shows the squared errors averaged over $10$ repetitions. In this setting, the two CY procedures perform worse than the others. One possible explanation is that these procedures accommodate more general missingness patterns, whereas the tensor and matrix estimators can be favored by structured staggered designs that are close to a four-block.

To better compare these methods, in Fig.~\ref{fig:Psi0} we repeat the experiment under a more severe masking scheme, in which the terminal mask retains only $3$ fully observed rows and $4$ fully observed columns. For $r=1$ and $r=2$, the matrix estimator performs better when additional masking is limited. As masking becomes more severe, however, its error increases relative to that of the pooled estimator, which achieves lower error in the high-missingness regime. This is consistent with the target layer initially containing enough information to estimate its low-rank structure on its own, while, as more information is removed, the auxiliary layers become increasingly useful; similar qualitative behavior arises in the synthetic masking experiment of Appendix~\ref{app:simulExtraMasking}. For $r=3$, the pooled estimator performs uniformly better than its matrix-only counterpart. Its performance is also comparable to that of the matrix-only estimators for $r=1$ and $r=2$, and may even be slightly better when the fraction of missing entries is large. 

\begin{figure}[!ht]
    \centering
    \includegraphics[width=1\linewidth]{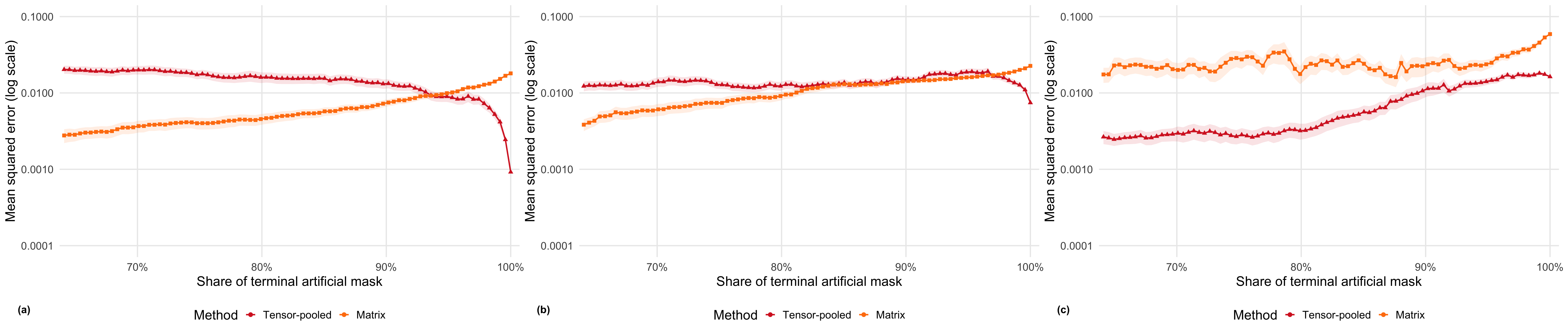}
    \captionsetup{
        font={scriptsize,stretch=0.8},
        skip=10pt
    }
    \caption{Castle masking experiment with additional missingness in the target layer $k=1$. The panels correspond to ranks $r=1,2,3$ used for the two methods; we again use $\tau=0.01$. The terminal mask retains $3$ fully observed rows and $4$ fully observed columns. The curves report the average squared error over $30$ replicates, with one standard error. The percentage on the $x$-axis represents the percentage of the terminal mask reached.}
    \label{fig:Psi0}
    \vspace{-15pt}
\end{figure}

Furthermore, to examine patterns in motor vehicle theft, we consider several linear aggregate quantities over the policy-on region. We order the rows of the first layer so that the adoption pattern has staircase form, with
$\Omega_{i,t,1}
=
\mathbbm 1\{(i,t)\in R_{a1}\times C_{b1}
\text{ for }a,b\text{ satisfying }a+b\leq o_1+1\}$
for some integer $o_1\geq 2$, and let
$\mathcal D_1:=\{(a,b):a+b>o_1+1\}$ denote the policy-on target blocks.
For $c\in\{0,1\}$ and
$h\in\{\textsc{Avg},\textsc{RowHet},\textsc{Local-}i_0,\textsc{Trend}\}$,
we consider weighted aggregates of the block-level bilinear forms $\Psi_c^{(h)}(1)
:=
\{W_h(1)\}^{-1}
\sum_{(a,b)\in\mathcal D_1}
c_{ab}^{(h)}
x_{a,h}^{\top}
\mathcal M_{\bullet,\bullet,1}^{(a,b)}(c) \,
y_{b,h}$. Formal definitions of
the query directions, weights, and normalizations are given in
Appendix~\ref{app:simulEstimands}. 
Informally, these are chosen so that $\Psi_c^{(\textsc{Avg})}(1)$ is the average potential outcome over all target
entries, $\Psi_c^{(\textsc{RowHet})}(1)$ is the corresponding signed contrast
across rows, $\Psi_c^{(\textsc{Local-}i_0)}(1)$ is the average potential
outcome for a specified row $i_0$ over its target periods, and
$\Psi_c^{(\textsc{Trend})}(1)$ captures the average slope of the row-averaged trajectory across blocks. We define the corresponding policy contrast
as $\Delta^{(h)}(1):=\Psi_1^{(h)}(1)-\Psi_0^{(h)}(1)$. For the $\textsc{Local-}i_0$ functional, we take $i_0$ to correspond to Florida, Montana, and Texas, while for $\textsc{RowHet}$ we set $\eta_i=+1$ for states that voted Republican in the 2000 presidential election and $\eta_i=-1$ for states that voted Democratic, so that this estimand measures a contrast between these two groups. We estimate $\Psi_0^{(h)}(1)$ using Algorithm~\ref{alg:qudraticStaggeredAggregate}, its matrix analogue, and the two CY plug-in estimators. Our procedure uses $r=3$ and $\tau=10^{-2}$, the matrix-only estimator uses $r=1$ and $\tau=10^{-2}$, while the debiased CY method uses $r=1$; these choices are motivated by the preceding masking experiment. Because ${\cal Y}(1)$ is fully observed over~$\mathcal D_1$, we estimate $\Psi_1^{(h)}(1)$ with a plug-in estimator. Finally, we estimate the policy contrast $\Delta^{(h)}(1)$ by subtracting the corresponding estimators.

\vspace{-5pt}
\begingroup
\setlength{\tabcolsep}{3pt}
\begin{table}[!ht]
\centering
\footnotesize
\captionsetup{
  font={scriptsize,stretch=0.8},
  skip=10pt
}
\caption{Estimates of $\Delta^{(h)}(1)$ using the pooled estimator of Algorithm~\ref{alg:qudraticStaggeredAggregate}, its matrix counterpart, and two plug-in approaches based on the nuclear-norm estimator of~\cite{choi2024matrix} and its debiased counterpart.}
\label{tab:robbery-delta}

\begin{tabular}{lcccccc}

\toprule
&
\textsc{Avg}
& \textsc{Local-Florida}
& \textsc{Local-Montana}
& \textsc{Local-Texas}
& \textsc{RowHet}
& \textsc{Trend}
\\

\midrule

$\widehat\Delta^{(h)}$
& 0.1080
& -0.2440
& 0.1430
& 0.1160
& 0.0992
& -0.0184
\\

$\widehat\Delta_{\mathrm{mat}}^{(h)}$
& 0.0746
& -0.1070
& 0.2170
& 0.1210
& 0.0764
& -0.0323
\\

$\widehat\Delta_{\mathrm{CY,nuc}}^{(h)}$
& 0.1900
& 0.0239
& 0.3020
& 0.2370
& 0.1790
& -0.0380
\\

$\widehat\Delta_{\mathrm{CY,deb}}^{(h)}$
& 0.0924
& -0.0832
& 0.2260
& 0.1380
& 0.0922
& -0.0336
\\

\bottomrule

\end{tabular}

\end{table}
\endgroup
\vspace{-15pt}

The estimates for $\Delta^{(h)}(1)$ in Table~\ref{tab:robbery-delta} are generally positive for the $\textsc{Avg}$, $\textsc{Local}$, and $\textsc{RowHet}$ functionals, although their magnitudes vary across methods and states. In particular, the $\textsc{Local-Montana}$ and $\textsc{Local-Texas}$ estimates are positive across all four estimators, while the $\textsc{Local-Florida}$ estimates are more heterogeneous. In contrast, the $\textsc{Trend}$ estimates are consistently negative but small, ranging from $-0.0380$ to $-0.0184$. Taken together, the results point to an increase in motor vehicle theft following adoption that appears to diminish over time; this is broadly consistent with \citet[][Section 4.1]{cheng2013castle}. The results also underscore the importance of considering functionals beyond simple averages, particularly when treatment responses may vary substantially across states.

\subsection{Real-data application: COVID-19 data}
\label{sec:real-data-oxcgrt-owid}

We construct a panel by merging policy-response indicators from the \href{https://github.com/OxCGRT/covid-policy-tracker}{Oxford COVID-19 Government Response Tracker} with epidemiological outcomes from the \href{https://github.com/owid/covid-19-data}{Our World in Data COVID-19 dataset}. We focus on stay-at-home requirements and contact tracing over the period from March 5, 2020, to April 5, 2020, and retain 18 countries, mostly European, where adoption of both policies was irreversible. This yields a tensor $\mathcal Y\in\mathbb R^{18\times32\times2}$, where the modes correspond to countries, days, and policy--outcome pairs. The first layer is associated with the 28-day delayed outcome \texttt{new\_deaths\_smoothed\_per\_million} and the stay-at-home policy, while the second is associated with the 28-day delayed outcome \texttt{new\_cases\_smoothed\_per\_million} and the contact-tracing policy. The 28-day delay allows time for policy effects to appear in reported outcomes. The pairing is also natural: stay-at-home requirements may affect downstream mortality, while contact tracing more directly targets contagion and reported cases. The outcome tensor is plotted in Fig.~\ref{fig:CovidY}. In each panel, blue entries indicate periods in which the policy is inactive, while red entries indicate periods in which it is active. From the perspective of the untreated potential outcomes, the blue entries are observed and the red entries are missing, with the resulting missingness presumably MNAR because policy adoption is more likely where disease burden is higher. 

We first conduct a masking experiment similar to that in Fig.~\ref{fig:Psi0ALL}, where the terminal mask retains $2$ rows and $3$ columns. The target layer is $k=1$, corresponding to new deaths. Fig.~\ref{fig:covid19} shows that, at ranks 1 and 2, the two CY methods perform much better than in the Castle data application, likely due to the less structured staggered missingness in this application. {Since the dataset contains only two fully observed rows, the matrix and CY methods cannot accommodate higher ranks, whereas Algorithm ~\ref{alg:staggered-wrapper} can. The plot on the right shows results for ranks $r=3$ and $r=4$; for these ranks, \eqref{ass:subBlockAuxilliary} fails for some target blocks, so the corresponding results are not covered by Corollary~\ref{corollary:StaggeredUB}. We nevertheless report them to examine the empirical behavior of the estimator beyond the assumptions required by our theory.} Our method, particularly for $r=4$, is substantially more stable as the percentage of missingness increases and achieves smaller errors than all competing methods, with the only exception being the matrix method with $r=1$ between $60\%$ and $90\%$ missingness.

\begin{figure}[!ht]
    \centering
    \includegraphics[width=1\linewidth]{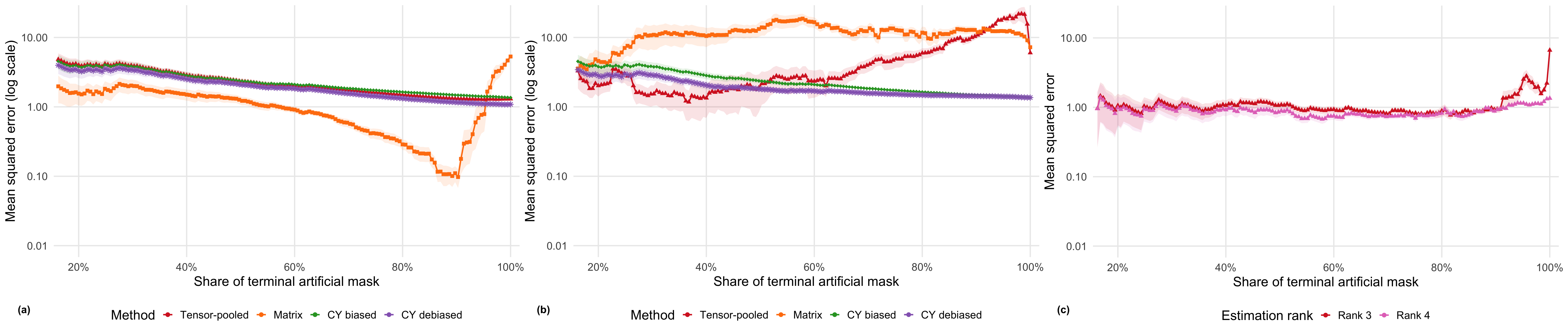}
    \captionsetup{
  font={scriptsize,stretch=0.8},
  skip=10pt
}
\caption{COVID-19 masking experiment with additional missingness in the target layer $k=1$. Results are shown for ranks $r=1,2$ (left and center) and $r=3,4$ (right), with $\tau=0.0001$. The terminal mask retains $2$ fully observed rows and $3$ fully observed columns. Curves show average squared error $\pm$ one standard error over $10$ replicates (left and center) and $30$ replicates (right). The $x$-axis gives the percentage of the terminal mask reached.}
    \label{fig:covid19}
    \vspace{-15pt}
\end{figure}

{Furthermore, to explore the relationship between stay-at-home policies and new deaths, Table~\ref{tab:covid} reports estimates of $\Delta^{(h)}(1)$ for $h\in\{\textsc{Avg},\textsc{Trend}\}$ across the feasible ranks for each method. Our pooled procedure is run with $r\in\{1,2,3,4\}$, while the remaining methods can accommodate only $r\in\{1,2\}$ in this application. As in the masking experiment, we also utilize $r=3,4$, which remain feasible for the SVD computations and can be competitive despite falling outside our current theory.}

{Across methods and ranks, the estimated trend contrast is generally close to zero,
although there is some sensitivity to the choice of rank. The average contrast shows
greater variation: our pooled estimator yields a negative estimate for $r\in \{2,3,4\}$,
whereas the comparison methods always yield positive estimates}.  {As a robustness check, since the cases and deaths layers may differ in scale and thereby affect estimation of the pooled subspaces, we also run our estimator after rescaling the cases layer to match the root mean square of the deaths layer over observed policy-off entries; the resulting estimates are similar to the original ones.} Under the maintained low-rank counterfactual model and the assumed interpretation of policy timing, {these negative estimates from our procedure are} consistent with a reduction in new deaths following stay-at-home requirements. The discrepancy in the estimated average contrast across methods may reflect the limited information in the target layer, where only Norway and Iceland are fully observed. By borrowing information on the latent unit and time factors from the second layer, our method may be better able to handle this setting, illustrating the potential benefit of exploiting information across layers.

\begingroup
\setlength{\tabcolsep}{3pt}
\begin{table}[!ht]
\centering
\footnotesize
\captionsetup{
font={scriptsize,stretch=0.8},
skip=10pt
}

\begin{minipage}{0.48\linewidth}
\centering
\begin{tabular}{cccccc}
\toprule
$r$
& $\widehat\Delta^{({\textsc{Avg}})}$ & $\widehat\Delta^{({\textsc{Avg}})}_{\textsc{std}}$
& $\widehat\Delta_{\mathrm{mat}}^{({\textsc{Avg}})}$
& $\widehat\Delta_{\mathrm{CY,nuc}}^{({\textsc{Avg}})}$
& $\widehat\Delta_{\mathrm{CY,deb}}^{({\textsc{Avg}})}$ \\
\midrule
1 & 1.525 &  1.302 & 0.428 & 1.485 & 1.338 \\
2 & -0.831 & -0.519 & 0.121 & 1.485 & 1.291 \\
3 & -1.833 &-1.993 & -- & -- & -- \\
4 & -1.196 & -0.750 & -- & -- & -- \\
\bottomrule
\end{tabular}
\end{minipage}
\hfill
\begin{minipage}{0.48\linewidth}
\centering
\begin{tabular}{ccccccc}
\toprule
$r$
& $\widehat\Delta^{({\textsc{Trend}})}$ & $\widehat\Delta^{({\textsc{Trend}})}_{\textsc{std}}$
& $\widehat\Delta_{\mathrm{mat}}^{({\textsc{Trend}})}$
& $\widehat\Delta_{\mathrm{CY,nuc}}^{({\textsc{Trend}})}$
& $\widehat\Delta_{\mathrm{CY,deb}}^{({\textsc{Trend}})}$ \\
\midrule
1 & -0.029 & -0.020 & -0.011 & -0.038 & -0.030 \\
2 & 0.154 & 0.028 & -0.049 & -0.038 & -0.031 \\
3 & 0.004 & 0.051 & --  & --  & --  \\
4 & -0.040 & -0.038 & --  & --  & --  \\
\bottomrule
\end{tabular}
\end{minipage}

\caption{Estimates of $\Delta^{(h)}(1)$ for
$h\in\{\textsc{Avg},\textsc{Trend}\}$ using the pooled estimator of
Algorithm~\ref{alg:qudraticStaggeredAggregate}, its matrix counterpart,
and the nuclear-norm and debiased estimators of~\cite{choi2024matrix},
across the feasible ranks for each method. We also report results from Algorithm~\ref{alg:qudraticStaggeredAggregate} after standardizing the cases layer relative to the deaths layer.}
\label{tab:covid}
\end{table}
\endgroup
\vspace{-30pt}

\section{Disclosure statement}\label{disclosure-statement}

The authors report there are no competing interests to declare.  \if1\anon
{The research of the second author was supported by European Research Council Starting Grant 101163546.}\fi

\phantomsection\label{supplementary-material}
\bigskip

\begin{center}

{\large\bf SUPPLEMENTARY MATERIAL}

\end{center}

Proofs and technical details are provided in the appendices. Code and dataset to replicate the paper’s figures and tables are included in the accompanying ZIP folder.

\begingroup
\small
\setlength{\bibsep}{0pt}
\setlength{\itemsep}{0pt}
\setlength{\parskip}{0pt}
\renewcommand{\baselinestretch}{1.15}\selectfont
\renewcommand{\newline}{\unskip\space}

\bibliography{bib.bib}

@article{Abadie2021,
  author = {Abadie, Alberto},
  title = {{U}sing {S}ynthetic {C}ontrols: {F}easibility, {D}ata {R}equirements, and {M}ethodological {A}spects},
  journal = {J. Econ. Lit.},
  year = {2021},
  volume = {59},
  number = {2},
  pages = {391--425}
}

@inproceedings{Agarwal2023CausalMatrixCompletion,
  author = {Anish Agarwal and Munther Dahleh and Devavrat Shah and Dennis Shen},
  title = {{C}ausal {M}atrix {C}ompletion},
  booktitle = {Proceedings of the 36th Annual Conference on Learning Theory},
  year = {2023},
  series = {Proc. Mach. Learn. Res.},
  volume = {195},
  pages = {3821--3826}
}

@misc{agarwal2026robustmatrixestimation,
  title         = {Robust {M}atrix {E}stimation with {S}ide {I}nformation},
  author        = {Agarwal, Anish and Choi, Jungjun and Yuan, Ming},
  year          = {2026},
  eprint        = {2603.24833},
  archivePrefix = {arXiv},
  primaryClass  = {stat.ME},
  doi           = {10.48550/arXiv.2603.24833},
  url           = {https://arxiv.org/abs/2603.24833}
}

@article{rubin1974estimating,
  title={Estimating Causal Effects of Treatments in Randomized and Nonrandomized Studies},
  author={Rubin, Donald B.},
  journal={J. Educ. Psychol.},
  volume={66},
  number={5},
  pages={688--701},
  year={1974},
  publisher={American Psychological Association}
}

@article{Agarwal2025SyntheticInterventions,
  author = {Anish Agarwal and Devavrat Shah and Dennis Shen},
  title = {{S}ynthetic {I}nterventions: {E}xtending {S}ynthetic {C}ontrols to {M}ultiple {T}reatments},
  journal = {Oper. Res.},
  year = {2025},
  volume = {74},
  number = {2},
  pages = {840--859}
}

@article{AtheyImbens2022DesignBasedDiD,
  title = {{D}esign-based analysis in {D}ifference-{I}n-{D}ifferences settings with staggered adoption},
  author = {Athey, Susan and Imbens, Guido W.},
  journal = {J. Econom.},
  volume = {226},
  number = {1},
  pages = {62--79},
  year = {2022}
}

@article{athey2021matrix,
  author = {Susan Athey and Mohsen Bayati and Nikolay Doudchenko and Guido Imbens and Khashayar Khosravi},
  title = {{M}atrix {C}ompletion {M}ethods for {C}ausal {P}anel {D}ata {M}odels},
  journal = {J. Amer. Statist. Assoc.},
  year = {2021},
  volume = {116},
  number = {536},
  pages = {1716--1730}
}

@misc{Auerbach2022TensorCompletionCausalInference,
  author = {Jonathan Auerbach and Martin Slawski and Shixue Zhang},
  title = {{T}ensor {C}ompletion for {C}ausal {I}nference with {M}ultivariate {L}ongitudinal {D}ata: {A} {R}eevaluation of {C}{O}{V}{I}{D}-19 {M}andates},
  year = {2022},
  eprint = {2203.04689},
  archivePrefix = {arXiv},
  url = {https://arxiv.org/abs/2203.04689}
}

@misc{BaharavNicolIrizarryMa2025StackedSVD,
  title = {{S}tacked {S}{V}{D} or {S}{V}{D} stacked? {A} {R}andom {M}atrix {T}heory perspective on data integration},
  author = {Baharav, Tavor Z. and Nicol, Phillip B. and Irizarry, Rafael A. and Ma, Rong},
  year = {2025},
  eprint        = {2507.22170},
  archivePrefix = {arXiv},
  url           = {https://arxiv.org/abs/2507.22170}
}

@article{BaiNg2021MatrixCompletionCounterfactuals,
  author = {Bai, Jushan and Ng, Serena},
  title = {{M}atrix {C}ompletion, {C}ounterfactuals, and {F}actor {A}nalysis of {M}issing {D}ata},
  journal = {J. Amer. Statist. Assoc.},
  year = {2021},
  volume = {116},
  number = {536},
  pages = {1746--1763}
}

@article{CahanBaiNg2023FactorBasedImputation,
  author = {Cahan, Ercument and Bai, Jushan and Ng, Serena},
  title = {{F}actor-{B}ased {I}mputation of {M}issing {V}alues and {C}ovariances in {P}anel {D}ata of {L}arge {D}imensions},
  journal = {J. Econom.},
  year = {2023},
  volume = {233},
  number = {1},
  pages = {113--131}
}

@article{candes_recht_2009,
  title = {{E}xact {M}atrix {C}ompletion via {C}onvex {O}ptimization},
  volume = {9},
  number = {6},
  journal = {Found. Comput. Math.},
  publisher = {Springer},
  author = {Candès, Emmanuel J. and Recht, Benjamin},
  year = {2009},
  pages = {717–772}
}

@article{spectralMethods,
  author = {Chen, Yuxin and Chi, Yuejie and Fan, Jianqing and Ma, Cong},
  title = {{S}pectral {M}ethods for {D}ata {S}cience: {A} {S}tatistical {P}erspective},
  year = {2021},
  publisher = {Now Publishers Inc.},
  address = {Hanover, MA, USA},
  volume = {14},
  number = {5},
  journal = {Found. Trends Mach. Learn.},
  pages = {566–806}
}

@article{cheng2013castle,
  title = {{D}oes {S}trengthening {S}elf-{D}efense {L}aw {D}eter {C}rime or {E}scalate {V}iolence? {E}vidence from {E}xpansions to {C}astle {D}octrine},
  author = {Cheng, Cheng and Hoekstra, Mark},
  journal = {J. Human Resources},
  volume = {48},
  number = {3},
  pages = {821--854},
  year = {2013}
}

@article{ChiLuChen2019NonconvexOptimization,
  author = {Chi, Yuejie and Lu, Yue M. and Chen, Yuxin},
  title = {{N}onconvex {O}ptimization {M}eets {L}ow-{R}ank {M}atrix {F}actorization: {A}n {O}verview},
  journal = {IEEE Trans. Signal Process.},
  year = {2019},
  volume = {67},
  number = {20},
  pages = {5239--5269}
}

@book{Chikuse2003,
  author = {Chikuse, Yasuko},
  title = {{S}tatistics on {S}pecial {M}anifolds},
  series = {Lecture Notes in Statistics},
  volume = {174},
  publisher = {Springer},
  address = {New York},
  year = {2003}
}

@article{choi2024matrix,
  author = {Jungjun Choi and Ming Yuan},
  title = {{M}atrix {C}ompletion {W}hen {M}issing {I}s {N}ot at {R}andom and {I}ts {A}pplications in {C}ausal {P}anel {D}ata {M}odels},
  journal = {J. Amer. Statist. Assoc.},
  year = {2026}, 
  note = {To appear}
}

@incollection{DavidsonSzarek2001,
  author = {Davidson, Kenneth R. and Szarek, Stanislaw J.},
  title = {{L}ocal {O}perator {T}heory, {R}andom {M}atrices and {B}anach {S}paces},
  booktitle = {Handbook of the Geometry of Banach Spaces},
  volume = {1},
  publisher = {Elsevier},
  year = {2001},
  pages = {317--366}
}

@misc{Gao2025CausalInferenceSequentialTreatmentsTensor,
  author = {Chenyin Gao and Han Chen and Anru R. Zhang and Shu Yang},
  title = {{C}ausal {I}nference on {S}equential {T}reatments via {T}ensor {C}ompletion},
  year = {2025},
  eprint = {2511.15866},
  archivePrefix = {arXiv},
  url = {https://arxiv.org/abs/2511.15866}
}

@article{GillVanTrees1995,
  author = {Richard D. Gill and Boris Y. Levit},
  title = {{A}pplications of the van {T}rees inequality: a {B}ayesian {C}ramér-{R}ao bound},
  volume = {1},
  journal = {Bernoulli},
  number = {1-2},
  publisher = {Bernoulli Society for Mathematical Statistics and Probability},
  pages = {59 -- 79},
  year = {1995}
}

@misc{GrossNesme2010NoteSamplingCoRR,
  title         = {{N}ote on sampling without replacing from a finite collection of matrices},
  author        = {Gross, David and Nesme, Vincent},
  year          = {2010},
  eprint        = {1001.2738},
  archivePrefix = {arXiv},
  url           = {https://arxiv.org/abs/1001.2738}
}

@book{ImbensRubin2015,
  author = {Imbens, Guido W. and Rubin, Donald B.},
  title = {{C}ausal {I}nference for {S}tatistics, {S}ocial, and {B}iomedical {S}ciences: {A}n {I}ntroduction},
  year = {2015},
  publisher = {Cambridge University Press},
  address = {New York}
}

@article{Keshavan_Montanari_Oh,
  AUTHOR = {Keshavan, R. H. and Montanari, A. and Oh,
              S.},
  TITLE = {{M}atrix completion from a few entries},
  JOURNAL = {IEEE Trans. Inform. Theory},
  FJOURNAL = {Institute of Electrical and Electronics Engineers.
              Transactions on Information Theory},
  VOLUME = {56},
  YEAR = {2010},
  NUMBER = {6},
  PAGES = {2980--2998},
  CODEN = {IETTAW}
}

@article{klopp_general,
  AUTHOR = {Klopp, O.},
  TITLE = {{N}oisy low-rank matrix completion with general sampling
              distribution},
  JOURNAL = {Bernoulli},
  FJOURNAL = {Bernoulli. Official Journal of the Bernoulli Society for
              Mathematical Statistics and Probability},
  VOLUME = {20},
  YEAR = {2014},
  NUMBER = {1},
  PAGES = {282--303}
}

@article{Kolda2009Tensor,
  author = {Tamara G. Kolda and Brett W. Bader},
  title = {{T}ensor {D}ecompositions and {A}pplications},
  journal = {SIAM Review},
  volume = {51},
  number = {3},
  pages = {455--500},
  year = {2009}
}

@article{Koltchinskii_Lounici_Tsybakov,
  AUTHOR = {Koltchinskii, Vladimir and Lounici, Karim and Tsybakov,
              Alexandre B.},
  TITLE = {{N}uclear-norm penalization and optimal rates for noisy low-rank
              matrix completion},
  JOURNAL = {Ann. Stat.},
  FJOURNAL = {Ann. Stat.},
  VOLUME = {39},
  YEAR = {2011},
  NUMBER = {5},
  PAGES = {2302--2329},
  CODEN = {ASTSC7}
}

@misc{Mandal2019WeightedTensorCompletion,
  author = {Debmalya Mandal and David Parkes},
  title = {{W}eighted {T}ensor {C}ompletion for {T}ime-{S}eries {C}ausal {I}nference},
  year = {2019},
  eprint = {1902.04646},
  archivePrefix = {arXiv},
  url = {https://arxiv.org/abs/1902.04646}
}

@article{Mezzadri2007,
  author = {Mezzadri, Francesco},
  title = {{H}ow to {G}enerate {R}andom {M}atrices from the {C}lassical {C}ompact {G}roups},
  journal = {Notices Amer. Math. Soc.},
  year = {2007},
  volume = {54},
  number = {5},
  pages = {592--604}
}

@article{Negahban_Wainwright,
  author = {Sahand N. Negahban and Martin J. Wainwright},
  title = {{R}estricted strong convexity and weighted matrix completion: {O}ptimal bounds with noise},
  journal = {J. Mach. Learn. Res.},
  year = {2012},
  volume = {13},
  pages = {1665--1697}
}

@article{Stewart1980,
  author = {Stewart, G. W.},
  title = {{T}he {E}fficient {G}eneration of {R}andom {O}rthogonal {M}atrices with an {A}pplication to {C}ondition {E}stimators},
  journal = {SIAM J. Numer. Anal.},
  year = {1980},
  volume = {17},
  number = {3},
  pages = {403--409}
}

@article{tropp2012userfriendly,
  title = {{U}ser-{F}riendly {T}ail {B}ounds for {S}ums of {R}andom {M}atrices},
  author = {Tropp, Joel A.},
  journal = {Found. Comput. Math.},
  volume = {12},
  number = {4},
  pages = {389--434},
  year = {2012},
  publisher = {Springer}
}

@book{tsybakov2009nonparametric,
  title = {{I}ntroduction to {N}onparametric {E}stimation},
  author = {Tsybakov, Alexandre B},
  year = {2009},
  publisher = {Springer}
}

@book{vershynin19hdp,
  author = {Vershynin, Roman},
  publisher = {Cambridge University Press},
  title = {{H}igh-{D}imensional {P}robability},
  year = {2019}
}

@book{wainwright2019high,
  title = {{H}igh-dimensional {S}tatistics: {A} {N}on-asymptotic {V}iewpoint},
  author = {Wainwright, Martin J},
  year = {2019},
  publisher = {Cambridge University Press}
}

@misc{xia2025inferencestaggeredadoptioncase,
  author = {Eric Xia and Yuling Yan and Martin J. Wainwright},
  title = {{I}nference under {S}taggered {A}doption: {C}ase {S}tudy of the {A}ffordable {C}are {A}ct},
  year = {2024},
  eprint = {2412.09482},
  archivePrefix = {arXiv},
  url = {https://arxiv.org/abs/2412.09482}
}

@misc{yan2024entrywise,
  author = {Yuling Yan and Martin J. Wainwright},
  title = {{E}ntrywise {I}nference for {M}issing {P}anel {D}ata: {A} {S}imple and {I}nstance-{O}ptimal {A}pproach},
  year = {2024},
  eprint = {2401.13665},
  archivePrefix = {arXiv},
  url = {https://arxiv.org/abs/2401.13665}
}

@misc{agterberg2026statistically,
  title         = {Statistically and {C}omputationally {O}ptimal {E}stimation and {I}nference of {C}ommon {S}ubspaces},
  author        = {Agterberg, Joshua},
  year          = {2026},
  eprint = {2606.06483},
  archivePrefix = {arXiv},
  url = {https://arxiv.org/abs/2606.06483}
}

@article{semedo2019cortical,
  title   = {Cortical {A}reas {I}nteract through a {C}ommunication {S}ubspace},
  author  = {Semedo, Jo{\~a}o D. and Zandvakili, Amin and Machens, Christian K. and Yu, Byron M. and Kohn, Adam},
  journal = {Neuron},
  volume  = {102},
  number  = {1},
  pages   = {249--259.e4},
  year    = {2019}
}

@misc{liu2026representation,
  title   = {Representation {L}earning with {B}lockwise {M}issingness and {S}ignal {H}eterogeneity},
  author  = {Liu, Ziqi and Tian, Ye and Tang, Weijing},
  year = {2026},
  eprint = {2602.11511},
  archivePrefix = {arXiv},
  url = {https://arxiv.org/abs/2602.11511}
}

@article{callaway2021difference,
  title   = {Difference-in-{D}ifferences with {M}ultiple {T}ime {P}eriods},
  author  = {Callaway, Brantly and Sant'Anna, Pedro H. C.},
  journal = {J. Econom.},
  volume  = {225},
  number  = {2},
  pages   = {200--230},
  year    = {2021}
}

@article{ma2026optimal,
  title   = {Optimal Estimation of Shared Singular Subspaces Across Multiple Noisy Matrices},
  author  = {Ma, Zhengchi and Ma, Rong},
  journal = {IEEE Trans. Inform. Theory},
  volume  = {72},
  number  = {5},
  pages   = {3277--3300},
  year    = {2026}
}

@article{arroyo2021inference,
  title   = {Inference for {M}ultiple {H}eterogeneous {N}etworks with a {C}ommon {I}nvariant {S}ubspace},
  author  = {Arroyo, Jes{\'u}s and Athreya, Avanti and Cape, Joshua and Chen, Guodong and Priebe, Carey E. and Vogelstein, Joshua T.},
  journal = {J. Mach. Learn. Res.},
  volume  = {22},
  number  = {142},
  pages   = {1--49},
  year    = {2021}
}
\endgroup

\clearpage

\section*{Supplementary Material for
``Estimation of Linear Functionals in
Multilayer Panels under Staggered Adoption''}

\addcontentsline{toc}{section}{Supplementary Material}

\setcounter{section}{0}
\setcounter{subsection}{0}
\setcounter{figure}{0}
\setcounter{table}{0}
\setcounter{equation}{0}
\setcounter{thm}{0}
\setcounter{algorithm}{0}

\renewcommand{\thesection}{S\arabic{section}}
\renewcommand{\thesubsection}{S\arabic{section}.\arabic{subsection}}
\renewcommand{\thesubsubsection}
  {S\arabic{section}.\arabic{subsection}.\arabic{subsubsection}}

\renewcommand{\thefigure}{S\arabic{figure}}
\renewcommand{\thetable}{S\arabic{table}}
\renewcommand{\theequation}{S\arabic{equation}}
\renewcommand{\thethm}{S\arabic{thm}}
\renewcommand{\thealgorithm}{S\arabic{algorithm}}

\renewcommand{\theHsection}{supp.\arabic{section}}
\renewcommand{\theHsubsection}{supp.\arabic{section}.\arabic{subsection}}
\renewcommand{\theHsubsubsection}
  {supp.\arabic{section}.\arabic{subsection}.\arabic{subsubsection}}

\renewcommand{\theHfigure}{supp.figure.\arabic{figure}}
\renewcommand{\theHtable}{supp.table.\arabic{table}}
\renewcommand{\theHequation}{supp.equation.\arabic{equation}}
\renewcommand{\theHthm}{supp.thm.\arabic{thm}}

\providecommand{\theHalgorithm}{}
\renewcommand{\theHalgorithm}{supp.algorithm.\arabic{algorithm}}

\spacingset{1.8}
\allowdisplaybreaks

{These Appendices are organized as follows. Appendix~\ref{sec:proofs} contains the proofs of all results stated in the main text. Appendix~\ref{sec:simulExtra} provides additional simulation studies and introduces some aggregate estimands of interest, together with a detailed discussion of procedures for estimating them. Appendix~\ref{appendix:compCost} discusses the computational cost of Algorithm~\ref{alg:bilinear4block}. Appendix~\ref{appendix:tecnicalLemmas} develops key matrix-denoising results and the technical tools required to analyze Algorithm~\ref{alg:bilinear4block} in the four-block setting. Appendix~\ref{appendix:additionalResults} contains additional results on identifiability and tensor structure that are only briefly mentioned in the main text. Finally, Appendix~\ref{appendix:AuxilliaryRes} collects standard probabilistic results and tail bounds, together with background material on the Stiefel manifold and Haar measure.}

\section{Proofs}\label{sec:proofs}
\subsection{Proofs for Section~\ref{sec:upperBound}}

\begin{proof}[Proof of Theorem~\ref{thm:4BlockUBLinear}]
The proof of this result relies on the matrix denoising theory developed in Appendix~\ref{appendix:tecnicalLemmas}. Start by writing $\hat \mu_{xy}^{(k)} - \mu_{xy}^{(k)}
    =
    Z_{xy}^{(1)} + Z_{xy}^{(2)} + Z_{xy}^{(3)} + Z_{xy}^{(4)} + \Delta_{xy} =:
    Z_{xy} + \Delta_{xy}$,
where 
\begin{align*}
    Z_{xy}^{(1)} 
    &:= x^\top 
    (E_{\rm left}^{\rm p})_{\mathcal I_k, \bullet}
    W_{\rm left}
    (W_{\rm left}^\top W_{\rm left})^{-1}
    \mathcal{C}_{\bullet, \bullet, k}
    (W_{\rm up}^\top W_{\rm up})^{-1}
    W_{\rm up}^\top
    (E_{\rm up}^{\rm p})_{\bullet, \mathcal J_k}
    y, \\
    Z_{xy}^{(2)} 
    &:= x^\top 
    U_{2k}
    \mathcal{C}_{\bullet, \bullet, k}
    (W_{\rm up}^\top W_{\rm up})^{-1}
    W_{\rm up}^\top
    (E_{\rm up}^{\rm p})_{\bullet, \mathcal J_k}
    y, \\
    Z_{xy}^{(3)} 
    &:= x^\top 
    U_{2k}
    (U_{1k}^\top U_{1k})^{-1}
    U_{1k}^\top
    (E_{\rm up}^{\rm p})_{\mathcal I_k^\mathrm{up}, \bullet}
    V
    V_{2k}^\top
    y, \\
    Z_{xy}^{(4)} 
    &:= x^\top 
    (E_\mathrm{left}^\mathrm{p})_{\mathcal I_k, \bullet}
    W_\mathrm{left}
    (W_\mathrm{left}^\top W_\mathrm{left})^{-1}
    \mathcal C_{\bullet, \bullet, k}
    V_{2k}^\top
    y.
\end{align*}
Under the assumptions of Theorem~\ref{thm:4BlockUBLinear}, Lemma~\ref{lemma:VarDominatesLemma1} gives $\mathbb{E} [Z_{xy}^2] \leq c_1 \, \Upsilon_{xy}$ for a sufficiently large constant $c_1 \equiv c_1(c_\ell, c_u, c_0,  c_{\mathrm{dim}}, c_{\mathrm{blk}}, \kappa, \nu_x, \nu_y) >~0$. Moreover, the same result ensures that there exists an event $\mathcal G_1$ with $\mathbb{P}(\mathcal G_1) \geq 1 - \mathcal{O}(p_N^{-10} + p_T^{-10})$ such that $\Delta_{xy}^2 \leq c_1 \, \Upsilon_{xy}$ under $\mathcal G_1$. 

Possibly enlarging the constant $c_1$ and allowing it to change from line to line, and writing $\hat \mu \equiv \hat \mu_{xy}^{(k)}, \, \mu \equiv~\mu_{xy}^{(k)}$ to simplify the notation, we then decompose the mean squared error as 
\begin{align*}
\mathbb{E}\left[
    \bigl\{\hat{\mu} - \mu\bigr\}^2
\right]
&=
\mathbb{E}\left[
    \bigl\{\hat{\mu} - \mu\bigr\}^2
    \, \mathbbm{1}_{\mathcal{G}_1}
\right]
+
\mathbb{E}\left[
    \bigl\{\hat{\mu} - \mu\bigr\}^2
    \, \mathbbm{1}_{\mathcal{G}_1^{\complement}}
\right] \\
&=
\mathbb{E}\left[
    \bigl\{Z_{xy}+\Delta_{xy}\bigr\}^2
    \mathbbm{1}_{\mathcal{G}_1}
\right]
+
\mathbb{E}\left[
    \bigl\{\hat{\mu} - \mu\bigr\}^2
    \mathbbm{1}_{\mathcal{G}_1^{\complement}}
\right] \\
&\le
2\mathbb{E}\left[
    Z_{xy}^2
\right]
+
2\mathbb{E}\left[
    \Delta_{xy}^2
    \mathbbm{1}_{\mathcal{G}_1}
\right]
+
2\,\mathbb{E}\left[
    \hat \mu^2
    \mathbbm{1}_{\mathcal{G}_1^{\complement}}
\right]
+
2\mu^2 \,
\mathbb{P}(\mathcal{G}_1^{\complement}) \\
&\le
2\mathbb{E} [Z_{xy}^2]
+
2 c_1 \Upsilon_{xy}
+
2\,\mathbb{E}\left[
    \hat \mu^2
    \mathbbm{1}_{\mathcal{G}_1^{\complement}}
\right]
+
2\mu^2 \,
\mathbb{P}(\mathcal{G}_1^{\complement}) \\
&\le
c_1 \, \Upsilon_{xy}
+
2\,\mathbb{E}\left[
    \hat \mu^2
    \mathbbm{1}_{\mathcal{G}_1^{\complement}}
\right]
+
c_1 \, \mu^2
\bigl(p_N^{-10} + p_T^{-10}\bigr),
\end{align*}
where the previous decomposition and the bound on the second moment of $Z_{xy}$ from Lemma~\ref{lemma:VarDominatesLemma1} are used to control the contribution on the good event $\mathcal G_1$, while the probability bound for $\mathcal G_1^\complement$ is used in the last inequality. We next derive separate bounds for each of the remaining two terms. For the third term, we have 
$|\mu| \leq \|\mathcal{C}_{\bullet, \bullet, k}\|_\mathrm{op}\|U_{2k}^\top x\|_2 \, \|V_{2k}^\top y\|_2 \leq \gamma_\mathrm{max}$, which yields \[
\mu^2 \, (p_N^{-10} + p_T^{-10}) \leq \gamma_\mathrm{max}^2 \, (p_N^{-10} + p_T^{-10}) \leq c_1  \tau^{-1} \,  \gamma_\mathrm{max}^2 \, (p_N^{-10} + p_T^{-10}) \,\frac{N_{1k}}{N}.
\]
The last inequality follows from $\tau \leq c_\ell N_{1k} / \, (2N)$. For the second term, we start by noticing that $\bigl\|\hat H_{k,\tau}^{\mathrm{inv}} \, \widehat U_{1k}^{\top}\bigr\|_{\mathrm{op}}^2
=
\bigl\|\hat H_{k,\tau}^{\mathrm{inv}} \hat H_k \hat H_{k,\tau}^{\mathrm{inv}}\bigr\|_{\mathrm{op}}
=
\max_{i\in[r]} \lambda_i/  (\lambda_i\vee\tau)^2
\le \tau^{-1}$. This allows showing 
\begin{align*}
\bigl|\hat\mu\bigr|
&=
\left|
x^{\top}\hat U_{2k}\hat H_{k,\tau}^{\mathrm{inv}}
\hat U_{1k}^{\top}
\hat U_{\mathrm{up}}^{(k)}\hat\Sigma_{\mathrm{up}}
\hat V_{2k}^{\top}y
\right| \le
\bigl\|\hat U_{2k}^{\top}x\bigr\|_2
\bigl\|\hat H_{k,\tau}^{\mathrm{inv}}\hat U_{1k}^{\top}\bigr\|_{\mathrm{op}}
\bigl\|\hat U_{\mathrm{up}}^{(k)}\hat\Sigma_{\mathrm{up}}\bigr\|_{\mathrm{op}}
\bigl\|\hat V_{2k}^{\top}y\bigr\|_2 \\
&\le
\tau^{-1/2} \,
\|(Y_{\mathrm{up}}^{\mathrm{p}})_{\mathcal I_k^{\mathrm{up}}, \bullet}
    \hat V_{\mathrm{up}}
\|_{\mathrm{op}} \le
\tau^{-1/2}
\|(Y_{\mathrm{up}}^{\mathrm{p}})_{\mathcal I_k^{\mathrm{up}}, \bullet}\|_{\mathrm{op}}
=
\tau^{-1/2}
\|Y_{\mathrm{up}}^{(k)}\|_{\mathrm{op}} \\ 
&\le
\tau^{-1/2}
    \|M_{\mathrm{up}}^{(k)}\|_{\mathrm{op}}
    +
    \tau^{-1/2} \|E_{\mathrm{up}}^{(k)}\|_{\mathrm{op}} =
\tau^{-1/2}
\|U_{1k}\mathcal C_{\bullet,\bullet,k}V^{\top}\|_{\mathrm{op}}
    +
    \tau^{-1/2} \|E_{\mathrm{up}}^{(k)}\|_{\mathrm{op}} \\
    &\le
\tau^{-1/2}
    c_u^{1/2}\gamma_{\mathrm{max}}\sqrt{N_{1k}/N}
    +
    \tau^{-1/2}\bigl\|E_{\mathrm{up}}^{(k)}\bigr\|_{\mathrm{op}},
\end{align*}
where the last inequality follows from  \eqref{assump:subblock-conditioning}. This, together with an application of the Cauchy--Schwarz inequality and Lemma~\ref{lemma:ExpOpNormE} with $p = 4$, gives
\begin{align*}
\mathbb{E}\left[
    \hat\mu^2
    \mathbbm{1}_{\mathcal G_1^{\complement}}
\right]
&\le
2\tau^{-1} c_u \gamma_{\mathrm{max}}^2
\frac{N_{1k}}{N}
\mathbb{P}\bigl(\mathcal G_1^{\complement}\bigr)
+
2\tau^{-1}
\mathbb{E}\left[
    \bigl\|E_{\mathrm{up}}^{(k)}\bigr\|_{\mathrm{op}}^2
    \mathbbm{1}_{\mathcal G_1^{\complement}}
\right] \\
&\le
c_1 \tau^{-1} \gamma_{\mathrm{max}}^2
\frac{N_{1k}}{N}
\bigl(p_N^{-10}+p_T^{-10}\bigr)
+
c_1 \tau^{-1}
\bigl(p_N^{-5}+p_T^{-5}\bigr)
\, \mathbb{E}^{1/2}\left[
    \bigl\|E_{\mathrm{up}}^{(k)}\bigr\|_{\mathrm{op}}^4
\right] \\
&\le
c_1\left\{
    \tau^{-1}\gamma_{\mathrm{max}}^2
    \frac{N_{1k}}{N}
    \bigl(p_N^{-10}+p_T^{-10}\bigr)
    +
    \tau^{-1}
    \bigl(p_N^{-5}+p_T^{-5}\bigr)
    \,\sigma^2\bigl(N_{1k}+T\bigr)
\right\}.
\end{align*}
Combining the previous bounds and using $\tau^{-1} \,  \gamma_\mathrm{max}^2 \, (p_N^{-10} + p_T^{-10})  \, \frac{N_{1k}}{N}\,\, + \,\, \tau^{-1}  (p_N^{-5} + p_T^{-5}) \, (\sigma^2 N_{1k} + \sigma^2 T) \leq c_0 \, \Upsilon_{xy}$ concludes the proof.
\end{proof}

\subsection{Proofs for Section~\ref{sec:MinimaxLowerBounds}}
The proof of Theorem~\ref{thm:CombinedLB} follows by combining the next two results. The first addresses the large-$K$ regime.

\begin{thm}\label{thm:localTermLB}
Fix $k\in[K]$ and unit vectors $ x \in \mathcal S^{N_{2k}-1},  y \in \mathcal S^{T_{2k}-1}$. Let $\mathcal M_0=\mathcal C_0\times_1  U_0\times_2  V_0\times_3  I_K
    \in \mathcal F(c_\ell,c_u)$.
Assume that the $k$-th core matrix is separated from the boundary of the
admissible singular-value interval, in the sense that $\delta_{\gamma}:=
    \min\{\sigma_{\min}((\mathcal C_{0})_{\bullet, \bullet, k})-\gamma_{\min},
    \gamma_{\max}-\sigma_{\max}((\mathcal C_{0})_{\bullet, \bullet, k})\}>0$.
For $\varsigma > 0$ define $\mathcal{F}_\mathrm{loc}(\mathcal{M}_0, \varsigma) := \{\mathcal M\in\mathcal F(c_\ell,c_u):
\|\mathcal M-\mathcal M_0\|_F\le \varsigma\}$. There exists $c \equiv c(c_u)>0$ such that
\begin{align*}
\inf_{\phi}
\sup_{\mathcal M\in\mathcal{F}_\mathrm{loc}(\mathcal{M}_0, \varsigma)}
\mathbb E_{\mathcal M} &
\left[
\{\phi(Z_\Omega)-\mu_{ x  y}^{(k)}(\mathcal M)\}^2
\right] \\
& \qquad \ge
c\,\min\left\{ \sigma^2 \min\left(\frac{N}{N_{1k}},
    \frac{T}{T_{1k}} \right), \varsigma^2, \delta_{\gamma}^2\right\} \,  \| U_{0, 2k}^\top \,  x\|_2^{2} \, \| V_{0, 2k}^\top \,  y\|_2^{2},
\end{align*}
where the infimum is over all Borel-measurable functions
$\phi$ of the observed entries $Z_\Omega$.
\end{thm}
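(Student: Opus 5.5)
We will prove Theorem~\ref{thm:localTermLB} by a two-point (Le Cam) argument in which only the $k$-th core is perturbed, while $U_0$, $V_0$ and all other cores are held fixed. Because the factors are fixed, the constraints $U^\top U=V^\top V=I_r$ and Assumption~\eqref{assump:subblock-conditioning} hold automatically for every alternative. The only constraints left to check are that the singular values of $\mathcal C_{\bullet,\bullet,k}$ stay in $[\gamma_{\min},\gamma_{\max}]$ and that $\|\mathcal M-\mathcal M_0\|_F\le\varsigma$.

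\textbf{Choice of perturbation.} Set $a:=U_{0,2k}^\top x$ and $b:=V_{0,2k}^\top y$, and assume both are nonzero, since otherwise the bound is trivial. Define $\mathcal C_1:=(\mathcal C_0)_{\bullet,\bullet,k}+h\,\Delta$ with the rank-one direction $\Delta:=\frac{a b^\top}{\|a\|_2\|b\|_2}$, which has $\|\Delta\|_{\mathrm{op}}=\|\Delta\|_F=1$. The functional then moves by
\[
\mu(\mathcal M_1)-\mu(\mathcal M_0)=h\,a^\top\Delta b=h\|a\|_2\|b\|_2 .
\]
The admissibility conditions translate as follows.
\begin{itemize}
\item By Weyl's inequality, singular values are preserved in the admissible interval whenever $h\le\delta_\gamma$.
\item $\|\mathcal M_1-\mathcal M_0\|_F=\|U_0\,h\Delta\,V_0^\top\|_F=h$, so we need $h\le\varsigma$.
\end{itemize}

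\textbf{KL divergence.} Only layer $k$ changes. Its observed entries lie in the $a,b,c$ blocks, so
\[
\mathrm{KL}=\frac{h^2}{2\sigma^2}\Bigl(\|U_{0,1k}\Delta V_{0}^\top\|_F^2+\|U_{0,2k}\Delta V_{0,1k}^\top\|_F^2\Bigr).
\]
The first term is at most $c_u h^2 N_{1k}/N$ by Assumption~\eqref{assump:subblock-conditioning}. The second term, by the same assumption, is at most $c_u h^2 T_{1k}/T$, using $\|U_{0,2k}\|_{\mathrm{op}}\le 1$. This gives the bound $\mathrm{KL}\lesssim c_u h^2\bigl(N_{1k}/N+T_{1k}/T\bigr)/\sigma^2$.

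\textbf{The main obstacle.} This bound yields $\sigma^2/\max(N_{1k}/N,T_{1k}/T)=\sigma^2\min(N/N_{1k},T/T_{1k})$ only up to a factor of $2$, which is absorbed into $c$, so it is fine. The more delicate point is whether a single direction $\Delta$ loses information compared with the tensor structure. It does not, because changing only the core is undetectable in the other layers.

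\textbf{Conclusion.} Choose
\[
h:=\min\Bigl\{c'\sigma\bigl(c_u(N_{1k}/N+T_{1k}/T)\bigr)^{-1/2},\ \varsigma,\ \delta_\gamma\Bigr\},
\]
so that $\mathrm{KL}\le 1/2$. Le Cam's two-point lemma (e.g.\ Tsybakov, Thm.~2.2) then gives a minimax risk of at least
\[
c\,h^2\|a\|_2^2\|b\|_2^2 ,
\]
which is exactly the claimed bound with $c$ depending only on $c_u$.
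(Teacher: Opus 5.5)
Your proof is correct and follows the paper's construction: both perturb only the $k$-th core along the rank-one direction $(U_{0,2k}^\top x)(V_{0,2k}^\top y)^\top$ with $U_0,V_0$ held fixed, check admissibility via Weyl's inequality and the identity $\|\mathcal M-\mathcal M_0\|_F=h$, and bound the information on the observed $a,b,c$ blocks of slice $k$ by $c_u(N_{1k}/N+T_{1k}/T)/\sigma^2$ using Assumption~\eqref{assump:subblock-conditioning}. The only difference is the final step: you use Le Cam's two-point lemma with a KL bound and a tuned $h$, whereas the paper applies the van Trees inequality on $[-h,h]$ with the $\cos^2$ prior. Both yield the stated bound with a constant depending only on $c_u$.
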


\begin{proof}[Proof of Theorem~\ref{thm:localTermLB}]
We prove the result by reducing the problem to a one-dimensional parametric submodel. To this end, we assume that $U_{0,2k}^\top x \neq 0$ and $V_{0,2k}^\top y \neq 0$; if either of these conditions fails, the desired lower bound is trivially satisfied. Let
\[
    F_k:=(U_{0,2k}^\top \, x)( V_{0,2k}^\top \, y)^\top,
    \qquad
    G_k:=\frac{F_k}{\|F_k\|_F^2}.
\]
Write $C_{0,j}:=(\mathcal{C}_0)_{\bullet,\bullet,j}$ to simplify the notation. For $\theta\in\mathbb R$, define a perturbed core tensor $\mathcal C(\theta)$ by keeping
all slices except the $k$-th one fixed, and setting $C_k(\theta):=C_{0,k}+\theta \, G_k$. We then set $\mathcal M(\theta):=\mathcal C(\theta)\times_1 U_0\times_2 V_0\times_3 I_K$. We next verify the range of $\theta$ for which the path remains in the local parameter space. Since $U_0$ and $V_0$ have orthonormal columns, we have 
\[
    \|C_k(\theta)-C_{0,k}\|_{\mathrm{op}}
    \le
    \|C_k(\theta)-C_{0,k}\|_F
    =
    \frac{|\theta|}{\|F_k\|_F},\]
    \[\|\mathcal M(\theta)-\mathcal M_0\|_F
    =
    \|U_0\{C_k(\theta)-C_{0,k}\}V_0^\top\|_F
    =
    \frac{|\theta|}{\|F_k\|_F}.
\]
We thus deduce that
$\{\mathcal M(\theta):|\theta|\le h\}$ is contained in $\left\{
    \mathcal M\in\mathcal F(c_\ell,c_u):
    \|\mathcal M-\mathcal M_0\|_F\le \varsigma
    \right\}$ whenever $h
    \leq
    \|F_k\|_F \, \min (\varsigma, \delta_{\gamma})$. In particular, the singular values of $C_k(\theta)$ remain in
$[\gamma_{\min},\gamma_{\max}]$ by the operator-norm bound above and Weyl's inequality (Lemma~\ref{lemma:Weyl}). Assumption  \eqref{assump:subblock-conditioning} continues to hold along the path because $U_0$ and $V_0$ are fixed.

We next compute the induced change in the target functional. Since the bottom-right block of the $k$-th slice is
$U_{0,2k}C_k(\theta)V_{0,2k}^\top$, we have
\begin{align*}
    \mu(\theta):= \mu_{xy}^{(k)}(\mathcal M(\theta))
    &=
    x^\top \, 
        U_{0,2k}C_k(\theta)V_{0,2k}^\top \, y  =
    \mu_{xy}^{(k)}(\mathcal M_0)
    +\theta \, 
        x^\top U_{0,2k} \, G_k V_{0,2k}^\top \, y =
    \mu_{xy}^{(k)}(\mathcal M_0)+\theta .
\end{align*}
Hence estimating $\mu_{xy}^{(k)}(\mathcal M(\theta))$ along this path is equivalent to
estimating the scalar parameter~$\theta$, up to the known additive constant
$\mu_{xy}^{(k)}(\mathcal M_0)$. It is also useful to compute the Fisher information for $\theta$. Using the Gaussianity of the error and the four-block structure of $\Omega_{\bullet, \bullet, k}$, the Fisher
information is constant and equal~to 
\begin{align*}
    \mathfrak I_\theta & 
    =
    \sigma^{-2}
    \left\|
        P_{\Omega_{\bullet, \bullet, k}}(U_0G_kV_0^\top)
    \right\|_F^2 \\
    &=
    \sigma^{-2}
    \left\{
    \|U_{0,1k}G_kV_{0,1k}^\top\|_F^2
    +
    \|U_{0,1k}G_kV_{0,2k}^\top\|_F^2
    +
    \|U_{0,2k}G_kV_{0,1k}^\top\|_F^2
    \right\} \\
    &=
    \sigma^{-2}
    \left\{
    \|U_{0,1k}G_k\|_F^2
    +
    \|G_kV_{0,1k}^\top\|_F^2
    -
    \|U_{0,1k}G_kV_{0,1k}^\top\|_F^2
    \right\} \\
    & = \sigma^{-2} \, \|F_k\|_F^{-4}
    \left\{
    \|U_{0,1k}F_k\|_F^2
    +
    \|F_kV_{0,1k}^\top\|_F^2
    -
    \|U_{0,1k}F_kV_{0,1k}^\top\|_F^2
    \right\} \\
    & \leq \sigma^{-2} \, \|F_k\|_F^{-4}
    \left\{
    \|U_{0,1k}F_k\|_F^2
    +
    \|F_kV_{0,1k}^\top\|_F^2
    \right\} \\
    & \leq \sigma^{-2} \, c_u \, \|F_k\|_F^{-2} \, \left\{\frac{N_{1k}}{N}
    +
    \frac{T_{1k}}{T} \right\},
\end{align*}
where the last inequality follows from Assumption  \eqref{assump:subblock-conditioning}. Note that only the $k$-th slice contributes to $\mathfrak I_\theta$, since the submodel is fixed along all other slices. 

We now apply the van Trees inequality \citep[][Equation~3]{GillVanTrees1995} on the interval $[-h,h]$. Let~$p$ be an arbitrary 
absolutely continuous prior density on $[-h,h]$ satisfying
$p(-h)=p(h)=0$. For any estimator~$\phi(Z_\Omega)$, define $\widehat \theta
    :=
    \phi(Z_\Omega)-\mu_{xy}^{(k)}(\mathcal M_0)$. Since $\mu(\theta)=\mu_{xy}^{(k)}(\mathcal M_0)+\theta$, the risk for
estimating the functional is exactly the risk for estimating $\theta$ along the
submodel. Writing $J(p):=\int_{-h}^h\frac{\{p'(\theta)\}^2}{p(\theta)}\,d\theta$ and choosing $h
    =
    \|F_k\|_F \, \min (\varsigma, \delta_{\gamma})$, the van Trees inequality yields
\begin{align*}
    \sup_{|\theta|\le h}
    \mathbb E_\theta
    &\left[
        \{\phi(Z_\Omega)-\mu(\theta)\}^2
    \right]
    \ge
    \int_{-h}^h
    \mathbb E_\theta[(\widehat \theta-\theta)^2]\,p(\theta)\,d\theta
    \\
    &\ge \frac{1}{\int_{-h}^h\mathfrak I_\theta\,p(\theta)\,d\theta + \inf_{p:\,p(\pm h)=0}J(p)}
    = \frac{1}{\int_{-h}^h\mathfrak I_\theta\,p(\theta)\,d\theta+\pi^2/h^2} \\
    & \ge
    \left\{
    \sigma^{-2} c_u \, \|F_k\|_F^{-2}
    \left(\frac{N_{1k}}{N}
    +
    \frac{T_{1k}}{T} \right)
    + \pi^2 \|F_k\|_F^{-2}
    \mathrm{min}^{-2}(\varsigma, \delta_{\gamma})
    \right\}^{-1} \\
    & =
    \|F_k\|_F^{2}
    \left\{
    \sigma^{-2} c_u
    \left(\frac{N_{1k}}{N}
    +
    \frac{T_{1k}}{T} \right)
    + \pi^2
    \mathrm{min}^{-2}(\varsigma, \delta_{\gamma})
    \right\}^{-1} \\
    & \ge
    \|F_k\|_F^{2}
    \left\{
    2 c_u \sigma^{-2}
    \max\left(\frac{N_{1k}}{N},
    \frac{T_{1k}}{T}
     \right)
    + \pi^2 \mathrm{min}^{-2}(\varsigma, \delta_{\gamma})
    \right\}^{-1} \\
    & \ge
    \frac{1}{2} \,
    \min\left\{
    \frac{\sigma^2}{2 c_u}
    \min\left(\frac{N}{N_{1k}},
    \frac{T}{T_{1k}} \right),
    \pi^{-2} \mathrm{min}^{2}(\varsigma, \delta_{\gamma})
    \right\}
    \|F_k\|_F^{2} \\
    & =
    \min\left\{
    \frac{\sigma^2}{4 c_u}
    \min\left(\frac{N}{N_{1k}},
    \frac{T}{T_{1k}} \right),
    \frac{\varsigma^2}{2\pi^2},
    \frac{\delta_{\gamma}^2}{2\pi^2}
    \right\}
    \|F_k\|_F^{2} \\
    & \ge
    \min\left\{\frac{1}{4c_u},\frac{1}{2\pi^2}\right\}
    \min\left\{
    \sigma^2 \min\left(\frac{N}{N_{1k}},
    \frac{T}{T_{1k}} \right), \varsigma^2, \delta_{\gamma}^2
    \right\}
    \|F_k\|_F^{2} \\
    & =
    \min\left\{\frac{1}{4c_u},\frac{1}{2\pi^2}\right\}
    \min\left\{
    \sigma^2 \min\left(\frac{N}{N_{1k}},
    \frac{T}{T_{1k}} \right), \varsigma^2, \delta_{\gamma}^2
    \right\}
    \|U_{0, 2k}^\top \, x\|_2^2
    \| V_{0, 2k}^\top y\|_2^{2},
\end{align*}
where the first equality follows from the fact that $J(p)$ is minimized by $p(\theta) = h^{-1}\cos^2(\pi \, \theta/2h)$, while the successive inequality follows from our previous bound on $\mathfrak I_\theta$. Since the path is contained in the local parameter space, this lower bound also
applies to the local minimax risk over $\mathcal{F}_\mathrm{loc}(\mathcal{M}_0, \varsigma)$. This concludes the proof.
\end{proof}

\medskip\medskip
For the second result, which concerns the small-$K$ regime, we suppose for simplicity that $N_{1j}=N_1$ and $T_{1j}=T_1$ for all
$j\in[K]$. Under this assumption, $ U_{0,1j_1}= U_{0,1j_2}$ and $ U_{0,2j_1}= U_{0,2j_2}$ for all $j_1,j_2\in[K]$. As in the main body, we denote these common matrices by $ U_{0,1}$ and~$ U_{0,2}$, respectively, and adopt the analogous convention for $ V_{0,1}$ and $ V_{0,2}$. We define the projections
$ P_{ V_{0,2}}
    :=
     V_{0,2}
    \left( V_{0,2}^{\top} V_{0,2}\right)^{-1}
     V_{0,2}^{\top}, \,\,
      P_{ U_{0,2}}
    :=
     U_{0,2}
    \left( U_{0,2}^{\top} U_{0,2}\right)^{-1}
     U_{0,2}^{\top}, \,\,  P_{V_{0,2}}^{\perp} :=
     I_{T_{2}}- P_{ V_{0,2}}$, and $ P_{ U_{0,2}}^{\perp}
    :=
     I_{N_{2}}-P_{ U_{0,2}}$. The inverses are well defined when~\eqref{assump:subblock-conditioning} holds with $c_u \max(\frac{N_1}{N}, \frac{T_1}{T}) < 1$. We also introduce $\omega_V:=
    \| P_{ V_{0,2}}^{\perp} \,  y \|_2, \, 
    \omega_U
    :=~\| P_{ U_{0,2}}^{\perp} \,  x \|_2$, which measure the components of $ y$ and $ x$ orthogonal to the column spaces of $ V_{0,2}$ and~$ U_{0,2}$.

\begin{thm}\label{thm:globalTermLB}
Fix $k\in[K]$ and unit vectors $ x \in \mathcal S^{N_{2k}-1},  y \in \mathcal S^{T_{2k}-1}$. Set $N_{1j}=N_1$ and $T_{1j}=T_1$ for all
$j\in[K]$. Let $\mathcal M_0=\mathcal C_0\times_1  U_0\times_2  V_0\times_3  I_K \in \mathcal F(c_\ell, c_u)$, and suppose that~\eqref{assump:subblock-conditioning}
holds with margin $0< \delta_{\mathrm{A1}} < (c_u - c_\ell)/2$, in the sense that
\begin{align*}
    &\left(c_\ell+\delta_{\mathrm{A1}}\right)\frac{N_1}{N} I_r
    \preceq
     U_{0,1}^{\top} U_{0,1}
    \preceq
    \left(c_u-\delta_{\mathrm{A1}}\right)\frac{N_1}{N} I_r, \\ &\left(c_\ell+\delta_{\mathrm{A1}}\right)\frac{T_1}{T} I_r
    \preceq
     V_{0,1}^{\top} V_{0,1}
    \preceq
    \left(c_u-\delta_{\mathrm{A1}}\right)\frac{T_1}{T} I_r.
\end{align*}
Also assume that $c_u \max(\frac{N_1}{N}, \frac{T_1}{T}) < 1$. There exists $c \equiv c(c_u, \gamma_\mathrm{min}, \gamma_\mathrm{max})>0$ such~that
\begin{align*}
    \inf_{\phi}
    \sup_{\mathcal{F}_\mathrm{loc}(\mathcal{M}_0, \varsigma)}
    \mathbb E_{\mathcal M}
    \left[
    \left\{\phi(Z_\Omega)-\mu_{xy}^{(k)}(\mathcal M)\right\}^2
    \right] &\geq c \, \omega_V^2 \, \min\left( \frac{\sigma^{2} N}{K N_1}, \, \varepsilon_V^{2}\right) \, \| U_{0,2}^{\top}\, x\|_2^2 \\
    & \qquad\qquad\qquad+ c \, \omega_U^2 \, \min\left( \frac{\sigma^{2} T}{K T_1}, \, \varepsilon_U^{2}\right) \, \|  V_{0,2}^{\top}\,  y\|_2^2,
\end{align*}
where $\varepsilon_U
:=
\min(
\omega_U/2,\,
\sqrt{T_1/T},\,
\varsigma\gamma_{\max}^{-1}K^{-1/2},\,
\sqrt{\delta_{\mathrm{A1}}/c_\ell})$, $\varepsilon_V
:=
\min(
\omega_V/2,
\sqrt{N_1/N},
\varsigma\gamma_{\max}^{-1}K^{-1/2},$ $
\sqrt{\delta_{\mathrm{A1}}/c_\ell})$, and the infimum is over all Borel-measurable functions $\phi$ of $Z_\Omega$.
\end{thm}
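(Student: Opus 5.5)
The plan is to prove the two terms separately, each through a one-dimensional (or low-dimensional) parametric submodel combined with the van Trees inequality, exactly as in the proof of Theorem~\ref{thm:localTermLB}. The lower bound for a sum of two nonnegative terms then follows from the lower bound on their maximum, up to a factor~$2$.

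For the first term, I would perturb the shared column factor $V$ rather than the core, since that is the source of error which decays with $K$. Assume $U_{0,2}^\top x\neq0$ and $\omega_V>0$; otherwise the term is trivial. Set
\[
w:=P_{V_{0,2}}^{\perp}y/\omega_V\in\mathbb R^{T_2},
\qquad
\tilde w:=(\boldsymbol 0_{T_1};w)\in\mathbb R^{T},
\qquad
a:=\mathcal C_{0,k}^{\top}U_{0,2}^{\top}x/\|\mathcal C_{0,k}^{\top}U_{0,2}^{\top}x\|_2 .
\]
The vector $\tilde w$ is orthogonal to the columns of $V_0$, because $V_{0,2}^\top w=0$, and it vanishes on the first $T_1$ coordinates. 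Define $V(\theta):=V_0+\theta\,\tilde w\,b^\top$, where $b:=\mathcal C_{0,k}^{-\top}\ldots$ is chosen so that the rows through $\mathcal C$ pick out the direction $a$; concretely, take $b$ with $\mathcal C_{0,k}b\propto\mathcal C_{0,k}\mathcal C_{0,k}^{\top}U_{0,2}^{\top}x$. After re-orthonormalising via $V(\theta)(V(\theta)^\top V(\theta))^{-1/2}$ and absorbing the symmetric factor into every core, the path stays in the Tucker2 class with $U_0$ fixed.

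Because $\tilde w$ is supported on the last $T_2$ columns, the blocks $V_{1}$ are unchanged. Assumption~\eqref{assump:subblock-conditioning} therefore only needs the margin $\delta_{\mathrm{A1}}$ to absorb the normalisation factor $1+\mathcal O(\theta^2)$. This requirement, together with the constraints $\|\mathcal M(\theta)-\mathcal M_0\|_F\lesssim\theta\gamma_{\max}\sqrt K\le\varsigma$ and $\theta\le\omega_V/2$ (which keeps the perturbation from cancelling the projection), dictates the range $|\theta|\le\varepsilon_V$. The functional then moves as
\[
\mu(\theta)-\mu(0)=\theta\,x^\top U_{0,2}\mathcal C_{0,k}b\,w^\top y+\mathcal O(\theta^2)
\asymp\theta\,\omega_V\,\|U_{0,2}^\top x\|_2\,\gamma,
\]
where $\gamma$ is a factor between $\gamma_{\min}$ and $\gamma_{\max}$.

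Next I compute the Fisher information. The perturbation touches only the observed $b$-blocks $U_{0,1}\mathcal C_{0,j}(\theta\,b\,w^\top)$ in every slice $j$, since the $d$-blocks are missing. Hence
\[
\mathfrak I_\theta\lesssim\sigma^{-2}\sum_j\|U_{0,1}\mathcal C_{0,j}b\|_2^2\le\sigma^{-2}c_u\gamma_{\max}^2K N_1/N .
\]
Van Trees on $[-\varepsilon_V,\varepsilon_V]$ then gives risk of order
\[
\omega_V^2\|U_{0,2}^\top x\|_2^2\,\gamma_{\min}^2\,\min\Bigl(\frac{\sigma^2 N}{\gamma_{\max}^2KN_1},\varepsilon_V^2\Bigr),
\]
and the $\gamma$ ratios go into $c(c_u,\gamma_{\min},\gamma_{\max})$.

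The second term follows by the symmetric construction that perturbs $U$ along $P_{U_{0,2}}^{\perp}x$ supported on the last $N_2$ rows, which only affects the observed $c$-blocks.

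The main obstacle is making the perturbation exact rather than first-order. Re-orthonormalising $V(\theta)$ changes the cores and also the $b$-block, and the $\mathcal O(\theta^2)$ remainder in $\mu(\theta)$ must be dominated by the linear term. The first thing I would try is to choose $b$ as a unit vector and exploit $\tilde w\perp V_0$, so that $V(\theta)^\top V(\theta)=I+\theta^2bb^\top$. This gives an explicit inverse square root, and lets me reparametrise $\mu$ as a monotone smooth function of $\theta$, using the van Trees inequality for a function $\psi(\theta)$ with $\psi'(\theta)\ge\frac12\psi'(0)$ on the interval; the restriction $\theta\le\omega_V/2\le1/2$ guarantees this.
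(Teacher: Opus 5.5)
\paragraph{The path you define leaves the parameter class.} You take $V(\theta)=V_0+\theta\tilde w b^\top$ with $b=a$ a unit vector, and absorb $(V(\theta)^\top V(\theta))^{1/2}=(I_r+\theta^2aa^\top)^{1/2}$ into the cores. The resulting signal is $\mathcal M_{\bullet,\bullet,j}(\theta)=U_0\mathcal C_{0,j}(V_0+\theta\tilde w a^\top)^\top$. Its singular values are those of $\mathcal C_{0,j}(I_r+\theta^2aa^\top)^{1/2}$. They are intrinsic to $\mathcal M(\theta)$ and can be as large as $\sqrt{1+\theta^2}\,\gamma_{\max}$. For instance, if $\mathcal C_{0,j}=\gamma_{\max}I_r$ for some $j$, then $\mathcal M(\theta)\notin\mathcal F(c_\ell,c_u)$ for every $\theta\neq0$. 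Unlike Theorem~\ref{thm:localTermLB}, this statement gives you no margin on the core singular values to absorb the increase. You only checked \eqref{assump:subblock-conditioning} and the Frobenius radius, so the supremum over $\mathcal F_{\mathrm{loc}}(\mathcal M_0,\varsigma)$ is not yet bounded below. Note also that on your path $\mathcal M(\theta)$ is affine in $\theta$, so $\mu$ is exactly linear. The $\mathcal O(\theta^2)$ remainder and the role of $\theta\le\omega_V/2$ in your last paragraph belong to a different path.

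\paragraph{The repair, and what it changes.} The fix is the paper's construction. Keep $U_0$ and every $\mathcal C_{0,j}$ fixed, and use $V(\theta)=(V_0+\theta\tilde w a^\top)(I_r+\theta^2aa^\top)^{-1/2}$ itself as the new orthonormal factor. Every slice then keeps exactly the singular values of $\mathcal C_{0,j}$.
\begin{itemize}
\item Your last paragraph now applies. From $V'(\theta)=(1+\theta^2)^{-3/2}(\tilde w-\theta V_0a)a^\top$ you get $\mu'(\theta)=(1+\theta^2)^{-3/2}\,d\,(1-\theta\,\omega_V^{-1}a^\top V_{0,2}^\top y)\ge 2^{-5/2}d$ for $|\theta|\le\omega_V/2$. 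Here $d=\omega_V\|\mathcal C_{0,k}^\top U_{0,2}^\top x\|_2\ge\gamma_{\min}\omega_V\|U_{0,2}^\top x\|_2$.
\item Your Fisher-information step must be redone. The block $V_1(\theta)=V_{0,1}(I_r+\theta^2aa^\top)^{-1/2}$ now moves, so the observed $a$- and $c$-blocks of every slice change, not just the $b$-blocks. This adds $\sigma^{-2}\sum_j\|\mathcal C_{0,j}V_1'(\theta)^\top\|_F^2\le\sigma^{-2}K\gamma_{\max}^2c_u\theta^2T_1/T$. The $c$-blocks carry $U_{0,2}$, so no factor $N_1/N$ helps here.
\item This extra term is dominated by the $b$-block term $\sigma^{-2}K\gamma_{\max}^2c_uN_1/N$ only because $\theta^2\le N_1/N$. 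That is exactly why $\sqrt{N_1/N}$ appears in $\varepsilon_V$; your account of the admissible range of $\theta$ never uses it.
\end{itemize}
With these corrections, and the mirrored ones for the $U$-perturbation, van Trees on $[-\varepsilon_V,\varepsilon_V]$ gives the stated bound.
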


\begin{proof}[Proof of Theorem~\ref{thm:globalTermLB}]
We prove the first term in the lower bound by reducing the problem to a one-dimensional parametric
submodel where only $V_0$ is perturbed. The second follows by the symmetric argument in which~$U_0$ is perturbed instead of $V_0$. To this end, we will assume that $U_{0,2}^{\top}\, x \neq 0$ and $\omega_V > 0$; if either of these conditions is not met, the desired lower bound is trivially satisfied. Let $C_{0,j}:=(\mathcal C_0)_{\bullet,\bullet,j}$ satisfying $\gamma_{\min}\le \sigma_r(C_{0,j})\le \sigma_1(C_{0,j})\le \gamma_{\max}$ for all $j\in[K]$. Let $G_V
    :=
    C_{0,k}^{\top} U_{0,2}^{\top} \, x \, y^\top  P_{V_{0,2}}^{\perp} \neq 0$ since $U_{0,2}^{\top}\, x \neq 0$, $\sigma_r(C_{0,k})\ge \gamma_{\min}>0$, and $ \omega_V > 0$. Since $G_V$ is rank one, write $G_V=dab^\top$, where
\[
    d:=\|G_V\|_F=\omega_V\|C_{0,k}^{\top}U_{0,2}^{\top}x\|_2,\qquad
    a:=\frac{C_{0,k}^{\top}U_{0,2}^{\top}x}{\|C_{0,k}^{\top}U_{0,2}^{\top}x\|_2},\qquad
    b:=\frac{P_{V_{0,2}}^\perp y}{\omega_V}.
\]
Then $\|a\|_2=\|b\|_2=1$ and $G_V b=d a$.
By definition of $b$ we also have
$b\in\operatorname{Im}(P_{V_{0,2}}^{\perp})$, hence $V_{0,2}^{\top}b=0$ and $P_{V_{0,2}}^{\perp} b = b$. Setting $w:= (\boldsymbol 0_{T_1}^\top  \, ;\, b^\top)^\top \in\mathbb R^{T}$, we obtain a vector supported only on the last $T_{2}$ entries that is orthogonal to $V_0$, meaning that $V_0^{\top}w=0$. 

We now introduce a one-dimensional submodel by perturbing $V_0$ only. For $\theta \in\mathbb R$, define $
    V(\theta) := (V_0+\theta w a^{\top})(I_r+\theta^2a a^{\top})^{-1/2}
    =
    (V_0+\theta w a^{\top})
    (I_r+ [\{1+\theta^2\}^{-1/2} - 1] \, a a^{\top})$. We have $V(\theta)^{\top}V(\theta)=I_r$ and 
    \begin{align*}
        V^\prime(\theta) &= (1+\theta^2)^{-3/2} (w - \theta \, V_0 a) \, a^\top.
    \end{align*}
We leave $U_0$ and $\mathcal{C}_0$ untouched, and set $\mathcal M(\theta):=\mathcal C_0\times_1 U_0\times_2 V(\theta)\times_3 I_K$. We next verify the range of $\theta$ for which the path
$\{\mathcal M(\theta):|\theta|\le h\}$ remains in the local parameter space. As \,$\mathcal C_0$ and $U_0$ are fixed, the only conditions to check are the Frobenius-norm bound, and assumption  \eqref{assump:subblock-conditioning} for $V(\theta)$. As for the former, since $U_0^{\top}U_0=I_r, V_0^\top w = 0, w^\top w = 1, a^\top a = 1$, we have
    \begin{align*}
    \|\mathcal M&(\theta)-\mathcal M_0\|_F^2
    =
    \sum_{j=1}^K
    \left\|
    U_0 \, C_{0,j} \, \{V(\theta)-V_0\}^{\top}
    \right\|_F^2 \leq \gamma_\mathrm{max}^2 \sum_{j=1}^K
    \left\|
    V(\theta)-V_0
    \right\|_F^2                                      \\
    &= \gamma_\mathrm{max}^2 \, K \, \left\|
    V(\theta)-V_0
    \right\|_F^2 = 2\,\gamma_\mathrm{max}^2\,K
\left\{
1-
\left(
1+
\theta^2
\right)^{-1/2}
\right\} \leq \gamma_\mathrm{max}^2\, K \, \theta^2 \leq \gamma_\mathrm{max}^2\, K \, h^2,
\end{align*}
where in the penultimate inequality we used the standard bound $2\left\{1-(1+x)^{-1/2}\right\}\le x$ for $x \ge 0$. This is upper bounded by $\varsigma^2$ for $h \leq \varsigma \, \gamma_\mathrm{max}^{-1}K^{-1/2}$. As for  \eqref{assump:subblock-conditioning}, start by observing that $(I_r+\theta^2a a^{\top})^{-1/2}$ is symmetric and has eigenvalues bounded between $(1+\theta^2)^{-1/2}$ and $1$. As a result, for $V_1(\theta) := V(\theta)_{[T_1], \bullet} = V_{0,1} \, (I_r+\theta^2a a^{\top})^{-1/2}$ we have 
\begin{align*}
    V_1(\theta)^\top V_1(\theta) &= \{I_r+\theta^2a a^{\top}\}^{-1/2} \, V_{0,1}^\top  V_{0,1} \, \{I_r+\theta^2a a^{\top}\}^{-1/2} \\
    & \preceq (c_u - \delta_\mathrm{A1}) \, \frac{T_1}{T} \, \{I_r+\theta^2a a^{\top}\}^{-1} \preceq (c_u - \delta_\mathrm{A1}) \, \frac{T_1}{T} I_r \preceq c_u \, \frac{T_1}{T} I_r.
\end{align*}
Similarly, 
\begin{align*}
    V_1(\theta)^\top V_1(\theta) & \succeq (c_\ell + \delta_\mathrm{A1}) \, \frac{T_1}{T} \, \{I_r+\theta^2a a^{\top}\}^{-1} \succeq \frac{c_\ell + \delta_\mathrm{A1}}{1+\theta^2} \,  \frac{T_1}{T} I_r,
\end{align*}    
which is lower bounded by $c_\ell T_1 T^{-1} I_r$ whenever $h^2 \leq \delta_\mathrm{A1} \, c_\ell ^{-1}$. We thus deduce that the path
$\{\mathcal M(\theta):|\theta|\le h\}$ is contained in $\left\{
    \mathcal M\in\mathcal F(c_\ell,c_u):
    \|\mathcal M-\mathcal M_0\|_F\le \varsigma
    \right\}$ whenever $h
    \leq \min (\varsigma \, \gamma_\mathrm{max}^{-1} K^{-1/2}, \delta_\mathrm{A1}^{1/2} \, c_\ell ^{-1/2})$. In order to simplify the computations below while remaining in the Frobenius neighborhood of $\mathcal M_0$, we will set $h
    = \varepsilon_V$.

We next compute the induced change in $\mu(\theta)
    :=
    \mu_{xy}^{(k)}(\mathcal M(\theta))
    =
    x^\top
    U_{0,2} \, C_{0,k} V_{2}(\theta)^{\top} \, y$. Using $G_V=C_{0,k}^{\top}U_{0,2}^{\top}x\,y^\top P_{V_{0,2}}^{\perp}=d ab^\top$,
$P_{V_{0,2}}^{\perp}b=b$, $(x^\top U_{0,2}C_{0,k}a)(b^\top y)=d$, and the expression for $V^\prime(\theta)$ derived above, we have
\begin{align*}
\mu'(\theta)
&=
x^\top U_{0,2} \, C_{0,k} V_2'(\theta)^\top y \\
&=
(1+\theta^2)^{-3/2}
x^\top U_{0,2}C_{0,k} \, a
\left(b-\theta V_{0,2}a\right)^\top y \\
&=
d
(1+\theta^2)^{-3/2}
\left\{
1-\theta d^{-1}
a^\top
(C_{0,k}^{\top}U_{0,2}^{\top}x\,y^\top V_{0,2}) \, a
\right\} \\
& \geq 
d
(1+\theta^2)^{-3/2}
\left\{
1-|\theta| \, d^{-1}
|a^\top
(C_{0,k}^{\top}U_{0,2}^{\top}x\,y^\top V_{0,2}) \, a|
\right\} \\
&\geq
d \, (1+h^2)^{-3/2}
(
1-hd^{-1}
\left\|C_{0,k}^{\top}U_{0,2}^{\top}x\,y^\top V_{0,2}\right\|_{\mathrm{op}}
)
 \\
&\geq
d \, (1+h^2)^{-3/2}
\left(1-h\omega_V^{-1}\right)
 \geq
2^{-5/2} \, d \\
&
= 2^{-5/2} \, \omega_V \,
\|C_{0,k}^{\top}U_{0,2}^{\top}x\|_2 \geq
2^{-5/2}\gamma_{\min}\,\omega_V
\|U_{0,2}^{\top}x\|_2
>0,
\end{align*}
where in the penultimate inequality we used
$h\leq \min(\sqrt{N_1/N}, \,\omega_V/2) \leq \min(1,\omega_V/2)$. Coming now to the Fisher information for this model, it is useful to compute $\|V_2^\prime(\theta)\|_F^2 \leq \|V^\prime(\theta)\|_F^2 = \left(1+\theta^2\right)^{-2} \leq 1$.    Similarly, we can show that $\|V_1^\prime(\theta)\|_F^2 = \theta^2(1+\theta^2)^{-3}\|V_{0,1}a\|_2^2\leq \theta^2 (c_u - \delta_{\mathrm{A1}}) \, \frac{T_1}{ T} (1 + \theta^2)^{-3}$. We thus have 
    \begin{align*}
        \mathfrak I_\theta & = \sigma^{-2} \sum_{j =1}^K \|P_{\Omega_{\bullet, \bullet, j}} ( U_0 \, C_{0,j} [V^\prime(\theta)]^\top)\|_F^2 \\
        &= \sigma^{-2} \, \sum_{j=1}^K \left\{\|V_1^\prime(\theta) C_{0,j}^{\top}U_0^\top\|_F^2 + \|V_2^\prime(\theta) C_{0,j}^{\top} U_{0,1}^\top\|_F^2 \right\} \\
        & \leq \gamma_\mathrm{max}^2 \, \sigma^{-2} \, K \left\{\|V_1^\prime(\theta) \|_F^2 + (c_u - \delta_{\mathrm{A1}}) \frac{N_1}{N} \, \|V_2^\prime(\theta) \|_F^2 \right\} \\
        &\leq \gamma_\mathrm{max}^2 \, \sigma^{-2} \, K \left\{\|V_1^\prime(\theta) \|_F^2 + (c_u - \delta_{\mathrm{A1}}) \frac{N_1}{N} \right\} \\
        & \leq \gamma_\mathrm{max}^2 \, \sigma^{-2} \, K (c_u - \delta_{\mathrm{A1}}) \left\{\theta^2 (1+\theta^2)^{-3} \frac{T_1}{T}  +  \frac{N_1}{N} \right\}  \\
        & \leq \gamma_\mathrm{max}^2 \, \sigma^{-2} \, K \, (c_u - \delta_{\mathrm{A1}}) \left\{h^2 +  \frac{N_1}{N} \right\} \leq 2\,\gamma_\mathrm{max}^2 \, \sigma^{-2} \, \, (c_u - \delta_{\mathrm{A1}}) \frac{K N_1}{N},
    \end{align*}
    where in the last inequality we used $h^2 \leq N_1/N$. 

   We now apply the van Trees inequality~\citep[][Equation~4]{GillVanTrees1995} on the interval $[-h,h]$ with $h = \varepsilon_V$. Let~$p$ be an arbitrary 
absolutely continuous prior density on $[-h,h]$ satisfying
$p(-h)=p(h)=0$. Writing $J(p):=\int_{-h}^h\frac{\{p'(\theta)\}^2}{p(\theta)}\,d\theta$, and setting $c_V:=2^{-6}\gamma_{\min}^2\min\left\{2^{-1}\gamma_{\max}^{-2} \, c_u^{-1},\pi^{-2}\right\}$, the van Trees inequality yields
\begin{align*}
    \sup_{|\theta|\le h}
    \mathbb E_\theta&
    \left[
        \{\phi(Z_\Omega)-\mu(\theta)\}^2
    \right]
     \ge
    \int_{-h}^h
    \mathbb E_\theta
    \left[
        \{\widehat \mu-\mu(\theta)\}^2
    \right] p(\theta)\,d\theta
    \\
    &\ge
    \frac{
    \left\{
    \int_{-h}^h\mu^\prime(\theta)\,p(\theta)\,d\theta
    \right\}^2
    }{
    \int_{-h}^h\mathfrak I_\theta\,p(\theta)\,d\theta
    +
    \inf_{p:\,p(\pm h)=0}J(p)
    } \\
    & =
    \frac{
    \left\{
    \int_{-h}^h\mu^\prime(\theta)p(\theta)\,d\theta
    \right\}^2
    }{
    \int_{-h}^h\mathfrak I_\theta\,p(\theta)\,d\theta
    +
    \pi^2/h^2
    }
    \\
    &\geq
    \frac{
    2^{-5}\gamma_{\min}^2\omega_V^2
    \|U_{0,2}^{\top}x\|_2^2
    }{
    2\gamma_{\max}^2\sigma^{-2}
    (c_u-\delta_{\mathrm{A1}})KN_1/N
    +
    \pi^2/h^2
    } \\
    & =
    \frac{
    2^{-5}\gamma_{\min}^2\omega_V^2
    \|U_{0,2}^{\top}x\|_2^2
    }{
    2\gamma_{\max}^2
    (c_u-\delta_{\mathrm{A1}})
    \sigma^{-2}KN_1/N
    +
    \pi^2h^{-2}
    }
    \\
    &\geq
    \frac{
    2^{-5}\gamma_{\min}^2\omega_V^2
    \|U_{0,2}^{\top}x\|_2^2
    }{
    2\max\left(
    2\gamma_{\max}^2
    (c_u-\delta_{\mathrm{A1}})
    \sigma^{-2}KN_1/N,
    \pi^2h^{-2}
    \right)
    } \\
    & =
    2^{-6}\gamma_{\min}^2\omega_V^2
    \|U_{0,2}^{\top}x\|_2^2 \,
    \min\left(
    \frac{\sigma^2N}
    {2\gamma_{\max}^2(c_u-\delta_{\mathrm{A1}})KN_1},
    \frac{h^2}{\pi^2}
    \right) \\
    & \geq
    2^{-6}\gamma_{\min}^2
    \min\left\{
    \frac{1}{2\gamma_{\max}^2(c_u-\delta_{\mathrm{A1}})},
    \frac{1}{\pi^2}
    \right\}
    \omega_V^2
    \min\left(
    \frac{\sigma^2N}{KN_1},
    h^2
    \right)
    \|U_{0,2}^{\top}x\|_2^2 \\
    & \geq
    c_V\,
    \omega_V^2
    \min\left(
    \frac{\sigma^2N}{KN_1},
    \varepsilon_V^2
    \right)
    \|U_{0,2}^{\top}x\|_2^2,
\end{align*}
where the first equality follows from the fact that $J(p)$ is minimized by $p(\theta) = h^{-1}\cos^2(\pi \, \theta/2h)$, while the successive inequalities follow from our previous bounds on $\mu^\prime(\theta)$ and $\mathfrak I_\theta$. Since the path is contained in the local parameter space, this lower bound also
applies to the local minimax risk over $\mathcal{F}_\mathrm{loc}(\mathcal{M}_0, \varsigma)$. 

Interchanging the roles of $U_0$ and $V_0$ and applying the same argument to $P_{U_{0,2}}^{\perp} x \,  y^\top \, V_{0,2} C_{0,k}^{\top}$, yields the second term in the lower bound. In particular, we will assume that $V_{0,2}^{\top}\, y \neq 0$ and $\omega_U > 0$; if either of these conditions is not met, the desired lower bound is trivially satisfied. We then define $G_U
:=
P_{U_{0,2}}^{\perp}x\,y^\top V_{0,2}C_{0,k}^{\top}$. Since $y^\top V_{0,2}C_{0,k}^{\top}\neq 0$ and
$\omega_U=\|P_{U_{0,2}}^\perp x\|_2 > 0$, this matrix is rank one.
Writing $G_U=d_U a_U b_U^\top$, where
\[
d_U
=
\omega_U\|C_{0,k}V_{0,2}^\top y\|_2,
\qquad
a_U=
\frac{P_{U_{0,2}}^\perp x}{\omega_U},
\qquad
b_U=
\frac{C_{0,k}V_{0,2}^\top y}
{\|C_{0,k}V_{0,2}^\top y\|_2},
\]
and perturbing $U_0$ along $(\boldsymbol 0_{N_1}^\top\,;\,a_U^\top)^\top$ gives an analogous one-dimensional path
$U(\theta)$ with $V_0$ and $\mathcal C_0$ fixed. The same calculations,
with $N_1/N$ and $T_1/T$ interchanged, yield
\[
\inf_{\phi}
\sup_{\mathcal M\in\mathcal F_{\mathrm{loc}}(\mathcal M_0,\varsigma)}
\mathbb E_{\mathcal M}
\left[
\{\phi(Z_\Omega)-\mu_{xy}^{(k)}(\mathcal M)\}^2
\right]
\ge
c_V\,
\omega_U^2
\min\left(
\frac{\sigma^2T}{KT_1},
\varepsilon_U^2
\right)
\|V_{0,2}^{\top}y\|_2^2.
\]
Combining this with the lower bound obtained from the $V_0$-perturbation,
and using that the maximum of two lower bounds is at least their average, gives
\begin{align*}
    \inf_{\phi}
\sup_{\mathcal M\in\mathcal F_{\mathrm{loc}}(\mathcal M_0,\varsigma)}
\mathbb E_{\mathcal M}
\left[
\{\phi(Z_\Omega)-\mu_{xy}^{(k)}(\mathcal M)\}^2
\right]
&\ge
\frac{c_V}{2}\,
\omega_V^2
\min\left(
\frac{\sigma^2N}{KN_1},
\varepsilon_V^2
\right)
\|U_{0,2}^{\top}x\|_2^2 \\
& \qquad +
\frac{c_V}{2}\,
\omega_U^2
\min\left(
\frac{\sigma^2T}{KT_1},
\varepsilon_U^2
\right)
\|V_{0,2}^{\top}y\|_2^2.
\end{align*}
Setting $c = c_V/2$ yields the desired bound.
\end{proof}

\medskip\medskip
For any $\mathcal M_0$ covered by both sets of assumptions, the combined results of Theorems~\ref{thm:localTermLB} and~\ref{thm:globalTermLB} immediately imply the lower bound given in Theorem~\ref{thm:CombinedLB} by taking the maximum of the respective right-hand sides.

In particular, the assumptions $c_u \max(\frac{N_1}{N}, \frac{T_1}{T}) \leq 1-{\delta_\mathrm{sep}}$ and ${\delta_\mathrm{sep}}^{-1} \max( \|  U_{0,2}^{\top}\,  x\|_2^2,  \|  V_{0,2}^{\top}\,  y\|_2^2) \leq 1/2$ allow simplifying the statement to avoid the presence of $\omega_U^2$ and $\omega_V^2$. To see why, start by noticing that $(1 - c_u \frac{N_1}{N}) (1 - \omega_U^2) \leq \sigma_\mathrm{min}^2( U_{0,2}) \,(1 - \omega_U^2) \leq \| U_{0, 2}^\top \,  x\|_2^2 \leq \sigma_\mathrm{max}^2( U_{0,2}) \,(1 - \omega_U^2) \leq 1 - \omega_U^2$. An analogous statement holds for $\omega_V$ and $\| V_{0, 2}^\top \,  y\|_2$. Under $c_u \max(\frac{N_1}{N}, \frac{T_1}{T}) \leq 1-{\delta_\mathrm{sep}}$, we thus get $\| U_{0, 2}^\top \,  x\|_2^2 \geq {\delta_\mathrm{sep}}\,( 1-\omega_U^2)$ and $\| V_{0, 2}^\top \,  y\|_2^2 \geq {\delta_\mathrm{sep}}\,(  1-\omega_V^2)$. Finally, ${\delta_\mathrm{sep}}^{-1} \max( \|   U_{0,2}^{\top}\,  x\|_2^2,  \|  V_{0,2}^{\top}\,  y\|_2^2) \leq 1/2$ implies  $\min(\omega_U^2, \omega_V^2)\geq 1/2$, allowing us to remove the dependence on $\omega_U$ and $\omega_V$ both from the prefactor multiplying the constant $c$ and from the quantities $\varepsilon_U, \varepsilon_V$ appearing inside the minima.

\subsection{Proofs for Section~\ref{sec:bilinearStaggered}}

\begin{proof}[Proof of Corollary~\ref{corollary:StaggeredUB}]
We verify that the auxiliary problem obtained by restricting to $S \times Q$
satisfies the hypotheses of Theorem~\ref{thm:4BlockUBLinear}. Although, for
$j \neq k$, the missingness masks $\Omega_{S,Q,j}$ are not necessarily in
four-block form, this is not an essential requirement. What is needed in order
to apply Theorem~\ref{thm:4BlockUBLinear}, and in particular
Lemma~\ref{lemma1} in Appendix~\ref{appendix:tecnicalLemmas}, is the relevant
subblock conditioning assumption for the rows and columns corresponding to fully
observed rows and columns, respectively. These are exactly the submatrices that
enter the definitions of the upper and left pooled matrices, and they are what
enable improved estimation of the shared subspaces.

We start by verifying the analogue of  \eqref{assump:subblock-conditioning}. Let $G_U:=U_S^\top U_S$ and $G_V:=V_Q^\top V_Q$. By Assumption \eqref{ass:subBlockAuxilliary} we have 
\[
c_\ell\frac{\mathfrak n}{N}I_r
\preceq
G_U
\preceq
c_u\frac{\mathfrak n}{N}I_r,
\qquad
c_\ell\frac{\mathfrak t}{T}I_r
\preceq
G_V
\preceq
c_u\frac{\mathfrak t}{T}I_r, 
\]
hence $G_U$ and $G_V$ are nonsingular. Letting
$\widetilde U:=U_SG_U^{-1/2},\ \widetilde V:=V_QG_V^{-1/2}$ and $\mathcal{\widetilde  C}_{\bullet, \bullet, j}:=G_U^{1/2} \mathcal C_{\bullet,\bullet,j}G_V^{1/2}$, 
we have that $\widetilde U^\top\widetilde U=I_r$, $\widetilde V^\top\widetilde V=I_r$ and $\mathcal M_{S,Q,j}
=
U_S \, \mathcal C_{\bullet,\bullet,j}V_Q^\top
=
\widetilde U \mathcal{\widetilde  C}_{\bullet, \bullet, j}\widetilde V^\top$. This shows that the restriction of the signal to $S\times Q$ admits an orthonormal
Tucker2 representation with row and column dimensions $\mathfrak n$ and
$\mathfrak t$, respectively.

We next check that the restricted Gram matrices are well conditioned. For the target layer, we have $\widetilde U_{S^+}^\top\widetilde U_{S^+} = G_U^{-1/2}U_{S^+}^\top U_{S^+}G_U^{-1/2}$, hence,  using \eqref{ass:subBlockAuxilliary} and the preceding bounds on $G_U$ gives 
\[
\frac{c_\ell}{c_u} \frac{\mathfrak n_{1k}}{\mathfrak n}I_r \preceq \widetilde U_{S^+}^\top\widetilde U_{S^+} \preceq \frac{c_u}{c_\ell} \frac{\mathfrak n_{1k}}{\mathfrak n}I_r.
\]
Similarly, for each $j\neq k$ we have $\widetilde U_{\mathsf{RowAnc}_k(j)}^\top \widetilde U_{\mathsf{RowAnc}_k(j)} = G_U^{-1/2} U_{\mathsf{RowAnc}_k(j)}^\top U_{\mathsf{RowAnc}_k(j)} G_U^{-1/2}$, thus \[
\frac{c_\ell}{c_u} \frac{\mathfrak n_{1j}}{\mathfrak n}I_r \preceq \widetilde U_{\mathsf{RowAnc}_k(j)}^\top \widetilde U_{\mathsf{RowAnc}_k(j)} \preceq \frac{c_u}{c_\ell} \frac{\mathfrak n_{1j}}{\mathfrak n}I_r.
\]
Since the same argument applies to the column factors, we can conclude that the auxiliary sub-block conditioning assumption holds with $c_\ell/c_u, c_u/c_\ell$ in place of $c_\ell, c_u$, respectively.

It remains to identify the signal strengths in the auxiliary model. Since $\widetilde{\mathcal{C}}_{\bullet, \bullet, j}=G_U^{1/2}\mathcal C_{\bullet,\bullet,j}G_V^{1/2}$, standard bounds on the singular values of a matrix product yield \[
\widetilde\gamma_{\min} := c_\ell \, \gamma_{\min} \sqrt{\frac{\mathfrak n\mathfrak t}{NT}} \leq \sigma_{\min}(\widetilde C_j) \leq \sigma_{\max}(\widetilde C_j) \leq c_u\gamma_{\max} \sqrt{\frac{\mathfrak n\mathfrak t}{NT}} =: \widetilde\gamma_{\max}, 
\]
thereby showing that the effective lower and upper signals are $\widetilde\gamma_{\min}$ and $\widetilde\gamma_{\max}$.

The noise distribution is unchanged by restriction since, on the observed auxiliary coordinates, the errors are still independent centered Gaussian variables with variance $\sigma^2$. Moreover, Assumption \eqref{ass:aux-extra} is the analogue of \eqref{assump:sampleSize},  \eqref{assump:smallNoise} and \eqref{assump:Incoherence} after replacing $N, T, N_{1k}, T_{1k}, \rho_N, \rho_T, p_N, p_T, \zeta_N, \zeta_T, \gamma_{\min}, \gamma_{\max}$ by $\mathfrak n, \mathfrak t, \mathfrak n_{1k}, \mathfrak t_{1k}, \rho_{\mathfrak n}, \rho_{\mathfrak t}, p_{\mathfrak n}, p_{\mathfrak t}, \zeta_{\mathfrak n}, \zeta_{\mathfrak t}, \widetilde\gamma_{\min}, \widetilde\gamma_{\max}$. Also, \eqref{eq:negligibilityForUBStaggared} is the analogue of \eqref{eq:negligibilityForUB4Block}, again with the same substitution of auxiliary dimensions and signal strengths. We can thus conclude that all hypotheses of Theorem~\ref{thm:4BlockUBLinear} hold for the auxiliary problem, hence applying this result gives $\mathbb E_\mathcal M [\{ \widehat\mu_{xy}^{(k,a,b)} - \mu_{xy}^{(k,a,b)} \}^2] \le c_1\widetilde\Upsilon_{xy}$. This concludes the proof.
\end{proof}

\section{Additional details on the numerical results}\label{sec:simulExtra}

\subsection{Additional simulation studies}\label{app:simulExtraMasking}

In this section we complement the empirical results on the masking experiments in Sections~\ref{sec:real-data-castle}-\ref{sec:real-data-oxcgrt-owid} by running similar masking experiments on synthetic data. In particular, this gives us an idea of what to expect in the favorable setting where the signal tensor follows a Tucker2 model and its true rank is known.

In this regard, we generate a tensor $\mathcal{Y}\in\mathbb{R}^{50\times 11\times 4}$ according to $\mathcal{Y}_{\bullet,\bullet,j}=\mathcal{M}_{\bullet,\bullet,j}+\mathcal{E}_{\bullet,\bullet,j}$, where $\mathcal{M}_{\bullet,\bullet,j}=U\mathcal{C}_{\bullet,\bullet,j}V^\top$ for all $j\in[4]$, so that the signal $\mathcal{M}$ has Tucker2 rank $(2,2,4)$. The unit factor $U\in\mathbb{R}^{50\times 2}$ is obtained by generating a matrix with independent standard Gaussian entries and orthonormalizing its columns.  The two columns of $V\in\mathbb{R}^{11\times 2}$ are obtained by orthonormalizing a constant and a centered linear function evaluated over an equally spaced grid on $[0,1]$. For each layer $j$, the entries of the core matrix $\mathcal{C}_{\bullet,\bullet,j}\in\mathbb{R}^{2\times 2}$ are independently drawn from a standard Gaussian distribution, after which the core is normalized to have unit Frobenius norm. The noise tensor has entries $\mathcal{E}_{itj}\sim\mathcal{N}(0,1)$.

We impose the same staggered-adoption pattern on all four layers. Of the $50$ units, $10$ are never treated, while the remaining $40$ units are equally divided across adoption times $4,\ldots,11$, with five units adopting at each time. As usual, an entry is observed before the corresponding adoption time and missing thereafter. We take $k=2$ as the target layer. Starting from this observation pattern, we generate additional missingness while preserving the staggered structure. More precisely, at each step, we randomly select an eligible row and mask its last currently observed entry, so that the observed entries in every row always form an initial segment of the time periods. The terminal mask is chosen to retain $6$ fully observed rows and $4$ fully observed columns, with all remaining rows observed only over the first $4$ periods. For each of these intermediate masks, our target is the average of the artificially masked entries in $\mathcal{Y}_{\bullet,\bullet,2}$, whose values before masking are treated as ground truth. We estimate this quantity using Algorithm~\ref{alg:staggered-wrapper}, aggregating the block-specific estimates with weights proportional to the number of artificially masked entries in each block. We compare it with its matrix-only counterpart, which is essentially equivalent to a plug-in approach based on the estimator of~\cite{yan2024entrywise}, as well as with two plug-in approaches based on the nuclear-norm estimator of~\cite{choi2024matrix} and its debiased counterpart.

All rank-dependent procedures are run with $r\in\{1,2,3\}$, corresponding to under-specification, correct specification, and over-specification of the true rank $r=2$. For Algorithm~\ref{alg:staggered-wrapper} and its matrix counterpart, we set $\tau=0.01$. Fig.~\ref{fig:SytheticALL} reports the average squared error over $30$ replicates, together with its standard error. Each replicate begins after $80$ additional entries have been masked and then adds one masked entry at a time until reaching the common terminal configuration. We repeat this procedure independently $30$ times, using a different masking path for each replicate.

\begin{figure}
\centering
\includegraphics[width=1\linewidth]{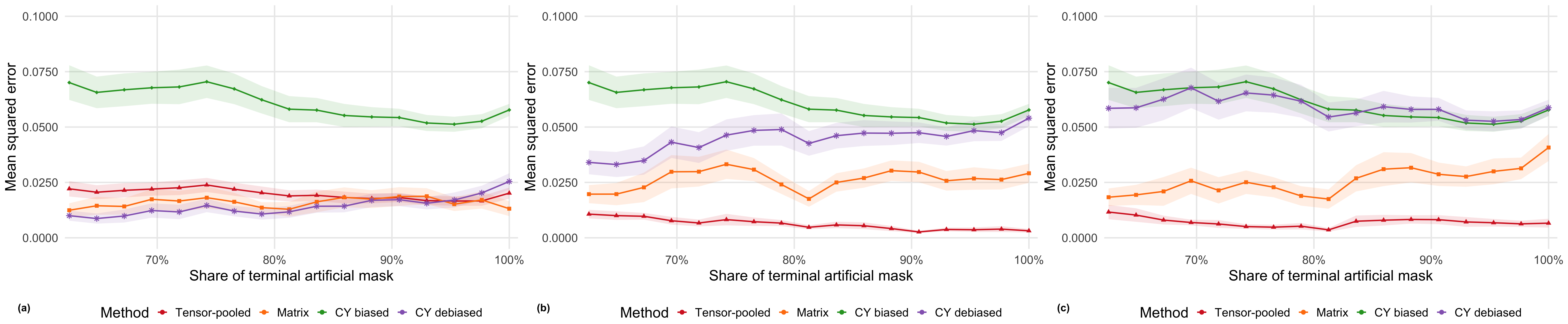}
  \captionsetup{
  font={footnotesize,stretch=0.9},
  skip=3pt
}
\caption{Synthetic masking experiment with additional missingness in the target layer $k =2$. The panels correspond to ranks $r=1,2,3$ used to run the four methods. The terminal mask retains $6$ fully observed rows and $4$ fully observed columns. The curves report the average squared error over $30$ replicates, with one standard error. The percentage on the $x$-axis represents the percentage of the terminal mask reached.}
\label{fig:SytheticALL}
\end{figure}
\vspace{-15pt}
The results show that the two CY procedures generally perform worse than the tensor and matrix estimators. A possible explanation is that the method of \cite{choi2024matrix} accommodates more general missingness patterns, whereas Algorithm~\ref{alg:staggered-wrapper} and its matrix counterpart are tailored to the staggered design and therefore perform particularly well when the missingness is close to a four-block structure. This should not be interpreted as a general ordering between the methods, as the COVID-19 application provides an example with less structured missingness where the CY procedures are highly competitive.

Furthermore, to better understand the relationship between the tensor and matrix estimators, we run a similar experiment in which the terminal mask is chosen to retain $3$ fully observed rows and $3$ fully observed columns. The results are shown in Fig.~\ref{fig:Synthetic}.

\begin{figure}
\centering
\includegraphics[width=1\linewidth]{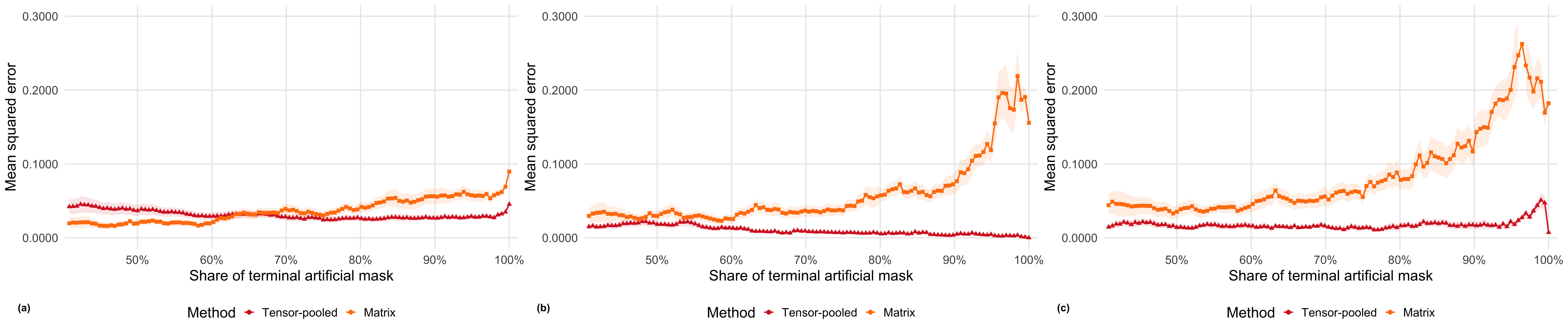}
  \captionsetup{
  font={footnotesize,stretch=0.9},
  skip=3pt
}
\caption{Synthetic masking experiment with additional missingness in the target layer $k=2$. The panels correspond to ranks $r=1,2,3$ used to run the two methods. The terminal mask retains $3$ fully observed rows and $3$ fully observed columns. The curves report the average squared error over $30$ replicates, with one standard error. The percentage on the $x$-axis represents the percentage of the terminal mask reached.}
\label{fig:Synthetic}
\vspace{-15pt}
\end{figure}

In the first panel, where the procedures are run with $r=1$, the matrix method performs better when the number of artificially masked entries is small. This may be because the target layer initially contains enough information on its own, while Algorithm~\ref{alg:bilinear4block} must estimate a common one-dimensional subspace across all layers. As the masking becomes more severe, however, the information in the target layer becomes scarcer and the matrix estimator deteriorates, while the pooled estimator can exploit additional information from the auxiliary layers.

The second and third panels show this advantage more clearly. When the rank is correctly specified at $r=2$, Algorithm~\ref{alg:staggered-wrapper} achieves smaller error. A similar pattern holds for $r=3$, showing that the gain from pooling persists under rank over-specification.

\subsection{Target estimands used in Sections~\ref{sec:real-data-castle} and~\ref{sec:real-data-oxcgrt-owid}}
\label{app:simulEstimands}

We provide more details on the aggregate quantities used in the
real-data experiments.  In these applications, we work with two signal tensors, $\mathcal M(0)$ and $\mathcal M(1)$, corresponding to the untreated and treated responses, respectively. The staggered-adoption mask $\Omega$ is constructed from the treatment variable, with $\Omega_{itj}=1$ if entry $(i,t,j) \in [N] \times [T] \times [K]$ lies in the untreated region, and $\Omega_{itj}=0$ otherwise. The fully observed tensor~$\cal Y$ satisfies $\mathcal Y_{itj}
=
\Omega_{itj} \, \mathcal Y_{itj}(0)
+
(1-\Omega_{itj}) \,  \mathcal Y_{itj}(1)$, where $\mathcal Y(0)$ denotes the untreated potential outcome, which is observed on $\{(i,t,j) :\Omega_{itj}=1\}$ and missing on the complementary set, and $\mathcal Y(1)$ denotes the treated potential outcome, which is observed only over $\{(i,t,j) :\Omega_{itj}=0\}$.

\medskip \medskip
We now introduce the four functionals used in the empirical applications:
\textsc{Avg}, \textsc{RowHet}, \textsc{Local-$i_0$}, and \textsc{Trend}. Fix a target slice $k$ with staircase adoption, and let $\mathcal D_k=\{(a,b):a+b>o_k+1\}$ for some integer $o_k\geq 2$ be the collection of policy-on target blocks. For all $(a,b)\in\mathcal D_k$ and $c \in \{0,1\}$, we also denote by 
$\mathcal M_{\bullet,\bullet,k}^{(a,b)}(c)$ the restriction of $\mathcal{M}(c)$ to rows $R_{ak}$ and columns $C_{bk}$. For $i\in R_{ak}$ and $t\in C_{bk}$, define the local-index maps
$\ell_a(i):=\operatorname{pos}_{R_{ak}}(i)$ and
$\ell_b(t):=\operatorname{pos}_{C_{bk}}(t)$, so that $\ell_a(i)$ is the
position of row $i$ within $R_{ak}$, and $\ell_b(t)$ is the position of
column $t$ within $C_{bk}$. For the
\textsc{RowHet} functional, choose a sign vector
$\eta=(\eta_1,\ldots,\eta_N)^\top\in\{\pm1\}^N$. For the \textsc{Local-$i_0$} functional, choose a row-block index $a_0$ such that $\{b:(a_0,b)\in\mathcal D_k\}\neq\varnothing$, and then fix a row index $i_0\in R_{a_0k}$. We also define $\mathcal D_k^{\mathrm{tr}}
:=
\{(a,b)\in\mathcal D_k:T_{bk}\ge2\}$, and assume $\mathcal D_k^{\mathrm{tr}}\neq\varnothing$ whenever the
\textsc{Trend} functional is considered. For
$h\in\{\textsc{Avg},\textsc{RowHet},\textsc{Local-}i_0,\textsc{Trend}\}$,
we write 
\[
\mathcal D_{k,h}:=
\begin{cases}
\mathcal D_k, & h\in\{\textsc{Avg},\textsc{RowHet}\},\\
\{(a_0,b):(a_0,b)\in\mathcal D_k\}, & h=\textsc{Local-}i_0,\\
\mathcal D_k^{\mathrm{tr}}, & h=\textsc{Trend}
\end{cases}
\]
for the active block set, and, for  fixed $(a,b)\in\mathcal D_{k,h}$, we consider the query
directions
\[
\begin{array}{lll}
\textsc{Avg}:
&
x_{a,h}=N_{ak}^{-1/2}\mathbf 1_{N_{ak}},
&
y_{b,h}=T_{bk}^{-1/2}\mathbf 1_{T_{bk}},
\\[5pt]
\textsc{RowHet}:
&
x_{a,h}=N_{ak}^{-1/2} \eta_{R_{ak}},
&
y_{b,h}=T_{bk}^{-1/2}\mathbf 1_{T_{bk}},
\\[5pt]
\textsc{Local-$i_0$}:
&
x_{a,h}=\boldsymbol e_{\ell_a(i_0)},
&
y_{b,h}=T_{bk}^{-1/2}\mathbf 1_{T_{bk}},
\\[5pt]
\textsc{Trend}:
&
x_{a,h}=N_{ak}^{-1/2}\mathbf 1_{N_{ak}},
&
y_{b,h}=\dfrac{z_b-\bar z_b\mathbf 1_{T_{bk}}}
{\|z_b-\bar z_b\mathbf 1_{T_{bk}}\|_2},
\end{array}
\]
with $z_b=(1,\ldots,T_{bk})^\top$ and $\bar z_b=(T_{bk}+1)/2$. 

Based on this, for $h\in\{\textsc{Avg},\textsc{RowHet},\textsc{Local-}i_0,\textsc{Trend}\}$  and  $(a,b)\in\mathcal D_{k,h}$, we define the block-level bilinear forms \[\mu_h^{(k,a,b)}(c)
:=
x_{a,h}^{\top}
\mathcal M_{\bullet,\bullet,k}^{(a,b)}(c) \,
y_{b,h}.
\]
As for the interpretation of these functionals, \textsc{Avg} averages all entries in the block, \textsc{RowHet} averages a signed row contrast over the block's columns, \textsc{Local-$i_0$} averages only row $i_0$ over the block's columns, while \textsc{Trend} averages over rows and contrasts later columns with earlier columns. In particular, this latter bilinear form also has a simple slope interpretation. For $t \in C_{bk}$ and $c \in \{0,1\}$, define the row-averaged trajectory $\bar m_t^{(a,b)}(c):=N_{ak}^{-1}\sum_{i\in R_{ak}}\mathcal M_{i,t,k}^{(a,b)}(c)$. If this trajectory is linear in local time, i.e.~$\bar m_t^{(a,b)}(c)=\alpha_{a,b}^c+\beta_{a,b}^c\,\ell_b(t)$, then
\[
x_{a, \textsc{Trend}}^\top \, \mathcal M_{\bullet,\bullet,k}^{(a,b)}(c) \, y_{b, \textsc{Trend}}
=
\sqrt{N_{ak}}\,\beta_{a,b}^c
\left\{\frac{T_{bk}(T_{bk}^2-1)}{12}\right\}^{1/2},
\]
thus showing that \textsc{Trend} recovers the slope of the row-averaged trajectory up to a known normalization.

We then aggregate these block-level summaries over all missing blocks in the target slice by
\[
\Psi_c^{(h)}(k)
:=
\{W_h(k)\}^{-1}
\sum_{(a,b)\in\mathcal D_{k,h}}
c_{ab}^{(h)}\mu_h^{(k,a,b)}(c),
\]
where the choices of weights and normalizing constants, together with the simplified form of each estimand,  are given below:
\begin{equation}
\label{eq:target-estimands-table}
\begin{array}{c|c|c|c}
h
&
c_{ab}^{(h)}
&
W_h(k)
&
\Psi_c^{(h)}(k)
\\ \hline
\textsc{Avg}
&
\sqrt{N_{ak}T_{bk}}
&
\displaystyle \sum_{(a,b)\in\mathcal D_k}N_{ak}T_{bk}
&
\displaystyle
\frac{
\sum_{(a,b)\in\mathcal D_k}
\sum_{i\in R_{ak}}
\sum_{t\in C_{bk}}
\mathcal M_{i,t,k}^{(a,b)}(c)
}{
\sum_{(a,b)\in\mathcal D_k}N_{ak}T_{bk}
}
\\[18pt]
\textsc{RowHet}
&
\sqrt{N_{ak}T_{bk}}
&
\displaystyle \sum_{(a,b)\in\mathcal D_k}N_{ak}T_{bk}
&
\displaystyle
\frac{
\sum_{(a,b)\in\mathcal D_k}
\sum_{i\in R_{ak}}
\sum_{t\in C_{bk}}
\eta_i\,\mathcal M_{i,t,k}^{(a,b)}(c)
}{
\sum_{(a,b)\in\mathcal D_k}N_{ak}T_{bk}
}
\\[18pt]
\textsc{Local-}i_0
&
\sqrt{T_{bk}} 
&
\displaystyle \sum_{b:\,(a_0,b)\in\mathcal D_k}T_{bk}
&
\displaystyle
\frac{
\sum_{b:\,(a_0,b)\in\mathcal D_k}
\sum_{t\in C_{bk}}
\mathcal M_{i_0,t,k}^{(a_0,b)}(c)
}{
\sum_{b:\,(a_0,b)\in\mathcal D_k}T_{bk}
}
\\[18pt]
\textsc{Trend}
&
\displaystyle
\frac{1}{\sqrt{
N_{ak} T_{bk}(T_{bk}^2-1)/12}}
&
\displaystyle |\mathcal D_k^{\mathrm{tr}}|
&
\displaystyle
\frac{1}{|\mathcal D_k^{\mathrm{tr}}|}
\sum_{(a,b)\in\mathcal D_k^{\mathrm{tr}}}
\beta_{a,b}^c .
\end{array}
\end{equation}
The final expression for $\Psi_c^{(\mathrm{Trend})}(k)$ uses the linearity condition on the row-averaged trajectory and the assumption $\mathcal D_k^{\mathrm{tr}} \neq \varnothing$. These four quantities have the following interpretations: $\Psi_c^{(\textsc{Avg})}(k)$ is the average potential outcome over the missing entries in slice $k$, $\Psi_c^{(\textsc{RowHet})}(k)$ is the corresponding signed row contrast, $\Psi_c^{(\textsc{Local}\text{-}i_0)}(k)$ is the average potential outcome for row $i_0$ over the missing target blocks containing that row, and $\Psi_c^{(\textsc{Trend})}(k)$ is the average within-block slope of the row-averaged trajectory. 

Based on these potential-outcome summaries, we also define the aggregate policy contrast for functional $h$ by $\Delta^{(h)}(k):=\Psi_1^{(h)}(k)-\Psi_0^{(h)}(k)$.

\medskip\medskip\medskip
Coming now to the estimation of these aggregate quantities, it is useful to recall that
$\mathcal Y_{\bullet, \bullet, k}(1)$ is observed over $\mathcal D_k$, since these are exactly the
policy-on target blocks in which $\mathcal Y_{\bullet, \bullet, k}(0)$ has missing entries. As a result,
functionals with $c=1$ are easier to target and can be estimated by simple
plug-in estimators. On the other hand, quantities such as
$\mu_h^{(k,a,b)}(0)$ require an alternative approach, and can be estimated using Algorithm~\ref{alg:staggered-wrapper}. This immediately leads to a naive estimator of $\Psi_0^{(h)}(k)$ that applies Algorithm~\ref{alg:staggered-wrapper}
separately to each missing target block, and then aggregates the resulting
block-level estimates using the weights $c_{ab}^{(h)}$ and normalizing constants
$W_h(k)$ defined in~\eqref{eq:target-estimands-table}. 

\begin{algorithm}[!ht]
\caption{\textsc{QuadraticStaggeredAggregate} for estimating
$\Psi_0^{(h)}(k)$}
\label{alg:qudraticStaggeredAggregate}
\begingroup \footnotesize \linespread{0.92}\selectfont \algrenewcommand{\algorithmicindent}{1em} \setlength{\abovedisplayskip}{2pt} \setlength{\belowdisplayskip}{2pt} \setlength{\abovedisplayshortskip}{1pt} \setlength{\belowdisplayshortskip}{1pt}
\begin{algorithmic}[1]
\Require target slice $k\in[K]$, functional
$h\in\{\textsc{Avg},\textsc{RowHet},\textsc{Local-}i_0,\textsc{Trend}\}$,
rank $r$, data $\mathcal Y$, staircase partitions
$\{R_{ak}\}_{a=1}^{o_k}$ and $\{C_{bk}\}_{b=1}^{o_k}$,
parameter $\tau>0$, and, when needed, sign vector $\eta$ and row index $i_0$.
\State Initialize $S_h\gets0$.
\For{$(a,b)\in\mathcal D_{k,h}$}
    \State Construct $x_{a,h}$ and $y_{b,h}$ according to the definitions above.
    \State Run Algorithm~\ref{alg:staggered-wrapper} with inputs
    $(k,a,b,r,x_{a,h},y_{b,h},\mathcal Y,\tau)$, and denote its output by
    $\widehat\mu_h^{(k,a,b)}(0)$.
    \State Update
    \[
    S_h\gets S_h+c_{ab}^{(h)}\widehat\mu_h^{(k,a,b)}(0).
    \]
\EndFor
\State \Return $\widehat\Psi_0^{(h)}(k)\gets \{W_h(k)\}^{-1}S_h$.
\end{algorithmic}
\endgroup
\end{algorithm}
\vspace{-15pt}
The blockwise plug-in
estimator in Algorithm~\ref{alg:qudraticStaggeredAggregate} applies Algorithm~\ref{alg:staggered-wrapper} separately to every target block
$(a,b)\in\mathcal D_{k,h} \subseteq \mathcal D_k$, hence it recomputes two rank-$r$ singular value
decompositions for each missing block. In the \textsc{Avg} and \textsc{RowHet} cases, we have
$|\mathcal D_{k,h}|=|\mathcal D_k|=o_k(o_k-1)/2$, so the cost is
quadratic in $o_k$.

This cost can be reduced by trading some statistical efficiency for computational savings through a reduced-anchor construction. In particular, for fixed $a$, we keep the
target-slice column anchor $\mathsf{ColAnc}_{k,a,b}(k)=Q^+_{k,a,b}$ but replace $\mathsf{ColAnc}_{k,a,b}(j)$ by $
\mathsf{ColAnc}_{k,a,b}(j)\cap Q_{k,1}$ for each $j\neq k$. Because $Q^+_{k,a,b}$ depends only on $a$, and because $Q_{k,1}\subseteq Q_{k,b}$
for all $b$, the resulting pooled left matrix depends only on $a$. Similarly,
for fixed $b$, we keep the target-slice row anchor $\mathsf{RowAnc}_{k,a,b}(k)=S^+_{k,a,b}$ but replace $\mathsf{RowAnc}_{k,a,b}(j)$ by $
\mathsf{RowAnc}_{k,a,b}(j)\cap S_{k,1}$ for each $j\neq k$. Because $S^+_{k,a,b}$ depends only on $b$, and because $S_{k,1}\subseteq S_{k,a}$
for all $a$, the resulting pooled upper matrix depends only on $b$.

With this reduced-anchor construction, the SVDs of the pooled left and upper matrices can be cached
and reused, as illustrated in the following algorithm. We will use the shorthand $Q^+_{k,a,b}\equiv Q^+_{k,a}$ and $S^+_{k,a,b}\equiv S^+_{k,b}$ to emphasize that these sets depend only on $a$ and $b$, respectively.

\begin{algorithm}[!ht]
\caption{\textsc{LinearStaggeredAggregate} for estimating $\Psi_0^{(h)}(k)$}
\label{alg:LinearStaggeredAggregate}
\begingroup \footnotesize \linespread{0.92}\selectfont \algrenewcommand{\algorithmicindent}{1em} \setlength{\abovedisplayskip}{2pt} \setlength{\belowdisplayskip}{2pt} \setlength{\abovedisplayshortskip}{1pt} \setlength{\belowdisplayshortskip}{1pt}
\begin{algorithmic}[1]
\Require target slice $k\in[K]$, functional
$h\in\{\textsc{Avg},\textsc{RowHet},\textsc{Local-}i_0,\textsc{Trend}\}$,
rank $r$, data $\mathcal Y$, staircase partitions
$\{R_{ak}\}_{a=1}^{o_k}$ and $\{C_{bk}\}_{b=1}^{o_k}$,
parameter $\tau>0$, and, when needed, sign vector $\eta$ and row index $i_0$.
\State Let $\mathcal A_h:=\{a:\exists b \text{ such that }(a,b)\in\mathcal D_{k,h}\}$ and
$\mathcal B_h:=\{b:\exists a \text{ such that }(a,b)\in\mathcal D_{k,h}\}$.

\For{$a\in\mathcal A_h$}
    \State Set $\mathsf{ColAnc}^{\mathrm{red}}_{k,a}(k):= Q^+_{k,a}$ and, for $j\neq k$, $\mathsf{ColAnc}^{\mathrm{red}}_{k,a}(j)
:=~\left\{
t\in Q_{k,1}:
\Omega_{i,t,j}=1 \ \text{for all } i\in S_{k,a}
\right\}$.
    \State Form reduced pooled left matrix
    $Y_{\mathrm{left},a}^{\mathrm{red}}
    \gets
    (\mathcal Y_{S_{k,a},\mathsf{ColAnc}^{\mathrm{red}}_{k,a}(1),1}
    \ \cdots\
    \mathcal Y_{S_{k,a},\mathsf{ColAnc}^{\mathrm{red}}_{k,a}(K),K})$.
    \State Compute rank-$r$ truncated singular value decomposition
    $(\widehat U_{\mathrm{left},a}^{\mathrm{red}},
    \widehat\Sigma_{\mathrm{left},a}^{\mathrm{red}},
    \widehat V_{\mathrm{left},a}^{\mathrm{red}})
    \gets
    \mathrm{SVD}_r(Y_{\mathrm{left},a}^{\mathrm{red}})$.
    \State Construct $x_{a,h}$, and cache
    $\widehat\alpha_{\mathrm{red}}^{(k, a, h)}
    \gets
    (\widehat U_{\mathrm{left},a}^{\mathrm{red}})_{R_{ak},\bullet}^{\top}
    \,\, x_{a,h}\in\mathbb R^r$.
\EndFor

\For{$b\in\mathcal B_h$}
    \State Set $\mathsf{RowAnc}^{\mathrm{red}}_{k,b}(k):=S_{k,b}^+$ and, for $j\neq k$, $\mathsf{RowAnc}^{\mathrm{red}}_{k,b}(j)
:=~\left\{
i\in S_{k,1}:
\Omega_{i,t,j}=1 \ \text{for all } t\in Q_{k,b}
\right\}$.
    \State Form reduced pooled upper matrix
    $Y_{\mathrm{up},b}^{\mathrm{red}}
    \gets
    (\mathcal Y_{\mathsf{RowAnc}^{\mathrm{red}}_{k,b}(1),Q_{k,b},1}
    \ ;\ \cdots\ ;\
    \mathcal Y_{\mathsf{RowAnc}^{\mathrm{red}}_{k,b}(K),Q_{k,b},K})$.
    \State Compute rank-$r$ truncated singular value decomposition
    $(\widehat U_{\mathrm{up},b}^{\mathrm{red}},
    \widehat\Sigma_{\mathrm{up},b}^{\mathrm{red}},
    \widehat V_{\mathrm{up},b}^{\mathrm{red}})
    \gets
    \mathrm{SVD}_r(Y_{\mathrm{up},b}^{\mathrm{red}})$.
    \State Set $s_{k,b}:=\sum_{j=1}^{k-1}|\mathsf{RowAnc}^{\mathrm{red}}_{k,b}(j)|$, \,\,
    $\widehat U_{\mathrm{up},b}^{(k)}
    \gets
    (\widehat U_{\mathrm{up},b}^{\mathrm{red}})_{
    \{s_{k,b}+1,\ldots,s_{k,b}+|S_{k,b}^+|\},\bullet}$, \,\, 
    $\widehat V_{bk}
    \gets~(\widehat V_{\mathrm{up},b}^{\mathrm{red}})_{C_{bk},\bullet}$.
    \State Construct $y_{b,h}$, and compute
    $T_{b,h}\gets \widehat V_{bk}^{\top} \,\, y_{b,h}$,
    $W_{b,h}\gets \widehat\Sigma_{\mathrm{up},b}^{\mathrm{red}}T_{b,h}$,
    $X_{b,h}\gets \widehat U_{\mathrm{up},b}^{(k)}W_{b,h}$.
\EndFor

\State Set $S_h\gets0$.

\For{$(a,b)\in\mathcal D_{k,h}$}
\State Set
$\widehat U_{+k}^{(a,b)}
\gets
(\widehat U_{\mathrm{left},a}^{\mathrm{red}})_{S_{k,b}^+,\bullet}$.
\State Compute
$\widehat H_{a,b}
\gets
(\widehat U_{+k}^{(a,b)})^\top
\widehat U_{+k}^{(a,b)}$ and $\widehat H_{a,b}
=
Q_{a,b}\operatorname{diag}(\lambda_{a,b,1},\ldots,\lambda_{a,b,r})\, Q_{a,b}^\top$. Then set
\[
\widehat H_{a,b,\tau}^{\mathrm{inv}}
\gets
Q_{a,b}
\operatorname{diag}\left(
\left\{
\frac{1}{\max[\lambda_{a,b,i},\tau]}
\right\}_{i=1}^r
\right)
Q_{a,b}^\top .
\]
    \State Compute
    $\widehat\beta_{\mathrm{red}}^{(k,a,b,h)}
    \gets
    \widehat H_{a,b,\tau}^{\mathrm{inv}}
    (\widehat U_{+k}^{(a,b)})^\top
    X_{b,h}\in\mathbb R^r$.
    \State Update
    $S_h\gets
    S_h+ c_{ab}^{(h)}
    \langle
    \widehat\alpha_{\mathrm{red}}^{(k,a, h)},
    \widehat\beta_{\mathrm{red}}^{(k,a,b,h)}
    \rangle$.
\EndFor

\State \Return
$\widehat\Psi_{0,\mathrm{lin}}^{(h)}(k)
\gets
\{W_h(k)\}^{-1}S_h$.
\end{algorithmic}
\endgroup
\end{algorithm}

Algorithm~\ref{alg:LinearStaggeredAggregate} avoids recomputing two spectral
decompositions for each missing block by exploiting the reduced-anchor
construction. Instead, it computes at most $o_k-1$ left decompositions and at most
$o_k-1$ upper decompositions, thus making the dominant spectral cost linear in
$o_k$ rather than quadratic. This reduction is obtained at the expense of statistical efficiency. While the
blockwise auxiliary construction uses all anchor rows and columns available for
each target block, the reduced-anchor construction uses a smaller common set of
anchors, restricting the non-target-slice column anchors to $Q_{k,1}$ and the
non-target-slice row anchors to $S_{k,1}$. The resulting pooled matrices may
therefore contain less information, which can  weaken the conditioning of the auxiliary Gram
matrices and make the estimated shared row and
column subspaces less accurate. Nevertheless, when $Q_{k,1}$ and $S_{k,1}$ are sufficiently large and
well conditioned, Algorithm~\ref{alg:LinearStaggeredAggregate} provides a computationally cheaper alternative to Algorithm~\ref{alg:qudraticStaggeredAggregate}. A numerical comparison between  Algorithm~\ref{alg:qudraticStaggeredAggregate} and Algorithm~\ref{alg:LinearStaggeredAggregate} is illustrated in Figs.~\ref{fig:StaggeredSynthetic}(b,~c).

\subsection{Castle Doctrine data}\label{app:castle-processing}
We provide more details on the data-processing setup of Section~\ref{sec:real-data-castle}. We use the Castle Doctrine data from the \texttt{PolicyEval} repository, 
\href{https://github.com/guerramarcelino/PolicyEval/raw/main/Datasets/}{available on GitHub}. The data contain state identifiers, calendar years, a Castle Doctrine treatment variable, and several state-level public-safety and socioeconomic variables. We use four logged crime-rate outcomes corresponding to the log motor-theft rate, log robbery rate, log aggravated-assault rate, and log murder rate.

We organize the observed outcomes into a fully observed tensor $\mathcal{Y}\in\mathbb{R}^{50\times 11\times 4}$, whose modes correspond to U.S.~states, calendar years, and crime outcomes. Thus, each entry $\mathcal{Y}_{itj}$ records outcome $j$ for state $i$ in year $t$. The four outcome slices are $\texttt{l\_motor}$, $\texttt{l\_robbery}$, $\texttt{l\_assault}$, and $\texttt{l\_homicide}$, respectively.

We construct the staggered-adoption mask $\Omega$ from the Castle Doctrine treatment variable. 
For all $j\in[K]$, we set $\Omega_{itj}=1$ if entry $(i,t,j)$ lies in the untreated region and zero otherwise. Because treatment status is common across crime outcomes, the same treatment pattern applies to each outcome slice; equivalently, $\Omega_{\bullet,\bullet,1}=\Omega_{\bullet,\bullet,2}=\Omega_{\bullet,\bullet,3}=\Omega_{\bullet,\bullet,4}$. Rows are ordered with never-adopting states at the top. Among adopting states, rows are arranged from later to earlier adopters, so that the treatment boundary moves smoothly across the panel. The resulting observation patterns are shown in Fig.~\ref{fig:castle-crime-tensor-matrix}. Blue cells indicate untreated observations, red cells indicate treated observations, and darker shades correspond to larger logged crime-rate values. In the potential-outcomes notation $\mathcal{Y}_{itj}= \Omega_{itj} \, \mathcal{Y}_{itj}(0)+(1-\Omega_{itj}) \, \mathcal{Y}_{itj}(1)$,  the blue cells are therefore the observed entries of $\mathcal{Y}(0)$, while the corresponding treated entries are treated as missing. Conversely, the red cells are the observed entries of $\mathcal{Y}(1)$, with the corresponding untreated entries treated as missing.

\begin{figure}[!ht]
    \centering
    \includegraphics[width=1\textwidth]{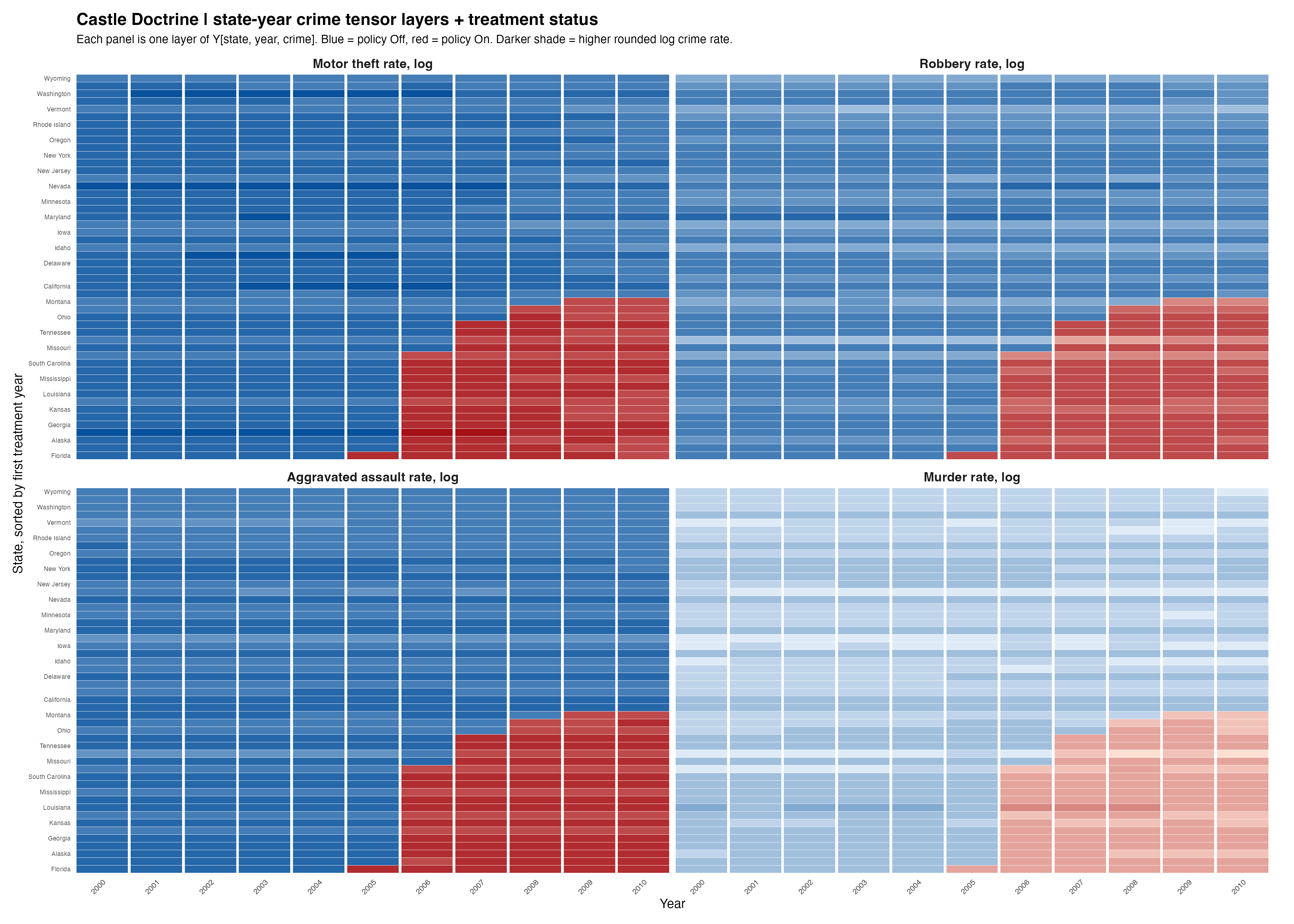}
    \captionsetup{
  font={footnotesize,stretch=0.9},
  skip=3pt
}
    \caption{Castle Doctrine state-year-crime tensor used in the real-data simulation. Panels show logged crime rates for motor theft, robbery, aggravated assault, and murder. Rows are U.S.~states, with never-adopters first and adopters ordered from later to earlier adoption years; columns are calendar years from 2000 to 2010. Blue cells are untreated, red cells are treated, and darker shades indicate larger logged crime-rate values.}
    \label{fig:castle-crime-tensor-matrix}
\end{figure}

 \section{Computational cost of Algorithm~\ref{alg:bilinear4block}}\label{appendix:compCost}
We further elaborate on the computational complexity of Algorithm~\ref{alg:bilinear4block}. The dominant cost is given by the rank-$r$ truncated singular value decompositions of $ Y_{\mathrm{left}}^{\mathrm{p}}\in\mathbb R^{N\times T_{1,{\mathrm{p}}}}$ and $ Y_{\mathrm{up}}^{\mathrm{p}}\in\mathbb R^{N_{1,{\mathrm{p}}}\times T}$, which are of the order $\mathcal O(NT_{1,{\mathrm{p}}}r + N_{1,{\mathrm{p}}}T \, r)$. After these decompositions are computed, the bilinear form is targeted directly. Indeed, Steps~9 and 10 compute $ T_y=\hat{ V}_{2k}^\top \,   y$, $ W_y=\hat{ \Sigma}_{\mathrm{up}}  T_y$, $ X_y=\hat{ U}_{\mathrm{up}}^{(k)} W_y$, and $\hat{ \beta}_y^{(k)}$ without ever materializing the full matrix. This avoids an $\mathcal O(N_{1k}T_{2k}\, r)$ block-construction cost, and computes the required action on~$ y$ in $\mathcal O(\,(T_{2k}+N_{1k}) \, r)$ time; including the computation of $\hat{ \alpha}_x^{(k)}=\hat{ U}_{2k}^\top  x$ in Step~5, the total cost per query is $\mathcal O( \, (N_{2k}+T_{2k}+N_{1k}) \, r)$. Furthermore, for the clipped inverse in Step~4, forming $\hat{ H}_k=\hat{ U}_{1k}^\top \hat{ U}_{1k}$ costs $\mathcal O(N_{1k}r^2)$, while its eigendecomposition and computing $\hat{ H}_{k,\tau}^\mathrm{inv}$ cost $\mathcal O(r^3)$. Taken together, the runtime analysis of these steps also indicate that our algorithm is well suited to caching. In particular, for fixed~$k$, once the pooled singular value decompositions and the slice-specific regression factorization have been computed, each additional query costs only $\mathcal O(\,(N_{2k}+T_{2k}+N_{1k}) \, r)$. When $k$ varies, the pooled singular value decompositions can still be reused, and the only additional slice-specific computation is the $\mathcal O(N_{1k}r^2)$ regression factorization, giving a total cost of $\mathcal O(N_{1k}r^2+(N_{2k}+T_{2k}+N_{1k}) \, r)$.

\medskip\medskip\medskip

\section{Matrix denoising in the pooled four-block setting}\label{appendix:tecnicalLemmas}
This appendix collects matrix denoising results specialized to the tensor four-block framework introduced in Section~\ref{sec:4block}. Throughout, we allow the constant $c_1$ to vary from line to line, while still depending only on $c_\ell, c_u, c_0,c_{\mathrm{dim}},  c_{\mathrm{blk}}, \kappa$. We use the notation introduced in Table~\ref{tab:block-notation-main}.  We also denote by $\mathbb O(d):= \{ Q \in \mathbb R^{d \times d} : Q^\top Q = QQ^\top = I_d \}$ the set of orthogonal matrices of dimension $d$, and write $\mathcal I_k :=\{N_{1k}+1,\dots, N\}$, $\mathcal J_k := \{T_{1k}+1,\ldots,T\}$ and $\mathcal I_k^\mathrm{up}:=\{s_k+1,\dots,s_k+N_{1k}\}$, where we recall $s_k = \sum_{j  =1}^{k-1} N_{1j}$.

The main results of this section are Lemmas~\ref{lemma1} and \ref{lemma:VarDominatesLemma1}. Their proofs rely on the intermediate results presented below, which provide first-order expansions of some relevant quantities appearing in the definition of $\hat \mu_{xy}^{(k)}$ in terms of the matrices $W_\mathrm{left}$, $W_\mathrm{up}$,  $E_\mathrm{left}^\mathrm{p}$, $E_\mathrm{up}^\mathrm{p}$. The first step in applying these results to $Y_{\mathrm{left}}^{\mathrm{p}}$ and $Y_{\mathrm{up}}^{\mathrm{p}}$ is to characterize the spectrum of the corresponding signal matrices $M_{\mathrm{left}}^{\mathrm{p}}$ and $M_{\mathrm{up}}^{\mathrm{p}}$.  

\begin{lemma}\label{lemma:spectrumMp}
    Grant Assumption   \eqref{assump:subblock-conditioning} with fixed constants $c_\ell, c_u$ satisfying $0 < c_\ell \leq c_u < \infty$, and  suppose $0<\gamma_\mathrm{min} \leq \sigma_\mathrm{min}(\mathcal{C}_{\bullet, \bullet, j}) \leq \sigma_\mathrm{max}(\mathcal{C}_{\bullet, \bullet, j}) \leq \gamma_\mathrm{max} < \infty$ for all $j \in [K]$. We have 
    \[
    \begin{cases}
        c_\ell^{1/2} \gamma_\mathrm{min} \,\rho_T^{1/2} \leq \sigma_r(M_{\mathrm{left}}^{\mathrm{p}}) \leq \sigma_1(M_{\mathrm{left}}^{\mathrm{p}}) \leq   c_u^{1/2} \gamma_\mathrm{max} \,\rho_T^{1/2}, \\ 
        \\
                c_\ell^{1/2} \gamma_\mathrm{min} \,\rho_N^{1/2} \leq \sigma_r(M_{\mathrm{up}}^{\mathrm{p}}) \leq \sigma_1(M_{\mathrm{up}}^{\mathrm{p}}) \leq   c_u^{1/2} \gamma_\mathrm{max} \,\rho_N^{1/2}.
    \end{cases}
    \]
\end{lemma}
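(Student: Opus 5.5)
We exploit the factorizations $M_{\mathrm{left}}^{\mathrm{p}} = U W_{\mathrm{left}}^\top$ and $M_{\mathrm{up}}^{\mathrm{p}} = W_{\mathrm{up}} V^\top$ noted in Section~\ref{sec:methodology4Block}. Since $U$ and $V$ have orthonormal columns, multiplication by them preserves singular values. Concretely,
\[
(M_{\mathrm{left}}^{\mathrm{p}})^\top M_{\mathrm{left}}^{\mathrm{p}} = W_{\mathrm{left}} U^\top U W_{\mathrm{left}}^\top = W_{\mathrm{left}} W_{\mathrm{left}}^\top ,
\]
whose nonzero eigenvalues coincide with those of the $r\times r$ matrix $W_{\mathrm{left}}^\top W_{\mathrm{left}}$. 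Hence $\sigma_i(M_{\mathrm{left}}^{\mathrm{p}}) = \lambda_i(W_{\mathrm{left}}^\top W_{\mathrm{left}})^{1/2}$ for $i\in[r]$, and similarly $\sigma_i(M_{\mathrm{up}}^{\mathrm{p}}) = \lambda_i(W_{\mathrm{up}}^\top W_{\mathrm{up}})^{1/2}$. It therefore suffices to sandwich these two $r\times r$ Gram matrices in the Loewner order.

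By the block structure of $W_{\mathrm{left}}$ in Table~\ref{tab:block-notation-main},
\[
W_{\mathrm{left}}^\top W_{\mathrm{left}} = \sum_{j=1}^K \mathcal C_{\bullet,\bullet,j}\, V_{1j}^\top V_{1j}\, \mathcal C_{\bullet,\bullet,j}^\top .
\]
We bound each summand using Assumption~\eqref{assump:subblock-conditioning}. The inequality $V_{1j}^\top V_{1j} \succeq c_\ell (T_{1j}/T) I_r$ is preserved under congruence by $\mathcal C_{\bullet,\bullet,j}$, so
\[
\mathcal C_{\bullet,\bullet,j} V_{1j}^\top V_{1j} \mathcal C_{\bullet,\bullet,j}^\top \succeq c_\ell \frac{T_{1j}}{T}\, \mathcal C_{\bullet,\bullet,j}\mathcal C_{\bullet,\bullet,j}^\top \succeq c_\ell \frac{T_{1j}}{T}\,\gamma_{\min}^2 I_r .
\]
The last step uses $\lambda_{\min}(\mathcal C_{\bullet,\bullet,j}\mathcal C_{\bullet,\bullet,j}^\top) = \sigma_{\min}(\mathcal C_{\bullet,\bullet,j})^2 \ge \gamma_{\min}^2$, which holds because $\mathcal C_{\bullet,\bullet,j}$ is square of size $r$. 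The same reasoning with $c_u$ and $\gamma_{\max}$ gives the upper bound $c_u (T_{1j}/T)\gamma_{\max}^2 I_r$. Summing over $j$ and using $\sum_j T_{1j}/T = T_{1,\mathrm p}/T = \rho_T$ yields
\[
c_\ell\gamma_{\min}^2\rho_T I_r \preceq W_{\mathrm{left}}^\top W_{\mathrm{left}} \preceq c_u\gamma_{\max}^2\rho_T I_r .
\]
Taking square roots of the extreme eigenvalues gives the first line of the statement.

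The second line follows identically. Here
\[
W_{\mathrm{up}}^\top W_{\mathrm{up}} = \sum_{j=1}^K \mathcal C_{\bullet,\bullet,j}^\top U_{1j}^\top U_{1j}\mathcal C_{\bullet,\bullet,j},
\]
and we use the bounds on $U_{1j}^\top U_{1j}$ from Assumption~\eqref{assump:subblock-conditioning}, together with $\lambda_{\min}(\mathcal C_{\bullet,\bullet,j}^\top \mathcal C_{\bullet,\bullet,j}) \ge \gamma_{\min}^2$ and $\sum_j N_{1j}/N = \rho_N$.

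No step is a genuine obstacle. The only point needing care is that the congruence step requires $\mathcal C_{\bullet,\bullet,j}$ to be square and full rank, so that $\mathcal C\mathcal C^\top$ and $\mathcal C^\top\mathcal C$ both have minimum eigenvalue $\sigma_{\min}(\mathcal C)^2$. This is guaranteed by $\mathcal C\in\mathbb R^{r\times r\times K}$ together with $\sigma_{\min}(\mathcal C_{\bullet,\bullet,j})\ge\gamma_{\min}>0$.
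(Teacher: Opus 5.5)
Your proof is correct and follows essentially the same route as the paper: factor $M_{\mathrm{left}}^{\mathrm{p}}=UW_{\mathrm{left}}^\top$ and $M_{\mathrm{up}}^{\mathrm{p}}=W_{\mathrm{up}}V^\top$, reduce to the $r\times r$ Gram matrices, and sandwich them in the Loewner order using Assumption~\eqref{assump:subblock-conditioning} together with the singular-value bounds on the core slices. One small precision: the congruence step holds for any $\mathcal C_{\bullet,\bullet,j}$, and squareness is needed only for the identity $\lambda_{\min}(\mathcal C_{\bullet,\bullet,j}\mathcal C_{\bullet,\bullet,j}^\top)=\sigma_{\min}(\mathcal C_{\bullet,\bullet,j})^2$.
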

\begin{proof}
Assumption  \eqref{assump:subblock-conditioning} implies
\[
c_\ell \sum_{j=1}^K \frac{T_{1j}}{T}\,
\mathcal C_{\bullet,\bullet,j}\mathcal C_{\bullet,\bullet,j}^\top
\;\preceq\;
W_{\mathrm{left}}^\top W_{\mathrm{left}} = \sum_{k=1}^K
\mathcal C_{\bullet,\bullet,k}\,V_{1k}^\top V_{1k}\,\mathcal C_{\bullet,\bullet,k}^\top
\;\preceq\;
c_u \sum_{j=1}^K \frac{T_{1j}}{T}\,
\mathcal C_{\bullet,\bullet,j}\mathcal C_{\bullet,\bullet,j}^\top,
\]
\[
c_\ell \sum_{j=1}^K \frac{N_{1j}}{N}\,
\mathcal C_{\bullet,\bullet,j}^\top \mathcal C_{\bullet,\bullet,j}
\;\preceq\;
W_{\mathrm{up}}^\top W_{\mathrm{up}} = \sum_{k=1}^K
\mathcal C_{\bullet,\bullet,k}^\top\,U_{1k}^\top U_{1k}\,\mathcal C_{\bullet,\bullet,k}
\;\preceq\;
c_u \sum_{j=1}^K \frac{N_{1j}}{N}\,
\mathcal C_{\bullet,\bullet,j}^\top \mathcal C_{\bullet,\bullet,j}.
\]
Since $M_{\mathrm{left}}^{\mathrm p}=U W_{\mathrm{left}}^\top, \,
M_{\mathrm{up}}^{\mathrm p}=W_{\mathrm{up}}V^\top$ and $U, V$ have orthonormal columns, we also have $\sigma_j(M_{\mathrm{left}}^{\mathrm{p}})
=
\sigma_j\!\left(W_{\mathrm{left}}\right)$ and $
\sigma_j\!\left(M_{\mathrm{up}}^{\mathrm{p}}\right)
=
\sigma_j\!\left(W_{\mathrm{up}}\right)$ for all $j \in [r]$. We thus get
\[
\sqrt{
c_\ell\,
\lambda_r\!\left(
\sum_{j=1}^K \frac{T_{1j}}{T}\,
\mathcal C_{\bullet,\bullet,j}\mathcal C_{\bullet,\bullet,j}^\top
\right)}
\;\le\;
\sigma_r\!\left(M_{\mathrm{left}}^{\mathrm{p}}\right)
\;\le\; \sigma_1\!\left(M_{\mathrm{left}}^{\mathrm{p}}\right) \leq 
\sqrt{
c_u\,
\lambda_1\!\left(
\sum_{j=1}^K \frac{T_{1j}}{T}\,
\mathcal C_{\bullet,\bullet,j}\mathcal C_{\bullet,\bullet,j}^\top
\right)},
\]
\[
\sqrt{
c_\ell\,
\lambda_r\!\left(
\sum_{j=1}^K \frac{N_{1j}}{N}\,
\mathcal C_{\bullet,\bullet,j}^\top \mathcal C_{\bullet,\bullet,j}
\right)}
\;\le\;
\sigma_r\!\left(M_{\mathrm{up}}^{\mathrm{p}}\right)
\;\le\; \sigma_1\!\left(M_{\mathrm{up}}^{\mathrm{p}}\right)\leq 
\sqrt{
c_u\,
\lambda_1\!\left(
\sum_{j=1}^K \frac{N_{1j}}{N}\,
\mathcal C_{\bullet,\bullet,j}^\top \mathcal C_{\bullet,\bullet,j}
\right)}.
\]
Combining this with $0<\gamma_\mathrm{min} \leq \sigma_\mathrm{min}(\mathcal{C}_{\bullet, \bullet, j}) \leq \sigma_\mathrm{max}(\mathcal{C}_{\bullet, \bullet, j}) \leq \gamma_\mathrm{max} < \infty$ concludes the proof.
\end{proof}

\medskip \medskip\medskip \medskip
The following lemma provides an upper bound on the estimation error of $\widehat U_{\mathrm{left}}$ relative to $U$ measured by the operator norm of the projected error $\Pi_N^\top(\widehat U_{\mathrm{left}}H_U-U)$. The proof relies on tools from Haar compression and properties of the Stiefel manifold, as outlined in Appendix~\ref{appendix:AuxilliaryRes}.

\begin{lemma}
\label{lemma13_centered}
Grant Assumptions  \eqref{assump:subblock-conditioning}
with fixed constants $0<c_\ell\le c_u$, \eqref{assump:sampleSize} and \eqref{assump:smallNoise}. Suppose further that $0<\gamma_{\min}\le \sigma_{\min}(\mathcal C_{\bullet,\bullet,j})
\le \sigma_{\max}(\mathcal C_{\bullet,\bullet,j})\le \gamma_{\max}<\infty$  for all
$j\in[K]$, and let $\kappa:=\gamma_{\max}/\gamma_{\min}$. Write
$Y_{\mathrm{left}}^{\mathrm{p}}
=M_{\mathrm{left}}^{\mathrm{p}}+E_{\mathrm{left}}^{\mathrm{p}}$, with 
$M_{\mathrm{left}}^{\mathrm{p}}=UW_{\mathrm{left}}^\top$, and set
$\Lambda:=W_{\mathrm{left}}^\top W_{\mathrm{left}}$. Let
$(\hat U_{\mathrm{left}},\hat\Sigma_{\mathrm{left}},\hat V_{\mathrm{left}})
:=\operatorname{SVD}_r(Y_{\mathrm{left}}^{\mathrm{p}})$ and
$H_U:=\operatorname{sgn}(\hat U_{\mathrm{left}}^\top U)$, and define the centered empirical eigenvalue matrix
$\hat\Lambda_c:=H_U^\top(\hat\Sigma_{\rm left}^2-\sigma^2 T_{1,\mathrm{p}} I_r)H_U$. Also fix $1\le p\le N$ and $\Pi_N\in\mathbb R^{N\times p}$ with
$\Pi_N^\top\Pi_N=I_p$. There exists a
constant $c_1\equiv c_1(c_\ell,c_u,c_0,c_{\mathrm{dim}}, c_{\mathrm{blk}},\kappa)>~0$ such
that, with probability at least $1-\mathcal O(p_T^{-10})$,
the following statements hold:

\begin{enumerate}
\item[(i)]
The centered empirical eigenvalue matrix is well-conditioned, i.e.~
\begin{align}
\lambda_r(\hat\Lambda_c)\ge \frac34\,\lambda_r(\Lambda),
\qquad
\|\hat\Lambda_c^{-1}\|_{\mathrm{op}}
\le \frac43\,\lambda_r(\Lambda)^{-1}.
\label{eq:lemma13_centered_lambdalower}
\end{align}

\item[(ii)]
We have 
\begin{align}
\begin{aligned}
\|\Pi_N^\top(\hat U_{\mathrm{left}}H_U-U)\|_{\mathrm{op}}
&\le
c_1
\frac{\sigma\sqrt{p+r+\zeta_T}}
{\gamma_{\min} \, \rho_T^{1/2}} +
c_1 \, \frac{\sigma^2 N}{\gamma_\mathrm{min}^2 \, \rho_T}\|\Pi_N^\top U\|_{\mathrm{op}} .
\end{aligned}
\label{eq:lemma13_centered_projected}
\end{align}
\end{enumerate}
\end{lemma}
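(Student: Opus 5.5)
We work with the centered Gram matrix $G:=Y_{\mathrm{left}}^{\mathrm{p}}(Y_{\mathrm{left}}^{\mathrm{p}})^\top-\sigma^2T_{1,\mathrm p}I_N$. Its top-$r$ eigenvectors are exactly $\hat U_{\mathrm{left}}$, with eigenvalues $\hat\Sigma_{\rm left}^2-\sigma^2T_{1,\mathrm p}I_r$. Expanding gives
\[
G=U\Lambda U^\top+\Delta,\qquad \Delta:=UW_{\mathrm{left}}^\top (E_{\mathrm{left}}^{\mathrm p})^\top+E_{\mathrm{left}}^{\mathrm p}W_{\mathrm{left}}U^\top+\bigl\{E_{\mathrm{left}}^{\mathrm p}(E_{\mathrm{left}}^{\mathrm p})^\top-\sigma^2T_{1,\mathrm p}I_N\bigr\}.
\]
By Lemma~\ref{lemma:spectrumMp}, $\lambda_r(\Lambda)\ge c_\ell\gamma_{\min}^2\rho_T$ and $\lambda_1(\Lambda)\le c_u\gamma_{\max}^2\rho_T$. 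Centering is the key step. Standard bounds give $\|E_{\mathrm{left}}^{\mathrm p}W_{\mathrm{left}}\|_{\mathrm{op}}\lesssim\sigma\gamma_{\max}\rho_T^{1/2}\sqrt{N+\zeta_T}$. For the quadratic term, the deviation of a Wishart matrix about its mean is of order $\sigma^2(\sqrt{NT_{1,\mathrm p}}+N)$ rather than $\sigma^2(N+T_{1,\mathrm p})$. Since $T_{1,\mathrm p}=\rho_T T$, Assumption~\eqref{assump:smallNoise} (through the terms $\sqrt N$, $\sqrt{N/\rho_T}$ and $\sqrt T$) makes $\|\Delta\|_{\mathrm{op}}\le c\,c_0\lambda_r(\Lambda)$ with probability $1-\mathcal O(p_T^{-10})$.

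Part (i) then follows from Weyl's inequality. We get $|\lambda_i(\hat\Lambda_c)-\lambda_i(\Lambda)|\le\|\Delta\|_{\mathrm{op}}\le\lambda_r(\Lambda)/4$ for $c_0$ small. Conjugation by $H_U$ is harmless: by Davis--Kahan $\|\hat U_{\mathrm{left}}\hat U_{\mathrm{left}}^\top-UU^\top\|_{\mathrm{op}}$ is small, so $H_U$ is close to orthogonal, and its singular values lie in $[1-\epsilon,1]$ with $\epsilon\lesssim c_0^2$. This gives both bounds in \eqref{eq:lemma13_centered_lambdalower}, possibly after slightly adjusting constants.

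For (ii) we use the eigen-equation $G\hat U=\hat U\hat D$, where $\hat D:=\hat\Sigma^2_{\rm left}-\sigma^2T_{1,\mathrm p}I_r$. This gives
\[
\hat U H_U-U=\bigl\{U\Lambda U^\top\hat U+\Delta\hat U\bigr\}\hat D^{-1}H_U-U.
\]
Writing $\hat D^{-1}H_U=H_U\hat\Lambda_c^{-1}+(\text{commutator})$ and $U^\top\hat U\approx H_U^\top$ up to second order in $\sin\Theta$, we obtain the first-order term
\[
\Pi_N^\top E_{\mathrm{left}}^{\mathrm p}W_{\mathrm{left}}\Lambda^{-1}.
\]
Conditionally, this is a $p\times r$ Gaussian matrix with covariance scale $\sigma^2\lambda_r(\Lambda)^{-1}$, so its operator norm is $\lesssim\sigma\sqrt{p+r+\zeta_T}/(\gamma_{\min}\rho_T^{1/2})$. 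The remaining terms are:
\begin{enumerate}
\item[(a)] $\Pi_N^\top U$ times quantities of order $\|\sin\Theta\|^2$ and $\|U^\top E W\|/\lambda_r$, each bounded by $\sigma^2N/(\gamma_{\min}^2\rho_T)$;
\item[(b)] the centered quadratic term $\Pi_N^\top(EE^\top-\sigma^2T_{1,\mathrm p}I)\hat U\hat D^{-1}$.
\end{enumerate}

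The main obstacle is (b), which must be bounded without paying for $T_{1,\mathrm p}$. A direct bound gives the $(N+T_{1,\mathrm p})$ factor of \citet{yan2024entrywise}, which we want to avoid. Our approach is to split $\hat U=UU^\top\hat U+U_\perp U_\perp^\top\hat U$. The part along $U$ involves $\Pi_N^\top(EE^\top-\sigma^2T_{1,\mathrm p})U$, which is a $p\times r$ compression of a centered Wishart matrix and has norm $\sigma^2\sqrt{T_{1,\mathrm p}(p+r+\zeta_T)}+\sigma^2N$-order terms. Divided by $\lambda_r\asymp\gamma_{\min}^2\rho_T$, its $\sqrt{T_{1,\mathrm p}}$ part is absorbed into the first term using $\sigma\sqrt{T}/\gamma_{\min}\le c_0$.

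For the part along $U_\perp$, we use rotational invariance of Gaussian noise. Conditionally on the relevant spectral quantities, the subspace $U_\perp^\top\hat U$ is distributed as a Haar-compressed Stiefel frame, and the Haar compression bounds of Appendix~\ref{appendix:AuxilliaryRes} give a $\sqrt{(p+r+\zeta_T)/N}$ gain on $\Pi_N^\top$. This yields a contribution of order $\sigma^2N/(\gamma_{\min}^2\rho_T)$ times either $\sqrt{(p+\zeta_T)/N}$, which is lower order, or $\|\Pi_N^\top U\|_{\mathrm{op}}$. A leave-$U$-out argument or a self-bounding (fixed-point) step on $\|\Pi_N^\top(\hat UH_U-U)\|$ closes the bound, with Assumption~\eqref{assump:sampleSize} controlling the logarithmic and dimensional factors.

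Finally, a union bound over the high-probability events gives overall probability $1-\mathcal O(p_T^{-10})$.
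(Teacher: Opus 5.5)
Your plan rests on the same mechanism as the paper's proof: Haar compression of the off-subspace component, using the rotational invariance of $G_2:=U_\perp^\top E$. Throughout, write $E:=E_{\mathrm{left}}^{\mathrm p}$ and $W:=W_{\mathrm{left}}$. Part (i) via Weyl on the full centered perturbation is fine. The paper uses Courant--Fischer and only needs the block $U^\top\Xi U$, but your bound $\|\Delta\|_{\mathrm{op}}\lesssim(\kappa\theta+\theta^2)\lambda_r(\Lambda)$ also suffices.

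\textbf{The gap.} The step that carries all of (ii) is only asserted. The claim ``conditionally on the relevant spectral quantities, $U_\perp^\top\hat U$ is a Haar-compressed frame'' needs an explicit conditioning $\sigma$-field. It also needs a reason why the off-subspace component factors as a Haar frame times an independent matrix. The paper argues as follows.
\begin{itemize}
\item Write $\hat U_{\mathrm{left}}H_U=UC+U_\perp S$, $G_1:=U^\top E$ and $G_2=Q\Sigma_2V_2^\top$.
\item The $U_\perp$-block of the eigen-equation is $G_2KC+(G_2G_2^\top-\sigma^2T_{1,\mathrm p}I)S=S\hat\Lambda_c$, with $K=W+G_1^\top$. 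Project it onto $\operatorname{col}(G_2)^\perp$ and combine with (i) to get $S=QR$.
\item The reduced eigenproblem for $(C;R)$ then involves $G_2$ only through $(\Sigma_2,V_2)$. So $R$ is $\sigma(G_1,\Sigma_2,V_2)$-measurable, while $Q$ is Haar on $\mathrm{St}(N-r,q)$ and independent of that $\sigma$-field.
\end{itemize}
An equivalent route: the map $E\mapsto(UU^\top+U_\perp OU_\perp^\top)E$ preserves the law, fixes $(G_1,G_2^\top G_2)$, and sends $S\mapsto OS$. Only after this does Lemma~\ref{lem:haar-frame-compression} give $\|\Pi_N^\top U_\perp S\|_{\mathrm{op}}\lesssim\sqrt{(p+r+\zeta_T)/N}\,\|S\|_{\mathrm{op}}$.

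\textbf{The expansion is unnecessary.} Once you have the compression, apply it to $S$ itself. You have $\Pi_N^\top(\hat U_{\mathrm{left}}H_U-U)=\Pi_N^\top U_\perp S+\Pi_N^\top U(C-I_r)$. Moreover $\|S\|_{\mathrm{op}}\lesssim\sigma\sqrt N/(\gamma_{\min}\rho_T^{1/2})$, either by Davis--Kahan on the centered matrix or from the master equation with $\|D_2\|_{\mathrm{op}}\|\hat\Lambda_c^{-1}\|_{\mathrm{op}}\le 1/4$. Finally $\|C-I_r\|_{\mathrm{op}}\le\|S\|_{\mathrm{op}}^2$. Together these give (ii) at once. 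No commutator, leave-one-out argument, or fixed-point step is needed; your closing sentence leaves that step unspecified anyway. Your variant, compressing only the $U_\perp$-part of (b), also works, since $(G_2G_2^\top-\sigma^2T_{1,\mathrm p}I)S=Q(\Sigma_2^2-\sigma^2T_{1,\mathrm p}I)R$ has the same structure. It is just more work.

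\textbf{Corrections.}
\begin{itemize}
\item $H_U=\operatorname{sgn}(\hat U_{\mathrm{left}}^\top U)$ is the orthogonal polar factor, not merely near-orthogonal. Hence $\hat D^{-1}H_U=H_U\hat\Lambda_c^{-1}$ holds exactly, and the constants $3/4$ and $4/3$ come out without adjustment. Also $C=U^\top\hat U_{\mathrm{left}}H_U$ is symmetric positive semidefinite, so the $U$-component is exactly $U(C-I_r)$.
\item In (a), $\|U^\top EW\|_{\mathrm{op}}/\lambda_r(\Lambda)\lesssim\sigma\sqrt{r+\zeta_T}/(\gamma_{\min}\rho_T^{1/2})$ is a first-order quantity. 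It is not in general bounded by $\sigma^2N/(\gamma_{\min}^2\rho_T)$. It is harmless only because it can be absorbed into the first term of the bound.
\item In (b), $\Pi_N^\top(EE^\top-\sigma^2T_{1,\mathrm p}I_N)U=\Pi_N^\top U(G_1G_1^\top-\sigma^2T_{1,\mathrm p}I_r)+\Pi_N^\top U_\perp G_2G_1^\top$ contains no bare $\sigma^2N$ term. A $\sigma^2N/\lambda_r(\Lambda)$ term without the factor $\|\Pi_N^\top U\|_{\mathrm{op}}$ would break the target bound. It would exceed the first term by the factor $\sigma\gamma_{\min}^{-1}\sqrt{N/\rho_T}\,\sqrt{N/(p+r+\zeta_T)}$, which can be large.
\end{itemize}
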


\begin{proof}
For readability, only in this proof we write
$Y=Y_{\mathrm{left}}^{\mathrm{p}}$,
$M=M_{\mathrm{left}}^{\mathrm{p}}=UW_{\mathrm{left}}^\top$, and
$E=E_{\mathrm{left}}^{\mathrm{p}}$. Define
$\hat S:=YY^\top-\sigma^2T_{1,{\mathrm{p}}}I_N$,
$S_0:=MM^\top=U\Lambda U^\top$, and
$\Xi:=\hat S-S_0$. Let $U_\perp\in\mathbb R^{N\times(N-r)}$ be such that
$[U\ \ U_\perp]\in\mathbb O(N)$.

Since $YY^\top$ and $\hat S$ differ by a scalar multiple of the identity,
they have the same eigenvectors. Hence $\hat U_{\mathrm{left}}$ is also the
top-$r$ eigenspace of $\hat S$, and
\begin{align}
\hat S\hat U_{\mathrm{left}}H_U
=
\hat U_{\mathrm{left}}H_U\hat\Lambda_c .
\label{eq:left-centered-eig}
\end{align}
Set $G_1:=U^\top E\in\mathbb R^{r\times T_{1,{\mathrm{p}}}}$ and
$G_2:=U_\perp^\top E\in\mathbb R^{(N-r)\times T_{1,{\mathrm{p}}}}$.
Since $E$ has independent $\mathcal N(0,\sigma^2)$ entries, rotational invariance ensures that $G_1$ and $G_2$ are independent Gaussian matrices with i.i.d.
$\mathcal N(0,\sigma^2)$ entries. Expanding $\Xi$ gives
$\Xi=UW_{\mathrm{left}}^\top E^\top+EW_{\mathrm{left}}U^\top
+(EE^\top-\sigma^2T_{1,{\mathrm{p}}}I_N)$, and therefore
\begin{align}
\begin{aligned}
U^\top\Xi U
&=
W_{\mathrm{left}}^\top G_1^\top
+
G_1W_{\mathrm{left}}
+
(G_1G_1^\top-\sigma^2T_{1,{\mathrm{p}}}I_r),  \\
U_\perp^\top\Xi U
&=
G_2W_{\mathrm{left}}+G_2G_1^\top
=
G_2K,
\qquad
K:=W_{\mathrm{left}}+G_1^\top,  \\
U_\perp^\top\Xi U_\perp
&=
G_2G_2^\top-\sigma^2T_{1,{\mathrm{p}}}I_{N-r}.
\end{aligned}
\label{eq:left-centered-Xi-blocks}
\end{align}
Write the aligned empirical eigenspace as
$\hat U_{\mathrm{left}}H_U=UC+U_\perp S$, where
$C:=U^\top\hat U_{\mathrm{left}}H_U$ and
$S:=U_\perp^\top\hat U_{\mathrm{left}}H_U$. Since $H_U$ is the Procrustes
alignment, $C$ is symmetric and satisfies $C\succeq0$,
$C^\top C+S^\top S=I_r$. We can thus write
\begin{align}
\Pi_N^\top(\hat U_{\mathrm{left}}H_U-U)
=
\Pi_N^\top U_\perp S+\Pi_N^\top U(C-I_r),
\label{eq:left-centered-split-clean}
\end{align}
which shows that it is enough to control the two terms on the right-hand side. 

More precisely, the preceding decomposition shows that sharp control of some projection of
$\hat U_{\rm left}H_U-U$ reduces mainly to controlling the off-subspace
component $S=U_\perp^\top\hat U_{\rm left}H_U$. Indeed,
$C=(I_r-S^\top S)^{1/2}$ and $\|C-I_r\|_{\rm op}\le \|S\|_{\rm op}^2$, so the
term involving $C-I_r$ is second order. A direct application of Wedin's theorem~\citep[][Section~2.4]{spectralMethods}
would control only the global subspace error
$\|S\|_{\rm op}
=
\|U_\perp^\top\hat U_{\rm left}\|_{\rm op}
=
\|\sin\Theta(\hat U_{\rm left},U)\|_{\rm op}
\lesssim
\|E_{\rm left}^{\rm p}\|_{\rm op}/\sigma_r(M_{\rm left}^{\rm p})$,
which is governed by an ambient noise norm and hence scales with the full row
dimension $N$. When combined with the triangle inequality
$\|\Pi_N^\top(\hat U_{\rm left}H_U-U)\|_{\rm op}
\le
\|\Pi_N^\top U_\perp\|_{\rm op}\|S\|_{\rm op}
+
\|\Pi_N^\top U\|_{\rm op}\|S\|_{\rm op}^2$,
this would not exploit the fact that $\Pi_N$ has only $p$ columns. Instead, we apply the Haar--Stiefel compression bounds from Appendix~\ref{appendix:AuxilliaryRes}, which allow us to use the fixed projection $\Pi_N^\top U_\perp$ to reduce the random off-subspace component by a factor of order $\sqrt{(p+r+\zeta_T)/N}$, as shown in~\eqref{eq:left-centered-random-range-step}. This is the key idea that
turns an ambient subspace perturbation estimate into the projected bound needed
here.

\medskip
\noindent\textbf{$\bullet$ Conditioning of $\hat\Lambda_c$.}
The eigenvalues of $\hat\Lambda_c$ are the top $r$ eigenvalues of $\hat S$.
By the Courant--Fischer formula restricted to $\operatorname{col}(U)$, we have 
$\lambda_r(\hat\Lambda_c)=\lambda_r(\hat S)
\ge \lambda_r(U^\top\hat S U)
=\lambda_r(\Lambda+U^\top\Xi U)$. Weyl's inequality (Lemma~\ref{lemma:Weyl}) then gives
\begin{align}
\lambda_r(\hat\Lambda_c)
\ge
\lambda_r(\Lambda)-\|U^\top\Xi U\|_{\mathrm{op}} .
\label{eq:left-centered-lambdahat-lower}
\end{align}
Define the events
\[
\|G_1W_{\mathrm{left}}\|_{\mathrm{op}}
\le c_1\sigma\|W_{\mathrm{left}}\|_{\mathrm{op}}\sqrt{r+\zeta_T},
\]
\[
\|G_1G_1^\top-\sigma^2T_{1,{\mathrm{p}}}I_r\|_{\mathrm{op}}
\le
c_1\sigma^2
\left\{\sqrt{T_{1,{\mathrm{p}}}(r+\zeta_T)}+r+\zeta_T\right\}.
\]
By Lemma~\ref{lemma:bilinearGaussian} in Appendix~\ref{appendix:AuxilliaryRes} and a standard Wishart concentration bound
\citep[e.g.][Theorem 4.6.1]{vershynin19hdp}, the probability that at least one of the two displayed
events fails is at most
$\mathcal O(p_T^{-10})$. 
If these bounds hold, the first display in \eqref{eq:left-centered-Xi-blocks} implies $    \|U^\top\Xi U\|_{\mathrm{op}}
\le
c_1\sigma\|W_{\mathrm{left}}\|_{\mathrm{op}}\sqrt{r+\zeta_T}
+
c_1\sigma^2
\bigl\{\sqrt{T_{1,{\mathrm{p}}}(r+\zeta_T)}+r+\zeta_T\bigr\}$. By Lemma~\ref{lemma:spectrumMp} we have
$\lambda_r(\Lambda)\ge c_\ell \, \gamma_{\min}^2 \rho_T$ and
$\|W_{\mathrm{left}}\|_{\mathrm{op}}
=\sigma_1(M_{\mathrm{left}}^{\mathrm{p}})
\le c_u^{1/2} \, \gamma_{\max} \rho_T^{1/2}$, which yield
\begin{align}\label{eq:UXiU}
    \frac{\|U^\top\Xi U\|_{\mathrm{op}}}{\lambda_r(\Lambda)}
&\le
c_1
\kappa\frac{\sigma}{\gamma_{\min}}
\sqrt{\frac{T(r+\zeta_T)}{T_{1,{\mathrm{p}}}}}
+ c_1 \, 
\frac{\sigma^2T}{\gamma_{\min}^2}
\left(
\sqrt{\frac{r+\zeta_T}{T_{1,{\mathrm{p}}}}}
+ 
\frac{r+\zeta_T}{T_{1,{\mathrm{p}}}}
\right) \nonumber \\
& \leq c_1
\frac{\sigma}{\gamma_{\min}}
\sqrt{\frac{NT}{T_{1,{\mathrm{p}}}}}
+ c_1 \, 
\frac{\sigma^2T}{\gamma_{\min}^2}
\left(
\sqrt{\frac{N}{T_{1,{\mathrm{p}}}}}
+ 
\frac{N}{T_{1,{\mathrm{p}}}}
\right) \leq c_1
\frac{\sigma}{\gamma_{\min}}
\sqrt{\frac{NT}{T_{1,{\mathrm{p}}}}} \le c_1 \theta \leq \frac{1}{4},
\end{align}
where the last inequality follows from Assumptions \eqref{assump:sampleSize}
and \eqref{assump:smallNoise}, provided that the absolute constants
$c_0>0$ and $c_{\mathrm{dim}}>0$ are sufficiently small. Combining this with \eqref{eq:left-centered-lambdahat-lower} proves (i).

\medskip
\noindent\textbf{$\bullet$ Reduction to the range of $G_2$.}
Using \eqref{eq:left-centered-eig} and
$\hat U_{\mathrm{left}}H_U=UC+U_\perp S$, and then left-multiplying by~$U_\perp^\top$, we obtain
\begin{align}
G_2KC+(G_2G_2^\top-\sigma^2T_{1,{\mathrm{p}}}I_{N-r})S
=
S\hat\Lambda_c .
\label{eq:left-centered-master}
\end{align}
Let $P_2:=\operatorname{Proj}_{\operatorname{col}(G_2)}$ and
$P_2^\perp:=I_{N-r}-P_2$. Since $P_2^\perp G_2K=0$, multiplying
\eqref{eq:left-centered-master} by $P_2^\perp$ gives
$P_2^\perp S(\hat\Lambda_c+\sigma^2T_{1,{\mathrm{p}}}I_r)=0$. On the event where $\hat\Lambda_c\succ0$, we also have that 
$\hat\Lambda_c+\sigma^2T_{1,{\mathrm{p}}}I_r$ is invertible, hence
$P_2^\perp S(\hat\Lambda_c+\sigma^2T_{1,{\mathrm{p}}}I_r)=0$ implies
$P_2^\perp S=0$, and therefore $S=P_2S$. Defining
$D_2:=(G_2G_2^\top-\sigma^2T_{1,{\mathrm{p}}}I_{N-r})P_2$, we may rewrite
\eqref{eq:left-centered-master} as
\begin{align}
G_2KC+D_2S=S\hat\Lambda_c .
\label{eq:left-centered-master-restricted}
\end{align}
The inclusion of the projection $P_2$ in the definition of $D_2$ is crucial:
although $G_2G_2^\top-\sigma^2T_{1,{\mathrm{p}}}I_{N-r}$ may be large on
$\operatorname{col}(G_2)^\perp$, the identity $S=P_2S$ ensures that only its
restriction to $\operatorname{col}(G_2)$ is relevant.

\medskip
\noindent\textbf{$\bullet$ Restricted centered-Wishart bound for $D_2$.}
Let $q:=\operatorname{rank}(G_2)=\min(N-r, \, T_{1,{\mathrm{p}}})$ almost
surely. Since $D_2$ acts on $\operatorname{col}(G_2)$, we have
$\|D_2\|_{\mathrm{op}}
=\max_{1\le i\le q}|\sigma_i(G_2)^2-\sigma^2T_{1,{\mathrm{p}}}|$. By the standard two-sided singular value bound for Gaussian matrices
\citep[e.g.][Theorem~4.6.1]{vershynin19hdp} applied to
$G_2^\top/\sigma$, and using
 \eqref{assump:sampleSize} to absorb the terms involving $\zeta_T$ into the right-hand side, we have $\|D_2\|_{\mathrm{op}}
\le
c_1\sigma^2
(\sqrt{NT_{1,{\mathrm{p}}}}+N+\zeta_T)$ with probability at least
$1-\mathcal O(p_T^{-10})$. This also implies
\begin{align}\label{eq:WishartBoundD2}
\|D_2\|_{\mathrm{op}}\|\hat\Lambda_c^{-1}\|_{\mathrm{op}}
\le
c_1
\frac{\sigma^2T}{\gamma_{\min}^2}
\left(
\sqrt{\frac{N}{T_{1,{\mathrm{p}}}}}
+
\frac{N+\zeta_T}{T_{1,{\mathrm{p}}}}
\right)
\le c_1 \, \theta^2 \leq  \frac14,
\end{align}
where the last inequality follows from  \eqref{assump:smallNoise}.

\medskip
\noindent\textbf{$\bullet$ Control of $G_2K$.}
We now control $G_2K\hat\Lambda_c^{-1}$ both globally and after projection by $\Pi_N^\top U_\perp$. Conditional on $G_1$, the matrix
$K=W_{\mathrm{left}}+G_1^\top$ is deterministic, independent of $G_2$, and
$\operatorname{rank}(K)\le r$. Hence Lemma~\ref{lemma:bilinearGaussian} gives
\begin{align}
\|\Pi_N^\top U_\perp G_2K\|_{\mathrm{op}}
\le c_1\sigma\|K\|_{\mathrm{op}}\sqrt{p+r+\zeta_T} \nonumber \\
\|G_2K\|_{\mathrm{op}}
\le c_1\sigma\|K\|_{\mathrm{op}}\sqrt{N+r+\zeta_T}
\label{eq:left-centered-G2K}
\end{align}
with probability at least
$1-\mathcal O(p_T^{-10})$. The same bounds hold unconditionally with the same probability. Moreover, on
the event
$\|G_1\|_{\mathrm{op}}\le
c_1\sigma(\sqrt{T_{1,{\mathrm{p}}}}+\sqrt{r+\zeta_T})$, which has
probability at least
$1-\mathcal O(p_T^{-10})$ again by Lemma~\ref{lemma:bilinearGaussian}, we have
$\|K\|_{\mathrm{op}}
\le
\|W_{\mathrm{left}}\|_{\mathrm{op}}+\|G_1\|_{\mathrm{op}}
\le
c_1 \,  (
\gamma_{\max} \, \rho_T^{1/2}
+\sigma\sqrt{T_{1,{\mathrm{p}}}}
+\sigma\sqrt{r+\zeta_T})$, where the second inequality follows from Lemma~\ref{lemma:spectrumMp}. Combining this bound with \eqref{eq:left-centered-G2K} and
(i) gives, on an event of probability at
least $1-\mathcal O(p_T^{-10})$, we have
\begin{align*}
\begin{aligned}
\|\Pi_N^\top U_\perp G_2K\|_{\mathrm{op}}
& \|\hat\Lambda_c^{-1}\|_{\mathrm{op}} \\
& \le c_1
\frac{\sigma\sqrt{p+r+\zeta_T}}
{\gamma_{\min} \, \rho_T^{1/2}} +
c_1
\frac{\sigma^2T}{\gamma_{\min}^2}
\left(
\sqrt{\frac{p+r+\zeta_T}{T_{1,{\mathrm{p}}}}}
+
\frac{\sqrt{(p+r+\zeta_T)(r+\zeta_T)}}{T_{1,{\mathrm{p}}}}
\right) \\
& \leq c_1
\frac{\sigma\sqrt{p+r+\zeta_T}}
{\gamma_{\min} \, \rho_T^{1/2}},
\end{aligned}
\end{align*}
\begin{align}
\begin{aligned}
\|G_2K\|_{\mathrm{op}}\|\hat\Lambda_c^{-1}\|_{\mathrm{op}}
&\le
c_1
\frac{\sigma\sqrt{N+r+\zeta_T}}
{\gamma_{\min} \, \rho_T^{1/2}}  +
c_1
\frac{\sigma^2T}{\gamma_{\min}^2}
\left(
\sqrt{\frac{N+r+\zeta_T}{T_{1,{\mathrm{p}}}}}
+
\frac{\sqrt{(N+r+\zeta_T)(r+\zeta_T)}}{T_{1,{\mathrm{p}}}}
\right) \\
&\le
c_1
\frac{\sigma\sqrt{N}}
{\gamma_{\min} \, \rho_T^{1/2}},
\end{aligned}
\label{eq:left-centered-G2K-final}
\end{align}
where the final inequalities in both displays follow from \eqref{assump:sampleSize},  \eqref{assump:smallNoise}.

\medskip
\noindent\textbf{$\bullet$ Haar-measure step.}
We now aim to control $\|\Pi_N^\top U_\perp S\|_{\mathrm{op}}$ using Lemma~\ref{lem:haar-frame-compression}. This is achieved through a Haar-measure argument; see
Appendix~\ref{appendix:AuxilliaryRes} for the statement of the lemma and the
relevant background material on the Stiefel manifold. The first step is to rewrite~\eqref{eq:left-centered-master-restricted} in coordinates adapted to
$\operatorname{col}(G_2)$. In this regard, let $q=\operatorname{rank}(G_2)$. Choose $(V_2,\Sigma_2)$ measurably from the eigendecomposition of $G_2^\top G_2 = V_2 \Sigma_2^2 V_2^\top$, with the positive eigenvalues sorted in decreasing order, and define
$Q:=G_2V_2\Sigma_2^{-1}\in\mathrm{St}(N-r,q)$, so that $G_2=Q\Sigma_2V_2^\top$. Since~$P_2$ is the
orthogonal projector onto $\operatorname{col}(G_2)$, we have $P_2=QQ^\top$.
Thus $S=P_2S$ implies $S=QR$, where $R:=Q^\top S\in\mathbb R^{q\times r}$.
Substituting $G_2=Q\Sigma_2V_2^\top$ and $S=QR$ into
\eqref{eq:left-centered-master-restricted}, and using
$D_2=Q(\Sigma_2^2-\sigma^2T_{1,{\mathrm{p}}}I_q)Q^\top$, gives $\Sigma_2V_2^\top KC
+
(\Sigma_2^2-\sigma^2T_{1,{\mathrm{p}}}I_q)R
=
R\hat\Lambda_c$ after
multiplying by $Q^\top$.

Now, let $Q_\perp$ be
chosen measurably so that $[Q\ Q_\perp]\in\mathbb O(N-r)$, with the last
block omitted if $q=N-r$, and set
$O:=[\,U\;\;U_\perp Q\;\;U_\perp Q_\perp\,]$. The following calculations allow us to show that $\hat{S}$ has a simple block form with respect to the basis given by $O$. Using
\eqref{eq:left-centered-Xi-blocks}, $\widehat S = U\Lambda U^\top+\Xi$ and $G_2=Q\Sigma_2V_2^\top$, we get
\[
U^\top \widehat S (U_\perp Q)
= (Q^\top U_\perp^\top \widehat S U)^\top
= (Q^\top G_2K)^\top
= (\Sigma_2V_2^\top K)^\top
= K^\top V_2\Sigma_2,
\] 
\[
(U_\perp Q)^\top \widehat S U
= Q^\top U_\perp^\top \widehat S U
= Q^\top G_2K
= \Sigma_2V_2^\top K,
\]
\[
(U_\perp Q)^\top \widehat S (U_\perp Q)
= Q^\top (G_2G_2^\top-\sigma^2 T_{1,\mathrm{p}} I_{N-r})Q
= \Sigma_2^2-\sigma^2 T_{1,\mathrm{p}} I_q.
\]
Moreover, since $\operatorname{col}(G_2)=\operatorname{col}(Q)$, we have $Q_\perp^\top G_2=0$. Hence
$U^\top \widehat S (U_\perp Q_\perp)=0$,
$(U_\perp Q)^\top \widehat S (U_\perp Q_\perp)=0$,
$(U_\perp Q_\perp)^\top \widehat S U=0$,
$(U_\perp Q_\perp)^\top \widehat S (U_\perp Q)=~0$,
and
$(U_\perp Q_\perp)^\top \widehat S (U_\perp Q_\perp)
= Q_\perp^\top (G_2G_2^\top-\sigma^2 T_{1,\mathrm{p}} I_{N-r})Q_\perp
= -\sigma^2 T_{1,\mathrm{p}} I_{N-r-q}$.
We therefore get
\[
O^\top\hat S O
=
\begin{pmatrix}
\Lambda+U^\top\Xi U & K^\top V_2\Sigma_2 & 0\\
\Sigma_2V_2^\top K & \Sigma_2^2-\sigma^2T_{1,{\mathrm{p}}}I_q & 0\\
0 & 0 & -\sigma^2T_{1,{\mathrm{p}}}I_{N-r-q}
\end{pmatrix}.
\]
This shows that, on the event that $\hat{S}$ has at least $r$ positive eigenvalues, the top-$r$ eigenspace of $\hat{S}$ is contained in the column space of $(U,U_\perp Q)$. Formally, using $S=QR$,
we have $\hat U_{\mathrm{left}}H_U
=
UC+U_\perp QR
=
O(C^\top,R^\top,0)^\top$, and since $\hat U_{\mathrm{left}}H_U$ is the aligned top-$r$ eigenspace of
$\hat S$, we can write
\[
(C^\top,R^\top,0)^\top\hat\Lambda_c
=
O^\top \hat U_{\mathrm{left}}H_U\hat\Lambda_c
=
O^\top\hat S\hat U_{\mathrm{left}}H_U
=
O^\top\hat S O(C^\top,R^\top,0)^\top .
\]
Comparing the first two block rows gives
\[
\begin{pmatrix}
\Lambda+U^\top\Xi U & K^\top V_2\Sigma_2\\
\Sigma_2V_2^\top K & \Sigma_2^2-\sigma^2T_{1,\mathrm{p}}I_q
\end{pmatrix}
\begin{pmatrix}
C\\
R
\end{pmatrix}
=
\begin{pmatrix}
C\\
R
\end{pmatrix}
\hat\Lambda_c.
\]
On the event $\lambda_r(\hat\Lambda_c)>0$, the $r$ largest eigenvalues of
$\hat S$ are positive and therefore cannot arise from the negative block
$-\sigma^2T_{1,\mathrm p}I_{N-r-q}$, hence they correspond precisely to the $r$
largest eigenvalues of the reduced block above. We thus get that $(C^\top,R^\top)^\top$ is the top-$r$ eigenspace of the reduced block, and $\hat\Lambda_c$ is the associated eigenvalue matrix. 

Now, this reduced block depends on $G_2$ only through $(\Sigma_2,V_2)$, and not
through $Q$. Moreover, since $K=W_{\mathrm{left}}+G_1^\top$ and
$U^\top\Xi U$ is a function of $G_1$, the reduced block is measurable with
respect to $\mathcal F:=\sigma(G_1,\Sigma_2,V_2)$, hence, after fixing deterministic measurable choices of eigenspaces and
of the Procrustes alignment, $(C,R,\hat\Lambda_c)$ is
$\mathcal F$-measurable. Furthermore, writing
$R=H_R\Omega_RJ_R^\top$ for the compact singular value decomposition of $R$
and $\ell=\operatorname{rank}(R)\le r$, also $H_R$ is
$\mathcal F$-measurable. For completeness, observe that we may assume $\ell \geq 1$; if $\ell = 0$ the desired bound is immediate.

On the other hand, since $G_2$ has i.i.d.~Gaussian entries, its law is
left-orthogonally invariant, in the sense that for every deterministic
$O_0\in\mathbb O(N-r)$ we have $O_0G_2\stackrel{d}{=}G_2$. Moreover,
$(O_0G_2)^\top(O_0G_2)=G_2^\top G_2$, so left multiplication changes only
the left singular subspace, from $Q$ to $O_0Q$, while leaving
$(\Sigma_2,V_2)$ unchanged. It follows that the conditional law of $Q$
given $(\Sigma_2,V_2)$ is left-invariant on $\mathrm{St}(N-r,q)$, and by uniqueness of the left-orthogonally invariant
probability measure on the Stiefel manifold~\citep[e.g.][Theorem~1.2.2 and Section~1.3.1]{Chikuse2003}, it is Haar. Furthermore, since this conditional law does not depend on the value of $(\Sigma_2, V_2)$, then $Q$ is
independent of $\sigma(\Sigma_2,V_2)$, and, since $G_1$ is independent of~$G_2$, we also have $Q\perp\!\!\!\perp\mathcal F$.

We have therefore shown that, conditional on $\mathcal F$, the matrix
$H_R$ is fixed, while $Q$ is Haar-distributed on
$\mathrm{St}(N-r,q)$ and independent of $\mathcal F$. Hence, by
Lemma~\ref{lem:haar-frame-compression}, $QH_R$ is Haar-distributed on
$\mathrm{St}(N-r,\ell)$ conditionally on $\mathcal F$.  We also note here that the first condition in Assumption~\eqref{assump:sampleSize}
implies $r\leq c_{\mathrm{dim}}(N-r)$, and therefore
$N-r\geq N/(1+c_{\mathrm{dim}})$.  Moreover, since $\ell\le r$, $t^2\asymp\zeta_T$, and
$r+\zeta_T\le c_{\mathrm{dim}}(N-r)$, choosing
$c_{\mathrm{dim}}>0$ sufficiently small ensures $\sqrt{\ell}+t\le {\sqrt{N-r}}/{2}$
as required for the simplified bound in
Lemma~\ref{lem:haar-frame-compression}. We then obtain $\|\Pi_N^\top U_\perp QH_R\|_{\mathrm{op}}
\le
c_1
\sqrt{(p+r+\zeta_T)/(N-r)}
\le
c_1
\sqrt{(p+r+\zeta_T)/ N}$
with conditional probability at least $1-\mathcal O(p_T^{-10})$. The same bound also holds unconditionally. Since $S=QR=QH_R\Omega_RJ_R^\top$ and
$\|\Omega_R\|_{\mathrm{op}}=\|R\|_{\mathrm{op}}=\|QR\|_{\mathrm{op}}
=\|S\|_{\mathrm{op}}$, we obtain
\begin{equation}
\|\Pi_N^\top U_\perp S\|_{\mathrm{op}}
\le \|\Pi_N^\top U_\perp QH_R\|_{\mathrm{op}}\,
\|\Omega_R\|_{\mathrm{op}}\,
\|J_R^\top\|_{\mathrm{op}} \le
c_1
\sqrt{\frac{p+r+\zeta_T}{N}}
\|S\|_{\mathrm{op}} .
\label{eq:left-centered-random-range-step}
\end{equation}
The prefactor $\sqrt{(p+r+\zeta_T)/N}$ is precisely the projection factor that we aimed to obtain, as discussed at the beginning of the proof.

\medskip
\noindent\textbf{$\bullet$ Conclusion.}
It remains to bound $\|S\|_{\mathrm{op}}$. From
\eqref{eq:left-centered-master-restricted} we get
$S=G_2KC\hat\Lambda_c^{-1}+D_2S\hat\Lambda_c^{-1}$, and, since
$\|C\|_{\mathrm{op}}\le1$, the triangle inequality gives $\|S\|_{\mathrm{op}}
\le
\|G_2K\|_{\mathrm{op}}\|\hat\Lambda_c^{-1}\|_{\mathrm{op}}
+
\|D_2\|_{\mathrm{op}}\|\hat\Lambda_c^{-1}\|_{\mathrm{op}}
\|S\|_{\mathrm{op}}$. The preceding Wishart bound for $D_2$ in~\eqref{eq:WishartBoundD2} allows reordering this inequality, and gives
\begin{equation}
\|S\|_{\mathrm{op}}
\le
c_1\|G_2K\|_{\mathrm{op}}\|\hat\Lambda_c^{-1}\|_{\mathrm{op}} \leq c_1 \frac{\sigma \sqrt{N}}{\gamma_\mathrm{min} \rho_T^{1/2}}
\label{eq:left-centered-S-bound}
\end{equation}
by~\eqref{eq:left-centered-G2K-final}. This, together with \eqref{eq:left-centered-random-range-step}, yields $\|\Pi_N^\top U_\perp S\|_{\mathrm{op}}
\le c_1 \sigma \gamma_\mathrm{min}^{-1} \sqrt{(p+r+\zeta_T) /\rho_T}  $. Finally, since $C=(I_r-S^\top S)^{1/2}$ and
$\|C-I_r\|_{\mathrm{op}}\le \|S\|_{\mathrm{op}}^2$, we have $\|\Pi_N^\top U(C-I_r)\|_{\mathrm{op}}
\le
\|\Pi_N^\top U\|_{\mathrm{op}}\|S\|_{\mathrm{op}}^2 \leq c_1 \frac{\sigma^2 N}{ \gamma_\mathrm{min}^2 \rho_T} \, \|\Pi_N^\top U\|_{\mathrm{op}}$. This proves
\eqref{eq:lemma13_centered_projected}, and concludes the proof.
\end{proof}

\medskip \medskip \medskip
Many useful corollaries can be derived from~\eqref{eq:lemma13_centered_projected}, yielding bounds that hold with probability at least $1 - \mathcal O(p_T^{-10})$. In particular, 
for any fixed $x\in \mathcal S^{N-1}$, by choosing $\Pi_N=x$ we have
\begin{align}\label{eq:boundU_vectorX}
  \|(\hat U_{\mathrm{left}}H_U-U)^\top x\|_2
\leq c_1
\frac{\sigma\sqrt{r+\zeta_T}}
{\gamma_{\min} \, \rho_T^{1/2}} +
c_1 \, \frac{\sigma^2 N}{\gamma_\mathrm{min}^2 \, \rho_T}\|U^\top x\|_2
\end{align}
with high probability,  which gives an estimation bound directly for the action of the error
$\hat U_{\mathrm{left}}H_U-U$ along any fixed unit direction. Furthermore, for fixed $k\in[K]$ the same bounds hold blockwise, in the sense that, for every fixed $1\le p\le N_{1k}$ and
$\Pi_N\in\mathbb R^{N_{1k}\times p}$ with $\Pi_N^\top\Pi_N=I_p$, we have
\begin{align}
\begin{aligned}
\|\Pi_N^\top(\hat U_{1k}H_U-U_{1k})\|_{\mathrm{op}}
&\le
c_1
\frac{\sigma\sqrt{p+r+\zeta_T}}
{\gamma_{\min} \, \rho_T^{1/2}} +
c_1 \, \frac{\sigma^2 N}{\gamma_\mathrm{min}^2 \, \rho_T}\|\Pi_N^\top U_{1k}\|_{\mathrm{op}}.
\end{aligned}
\label{eq:lemma13_centered_block1}
\end{align}
As a result, choosing $\Pi_N = I_{N_{1k}}$ and using  \eqref{assump:subblock-conditioning},  \eqref{assump:sampleSize} give $ \|\hat U_{1k} \,H_U-U_{1k} \|_\mathrm{op}
\ \le\ c_1 \, \sigma\, \gamma_\mathrm{min}^{-1} \sqrt{N_{1k}/\rho_T}$
and, as a byproduct, $\hat U_{1k}^\top \hat U_{1k}$ is invertible, satisfying $\frac{c_\ell}{2}\frac{N_{1k}}{N}I_r
 \preceq
\hat U_{1k}^\top \hat U_{1k}
 \preceq 
2c_u\frac{N_{1k}}{N}I_r$. This follows from Weyl's inequality (Lemma~\ref{lemma:Weyl}), along similar lines to the proof of~\eqref{eq:lemma13_centered_lambdalower}. This is particularly useful in our setting, as it ensures that the matrix $\hat H_{k,\tau}^\mathrm{inv}$ used in Algorithm~\ref{alg:bilinear4block} coincides with $(\hat U_{1k}^\top \hat U_{1k})^{-1}$ with high probability whenever the algorithm is run with $\tau \leq \frac{c_\ell \, N_{1k}}{2 \, N}$. Indeed, in this case we have 
\begin{align}\label{eq:HDaggerCoincidesWithInverse}
   \left\{\frac{c_\ell}{2}\frac{N_{1k}}{N}I_r
 \preceq
\hat H_k
 \preceq 
2c_u\frac{N_{1k}}{N}I_r\right\}  &= \left\{\frac{c_\ell}{2}\frac{N_{1k}}{N}I_r
 \preceq
\hat H_k
\right\} \bigcap \left\{ \hat H_k
 \preceq 
2c_u\frac{N_{1k}}{N}I_r\right\}  \nonumber \\
& \subseteq \left\{\frac{c_\ell}{2}\frac{N_{1k}}{N}I_r
 \preceq
\hat H_k
\right\} \subseteq \left\{\tau \, I_r
 \preceq
\hat H_k
\right\},
\end{align}
which further implies that $\mathbb P(\hat H_k \succeq \tau \, I_r) \geq 1 - \mathcal{O}(p_T^{-10})$. On this event, all eigenvalues of $\hat H_k$ are at least $\tau$, hence
$\lambda_i\vee\tau=\lambda_i$ for every $i \in [r]$, and $\hat H_{k,\tau}^{\mathrm{inv}}
=
\hat H_k^{-1}
=
(\hat U_{1k}^\top \hat U_{1k})^{-1}$.

\medskip
\medskip
\medskip

\begin{lemma}
\label{lemma:centered_inverse_comparison}
Let $\Lambda, \hat \Lambda_c, C$ be as in Lemma~\ref{lemma13_centered}, and suppose that the
assumptions of Lemma~\ref{lemma13_centered} hold. There exists a
constant $c_1\equiv
c_1(c_\ell,c_u,c_0,c_{\mathrm{dim}}, c_{\mathrm{blk}},\kappa)>0$ such that, with probability at least
$1-\mathcal O(p_T^{-10})$, we have
\[
\bigl\|
C\widehat\Lambda_c^{-1}-\Lambda^{-1}
\bigr\|_{\mathrm{op}}
\leq
c_1
\sigma\gamma_{\min}^{-3}
\sqrt N\,\rho_T^{-3/2}.
\]
\end{lemma}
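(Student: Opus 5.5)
We split the difference as
\[
C\widehat\Lambda_c^{-1}-\Lambda^{-1}
=(C-I_r)\widehat\Lambda_c^{-1}+\bigl(\widehat\Lambda_c^{-1}-\Lambda^{-1}\bigr)
=(C-I_r)\widehat\Lambda_c^{-1}+\widehat\Lambda_c^{-1}\bigl(\Lambda-\widehat\Lambda_c\bigr)\Lambda^{-1},
\]
and bound each piece on the intersection of the high-probability events from the proof of Lemma~\ref{lemma13_centered}. That intersection has probability at least $1-\mathcal O(p_T^{-10})$. Throughout, we use Lemma~\ref{lemma:spectrumMp} for $\lambda_r(\Lambda)\ge c_\ell\gamma_{\min}^2\rho_T$, and part (i) of Lemma~\ref{lemma13_centered} for $\|\widehat\Lambda_c^{-1}\|_{\rm op}\le \tfrac43\lambda_r(\Lambda)^{-1}\lesssim \gamma_{\min}^{-2}\rho_T^{-1}$.

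For the first piece we use $\|C-I_r\|_{\rm op}\le\|S\|_{\rm op}^2$ together with the bound $\|S\|_{\rm op}\le c_1\sigma\sqrt N/(\gamma_{\min}\rho_T^{1/2})$ from \eqref{eq:left-centered-S-bound}. This gives
\[
\|(C-I_r)\widehat\Lambda_c^{-1}\|_{\rm op}\lesssim \frac{\sigma^2N}{\gamma_{\min}^4\rho_T^2}
=\frac{\sigma\sqrt N}{\gamma_{\min}^3\rho_T^{3/2}}\cdot\frac{\sigma\sqrt N}{\gamma_{\min}\rho_T^{1/2}}.
\]
The last factor is at most $\theta\le c_0$ by Assumption~\eqref{assump:smallNoise}, so this piece is dominated by the target rate.

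The second piece is the main step: we need $\|\widehat\Lambda_c-\Lambda\|_{\rm op}\lesssim\sigma\gamma_{\min}^{-1}\sqrt N\rho_T^{1/2}\cdot\gamma_{\min}^2$, which, after multiplying by the two inverses (each $\lesssim\gamma_{\min}^{-2}\rho_T^{-1}$), yields exactly $\sigma\gamma_{\min}^{-3}\sqrt N\rho_T^{-3/2}$. To get it, we left-multiply the eigen-equation \eqref{eq:left-centered-eig} by $U^\top$. Using $\widehat U_{\rm left}H_U=UC+U_\perp S$ and the block forms \eqref{eq:left-centered-Xi-blocks}, this gives
\[
(\Lambda+U^\top\Xi U)C+K^\top G_2^\top S=C\widehat\Lambda_c .
\]
Rearranging,
\[
C(\widehat\Lambda_c-\Lambda)=(\Lambda C-C\Lambda)+U^\top\Xi U\,C+K^\top G_2^\top S .
\]
We then control each term on the right.
\begin{itemize}
\item $\|U^\top\Xi U\|_{\rm op}\lesssim\sigma\gamma_{\max}\rho_T^{1/2}\sqrt{r+\zeta_T}+\sigma^2(\sqrt{T_{1,\rm p}(r+\zeta_T)}+r+\zeta_T)$, from the events in the proof of part (i). Under Assumptions~\eqref{assump:sampleSize}--\eqref{assump:smallNoise} this is $\lesssim\kappa\sigma\gamma_{\min}\sqrt{N\rho_T}$.
\item $\|K^\top G_2^\top S\|_{\rm op}\le\|G_2K\|_{\rm op}\|S\|_{\rm op}$. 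By \eqref{eq:left-centered-G2K-final}, $\|G_2K\|_{\rm op}\lesssim \sigma\gamma_{\min}\sqrt{N\rho_T}$; multiplying by $\|S\|_{\rm op}\lesssim\theta$ keeps it within the same order.
\item The commutator satisfies $\Lambda C-C\Lambda=\Lambda(C-I_r)-(C-I_r)\Lambda$, so $\|\Lambda C-C\Lambda\|_{\rm op}\le 2\|\Lambda\|_{\rm op}\|S\|_{\rm op}^2\lesssim \gamma_{\max}^2\rho_T\cdot\sigma^2N/(\gamma_{\min}^2\rho_T)$. This is again $\lesssim\kappa^2\sigma\gamma_{\min}\sqrt{N\rho_T}\,\theta$.
\end{itemize}
Finally, $\sigma_r(C)\ge(1-\|S\|_{\rm op}^2)^{1/2}\ge 1/2$ by smallness of $\theta$, so $C$ is invertible with $\|C^{-1}\|_{\rm op}\le 2$. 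Hence $\|\widehat\Lambda_c-\Lambda\|_{\rm op}\lesssim \sigma\gamma_{\min}\sqrt{N\rho_T}$, with the constants absorbing powers of $\kappa$.

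The main obstacle is this eigenvalue-perturbation step. A crude Weyl bound on $\|\widehat\Lambda_c-\Lambda\|_{\rm op}$ would involve $\|\Xi\|_{\rm op}$, which contains the large centered-Wishart block $U_\perp^\top\Xi U_\perp$. Working through the $U^\top$-projected eigen-equation, as above, avoids that term. The residual cross term $K^\top G_2^\top S$ is only second order thanks to the bound on $\|S\|_{\rm op}$.
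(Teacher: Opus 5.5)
Your proposal is correct and follows essentially the same route as the paper. Both use the split $C\widehat\Lambda_c^{-1}-\Lambda^{-1}=(C-I_r)\widehat\Lambda_c^{-1}+\widehat\Lambda_c^{-1}(\Lambda-\widehat\Lambda_c)\Lambda^{-1}$, the $U^\top$-projected eigen-equation $(\Lambda+U^\top\Xi U)C+K^\top G_2^\top S=C\widehat\Lambda_c$, and the bounds on $\|S\|_{\mathrm{op}}$, $\|U^\top\Xi U\|_{\mathrm{op}}$ and $\|G_2K\|_{\mathrm{op}}$ from the proof of Lemma~\ref{lemma13_centered}. The only difference is cosmetic: the paper writes $\Lambda-\widehat\Lambda_c=(\Lambda-C\widehat\Lambda_c)+(C-I_r)\widehat\Lambda_c$ and invokes $\|\widehat\Lambda_c\|_{\mathrm{op}}\lesssim\gamma_{\max}^2\rho_T$, whereas you isolate $C(\widehat\Lambda_c-\Lambda)$ through the commutator and invert $C$ with $\|C^{-1}\|_{\mathrm{op}}\le 2$, which spares you an upper bound on $\|\widehat\Lambda_c\|_{\mathrm{op}}$.
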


\begin{proof}
All the bounds below will hold on the events from Lemma~\ref{lemma13_centered}, which have probability at least
$1-\mathcal O(p_T^{-10})$. Using $U_\perp^\top\Xi U=G_2K$, $C\widehat\Lambda_c-\Lambda
=
\Lambda(C-I_r)+U^\top\Xi U\,C+U^\top\Xi U_\perp S$, $\Lambda-\widehat\Lambda_c
=
\Lambda-C\widehat\Lambda_c+(C-I_r)\widehat\Lambda_c$, and $ C\widehat\Lambda_c^{-1}-\Lambda^{-1}
=
(C-I_r)\widehat\Lambda_c^{-1}
+
\widehat\Lambda_c^{-1}
(\Lambda-\widehat\Lambda_c)\Lambda^{-1}$ from Lemma~\ref{lemma13_centered}, we can write
\begin{align}\label{eq:CLambdaInv}
&\|C\widehat\Lambda_c^{-1}-\Lambda^{-1}\|_{\mathrm{op}}
\leq
\|C-I_r\|_{\mathrm{op}}
\|\widehat\Lambda_c^{-1}\|_{\mathrm{op}} +
\|\widehat\Lambda_c^{-1}\|_{\mathrm{op}}
\|\Lambda-\widehat\Lambda_c\|_{\mathrm{op}}
\|\Lambda^{-1}\|_{\mathrm{op}} \nonumber \\
&\leq
\|C-I_r\|_{\mathrm{op}}
\|\widehat\Lambda_c^{-1}\|_{\mathrm{op}} +
\|\widehat\Lambda_c^{-1}\|_{\mathrm{op}}
\|\Lambda-C\widehat\Lambda_c\|_{\mathrm{op}}
\|\Lambda^{-1}\|_{\mathrm{op}} +
\|\widehat\Lambda_c^{-1}\|_{\mathrm{op}}
\|C-I_r\|_{\mathrm{op}}
\|\widehat\Lambda_c\|_{\mathrm{op}}
\|\Lambda^{-1}\|_{\mathrm{op}} \nonumber \\
&\leq
\|C-I_r\|_{\mathrm{op}}
\|\widehat\Lambda_c^{-1}\|_{\mathrm{op}} +
\|\widehat\Lambda_c^{-1}\|_{\mathrm{op}}
\|\Lambda\|_{\mathrm{op}}
\|C-I_r\|_{\mathrm{op}}
\|\Lambda^{-1}\|_{\mathrm{op}} \nonumber \\
& \qquad +
\|\widehat\Lambda_c^{-1}\|_{\mathrm{op}}
\|U^\top\Xi U\|_{\mathrm{op}}
\|C\|_{\mathrm{op}}
\|\Lambda^{-1}\|_{\mathrm{op}} +
\|\widehat\Lambda_c^{-1}\|_{\mathrm{op}}
\|U^\top\Xi U_\perp S\|_{\mathrm{op}}
\|\Lambda^{-1}\|_{\mathrm{op}} \nonumber \\
& \qquad +
\|\widehat\Lambda_c^{-1}\|_{\mathrm{op}}
\|C-I_r\|_{\mathrm{op}}
\|\widehat\Lambda_c\|_{\mathrm{op}}
\|\Lambda^{-1}\|_{\mathrm{op}}.
\end{align}
We now bound the five terms on the right-hand side individually. For the first one, combining $\|C-I_r\|_{\mathrm{op}}
\leq
\|S\|_{\mathrm{op}}^2$, $\|\Lambda^{-1}\|_{\mathrm{op}}
\lesssim
\gamma_{\min}^{-2}\rho_T^{-1}$, \eqref{eq:left-centered-S-bound} and~\eqref{eq:lemma13_centered_lambdalower} gives
\begin{align*}
\|C-I_r\|_{\mathrm{op}}
\|\widehat\Lambda_c^{-1}\|_{\mathrm{op}}
&\leq c_1
\frac{\sigma^2N}
{\gamma_{\min}^4\rho_T^2} = c_1
\frac{\sigma\sqrt N}
{\gamma_{\min}\rho_T^{1/2}}
\frac{\sigma\sqrt N}
{\gamma_{\min}^3\rho_T^{3/2}} \leq c_1
\frac{\sigma\sqrt N}
{\gamma_{\min}^3\rho_T^{3/2}},
\end{align*}
where the last inequality follows from \eqref{assump:smallNoise}. The second term can be controlled by combining the previous bound with $\|\Lambda\|_{\mathrm{op}} \|\Lambda^{-1}\|_{\mathrm{op}} \leq c_1 \gamma_\mathrm{max}^2 \rho_T \, \gamma_\mathrm{min}^{-2} \rho_T^{-1} = \kappa^2 c_1$ and incorporating the constant $\kappa^2$ into $c_1$. A similar argument applies to the fifth term, using the bound $\|\widehat{\Lambda}_c\|_{\mathrm{op}} \lesssim \gamma_{\max}^2\rho_T$.

For the third term in~\eqref{eq:CLambdaInv}, we can use $\|C\|_{\mathrm{op}}\leq 1$, $\|\Lambda^{-1}\|_{\mathrm{op}}
\lesssim
\gamma_{\min}^{-2}\rho_T^{-1}$, \eqref{eq:lemma13_centered_lambdalower} and the second line in~\eqref{eq:UXiU} to write  
\begin{align*}
&\|\widehat\Lambda_c^{-1}\|_{\mathrm{op}}
\|U^\top\Xi U\|_{\mathrm{op}}
\|C\|_{\mathrm{op}}
\|\Lambda^{-1}\|_{\mathrm{op}} \le c_1 \frac{\|U^\top\Xi U\|_{\mathrm{op}}}{\lambda_r(\Lambda)} \frac{1}{\lambda_r(\Lambda)} \leq c_1 
\frac{\sigma \sqrt{N}}{\gamma_{\min} \rho_T^{1/2}} \frac{1}{\gamma_\mathrm{min}^2 \rho_T}
 = c_1
\frac{\sigma\sqrt N}
{\gamma_{\min}^3\rho_T^{3/2}}.
\end{align*}

For the fourth piece, \eqref{eq:left-centered-Xi-blocks} gives $U_\perp^\top\Xi U=G_2K$ hence $U^\top\Xi U_\perp S
=
K^\top G_2^\top S$. As a result, combining this with~\eqref{eq:left-centered-S-bound} and~\eqref{eq:left-centered-G2K-final} therefore implies
\begin{align*}
\|\widehat\Lambda_c^{-1}\|_{\mathrm{op}}
\|U^\top\Xi U_\perp S\|_{\mathrm{op}}
\|\Lambda^{-1}\|_{\mathrm{op}} & \leq \|\widehat\Lambda_c^{-1}\|_{\mathrm{op}}
\|G_2 K\|_{\mathrm{op}} \|S\|_{\mathrm{op}}
\|\Lambda^{-1}\|_{\mathrm{op}} \leq c_1 \frac{\sigma\sqrt N}
{\gamma_{\min} \rho_T^{1/2}} \|S\|_{\mathrm{op}}
\|\Lambda^{-1}\|_{\mathrm{op}} \\
&\leq c_1 \frac{\sigma^2 N}
{\gamma_{\min}^2 \rho_T} \frac{1}{\gamma_\mathrm{min}^2 \rho_T}
= c_1 \frac{\sigma \sqrt{N}}{\gamma_{\min}\rho_T^{1/2}}
\frac{\sigma\sqrt N}
{\gamma_{\min}^3\rho_T^{3/2}} \leq  c_1
\frac{\sigma\sqrt N}
{\gamma_{\min}^3\rho_T^{3/2}},
\end{align*}
where the last inequality follows from \eqref{assump:smallNoise}. Combining the five bounds concludes the proof.
\end{proof}

\medskip\medskip\medskip
We next provide a representation of $\widehat U_{\mathrm{left}}H_U-U$ in terms of a Gaussian term and a remainder.

\begin{cor}
\label{cor:centered-left-perturbation}
Use the assumptions and notation of Lemma~\ref{lemma13_centered}. Set $
\Psi_U:= \widehat U_{\rm left}H_U-U
-
E_{\rm left}^{\rm p}W_\mathrm{left}(W_\mathrm{left}^\top W_\mathrm{left})^{-1}$, and for fixed $k\in[K]$ define $\Psi_{U,2k}:=(\Psi_U)_{\mathcal I_{k},\bullet}$, where $\mathcal I_{k} = \{N_{1k}+1, \ldots, N\}$. For every fixed
$x\in\mathcal S^{N_{2k}-1}$, there exists a constant
$c_1\equiv c_1(c_\ell,c_u,c_0,c_{\mathrm{dim}}, c_{\mathrm{blk}},\kappa)>0$ such that, with
probability at least $1-\mathcal O(p_T^{-10})$, we have
\begin{align}
\|(\Psi_{U,2k})^\top x\|_2
\le\;&
c_1
\frac{\sigma^2\sqrt{(N+T_{1,\mathrm{p}})(r+\zeta_T)}}{
\gamma_{\min}^2\rho_T}
+
c_1\left(
\frac{\sigma^2N}{\gamma_{\min}^2\rho_T}
+
\frac{\sigma\sqrt{r+\zeta_T}}{
\gamma_{\min}\rho_T^{1/2}}
\right)
\|U_{2k}^\top x\|_2 .
\label{cor:bound_Psi}
\end{align}
\end{cor}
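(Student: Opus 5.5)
Fix $x$ and set $\Pi_N:=(\boldsymbol 0_{N_{1k}}\,;\,x)\in\mathcal S^{N-1}$, so that $(\Psi_{U,2k})^\top x=\Psi_U^\top \Pi_N$ and $\|U^\top\Pi_N\|_2=\|U_{2k}^\top x\|_2$. The plan is to expand $\hat U_{\rm left}H_U-U$ through the decomposition \eqref{eq:left-centered-split-clean}, namely $UC+U_\perp S - U = U(C-I_r)+U_\perp S$, and to isolate the first-order Gaussian piece $E W_{\rm left}\Lambda^{-1}=UG_1W_{\rm left}\Lambda^{-1}+U_\perp G_2W_{\rm left}\Lambda^{-1}$.

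The second-order term is immediate: by the Conclusion step of Lemma~\ref{lemma13_centered},
\[
\|\Pi_N^\top U(C-I_r)\|\le \|U^\top\Pi_N\|_2\,\|S\|_{\rm op}^2\le c_1\,\frac{\sigma^2N}{\gamma_{\min}^2\rho_T}\,\|U_{2k}^\top x\|_2 .
\]
The term $\Pi_N^\top UG_1W_{\rm left}\Lambda^{-1}$ is bounded by $\|U_{2k}^\top x\|_2\,\|G_1W_{\rm left}\|_{\rm op}\,\|\Lambda^{-1}\|_{\rm op}$. Using the event on $G_1W_{\rm left}$ and Lemma~\ref{lemma:spectrumMp}, this is at most $c_1\sigma\sqrt{r+\zeta_T}\,\gamma_{\min}^{-1}\rho_T^{-1/2}\,\|U_{2k}^\top x\|_2$.

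For the off-subspace part, use the master equation \eqref{eq:left-centered-master-restricted}, $S=G_2KC\hat\Lambda_c^{-1}+D_2S\hat\Lambda_c^{-1}$ with $K=W_{\rm left}+G_1^\top$. Then
\[
U_\perp S-U_\perp G_2W_{\rm left}\Lambda^{-1}=U_\perp G_2W_{\rm left}(C\hat\Lambda_c^{-1}-\Lambda^{-1})+U_\perp G_2G_1^\top C\hat\Lambda_c^{-1}+U_\perp D_2S\hat\Lambda_c^{-1}.
\]
The three pieces are handled as follows.
\begin{itemize}
\item[(a)] Lemma~\ref{lemma:bilinearGaussian}, applied to the fixed vector $U_\perp^\top\Pi_N$ and the rank-$r$ matrix $W_{\rm left}$, gives $\|\Pi_N^\top U_\perp G_2W_{\rm left}\|\lesssim\sigma\gamma_{\max}\rho_T^{1/2}\sqrt{r+\zeta_T}$. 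Lemma~\ref{lemma:centered_inverse_comparison} then yields a contribution of order $\sigma^2\sqrt{N(r+\zeta_T)}/(\gamma_{\min}^2\rho_T)$.
\item[(b)] Conditionally on $G_1$, Lemma~\ref{lemma:bilinearGaussian} gives $\|\Pi_N^\top U_\perp G_2G_1^\top\|\lesssim\sigma\|G_1\|_{\rm op}\sqrt{r+\zeta_T}\lesssim\sigma^2\sqrt{T_{1,\mathrm p}(r+\zeta_T)}$. Together with $\|\hat\Lambda_c^{-1}\|_{\rm op}\lesssim\gamma_{\min}^{-2}\rho_T^{-1}$, this is of order $\sigma^2\sqrt{T_{1,\mathrm p}(r+\zeta_T)}/(\gamma_{\min}^2\rho_T)$.
\end{itemize}
Pieces (a) and (b) together produce the first term of \eqref{cor:bound_Psi}.

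\begin{itemize}
\item[(c)] The main obstacle is $\Pi_N^\top U_\perp D_2 S\hat\Lambda_c^{-1}$, since $\|D_2\|_{\rm op}\sim\sigma^2\sqrt{NT_{1,\mathrm p}}$ is ambient and a crude bound loses the projection gain.
\end{itemize}
I would reuse the Haar-measure structure from the proof of Lemma~\ref{lemma13_centered}. Write $D_2S=Q(\Sigma_2^2-\sigma^2T_{1,\mathrm p}I_q)R$, where $Q$ is Haar and independent of $\mathcal F=\sigma(G_1,\Sigma_2,V_2)$, while $(\Sigma_2^2-\sigma^2T_{1,\mathrm p}I_q)R$ is $\mathcal F$-measurable with rank at most $r$. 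Taking the compact SVD of this $\mathcal F$-measurable matrix and applying Lemma~\ref{lem:haar-frame-compression} to $\Pi_N^\top U_\perp Q$ times its left factor gives a factor $\sqrt{(r+\zeta_T)/N}$. Hence
\[
\|\Pi_N^\top U_\perp D_2S\hat\Lambda_c^{-1}\|\lesssim\sqrt{(r+\zeta_T)/N}\;\|D_2\|_{\rm op}\|S\|_{\rm op}\|\hat\Lambda_c^{-1}\|_{\rm op}.
\]
Using \eqref{eq:WishartBoundD2} and \eqref{eq:left-centered-S-bound}, the right-hand side is at most $\sigma^2\sqrt{T_{1,\mathrm p}(r+\zeta_T)}\,\theta/(\gamma_{\min}^2\rho_T)$ up to constants, which fits the first term. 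If the Haar step fails to carry over cleanly here, the fallback is to iterate the master equation once and bound the remaining second-order piece using $\|D_2\|\|\hat\Lambda_c^{-1}\|\le\theta^2$.

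Finally, sum all the pieces and take a union bound over the $\mathcal O(1)$ events, each of probability at least $1-\mathcal O(p_T^{-10})$.
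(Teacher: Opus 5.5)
Your proposal is correct and follows the paper's proof essentially step by step. It uses the same zero-extension of $x$, the same three-term split into $U_\perp\{S-G_2W_{\rm left}\Lambda^{-1}\}$, $U(C-I_r)$ and $UG_1W_{\rm left}\Lambda^{-1}$, and the same three-piece expansion of $S-G_2W_{\rm left}\Lambda^{-1}$. The only difference is organizational: the paper first bounds $\|S-G_2W_{\rm left}\Lambda^{-1}\|_{\rm op}$ and then applies Lemma~\ref{lem:haar-frame-compression} once to the whole of $Q\widetilde R$, whereas you bound pieces (a) and (b) directly with Lemma~\ref{lemma:bilinearGaussian} and reserve the Haar step for $D_2S$. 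In that step, note that the $N$-part of $\|D_2\|_{\rm op}$ also contributes $\sigma^2\sqrt{N(r+\zeta_T)}\,\theta/(\gamma_{\min}^2\rho_T)$; this is still absorbed by the first term, so no fallback is needed.
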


\begin{proof}
Let $\bar x\in\mathcal S^{N-1}$
be the zero extension of $x$ to the coordinates $\mathcal I_{k}$, i.e.~$\bar x_i = x_{i-N_{1k}} \, \mathbbm 1\{i\in\mathcal I_{k}\}$ for all $i \in [N]$. Then
$\|(\Psi_{U,2k})^\top x\|_2=\|\Psi_U^\top\bar x\|_2$ and
$\|U^\top\bar x\|_2=\|U_{2k}^\top x\|_2$.

We now recall some important facts that were already stated and justified in the proof of Lemma~\ref{lemma13_centered}. For readability write $E:=E_{\rm left}^{\rm p}$,
$Y:=Y_{\rm left}^{\rm p}$, $W := W_\mathrm{left}$, $M:=M_{\rm left}^{\rm p}=UW^\top$, and
$\Lambda:=W^\top W$. Let $U_\perp\in\mathbb R^{N\times(N-r)}$ be such that
$[U\ U_\perp]\in\mathbb O(N)$, and define
$G_1:=U^\top E$ and $G_2:=U_\perp^\top E$. Since $E$ has independent
$\mathcal N(0,\sigma^2)$ entries, rotational invariance gives that $G_1$ and
$G_2$ are independent Gaussian matrices with independent
$\mathcal N(0,\sigma^2)$ entries. Set $\widehat S:=YY^\top-\sigma^2T_{1,\mathrm{p}}I_N$ and
$\widehat\Lambda_c:=H_U^\top(\widehat\Sigma_{\rm left}^2
-\sigma^2T_{1,\mathrm{p}}I_r)H_U$. Since subtracting
$\sigma^2T_{1,\mathrm{p}}I_N$ does not change eigenvectors,
$\widehat U_{\rm left}H_U$ satisfies
$\widehat S\widehat U_{\rm left}H_U
=\widehat U_{\rm left}H_U\widehat\Lambda_c$. Write
$\widehat U_{\rm left}H_U=UC+U_\perp S$, where
$C:=U^\top\widehat U_{\rm left}H_U$ and
$S:=U_\perp^\top\widehat U_{\rm left}H_U$. Because $H_U$ is the Procrustes
alignment, $C$ is symmetric positive semidefinite and
$C=(I_r-S^\top S)^{1/2}$. Expanding $\widehat S-U\Lambda U^\top$ gives
$\Xi:=\widehat S-U\Lambda U^\top
=UW^\top E^\top+EWU^\top+(EE^\top-\sigma^2T_{1,\mathrm{p}}I_N)$. Therefore
$U_\perp^\top\Xi U=G_2W+G_2G_1^\top=G_2(W+G_1^\top)$, and projecting onto
$U_\perp$ gives
\begin{align}\label{eq:S_D_2}
S
=
G_2(W+G_1^\top)C\widehat\Lambda_c^{-1}
+
D_2S\widehat\Lambda_c^{-1},
\qquad
D_2:=(G_2G_2^\top-\sigma^2T_{1,\mathrm{p}}I_{N-r})P_2,
\end{align}
where $P_2$ denotes the orthogonal projector onto the column space of $G_2$. In particular, this follows from $S=P_2S$, which holds under $\widehat\Lambda_c\succ0$. Also, since $EW=UG_1W+U_\perp G_2W$, the definition of $\Psi_U$ gives $\Psi_U
=
U_\perp\{S-G_2W\Lambda^{-1}\}
+
U\{C-I_r-G_1W\Lambda^{-1}\}$,
consequently we have
\begin{align}\label{eq:PSIU_proof}
   \|\Psi_U^\top\bar x\|_2
\le
\|\{S-G_2W\Lambda^{-1}\}^\top U_\perp^\top\bar x\|_2
+
\|(C-I_r)U^\top\bar x\|_2
+
\|\{G_1W\Lambda^{-1}\}^\top U^\top\bar x\|_2 . 
\end{align}

We now bound these three terms separately. First, subtracting $G_2W\Lambda^{-1}$ from
the first equation in~\eqref{eq:S_D_2}
yields
\begin{align}\label{eq:S-G_2WLambda}
S-G_2W\Lambda^{-1}
=
G_2W(C\widehat\Lambda_c^{-1}-\Lambda^{-1})
+
G_2G_1^\top C\widehat\Lambda_c^{-1}
+
D_2S\widehat\Lambda_c^{-1}.
\end{align}
We recall that on the high-probability event from 
Lemma~\ref{lemma13_centered} we have $\lambda_r(\widehat\Lambda_c)\ge 3\lambda_r(\Lambda)/4$,
$\|\widehat\Lambda_c^{-1}\|_{\rm op}\le c_1\lambda_r(\Lambda)^{-1}$,
$\|S\|_{\rm op}\le c_1\sigma \gamma_{\min}^{-1} \sqrt{N/\rho_T}$, and
$\|C-I_r\|_{\rm op}\le \|S\|_{\rm op}^2$. Moreover, arguing as in the paragraph after~\eqref{eq:left-centered-lambdahat-lower} and simplifying some bounds using \eqref{assump:sampleSize}, the same event also gives
$\|G_2W\|_{\rm op}\le c_1\sigma\|W\|_{\rm op}\sqrt{N}$,
$\|G_1W\|_{\rm op}\le c_1\sigma\|W\|_{\rm op}\sqrt{r+\zeta_T}$, and
$\|G_2G_1^\top\|_{\rm op}\le c_1\sigma^2 
\sqrt{NT_{1,\mathrm{p}}}$. Now, using $\lambda_r(\Lambda)\ge c_1\gamma_{\min}^2\rho_T$, the middle term of~\eqref{eq:S-G_2WLambda} above satisfies
\[
\|G_2G_1^\top C\widehat\Lambda_c^{-1}\|_{\rm op}
\le
c_1
\frac{\sigma^2\sqrt{N T_{1,\mathrm{p}}}}{
\gamma_{\min}^2\rho_T}.
\]
We now deal with the other two terms. First, combining $\|C\widehat\Lambda_c^{-1}-\Lambda^{-1}\|_{\rm op}
\le c_1 \sigma \gamma_\mathrm{min}^{-3} \, \sqrt{N} \rho_T^{-3/2}$ from Lemma~\ref{lemma:centered_inverse_comparison} with the above
bound for $\|G_2W\|_{\rm op}$ gives
$\|G_2W(C\widehat\Lambda_c^{-1}-\Lambda^{-1})\|_{\rm op}
\le c_1\sigma^2 \gamma_{\min}^{-2} N/ \rho_T$. Similarly, the restricted
Wishart bound~\eqref{eq:WishartBoundD2} for $D_2$ and the bound on $\|S\|_{\rm op}$ in~\eqref{eq:left-centered-S-bound} give
$\|D_2S\widehat\Lambda_c^{-1}\|_{\rm op}
\le c_1\sigma^2 \gamma_{\min}^{-2} N/\rho_T$, thereby implying
\[
\|S-G_2W\Lambda^{-1}\|_{\rm op}
\le
c_1
\frac{\sigma^2\sqrt{T_{1,\mathrm{p}}N }}{
\gamma_{\min}^2\rho_T}
+
c_1
\frac{\sigma^2 N}{\gamma_{\min}^2\rho_T} .
\]

It remains to convert this operator bound into a Euclidean norm bound. Let
$G_2=Q\Sigma_2V_2^\top$ be its singular value decomposition. Arguing as in the proof of Lemma~\ref{lemma13_centered}, conditional on
$\sigma(G_1,\Sigma_2,V_2)$, the factor~$Q$ is Haar-distributed on the
appropriate Stiefel manifold. Since $S=QR$ for an
$\sigma(G_1,\Sigma_2,V_2)$-measurable matrix~$R$ and
$G_2W\Lambda^{-1}=Q\Sigma_2V_2^\top W\Lambda^{-1}$, we can write
$S-G_2W\Lambda^{-1}=Q\widetilde R$, where $\widetilde R$ is $\sigma(G_1,\Sigma_2,V_2)$-measurable and has rank at most $r$. Now, let $\ell:=\operatorname{rank}(\widetilde R)\le r$, and consider the compact SVD of $\widetilde R=L_{\widetilde R}D_{\widetilde R}M_{\widetilde R}^{\top}$ with $
L_{\widetilde R}^{\top}L_{\widetilde R}=~I_\ell$, where the factors may be chosen $\sigma(G_1,\Sigma_2,V_2)$-measurable. Conditional
on $\sigma(G_1,\Sigma_2,V_2)$, the matrix $L_{\widetilde R}$ is fixed and
$Q$ is Haar-distributed on $\mathrm{St}(N-r,q)$. As a result, 
Lemma~\ref{lem:haar-frame-compression}, applied with $d=N-r$,
$H=L_{\widetilde R}$, $A=\bar x^\top U_\perp$, and
$t^2\asymp \zeta_T$, gives $\|L_{\widetilde R}^{\top}Q^{\top}U_\perp^{\top}\bar x\|_2
\le
c_1\|\bar x^\top U_\perp\|_2
\sqrt{(r+\zeta_T)/(N-r)} \leq c_1\|\bar x^\top U_\perp\|_2
\sqrt{(r+\zeta_T)/ N}$, where the last inequality follows from \eqref{assump:sampleSize}. Finally, using $\|\bar x^\top U_\perp\|_2\le \|\bar x\|_2 = 1$,
we obtain 
\begin{align*}
    \|\widetilde R^\top Q^\top U_\perp^\top\bar x\|_2 &=\|M_{\widetilde R}D_{\widetilde R}L_{\widetilde R}^\top Q^\top U_\perp^\top\bar x\|_2\le \|D_{\widetilde R}\|_{\rm op}\|L_{\widetilde R}^\top Q^\top U_\perp^\top\bar x\|_2\\
    &=\|\widetilde R\|_{\rm op}\|L_{\widetilde R}^\top Q^\top U_\perp^\top\bar x\|_2\le 
c_1\sqrt{\frac{r+\zeta_T}{N}}\|\widetilde R\|_{\rm op}
\end{align*}
with probability at
least $1-\mathcal O(p_T^{-10})$, which therefore implies
\[
\|\{S-G_2W\Lambda^{-1}\}^\top U_\perp^\top\bar x\|_2
\le
c_1
\frac{\sigma^2\sqrt{(N+T_{1,\mathrm{p}})(r+\zeta_T)}}{
\gamma_{\min}^2\rho_T}.
\]

This concludes the analysis for the first and more problematic term in~\eqref{eq:PSIU_proof}. For the second one, using $\|C-I_r\|_{\rm op}\le \|S\|_{\rm op}^2$ and $\|S\|_{\rm op}\le c_1\sigma \, \gamma_{\min}^{-1} \sqrt{N/\rho_T}$ from the proof of Lemma~\ref{lemma13_centered}, we get
$\|(C-I_r)U^\top\bar x\|_2
\le c_1\sigma^2 \, \gamma_{\min}^{-2} N\rho_T^{-1} \,
\|U^\top\bar x\|_2 = c_1\sigma^2 \, \gamma_{\min}^{-2} N \rho_T^{-1} \,
\|U^\top_{2k} x\|_2$. For the third term, Lemma~\ref{lemma:bilinearGaussian} gives $\|G_1W\|_{\rm op}\le
c_1\sigma\|W\|_{\rm op}\sqrt{r+\zeta_T}$ with probability at least
$1-\mathcal O(p_T^{-10})$. Also, Lemma~\ref{lemma:spectrumMp} gives
$\|W\|_{\rm op}\le c_u^{1/2}\gamma_{\max}\rho_T^{1/2}$ and
$\|\Lambda^{-1}\|_{\rm op}=\lambda_r(\Lambda)^{-1}
=\sigma_r(W)^{-2}\le c_\ell^{-1}\gamma_{\min}^{-2}\rho_T^{-1}$. As a result, we have
$\|G_1W\Lambda^{-1}\|_{\rm op}\le
\|G_1W\|_{\rm op}\|\Lambda^{-1}\|_{\rm op}
\le
c_1\sigma\sqrt{r+\zeta_T}\,
\gamma_{\max}\rho_T^{1/2}
(\gamma_{\min}^2\rho_T)^{-1}
\le
c_1\sigma \gamma_{\min}^{-1} \sqrt{(r+\zeta_T)/\rho_T}$, which further implies $\|\{G_1W\Lambda^{-1}\}^\top U^\top\bar x\|_2
\le c_1\sigma \gamma_{\min}^{-1}\sqrt{(r+\zeta_T)/\rho_T} \, \|U_{2k}^\top x\|_2$.

Combining the three bounds concludes the proof.
\end{proof}

\medskip\medskip
As a sanity check, combining~\eqref{cor:bound_Psi} and $
\widehat U_{\rm left}H_U-U
= \Psi_U +
E_{\rm left}^{\rm p}W_\mathrm{left}(W_\mathrm{left}^\top W_\mathrm{left})^{-1}$ with Lemmas~\ref{lemma:spectrumMp} and~\ref{lemma:bilinearGaussian} and  Assumptions \eqref{assump:sampleSize},  \eqref{assump:smallNoise}, allows proving a bound for $\|(\widehat U_{\rm left}H_U-U)^\top x\|_2$ that agrees with~\eqref{eq:boundU_vectorX}. 

Moreover, by applying Lemma~\ref{lemma13_centered} and Corollary~\ref{cor:centered-left-perturbation} to $Y_\mathrm{up}^\top$ we get the following corollary. 

\begin{cor}
\label{lemma13_Transpose}
Grant Assumptions  \eqref{assump:subblock-conditioning}
with fixed constants $0<c_\ell\le c_u$, \eqref{assump:sampleSize} and \eqref{assump:smallNoise}. Suppose further that $0<\gamma_{\min}\le \sigma_{\min}(\mathcal C_{\bullet,\bullet,j})
\le \sigma_{\max}(\mathcal C_{\bullet,\bullet,j})\le \gamma_{\max}<\infty$  for all
$j\in[K]$, and let $\kappa:=\gamma_{\max}/\gamma_{\min}$. Write
$Y_{\mathrm{up}}^{\mathrm{p}}
=M_{\mathrm{up}}^{\mathrm{p}}+E_{\mathrm{up}}^{\mathrm{p}}$, with 
$M_{\mathrm{up}}^{\mathrm{p}}=W_{\mathrm{up}}V^\top$. Let
$(\hat U_{\mathrm{up}},\hat\Sigma_{\mathrm{up}},\hat V_{\mathrm{up}})
:=\operatorname{SVD}_r(Y_{\mathrm{up}}^{\mathrm{p}})$ and
$H_V:=\operatorname{sgn}(\hat V_{\mathrm{up}}^\top V)$, and define $
\Psi_V:= \widehat V_{\rm up}H_V-V
-
(E_{\rm up}^{\rm p})^\top W_\mathrm{up}(W_\mathrm{up}^\top W_\mathrm{up})^{-1}$. For fixed $k \in [K]$ we also set $\Psi_{V,2k}:=(\Psi_V)_{\mathcal J_{k},\bullet}$, where $\mathcal J_k = \{T_{1k} + 1, \ldots, T\}$. Fix also  $\bar y \in \mathcal S^{T-1}$ and  $y \in \mathcal S^{T_{2k}-1}$. There exists a
constant $c_1\equiv c_1(c_\ell,c_u,c_0,c_{\mathrm{dim}}, c_{\mathrm{blk}},\kappa)>~0$ such
that, with probability at least $1-\mathcal O(p_N^{-10})$,
the following statements hold:
\begin{enumerate}
    \item[(i)] We have 
\begin{align}\label{eq:DeltaVtranspose}
        \|(\hat V_\mathrm{up} H_V - V)^\top \, \bar y\|_2 \leq c_1 \frac{\sigma \sqrt{r + \zeta_N}}{\gamma_\mathrm{min} \, \rho_N^{1/2}} + c_1\frac{\sigma^2 T}{\gamma_\mathrm{min}^2 \, \rho_N} \|V^\top \bar y\|_2.
    \end{align} 
        \item[(ii)] We can bound the operator norm as \begin{align}\label{eq:DeltaVtransposeOP} \|\hat V_\mathrm{up} H_V - V\|_\mathrm{op} \leq c_1 \frac{\sigma \sqrt{T}}{\gamma_\mathrm{min}\,\rho_N^{1/2}}.
    \end{align} 
\item[(iii)] We have 
    \begin{align}\label{eq:PsiVranspose}
    \|\Psi_{V,2k}^\top \, y\|_2 \leq c_1 \frac{\sigma^2\sqrt{(T+N_{1,\mathrm{p}})(r+\zeta_N)}}{
\gamma_{\min}^2\rho_N}
+ c_1
\left(
\frac{\sigma^2T}{\gamma_{\min}^2\rho_N}
+ 
\frac{\sigma\sqrt{r+\zeta_N}}{
\gamma_{\min}\rho_N^{1/2}}
\right)
\|V_{2k}^\top y\|_2.
\end{align}
\end{enumerate}
\end{cor}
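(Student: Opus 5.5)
The plan is to obtain Corollary~\ref{lemma13_Transpose} by a transposition argument. We will apply Lemma~\ref{lemma13_centered}, the bound \eqref{eq:boundU_vectorX}, and Corollary~\ref{cor:centered-left-perturbation} to the transposed matrix $(Y_{\mathrm{up}}^{\mathrm{p}})^\top = V W_{\mathrm{up}}^\top + (E_{\mathrm{up}}^{\mathrm{p}})^\top \in \mathbb R^{T\times N_{1,\mathrm p}}$. This matrix has exactly the structure of $Y_{\mathrm{left}}^{\mathrm{p}}=UW_{\mathrm{left}}^\top+E_{\mathrm{left}}^{\mathrm{p}}$, under the substitutions
\[
U\mapsto V,\quad W_{\mathrm{left}}\mapsto W_{\mathrm{up}},\quad N\mapsto T,\quad T_{1,\mathrm p}\mapsto N_{1,\mathrm p},\quad \rho_T\mapsto\rho_N,\quad \zeta_T\mapsto\zeta_N,\quad p_T\mapsto p_N.
\]
The top-$r$ left singular vectors of the transpose are $\hat V_{\mathrm{up}}$, and $H_V$ is the corresponding Procrustes alignment. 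The noise $(E_{\mathrm{up}}^{\mathrm p})^\top$ is still i.i.d.\ $\mathcal N(0,\sigma^2)$.

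The first step is to check the hypotheses under this substitution. Lemma~\ref{lemma:spectrumMp} already gives $c_\ell^{1/2}\gamma_{\min}\rho_N^{1/2}\le\sigma_r(W_{\mathrm{up}})\le\sigma_1(W_{\mathrm{up}})\le c_u^{1/2}\gamma_{\max}\rho_N^{1/2}$. Inspecting the proof of Lemma~\ref{lemma13_centered}, the only facts about $W_{\mathrm{left}}$ that were used are these spectral bounds; the conditioning of $U_{1j}$ played no role.

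The remaining ingredients are the dimension and SNR conditions. In the transposed proof these appear as $r+\zeta_N\le c_{\mathrm{dim}}(T-r)$, together with the requirement that the analogues of \eqref{eq:UXiU} and \eqref{eq:WishartBoundD2}, namely $\sigma\gamma_{\min}^{-1}\sqrt{TN/N_{1,\mathrm p}}$ and $\sigma^2 N\gamma_{\min}^{-2}(\sqrt{T/N_{1,\mathrm p}}+T/N_{1,\mathrm p})$, be small. Both follow from Assumption~\eqref{assump:sampleSize} and from $\theta\le c_0$ in Assumption~\eqref{assump:smallNoise}, because $N_{1,\mathrm p}\ge N_{1k}$. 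This gives $\sqrt{NT/N_{1,\mathrm p}}\le\sqrt{NT/N_{1k}}$ and $\sigma\sqrt{T}/\gamma_{\min}\le\theta$, and the $\zeta_N$ terms are absorbed using $\zeta\le c_{\mathrm{dim}}\min(N-r,T-r)$.

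With the hypotheses verified, each part of the statement is read off from an earlier result.

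\textbf{Part (i).} Apply \eqref{eq:boundU_vectorX} with $\Pi_N=\bar y$; equivalently, use Lemma~\ref{lemma13_centered}(ii) with $p=1$.

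\textbf{Part (ii).} Take $\Pi_N=I_T$, so $p=T$, in Lemma~\ref{lemma13_centered}(ii). This gives $c_1\sigma\sqrt{T+r+\zeta_N}/(\gamma_{\min}\rho_N^{1/2})+c_1\sigma^2T/(\gamma_{\min}^2\rho_N)$. The second term is at most the first times $c_1\sigma\sqrt T/(\gamma_{\min}\rho_N^{1/2})\le c_1\theta\le 1$, and $r+\zeta_N\lesssim T$, which yields \eqref{eq:DeltaVtransposeOP}. Alternatively, one can use $\|S\|_{\mathrm{op}}$ from \eqref{eq:left-centered-S-bound} directly.

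\textbf{Part (iii).} Apply Corollary~\ref{cor:centered-left-perturbation} with row block $\mathcal J_k$ in place of $\mathcal I_k$, using the zero extension of $y$. The definition of $\Psi_V$ matches that of $\Psi_U$ after transposition.

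The only real obstacle is the transposed form of the SNR conditions used inside the Haar argument. In the original proof, Assumption~\eqref{assump:smallNoise} was tailored to the left matrix: it contains $\sqrt{N/\rho_T}$, whereas here we need $\sqrt{T/\rho_N}=\sqrt{NT/N_{1,\mathrm p}}$. This is dominated by the $\sqrt{NT/N_{1k}}$ entry, so I would verify it term by term when reusing \eqref{eq:UXiU}, \eqref{eq:WishartBoundD2} and \eqref{eq:left-centered-G2K-final}. The failure probabilities become $\mathcal O(p_N^{-10})$, since every concentration step runs over matrices of size $T\times N_{1,\mathrm p}$.
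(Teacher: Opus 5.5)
Your proposal is correct and matches the paper's proof: it likewise applies \eqref{eq:boundU_vectorX}, Lemma~\ref{lemma13_centered}(ii) with the identity projection plus Assumption~\eqref{assump:smallNoise}, and Corollary~\ref{cor:centered-left-perturbation} to $(Y_{\mathrm{up}}^{\mathrm{p}})^\top = VW_{\mathrm{up}}^\top + (E_{\mathrm{up}}^{\mathrm{p}})^\top$ to get (i), (ii) and (iii). Your explicit check that the transposed SNR quantities, such as $\sigma\gamma_{\min}^{-1}\sqrt{T/\rho_N}$, are dominated by $\theta$ because $N_{1,\mathrm{p}}\ge N_{1k}$ fills in a step the paper leaves implicit.
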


\begin{proof}
    The proof follows by applying Lemmas~\ref{lemma13_centered} and~\ref{cor:centered-left-perturbation} to $(Y_\mathrm{up}^\mathrm{p})^\top = V W_\mathrm{up}^\top + (E_\mathrm{up}^\mathrm{p})^\top$. In particular, \eqref{eq:DeltaVtranspose} follows from~\eqref{eq:boundU_vectorX}, \eqref{eq:DeltaVtransposeOP} from \eqref{eq:lemma13_centered_projected} with $\Pi_T = I_T$ and \eqref{assump:smallNoise}, and \eqref{eq:PsiVranspose} from~\eqref{cor:bound_Psi}.
\end{proof}

\medskip\medskip\medskip
Returning to theoretical guarantees for quantities obtained from the SVD of $Y_{\mathrm{left}}^\mathrm{p}$, the following result controls the estimation error for $(U_{1k}^\top U_{1k})^{-1}U_{1k}^\top$, a key object for predicting the $c$-block from the $a$-block.
\begin{lemma}
\label{lemma15_centered}
Adopt the assumptions and notation of Lemma~\ref{lemma13_centered}. Fix
$k\in[K]$, write $H_k:=U_{1k}^\top U_{1k}$ and
$\widehat H_k:=(\hat U_{1k}H_U)^\top\hat U_{1k}H_U$, and set $D_k:=\widehat H_k^{-1}(\hat U_{1k}H_U)^\top-H_k^{-1}U_{1k}^\top$. 
There exists $c_1\equiv c_1(c_\ell,c_u,c_0,c_{\mathrm{dim}}, c_{\mathrm{blk}},\kappa)>0$
such that, with probability at least
$1-\mathcal O(p_T^{-10})$, we have
\begin{align}\label{eq:lemma15_centered_D_rewrite}
\|D_k\|_{\rm op}& \leq
c_1 \frac{\sigma \sqrt{N}}{\gamma_\mathrm{min} \rho_T^{1/2}} \sqrt{\frac{N}{N_{1k}}}.
\end{align}
\end{lemma}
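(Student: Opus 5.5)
Write $\Delta_{1k}:=\hat U_{1k}H_U-U_{1k}$. The plan is a perturbation argument for the map $A\mapsto (A^\top A)^{-1}A^\top$, linearised at $A=U_{1k}$. We use the algebraic decomposition
\[
D_k=\widehat H_k^{-1}\Delta_{1k}^\top+\bigl(\widehat H_k^{-1}-H_k^{-1}\bigr)U_{1k}^\top
=\widehat H_k^{-1}\Delta_{1k}^\top+\widehat H_k^{-1}\bigl(H_k-\widehat H_k\bigr)H_k^{-1}U_{1k}^\top ,
\]
together with $H_k-\widehat H_k=-\Delta_{1k}^\top U_{1k}-U_{1k}^\top\Delta_{1k}-\Delta_{1k}^\top\Delta_{1k}$.

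All bounds are taken on the high-probability event of Lemma~\ref{lemma13_centered}. The first ingredient is the blockwise bound~\eqref{eq:lemma13_centered_block1} with $\Pi_N=I_{N_{1k}}$, which gives
\[
\|\Delta_{1k}\|_{\rm op}\le c_1\sigma\gamma_{\min}^{-1}\sqrt{N_{1k}/\rho_T}
+c_1\frac{\sigma^2N}{\gamma_{\min}^2\rho_T}\sqrt{c_uN_{1k}/N}.
\]
By Assumption~\eqref{assump:smallNoise}, $\sigma\sqrt N/(\gamma_{\min}\rho_T^{1/2})\le\theta\le c_0$. The second term is therefore at most a constant times the first, so $\|\Delta_{1k}\|_{\rm op}\le c_1\varepsilon\sqrt{N_{1k}/N}$ with $\varepsilon:=\sigma\sqrt N/(\gamma_{\min}\rho_T^{1/2})$.

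Next, Assumption~\eqref{assump:subblock-conditioning} gives $\|H_k^{-1}\|_{\rm op}\le c_\ell^{-1}N/N_{1k}$ and $\|U_{1k}\|_{\rm op}\le\sqrt{c_uN_{1k}/N}$. Weyl's inequality with the bound on $\Delta_{1k}$ then gives $\widehat H_k\succeq(c_\ell/2)(N_{1k}/N)I_r$, which is the byproduct already noted after~\eqref{eq:lemma13_centered_block1}. Because $H_U$ is orthogonal, this spectral bound carries over to $\widehat H_k$ as defined here, so $\|\widehat H_k^{-1}\|_{\rm op}\le 2c_\ell^{-1}N/N_{1k}$.

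Plugging in, the first term is bounded by
\[
\|\widehat H_k^{-1}\|_{\rm op}\|\Delta_{1k}\|_{\rm op}\lesssim \frac{N}{N_{1k}}\,\varepsilon\sqrt{\frac{N_{1k}}{N}}=\varepsilon\sqrt{\frac{N}{N_{1k}}}.
\]
For the second term, $\|H_k-\widehat H_k\|_{\rm op}\le 2\|U_{1k}\|_{\rm op}\|\Delta_{1k}\|_{\rm op}+\|\Delta_{1k}\|_{\rm op}^2\lesssim\varepsilon N_{1k}/N$, using $\varepsilon\le c_0\le 1$. Hence
\[
\|\widehat H_k^{-1}\|_{\rm op}\,\|H_k-\widehat H_k\|_{\rm op}\,\|H_k^{-1}\|_{\rm op}\,\|U_{1k}\|_{\rm op}
\lesssim \frac{N}{N_{1k}}\cdot\varepsilon\frac{N_{1k}}{N}\cdot\frac{N}{N_{1k}}\cdot\sqrt{\frac{N_{1k}}{N}}=\varepsilon\sqrt{\frac{N}{N_{1k}}}.
\]
Summing the two terms yields~\eqref{eq:lemma15_centered_D_rewrite}.

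The only delicate point is absorbing the second-order term $\sigma^2N\rho_T^{-1}\gamma_{\min}^{-2}\|U_{1k}\|_{\rm op}$ of the blockwise bound into the first-order term. This works because the smallness factor is exactly $\theta$, which Assumption~\eqref{assump:smallNoise} controls, and because $\|U_{1k}\|_{\rm op}\asymp\sqrt{N_{1k}/N}$ while $\sqrt{p+r+\zeta_T}$ with $p=N_{1k}$ is at least $\sqrt{N_{1k}}$. The failure probability is $\mathcal O(p_T^{-10})$, inherited from Lemma~\ref{lemma13_centered}.
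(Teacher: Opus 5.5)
Your argument is correct; it differs from the paper's only in the identity used to linearise $D_k$. The paper writes $D_k=(\hat U_{1k}H_U)^\dagger-U_{1k}^\dagger$ and applies the exact Moore--Penrose perturbation identity of Lemma~\ref{lem:pinv_exact_identity}. That identity contains no quadratic term in $\Delta_{1k}$ and yields $\|D_k\|_{\mathrm{op}}\le(\sqrt2+1)c_\ell^{-1}(N/N_{1k})\|\Delta_{1k}\|_{\mathrm{op}}$ in one line. You instead expand through the Gram matrices using the resolvent identity $\widehat H_k^{-1}-H_k^{-1}=\widehat H_k^{-1}(H_k-\widehat H_k)H_k^{-1}$.

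Your route is fully elementary and needs no pseudoinverse lemma, but it has two costs. You must absorb the extra piece $\Delta_{1k}^\top\Delta_{1k}$ via $\varepsilon\le 1$. Your constant is also inflated by a factor of order $c_u/c_\ell$, because you bound $\|H_k^{-1}\|_{\mathrm{op}}$ and $\|U_{1k}\|_{\mathrm{op}}^2$ separately rather than using $\|U_{1k}^\dagger\|_{\mathrm{op}}=\sigma_r(U_{1k})^{-1}$. Both costs are harmless: $c_1$ may depend on $c_\ell,c_u$, and the SNR condition is needed anyway for the lower bound on $\widehat H_k$.

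The shared ingredients are identical: \eqref{eq:lemma13_centered_block1} with $\Pi_N=I_{N_{1k}}$, and the Weyl bound $\widehat H_k\succeq (c_\ell/2)(N_{1k}/N)I_r$. You are more precise than the paper's text in noting that absorbing the second-order term of the blockwise bound requires Assumption~\eqref{assump:smallNoise}. Note also that replacing $\sqrt{N_{1k}+r+\zeta_T}$ by $\sqrt{N_{1k}}$ in the first term uses the upper bound $r+\zeta_T\le c_{\mathrm{dim}}N_{1k}$ from Assumption~\eqref{assump:sampleSize}. You need this in addition to the lower comparison you invoke for the absorption.
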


\begin{proof}
We showed in~\eqref{eq:lemma13_centered_block1} and the discussion thereafter that
$\hat U_{1k}H_U$ has full column rank on the high-probability event of
Lemma~\ref{lemma13_centered}. On this event, letting $\Delta_k := \hat U_{1k} H_U - U_{1k}$ and using $D_k=\widehat H_k^{-1}(\hat U_{1k}H_U)^\top-H_k^{-1}U_{1k}^\top
=(\hat U_{1k}H_U)^\dagger-U_{1k}^\dagger$,  Lemma~\ref{lem:pinv_exact_identity}
gives \[
D_k
=
-\,U_{1k}^\dagger\Delta_k(\hat U_{1k}H_U)^\dagger
+
U_{1k}^\dagger(U_{1k}^\dagger)^\top\Delta_k^\top
\{I_{N_{1k}}-(\hat U_{1k}H_U)(\hat U_{1k}H_U)^\dagger\}.\]
As a result, we have
\begin{align}
    \|D_k\|_{\rm op}
& \le
\|U_{1k}^\dagger\|_{\rm op}\|\Delta_k\|_{\rm op}
\|(\hat U_{1k}H_U)^\dagger\|_{\rm op}
+
\|U_{1k}^\dagger\|_{\rm op}^2\|\Delta_k\|_{\rm op}
\nonumber \\
&\le
(\sqrt{2}+1)c_\ell^{-1}\frac{N}{N_{1k}}\|\Delta_k\|_{\rm op} \leq c_1 \frac{\sigma \sqrt{N}}{\gamma_\mathrm{min} \rho_T^{1/2}} \sqrt{\frac{N}{N_{1k}}},
\label{eq:lemma15_basic_rewrite}
\end{align}
where the penultimate inequality follows from  \eqref{assump:subblock-conditioning} and $\frac{c_\ell}{2}\frac{N_{1k}}{N}I_r
 \preceq
\hat U_{1k}^\top \hat U_{1k}
 \preceq 
2c_u\frac{N_{1k}}{N}I_r$, while the last one follows from the discussion right after~\eqref{eq:lemma13_centered_block1}.
This completes the proof.
\end{proof}

\begin{lemma}
\label{lemmaL_action_centered}
Grant the assumptions of Lemma~\ref{lemma13_centered}. Fix $k\in[K]$, and recall $H_k=U_{1k}^\top U_{1k}, \,\hat H_k= H_U^\top \hat U_{1k}^\top\hat U_{1k}H_U$. On the high probability event where $\hat H_k$ is invertible, define $L:=
\hat U_{2k}H_U
\hat H_k^{-1}
H_U^\top \hat U_{1k}^\top
-
U_{2k}H_k^{-1} U_{1k}^\top$. Writing 
\begin{align}
 \Delta_L := L \, U_{1k}\mathcal C_{\bullet,\bullet,k}
-
(E_{\rm left}^{\rm p})_{\{N_{1k}+1,\ldots,N\},\bullet}
W_{\rm left}(W_{\rm left}^\top W_{\rm left})^{-1}
\mathcal C_{\bullet,\bullet,k},
\label{eq:lemma16_centered_decomp_rewrite}
\end{align}
for every fixed $x\in\mathcal S^{N_{2k}-1}$ there exists $c_1\equiv c_1(c_\ell,c_u,c_0,c_{\mathrm{dim}}, c_{\mathrm{blk}},\kappa)>0$
such that, with probability at least
$1-\mathcal O(p_T^{-10})$, we have 
\begin{align}
\|x^\top \Delta_L \|_2
&\le
c_1
\frac{\sigma^2\sqrt{(N+T_{1,\mathrm{p}})(r+\zeta_T)}}{
\gamma_{\min} \rho_T} \, + \,  c_1 \frac{\sigma \sqrt{N}}{\rho_T^{1/2}}\, \|U_{2k}^\top x\|_2.
\label{eq:lemma16_centered_bound_rewrite}
\end{align}
\end{lemma}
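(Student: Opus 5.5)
We use the aligned block notation $\tilde U_{1k}:=\hat U_{1k}H_U$ and $\tilde U_{2k}:=\hat U_{2k}H_U$, with errors $\Delta_{1}:=\tilde U_{1k}-U_{1k}$ and $\Delta_{2}:=\tilde U_{2k}-U_{2k}$. Since $\tilde U_{1k}$ has full column rank on the high-probability event of Lemma~\ref{lemma13_centered}, we have $\hat H_k^{-1}\tilde U_{1k}^\top\tilde U_{1k}=I_r$. This gives the exact algebraic identity
\[
L\,U_{1k}=\tilde U_{2k}\hat H_k^{-1}\tilde U_{1k}^\top U_{1k}-U_{2k}
=\tilde U_{2k}\bigl(I_r-\hat H_k^{-1}\tilde U_{1k}^\top\Delta_1\bigr)-U_{2k}
=\Delta_2-\tilde U_{2k}\hat H_k^{-1}\tilde U_{1k}^\top\Delta_1 .
\]
Next we split $\Delta_2=(E_{\rm left}^{\rm p})_{\mathcal I_k,\bullet}W_{\rm left}\Lambda^{-1}+\Psi_{U,2k}$ using Corollary~\ref{cor:centered-left-perturbation}. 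The Gaussian piece cancels exactly with the subtracted term in \eqref{eq:lemma16_centered_decomp_rewrite}, leaving
\[
x^\top\Delta_L=x^\top\Psi_{U,2k}\,\mathcal C_{\bullet,\bullet,k}-x^\top\tilde U_{2k}\hat H_k^{-1}\tilde U_{1k}^\top\Delta_1\,\mathcal C_{\bullet,\bullet,k}.
\]

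For the first term we use $\|\mathcal C_{\bullet,\bullet,k}\|_{\rm op}\le\gamma_{\max}$ together with \eqref{cor:bound_Psi}. Its leading part gives $\gamma_{\max}\sigma^2\sqrt{(N+T_{1,\mathrm p})(r+\zeta_T)}/(\gamma_{\min}^2\rho_T)$, which matches the first term of the claim after absorbing $\kappa$ into $c_1$. Its remaining parts are multiplied by $\|U_{2k}^\top x\|_2$ with prefactor $\gamma_{\max}\bigl(\sigma^2N/(\gamma_{\min}^2\rho_T)+\sigma\sqrt{r+\zeta_T}/(\gamma_{\min}\rho_T^{1/2})\bigr)$. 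By \eqref{assump:smallNoise} we have $\sigma\sqrt{N/\rho_T}/\gamma_{\min}\le c_0$, and by \eqref{assump:sampleSize} we have $r+\zeta_T\le N$. Hence this prefactor is at most $c_1\sigma\sqrt N/\rho_T^{1/2}$, as required.

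For the second term we write $x^\top\tilde U_{2k}=x^\top U_{2k}+x^\top\Delta_2$. We bound $\|\tilde U_{2k}^\top x\|_2\le\|U_{2k}^\top x\|_2+\|\Delta_2^\top x\|_2$, controlling $\|\Delta_2^\top x\|_2$ by \eqref{eq:boundU_vectorX} applied to the zero extension $\bar x$. We use $\|\hat H_k^{-1}\|_{\rm op}\le 2N/(c_\ell N_{1k})$ from the discussion after \eqref{eq:lemma13_centered_block1}. The key quantity is $\|\tilde U_{1k}^\top\Delta_1\|_{\rm op}\le\|U_{1k}^\top\Delta_1\|_{\rm op}+\|\Delta_1\|_{\rm op}^2$.

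Controlling $\|U_{1k}^\top\Delta_1\|_{\rm op}$ is the main obstacle. I would apply \eqref{eq:lemma13_centered_block1} with $\Pi_N$ the orthonormal basis of ${\rm col}(U_{1k})$, so that $p=r$. This gives $\|\Pi_N^\top\Delta_1\|_{\rm op}\lesssim\sigma\sqrt{r+\zeta_T}/(\gamma_{\min}\rho_T^{1/2})+\sigma^2N\sqrt{N_{1k}/N}/(\gamma_{\min}^2\rho_T)$. Multiplying by $\|U_{1k}\|_{\rm op}\lesssim\sqrt{N_{1k}/N}$ yields $\|U_{1k}^\top\Delta_1\|_{\rm op}$. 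Meanwhile $\|\Delta_1\|_{\rm op}^2\lesssim\sigma^2N_{1k}/(\gamma_{\min}^2\rho_T)$. After multiplying by $\|\hat H_k^{-1}\|_{\rm op}\gamma_{\max}$, the factors $N_{1k}/N$ cancel against $N/N_{1k}$. Each resulting contribution then splits into two kinds. Those proportional to $\|U_{2k}^\top x\|_2$ are of order $\sigma\sqrt N/\rho_T^{1/2}$ using \eqref{assump:smallNoise} and \eqref{assump:sampleSize}. Those arising from the constant part of \eqref{eq:boundU_vectorX}, of order $\sigma\sqrt{r+\zeta_T}/(\gamma_{\min}\rho_T^{1/2})$, multiply an $O(\sigma\sqrt N/\rho_T^{1/2})$-type factor and so fall under the first term of the claim.

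The delicate point is the cross term $\|\Delta_1\|_{\rm op}^2 N/N_{1k}\sim\sigma^2N/(\gamma_{\min}^2\rho_T)$. It is only acceptable because it multiplies $\|\tilde U_{2k}^\top x\|_2$, and it must be checked that it does not need an extra $\sqrt N$. If the bookkeeping fails, I would instead use the projected bound with $\Pi_N=I_{N_{1k}}$ only through $\Delta_1^\top\Delta_1$ combined with the $\theta\le c_0$ smallness. Finally, a union bound over the $O(1)$ events, each of probability $1-\mathcal O(p_T^{-10})$, gives the stated probability.
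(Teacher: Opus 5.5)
Your proposal is correct and is essentially the paper's proof. The paper uses the same split $\Delta_L=\Psi_{U,2k}\mathcal C_{\bullet,\bullet,k}+\hat U_{2k}H_U D_k U_{1k}\mathcal C_{\bullet,\bullet,k}$; your second term is this one, since $D_kU_{1k}=-\hat H_k^{-1}(\hat U_{1k}H_U)^\top\Delta_1$ by the last identity of Lemma~\ref{lem:pinv_exact_identity}. The paper bounds the first piece exactly as you do via \eqref{cor:bound_Psi}, and handles $\|H_U^\top\hat U_{2k}^\top x\|_2$ in the second piece via \eqref{eq:boundU_vectorX}.

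The only difference is that the paper controls the second piece crudely, by $\|D_k\|_{\rm op}\|U_{1k}\|_{\rm op}\gamma_{\max}\lesssim\sigma\sqrt N/\rho_T^{1/2}$ using Lemma~\ref{lemma15_centered}. Your projected estimate of $U_{1k}^\top\Delta_1$ is therefore more refined than needed; even $\|\hat H_k^{-1}\|_{\rm op}\|\hat U_{1k}H_U\|_{\rm op}\|\Delta_1\|_{\rm op}$ already gives this scale. The cross term you worry about is also harmless, because $\kappa\sigma^2N/(\gamma_{\min}\rho_T)\le\kappa c_0\,\sigma\sqrt N/\rho_T^{1/2}$ by \eqref{assump:smallNoise}.
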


\begin{proof}
Using $H_k^{-1}U_{1k}^\top U_{1k}=I_r$, by definition of $L$, we have
\begin{align*}
L U_{1k}\mathcal C_{\bullet,\bullet,k}
&=
\hat U_{2k}H_U
\hat H_k^{-1}
H_U^\top \hat U_{1k}^\top
U_{1k}\mathcal C_{\bullet,\bullet,k}
-
U_{2k}H_k^{-1} U_{1k}^\top
U_{1k}\mathcal C_{\bullet,\bullet,k}  \\
&=
\bigl(\hat U_{2k}H_U-U_{2k}\bigr)
\mathcal C_{\bullet,\bullet,k}  
+
\hat U_{2k}H_U
\left(
\hat H_k^{-1}
H_U^\top \hat U_{1k}^\top
-
H_k^{-1}U_{1k}^\top
\right)
U_{1k}\mathcal C_{\bullet,\bullet,k},
\end{align*}
Also, the definition of $\Psi_{U,2k}$ in Corollary~\ref{cor:centered-left-perturbation} gives \[\hat U_{2k}H_U-U_{2k}
=
(E_{\rm left}^{\rm p})_{\{N_{1k}+1,\ldots,N\},\bullet}
W_{\rm left}(W_{\rm left}^\top W_{\rm left})^{-1}
 +\Psi_{U,2k}.
 \]
 Substituting this identity into the above display and using the definition of $\Delta_L$ in~\eqref{eq:lemma16_centered_decomp_rewrite} give $\Delta_L
=
\Psi_{U,2k}\mathcal C_{\bullet,\bullet,k} 
+
\hat U_{2k}H_U
(
\hat H_k^{-1}
H_U^\top \hat U_{1k}^\top
-
H_k^{-1}U_{1k}^\top
)
U_{1k}\mathcal C_{\bullet,\bullet,k}$.

We will use this to control the Euclidean norm of $x^\top \Delta_L$ by bounding the norm of each of the two terms separately. For the first one, combining~\eqref{cor:bound_Psi} with $\|\mathcal C_{\bullet,\bullet,k}\|_{\rm op}
\le \gamma_{\max}
=
\kappa\gamma_{\min}$ gives
\begin{align*}
\|x^\top \Psi_{U,2k}\mathcal C_{\bullet,\bullet,k}\|_2
&\le \|\mathcal C_{\bullet,\bullet,k} \|_\mathrm{op} \, \|\Psi_{U,2k}^\top x\|_2 \\
& \leq  
c_1
\frac{\sigma^2\sqrt{(N+T_{1,\mathrm{p}})(r+\zeta_T)}}{
\gamma_{\min} \rho_T}
+
c_1\left(
\frac{\sigma^2N}{\gamma_{\min} \, \rho_T}
+
\frac{\sigma\sqrt{r+\zeta_T}}{
\rho_T^{1/2}}
\right)
\|U_{2k}^\top x\|_2.
\end{align*}
Second, recalling $D_k=\widehat H_k^{-1}(\hat U_{1k}H_U)^\top-H_k^{-1}U_{1k}^\top$ from Lemma~\ref{lemma15_centered}, we have
\begin{align*}
\|
x^\top \hat U_{2k}H_U
\, D_k \, 
U_{1k}\mathcal C_{\bullet,\bullet,k}\|_2 & \le
\|H_U^\top\hat U_{2k}^\top x\|_2
\left\|
 \, D_k \, 
U_{1k}\mathcal C_{\bullet,\bullet,k}
\right\|_\mathrm{op} \\
& \leq \left(\|(\hat U_{2k} H_U - U_{2k})^\top x\|_2+\|U_{2k}^\top x\|_2 \right)
\left\|
 \, D_k \, 
U_{1k}\mathcal C_{\bullet,\bullet,k}
\right\|_\mathrm{op} \\
& \leq c_1 \left(\frac{\sigma \sqrt{r + \zeta_T}}{\gamma_\mathrm{min} \, \rho_T^{1/2}} +\frac{\sigma^2 N}{\gamma_\mathrm{min}^2 \, \rho_T}\|U_{2k}^\top x\|_2+\|U_{2k}^\top x\|_2 \right)
\left\|
 \, D_k \, 
U_{1k}\mathcal C_{\bullet,\bullet,k}
\right\|_\mathrm{op} \\
& \leq c_1 \left(\frac{\sigma \sqrt{r + \zeta_T}}{\gamma_\mathrm{min} \, \rho_T^{1/2}} +\|U_{2k}^\top x\|_2 \right)
\left\|
 \, D_k \, 
U_{1k}\mathcal C_{\bullet,\bullet,k}
\right\|_\mathrm{op} \\ 
& \leq c_1 \left(\frac{\sigma \sqrt{r + \zeta_T}}{\gamma_\mathrm{min} \, \rho_T^{1/2}} +\|U_{2k}^\top x\|_2 \right)
\left\|
 \, D_k \right\|_\mathrm{op} \left\| 
U_{1k} \right\|_\mathrm{op} \left\| \mathcal C_{\bullet,\bullet,k}
\right\|_\mathrm{op} \\ 
& \leq c_1 \frac{\sigma^2 \sqrt{N (r + \zeta_T)}}{\gamma_\mathrm{min} \, \rho_T} + c_1 \frac{\sigma \sqrt{N}}{\rho_T^{1/2}}\, \|U_{2k}^\top x\|_2, 
\end{align*}
where the third inequality follows from~\eqref{eq:boundU_vectorX}, and the last one from~\eqref{eq:lemma15_centered_D_rewrite},   \eqref{assump:subblock-conditioning} and $\|\mathcal C_{\bullet,\bullet,k}\|_{\rm op}
\le \gamma_{\max}
=
\kappa\gamma_{\min}$. Combining the two displays and using \eqref{assump:sampleSize},  \eqref{assump:smallNoise} concludes the proof.
\end{proof}

\medskip \medskip
\begin{lemma}
\label{lem:up-cen}
Grant Assumptions  \eqref{assump:subblock-conditioning}
with fixed constants $0<c_\ell\le c_u$, \eqref{assump:sampleSize} and \eqref{assump:smallNoise}. Suppose that
$0<\gamma_{\min}\le \sigma_{\min}(\mathcal C_{\bullet,\bullet,j})
\le \sigma_{\max}(\mathcal C_{\bullet,\bullet,j})\le \gamma_{\max}<\infty$
for all $j\in[K]$. Write $Y_{\rm up}^{\rm p}=M_{\rm up}^{\rm p}
+E_{\rm up}^{\rm p}$, where $M_{\rm up}^{\rm p}= W_\mathrm{up} V^\top$. Also recall
$(\widehat U_{\mathrm{up}},\widehat\Sigma_{\mathrm{up}},
\widehat V_{\mathrm{up}})
=\operatorname{SVD}_r(Y_\mathrm{up}^\mathrm{p})$,
$H_V=\operatorname{sgn}(\widehat V_{\mathrm{up}}^\top V)$. Writing
\begin{equation}
\Phi_{\mathrm{up}} := \widehat U_{\mathrm{up}}\widehat\Sigma_{\mathrm{up}}
\widehat V_{\mathrm{up}}^\top-M_\mathrm{up}^\mathrm{p}
-
 E_\mathrm{up}^\mathrm{p}VV^\top 
-
 W_{\rm up}(W_{\rm up}^\top W_{\rm up})^{-1}W_{\rm up}^\top E_{\rm up}^{\rm p}, 
\label{eq:up-decomp}
\end{equation}
for every fixed $g\in\mathbb R^{N_{1,{\mathrm{p}}}}$ and
$y\in\mathcal S^{T-1}$ there exists a constant
$c_1\equiv c_1(c_\ell,c_u,c_0,c_{\mathrm{dim}}, c_{\mathrm{blk}},\kappa)>~0$ such that 
\begin{align}
|g^\top \Phi_{\mathrm{up}} \, y|
& \le c_1  \frac{\sigma^2 \sqrt{T(r + \zeta_N)}}{\gamma_\mathrm{min} \, \rho_N^{1/2}} \, \|g\|_2 \, +  c_1  \, \frac{\sigma \sqrt{r + \zeta_N}}{\gamma_\mathrm{min} \, \rho_N^{1/2}}  \|W_{\mathrm{up}}^\top g\|_2 + c_1 \frac{\sigma^2 T}{\gamma_\mathrm{min}\, \rho_N^{1/2}} \, \|g\|_2 \, \|V^\top y\|_2 \nonumber \\
& \qquad \qquad + c_1 \frac{\sigma \sqrt{T}}{\gamma_\mathrm{min}\, \rho_N^{1/2}} \|W_{\mathrm{up}}^\top g\|_2
 \, \|V^\top y\|_2
\label{eq:Rup-bound}
\end{align}
 with
probability at least $1-\mathcal O(p_N^{-10})$.
\end{lemma}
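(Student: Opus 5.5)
The plan is to reduce everything to the projector identity for the rank-$r$ reconstruction of $Y:=Y_{\rm up}^{\rm p}$. Write $\widehat P_V:=\widehat V_{\rm up}\widehat V_{\rm up}^\top$ and $P_V:=VV^\top$. Then
\[
\widehat U_{\rm up}\widehat\Sigma_{\rm up}\widehat V_{\rm up}^\top = Y\widehat P_V = W_{\rm up}V^\top\widehat P_V + E_{\rm up}^{\rm p}\widehat P_V .
\]
Since $V^\top P_V = V^\top$, this gives
\[
\Phi_{\rm up}= W_{\rm up}V^\top(\widehat P_V-P_V) + E_{\rm up}^{\rm p}(\widehat P_V-P_V) - W_{\rm up}(W_{\rm up}^\top W_{\rm up})^{-1}W_{\rm up}^\top E_{\rm up}^{\rm p}.
\]
I then insert the first-order expansion of $\widehat V_{\rm up}H_V$ from Corollary~\ref{lemma13_Transpose}. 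Write
\[
\Delta_V:=\widehat V_{\rm up}H_V-V=(E_{\rm up}^{\rm p})^\top W_{\rm up}(W_{\rm up}^\top W_{\rm up})^{-1}+\Psi_V ,
\]
so that
\[
\widehat P_V-P_V=\Delta_V V^\top+V\Delta_V^\top+\Delta_V\Delta_V^\top .
\]
In the term $W_{\rm up}V^\top(\widehat P_V-P_V)$, the piece $W_{\rm up}V^\top\Delta_VV^\top+W_{\rm up}\Delta_V^\top$ produces $W_{\rm up}(W_{\rm up}^\top W_{\rm up})^{-1}W_{\rm up}^\top E_{\rm up}^{\rm p}$ through $\Delta_V^\top$'s Gaussian part. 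This cancels the subtracted term exactly, leaving $W_{\rm up}\Psi_V^\top$. The remaining contributions are $W_{\rm up}V^\top\Delta_VV^\top$ and $W_{\rm up}V^\top\Delta_V\Delta_V^\top$. Here $V^\top\Delta_V=H_V\text{-type}$ and $C-I$-type, i.e.\ second order and symmetric up to $\|S\|^2$. The leftover $E_{\rm up}^{\rm p}(\widehat P_V-P_V)$ is cross noise.

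Next I bound $g^\top(\cdot)y$ term by term, using the four structures $\|g\|_2$, $\|W_{\rm up}^\top g\|_2$, $\|V^\top y\|_2$ and generic $y$.

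\begin{enumerate}
\item For $g^\top W_{\rm up}\Psi_V^\top y$, use $\|W_{\rm up}^\top g\|_2\,\|\Psi_V^\top y\|_2$ together with the bound \eqref{cor:bound_Psi} transposed, applied with $\bar y$ in place of the block vector. After using $\|W_{\rm up}\|\lesssim\gamma_{\max}\rho_N^{1/2}$ and Assumption~\eqref{assump:smallNoise} to absorb constants, this yields the $\|W_{\rm up}^\top g\|_2$ terms, both with and without $\|V^\top y\|_2$.
\item For the $V^\top\Delta_V$ terms, $\|V^\top\Delta_V\|_{\rm op}\lesssim\|S\|^2$ plus $\|G_1W\Lambda^{-1}\|$ bounds. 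These give factors $\sigma\sqrt{r+\zeta_N}/(\gamma_{\min}\rho_N^{1/2})$ times $\|W_{\rm up}^\top g\|_2$. The factor $V^\top y$ in $W_{\rm up}V^\top\Delta_VV^\top y$ supplies $\|V^\top y\|_2$. In $W_{\rm up}V^\top\Delta_V\Delta_V^\top y$ I use \eqref{eq:DeltaVtranspose} for $\Delta_V^\top y$.
\item For $E_{\rm up}^{\rm p}(\widehat P_V-P_V)y$, expand it as $E\Delta_V(V^\top y)+EV\Delta_V^\top y+E\Delta_V\Delta_V^\top y$.
\end{enumerate}

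The term $g^\top E_{\rm up}^{\rm p}V$ is a Gaussian $r$-vector of norm $\lesssim\sigma\|g\|_2\sqrt{r+\zeta_N}$ (Lemma~\ref{lemma:bilinearGaussian}). Paired with $\|\Delta_V^\top y\|_2$ from \eqref{eq:DeltaVtranspose}, it gives $\sigma^2\sqrt{r+\zeta_N}\,\|g\|_2/(\gamma_{\min}\rho_N^{1/2})$, which is dominated by the first term. The terms $g^\top E\Delta_V$ are the delicate ones, since $E$ and $\Delta_V$ are dependent. Crudely, $\|E\|_{\rm op}\|\Delta_V\|_{\rm op}\lesssim\sigma\sqrt{p_N}\cdot\sigma\sqrt T/(\gamma_{\min}\rho_N^{1/2})$ by \eqref{eq:DeltaVtransposeOP}. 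This is too large, because it uses $\|E\|$ rather than a projected version.

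This last step is the main obstacle. I will handle it with the same Haar/leave-the-Gaussian-part argument as in Lemma~\ref{lemma13_centered}, applied to $Y^\top$. Decompose $E=E P_V+E P_V^\perp$ and write $\widehat V_{\rm up}H_V=VC+V_\perp S$ with $S=Q R$, where $Q$ is Haar and independent of $\mathcal F$. Then $g^\top E V_\perp Q$ is controlled through the explicit eigen-equation. Concretely, $g^\top E\,V_\perp S = g^\top(\text{noise in the }G_2\text{ coordinates})\,S$, and the reduced block equation identifies $G_2^\top$-type products with $S\widehat\Lambda_c - $ terms. This yields $|g^\top E V_\perp S\,w|\lesssim\sigma\sqrt{T}\|g\|_2\|S\|$ with the Haar compression factor. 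The resulting terms are $\sigma^2 T\|g\|_2\|V^\top y\|_2/(\gamma_{\min}\rho_N^{1/2})$ and $\sigma^2\sqrt{T(r+\zeta_N)}\|g\|_2/(\gamma_{\min}\rho_N^{1/2})$. Here the $\sqrt{r+\zeta_N}$ arises from compressing along the fixed direction $y$ via Lemma~\ref{lem:haar-frame-compression}.

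Collecting all bounds on the intersection of the events from Corollary~\ref{lemma13_Transpose} and the Gaussian lemmas, each of probability $1-\mathcal O(p_N^{-10})$, gives \eqref{eq:Rup-bound}.
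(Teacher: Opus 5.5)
Your decomposition is the same as the paper's. Writing $E:=E_{\rm up}^{\rm p}$ and $Y:=Y_{\rm up}^{\rm p}$, you expand $\widehat P_V-P_V=\Delta_VV^\top+V\Delta_V^\top+\Delta_V\Delta_V^\top$ and cancel the Gaussian part of $W_{\rm up}\Delta_V^\top$. This gives exactly $g^\top\Phi_{\rm up}y=g^\top EV\Delta_V^\top y+g^\top Y\Delta_VV^\top y+g^\top Y\Delta_V\Delta_V^\top y+g^\top W_{\rm up}\Psi_V^\top y$. Your handling of the $W_{\rm up}$-pieces and of $g^\top EV\Delta_V^\top y$ is fine up to bookkeeping.

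The gap is in step 3, the terms $g^\top E\Delta_V w$ with $w\in\{V^\top y,\Delta_V^\top y\}$, which you call the main obstacle. That obstacle is not real. Because $g$ is fixed, Lemma~\ref{lemma:bilinearGaussian} with $X=g$, $Y=I_T$, $\delta=p_N^{-10}$ gives $\|E^\top g\|_2\le c_1\sigma\sqrt{T+\zeta_N}\,\|g\|_2\lesssim\sigma\sqrt T\,\|g\|_2$. The inequality $|g^\top E\Delta_V w|\le\|E^\top g\|_2\,\|\Delta_V\|_{\rm op}\,\|w\|_2$ is deterministic, so the dependence between $E$ and $\Delta_V$ is irrelevant. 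Your spurious loss came from using $\|E\|_{\rm op}\|g\|_2\asymp\sigma\sqrt{p_N}\|g\|_2$ instead of $\|E^\top g\|_2$.

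With \eqref{eq:DeltaVtransposeOP}, the choice $w=V^\top y$ gives exactly the term $\sigma^2T\|g\|_2\|V^\top y\|_2/(\gamma_{\min}\rho_N^{1/2})$. For $w=\Delta_V^\top y$, combine with \eqref{eq:DeltaVtranspose} and $\sigma\sqrt{T/\rho_N}/\gamma_{\min}\le\theta\le c_0$; the result is absorbed into the first and third terms of \eqref{eq:Rup-bound}. This is the paper's argument. The paper keeps $Y$ together and uses $\|Y^\top g\|_2\le\|W_{\rm up}^\top g\|_2+c_1\sigma\sqrt T\|g\|_2$.

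The Haar route you substitute does not work as sketched. In the transposed notation of Lemma~\ref{lemma13_centered}, $EV_\perp=G_2^\top=V_2\Sigma_2Q^\top$ and $S=QR$. Hence $g^\top EV_\perp S=g^\top V_2\Sigma_2R$: the Haar factor $Q$ cancels and everything left is $\mathcal F$-measurable, so Lemma~\ref{lem:haar-frame-compression} has nothing to act on. The outcome you state, $\sigma\sqrt T\|g\|_2\|S\|_{\rm op}$, is just the Cauchy--Schwarz bound above.

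Likewise, the factor $\sqrt{r+\zeta_N}$ in the first term of \eqref{eq:Rup-bound} needs no new compression step. It enters through $\|\Delta_V^\top y\|_2$ in \eqref{eq:DeltaVtranspose}, where the compression along $y$ was already done.

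A minor point in step 2: $V^\top\Delta_V=C-I_r$ exactly. Its contribution $\|W_{\rm up}^\top g\|_2\|C-I_r\|_{\rm op}\|V^\top y\|_2$ must go to the fourth term, not the second, because $\sigma^2T/(\gamma_{\min}^2\rho_N)$ need not be $\lesssim\sigma\sqrt{r+\zeta_N}/(\gamma_{\min}\rho_N^{1/2})$.
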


\begin{proof}
Write
$\widehat M^\mathrm{p}_\mathrm{up}:=
\widehat U_{\mathrm{up}}\widehat\Sigma_{\mathrm{up}}
\widehat V_{\mathrm{up}}^\top$, $\delta_V :=\widehat V_{\mathrm{up}}H_V-~V$, and $\Psi_V := \delta_V - (E_\mathrm{up}^\mathrm{p})^\top W_\mathrm{up} (W_\mathrm{up}^\top W_\mathrm{up})^{-1}$. Combining this with 
$\widehat U_{\mathrm{up}}\widehat\Sigma_{\mathrm{up}}H_V
=Y_\mathrm{up}^\mathrm{p}(V+\delta_V)$ gives
\begin{align*}
\widehat M_\mathrm{up}^\mathrm{p}&-M_\mathrm{up}^\mathrm{p} = Y_{\rm up}^{\rm p}(V+\delta_V)(V+\delta_V)^\top - W_{\rm up}V^\top \\
& = (W_{\rm up} V^\top+ E_{\rm up}^{\rm p})(V+\delta_V)(V+\delta_V)^\top - W_{\rm up}V^\top
\\
& =    E_\mathrm{up}^\mathrm{p}VV^\top
+
E_\mathrm{up}^\mathrm{p}V\delta_V^\top
+
Y_\mathrm{up}^\mathrm{p}\delta_VV^\top
+
Y_\mathrm{up}^\mathrm{p}\delta_V\delta_V^\top
+
W_{\mathrm{up}}\delta_V^\top \\
& = E_\mathrm{up}^\mathrm{p}VV^\top
+
W_{\mathrm{up}}(W_{\mathrm{up}}^\top W_{\mathrm{up}})^{-1}W_{\mathrm{up}}^\top E_\mathrm{up}^\mathrm{p}
+
E_\mathrm{up}^\mathrm{p}V\delta_V^\top
+
Y_\mathrm{up}^\mathrm{p}\delta_VV^\top
+
Y_\mathrm{up}^\mathrm{p}\delta_V\delta_V^\top
+
W_{\mathrm{up}}\Psi_V^\top.
\end{align*}
This, together with the definition of $\Phi_{\mathrm{up}}$ in~\eqref{eq:up-decomp}, gives  $g^\top \Phi_{\mathrm{up}}y
=
g^\top E_\mathrm{up}^\mathrm{p}V\delta_V^\top y 
+
g^\top Y_\mathrm{up}^\mathrm{p}\delta_VV^\top  y
+
g^\top Y_\mathrm{up}^\mathrm{p}\delta_V\delta_V^\top 
y + g^\top W_{\mathrm{up}}\Psi_V^\top y$.

We now bound each of the four terms in $g^\top \Phi_\mathrm{up} \, y$ individually. By~\eqref{eq:DeltaVtranspose} and~\eqref{eq:DeltaVtransposeOP} in Corollary~\ref{lemma13_Transpose}, with probability at least $1 - \mathcal{O}(p_N^{-10})$ we have
\begin{align}\label{eq:deltaVbound}
\|\delta_V^\top y\|_2 \leq c_1 \frac{\sigma \sqrt{r + \zeta_N}}{\gamma_\mathrm{min} \, \rho_N^{1/2}} + c_1\frac{\sigma^2 T}{\gamma_\mathrm{min}^2 \, \rho_N} \|V^\top y\|_2, \qquad\qquad \|\delta_V\|_\mathrm{op} \leq c_1 \frac{\sigma \sqrt{T}}{\gamma_\mathrm{min}\, \rho_N^{1/2}}.
\end{align}
Combining the first bound with Lemma~\ref{lemma:bilinearGaussian} and \eqref{assump:sampleSize} gives 
\begin{align*}
    |g^\top E_\mathrm{up}^\mathrm{p}V\delta_V^\top y|
& \le
\|(E_\mathrm{up}^\mathrm{p})^\top g\|_2 \|\delta_V^\top y\|_2
\le
c_1\sigma\sqrt{T+\zeta_N} \, \|g\|_2 \,\, \|\delta_V^\top y\|_2 \\
& \leq c_1 \frac{\sigma^2 \sqrt{T(r + \zeta_N)}}{\gamma_\mathrm{min} \, \rho_N^{1/2}} \, \|g\|_2 + c_1 \frac{\sigma^3 T^{3/2}}{\gamma_\mathrm{min}^2 \, \rho_N} \, \|g\|_2 \,\|V^\top y\|_2.
\end{align*}
Next, using $\|(Y_\mathrm{up}^\mathrm{p})^\top g\|_2
\le
\|(M_\mathrm{up}^\mathrm{p})^\top g\|_2 + \|(E_\mathrm{up}^\mathrm{p})^\top g\|_2
\le
\|W_{\mathrm{up}}^\top g\|_2
+
c_1\sigma\sqrt{T} \, \|g\|_2$, we also get
\begin{align*}
    |g^\top Y_\mathrm{up}^\mathrm{p} \delta_VV^\top y|
&\le  \|(Y_\mathrm{up}^\mathrm{p})^\top g\|_2 \, \|\delta_V\|_\mathrm{op} \, \|V^\top y\|_2 \leq c_1 \frac{\sigma \sqrt{T}}{\gamma_\mathrm{min}\, \rho_N^{1/2}} \|(Y_\mathrm{up}^\mathrm{p})^\top g\|_2 \, \|V^\top y\|_2 \\
& \leq  c_1 \frac{\sigma \sqrt{T}}{\gamma_\mathrm{min}\, \rho_N^{1/2}} \left(\|W_{\mathrm{up}}^\top g\|_2
+\sigma\sqrt{T} \, \|g\|_2 \right) \, \|V^\top y\|_2 \\
& \leq c_1 \frac{\sigma \sqrt{T}}{\gamma_\mathrm{min}\, \rho_N^{1/2}} \|W_{\mathrm{up}}^\top g\|_2
 \, \|V^\top y\|_2 + c_1 \frac{\sigma^2 T}{\gamma_\mathrm{min}\, \rho_N^{1/2}} \, \|g\|_2 \, \|V^\top y\|_2.
\end{align*}
Similarly, for the third term we have 
\begin{align*}
    |g^\top Y_\mathrm{up}^\mathrm{p}\delta_V\delta_V^\top y|
&\le  \|(Y_\mathrm{up}^\mathrm{p})^\top g\|_2 \, \|\delta_V\|_\mathrm{op} \, \|\delta_V^\top y\|_2 \leq c_1 \frac{\sigma \sqrt{T}}{\gamma_\mathrm{min}\, \rho_N^{1/2}} \|(Y_\mathrm{up}^\mathrm{p})^\top g\|_2 \, \|\delta_V^\top y\|_2 \\
& \leq  c_1 \frac{\sigma \sqrt{T}}{\gamma_\mathrm{min}\, \rho_N^{1/2}} \left(\|W_{\mathrm{up}}^\top g\|_2
+\sigma\sqrt{T} \, \|g\|_2 \right) \, \|\delta_V^\top y\|_2 \\
& \leq  c_1 \frac{\sigma \sqrt{T}}{\gamma_\mathrm{min}\, \rho_N^{1/2}} \left(\|W_{\mathrm{up}}^\top g\|_2
+\sigma\sqrt{T} \, \|g\|_2 \right) \, \left(  \frac{\sigma \sqrt{r + \zeta_N}}{\gamma_\mathrm{min} \, \rho_N^{1/2}} + \frac{\sigma^2 T}{\gamma_\mathrm{min}^2 \, \rho_N} \|V^\top y\|_2 \right) \\
& \leq c_1 \frac{\sigma^2 \sqrt{T(r + \zeta_N)}}{\gamma_\mathrm{min}^2\, \rho_N} \|W_{\mathrm{up}}^\top g\|_2 + c_1 \frac{\sigma^3 T \sqrt{r + \zeta_N}}{\gamma_\mathrm{min}^2\, \rho_N} \|g\|_2  \\
& \qquad \qquad + c_1 \frac{\sigma^3 T^{3/2}}{\gamma_\mathrm{min}^3\, \rho_N^{3/2}} \|W_{\mathrm{up}}^\top g\|_2 \, \|V^\top y\|_2 + c_1 \frac{\sigma^4 T^2}{\gamma_\mathrm{min}^3 \, \rho_N^{3/2}} \|g\|_2 \, \|V^\top y\|_2.
\end{align*}
It remains to control the term involving $\Psi_V$. Using the definition of $\Psi_V$ and Lemmas~\ref{lemma:spectrumMp} and \ref{lemma:bilinearGaussian} we can write 
\begin{align*}
|g^\top W_{\mathrm{up}}\Psi_V^\top y| &\leq |g^\top W_{\mathrm{up}}\delta_V^\top y| + |g^\top W_{\mathrm{up}}(W_{\mathrm{up}}^\top W_{\mathrm{up}})^{-1} W_{\mathrm{up}}^\top E_\mathrm{up}^\mathrm{p} \, y| \\
& \le \|W_{\mathrm{up}}^\top g\|_2 \, \left(\|\delta_V^\top y\|_2 + \|(W_{\mathrm{up}}^\top W_{\mathrm{up}})^{-1}\|_\mathrm{op} \, \|W_{\mathrm{up}}^\top E_\mathrm{up}^\mathrm{p} \, y \|_2 \right) \\
& \leq c_1  \|W_{\mathrm{up}}^\top g\|_2 \, \left(\frac{\sigma \sqrt{r + \zeta_N}}{\gamma_\mathrm{min} \, \rho_N^{1/2}} + c_1\frac{\sigma^2 T}{\gamma_\mathrm{min}^2 \, \rho_N} \|V^\top y\|_2 + \frac{1}{\gamma_\mathrm{min}^2 \rho_N} \, \sigma \|W_{\mathrm{up}}\|_\mathrm{op} \sqrt{r + \zeta_N} \right) \\
& \leq c_1  \, \frac{\sigma \sqrt{r + \zeta_N}}{\gamma_\mathrm{min} \, \rho_N^{1/2}}  \|W_{\mathrm{up}}^\top g\|_2 + c_1\frac{\sigma^2 T}{\gamma_\mathrm{min}^2 \, \rho_N} \,  \|W_{\mathrm{up}}^\top g\|_2\, \|V^\top y\|_2.
\end{align*}
Combining the four preceding bounds and simplifying them further using \eqref{assump:sampleSize},  \eqref{assump:smallNoise} proves \eqref{eq:Rup-bound}.
\end{proof}

\medskip \medskip \medskip
Lemma~\ref{lem:up-cen} provides an upper bound on the approximation error of the entire pooled upper matrix $M_\mathrm{up}^\mathrm{p} = W_\mathrm{up} V^\top$. By restricting $\Phi_\mathrm{up}$ in~\eqref{eq:up-decomp} to the subsets $\mathcal I_k^\mathrm{up} = \{s_k + 1, \ldots, s_k + N_{1k}\}$ and $\mathcal J_k = \{T_{1k} + 1, \ldots, T\}$, computations similar to those in the previous proof allow us to quantify the approximation error of $\mathcal{M}_{\bullet, \bullet, k}^{(b)}$. 

\begin{cor}
\label{cor:R1-sharp}
Suppose the assumptions of Lemma~\ref{lem:up-cen} are satisfied, and use the notation introduced there. Also define $\hat M_b^{(k)} := (\widehat U_{\mathrm{up}}\widehat\Sigma_{\mathrm{up}}
\widehat V_{\mathrm{up}}^\top)_{\mathcal I_k^\mathrm{up}, \mathcal J_k}$ and recall $\mathcal{M}_{\bullet, \bullet, k}^{(b)} = (M_\mathrm{up}^\mathrm{p})_{\mathcal I_k^\mathrm{up}, \mathcal J_k} = U_{1k} \mathcal C_{\bullet, \bullet, k} \, V_{2k}^\top$.
Writing
\begin{align}\label{eq:decomopistionPhi_k}
    \Phi_{k}: = \widehat M_{b}^{(k)}-\mathcal{M}_{\bullet, \bullet, k}^{(b)}
-
(E_{\rm up}^{\rm p})_{\mathcal{I}_k^\mathrm{up}, \bullet }
            \,VV_{2k}^\top -  U_{1k} \,  \mathcal{C}_{\bullet, \bullet, k} \, (W_{\rm up}^\top W_{\rm up})^{-1}W_{\rm up}^\top
            (E_{\rm up}^{\rm p})_{\bullet, \mathcal{J}_k},
\end{align}
for fixed $x\in \mathcal S^{N_{2k}-1}$,
$y\in \mathcal S^{T_{2k}-1}$ there exists a constant
$c_1\equiv c_1(c_\ell,c_u,c_0,c_{\mathrm{dim}}, c_{\mathrm{blk}},\kappa)>~0$ such that
\begin{align}
|x^\top U_{2k}(U_{1k}^\top U_{1k})^{-1}U_{1k}^\top \, \Phi_{k} \, y|
& \le c_1  \, \frac{\sigma \sqrt{r + \zeta_N}}{ \rho_N^{1/2}}  \|U_{2k}^\top x\|_2  + c_1 \frac{\sigma \sqrt{T}}{\rho_N^{1/2}}
 \, \|U_{2k}^\top x\|_2 \, \|V_{2k}^\top y\|_2,
\label{eq:R1-sharp-bound-corrected}
\end{align}
\begin{align}\label{eq:prokection_1Phi_k}
\left\|(U_{1k}^\top U_{1k})^{-1/2} \, U_{1k}^\top \, \Phi_k \,y\right\|_2
& \le c_1 \frac{\sigma \sqrt{r + \zeta_N}}{\rho_N^{1/2} } \sqrt{\frac{N_{1k}}{N}} + c_1\frac{\sigma \sqrt{T}}{\rho_N^{1/2}} \sqrt{\frac{N_{1k}}{N}} \|V_{2k}^\top y\|_2,
\end{align}
\begin{align}\label{eq:prokection_2Phi_k}
    \|(I_{N_{1k}}-U_{1k} \{U_{1k}^\top U_{1k}\}^{-1} \, U_{1k}^\top) \, \Phi_k \, y\|_2 & \leq c_1 \frac{\sigma^3 T \sqrt{r + \zeta_N}}{\gamma_\mathrm{min}^2 \rho_N} + c_1 \frac{\sigma^2 \sqrt{N_{1k}(r + \zeta_N)}}{\gamma_\mathrm{min} \, \rho_N^{1/2}}\nonumber \\
    & \qquad + c_1\frac{\sigma^2 \sqrt{T(N_{1k} + T)}}{\gamma_\mathrm{min} \, \rho_N^{1/2}} \, \|V_{2k}^\top y \|_2
\end{align}
with probability at least $1-O(p_N^{-10})$.
\end{cor}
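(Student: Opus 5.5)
The plan is to restrict the global expansion in the proof of Lemma~\ref{lem:up-cen} to the rows $\mathcal I_k^{\mathrm{up}}$ and columns $\mathcal J_k$, and then apply it to the specific vectors that appear in the three displays. Write $\delta_V:=\widehat V_{\mathrm{up}}H_V-V$ and $\Psi_V$ as in Corollary~\ref{lemma13_Transpose}. That proof shows
\[
\widehat M_{\mathrm{up}}^{\mathrm{p}}-M_{\mathrm{up}}^{\mathrm{p}}=E_{\mathrm{up}}^{\mathrm{p}}VV^\top+W_{\mathrm{up}}(W_{\mathrm{up}}^\top W_{\mathrm{up}})^{-1}W_{\mathrm{up}}^\top E_{\mathrm{up}}^{\mathrm{p}}+E_{\mathrm{up}}^{\mathrm{p}}V\delta_V^\top+Y_{\mathrm{up}}^{\mathrm{p}}\delta_V V^\top+Y_{\mathrm{up}}^{\mathrm{p}}\delta_V\delta_V^\top+W_{\mathrm{up}}\Psi_V^\top .
\]
The rows of $W_{\mathrm{up}}$ indexed by $\mathcal I_k^{\mathrm{up}}$ are $U_{1k}\mathcal C_{\bullet,\bullet,k}$. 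Selecting rows $\mathcal I_k^{\mathrm{up}}$ and columns $\mathcal J_k$ therefore turns the first two terms into exactly the Gaussian terms subtracted in~\eqref{eq:decomopistionPhi_k}. It gives
\[
\Phi_k=(E_{\mathrm{up}}^{\mathrm{p}})_{\mathcal I_k^{\mathrm{up}},\bullet}V\delta_{V,2k}^\top+(Y_{\mathrm{up}}^{\mathrm{p}})_{\mathcal I_k^{\mathrm{up}},\bullet}\delta_V V_{2k}^\top+(Y_{\mathrm{up}}^{\mathrm{p}})_{\mathcal I_k^{\mathrm{up}},\bullet}\delta_V\delta_{V,2k}^\top+U_{1k}\mathcal C_{\bullet,\bullet,k}\Psi_{V,2k}^\top,
\]
where $\delta_{V,2k}$ and $\Psi_{V,2k}$ are the $\mathcal J_k$-rows. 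I will treat each of the three claims as $g^\top\Phi_k y$ or $P\Phi_k y$ for a suitable left map, and bound the four pieces separately using the block analogues of~\eqref{eq:deltaVbound} and~\eqref{eq:PsiVranspose}. The bound $\|\delta_{V,2k}^\top y\|_2\lesssim \sigma\sqrt{r+\zeta_N}/(\gamma_{\min}\rho_N^{1/2})+\sigma^2T\gamma_{\min}^{-2}\rho_N^{-1}\|V_{2k}^\top y\|_2$ follows from~\eqref{eq:DeltaVtranspose} with $\bar y$ the zero-extension of $y$.

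For~\eqref{eq:R1-sharp-bound-corrected} I will take $g^\top:=x^\top U_{2k}(U_{1k}^\top U_{1k})^{-1}U_{1k}^\top$, which is deterministic. Assumption~\eqref{assump:subblock-conditioning} gives $\|g\|_2\lesssim \sqrt{N/N_{1k}}\|U_{2k}^\top x\|_2$ and $g^\top U_{1k}\mathcal C_{\bullet,\bullet,k}=x^\top U_{2k}\mathcal C_{\bullet,\bullet,k}$, so the last piece is bounded by $\gamma_{\max}\|U_{2k}^\top x\|_2\|\Psi_{V,2k}^\top y\|_2$. Then~\eqref{eq:PsiVranspose} together with~\eqref{assump:smallNoise} reduces it to $\sigma\sqrt{r+\zeta_N}\rho_N^{-1/2}\|U_{2k}^\top x\|_2+\sigma\sqrt{T}\rho_N^{-1/2}\|U_{2k}^\top x\|_2\|V_{2k}^\top y\|_2$, after absorbing $\sigma^2T/(\gamma_{\min}\rho_N)\lesssim\sigma\sqrt T/\rho_N^{1/2}$. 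The remaining three pieces are handled as in Lemma~\ref{lem:up-cen}. Lemma~\ref{lemma:bilinearGaussian} applied to the fixed vector $g$ gives $\|(E_{\mathrm{up}}^{(k)})^\top g\|_2\lesssim\sigma\sqrt{T}\|g\|_2$. I also use $\|(Y^{(k)}_{\mathrm{up}})^\top g\|_2\le\|V\mathcal C_{\bullet,\bullet,k}^\top U_{2k}^\top x\|_2+\sigma\sqrt T\|g\|_2$. The key point is that the Gaussian piece carries $\|g\|_2\sqrt{N_{1k}/N}$-type factors, which~\eqref{assump:smallNoise} (via $\theta\ge \sigma\sqrt{NT/N_{1k}}/\gamma_{\min}$) absorbs. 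The factor $\sqrt{N/N_{1k}}$ in $\|g\|_2$ is exactly cancelled by $\sigma\sqrt{NT/N_{1k}}\le c_0\gamma_{\min}$ when it multiplies $\sigma\sqrt T$.

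For~\eqref{eq:prokection_1Phi_k} I will use the same argument with the $r\times N_{1k}$ left map $(U_{1k}^\top U_{1k})^{-1/2}U_{1k}^\top$. It has orthonormal-type rows, so a union bound over an $r$-dimensional net, or Lemma~\ref{lemma:bilinearGaussian} applied to a rank-$r$ projection, replaces $\|g\|_2$ by $O(1)$ and $\|W^\top g\|$ by $\|\mathcal C\|(N_{1k}/N)^{1/2}$. That is where the $\sqrt{N_{1k}/N}$ factors come from. For~\eqref{eq:prokection_2Phi_k} the projector $P^\perp:=I-U_{1k}(U_{1k}^\top U_{1k})^{-1}U_{1k}^\top$ annihilates the $\Psi_V$ piece and the signal part of $Y^{(k)}_{\mathrm{up}}$. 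What remains is $P^\perp E^{(k)}_{\mathrm{up}}(V\delta_{V,2k}^\top+\delta_VV_{2k}^\top+\delta_V\delta_{V,2k}^\top)y$. Here $P^\perp$ has rank up to $N_{1k}$, so I bound $\|E^{(k)}_{\mathrm{up}}V\|_{\mathrm{op}}\lesssim\sigma\sqrt{N_{1k}+r+\zeta_N}$ for the first piece and $\|E_{\mathrm{up}}^{(k)}\|_{\mathrm{op}}\lesssim\sigma\sqrt{N_{1k}+T}$ with $\|\delta_V\|_{\mathrm{op}}$ from~\eqref{eq:DeltaVtransposeOP} for the others. This produces the three stated terms.

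The main obstacle is that $\delta_V$ depends on $E_{\mathrm{up}}^{\mathrm p}$, so the products $E^{(k)}_{\mathrm{up}}V\delta_{V,2k}^\top y$ and $E^{(k)}_{\mathrm{up}}\delta_V$ are not Gaussian bilinear forms with fixed factors. My first attempt is to avoid independence entirely. I will factor through $\|\delta_{V,2k}^\top y\|_2$, which is controlled by~\eqref{eq:DeltaVtranspose} for a fixed direction, and use crude operator norms on the Gaussian factor, as Lemma~\ref{lem:up-cen} did. I then check that the resulting extra factors are absorbed by~\eqref{assump:sampleSize} and~\eqref{assump:smallNoise}. If the crude bound $\sigma\sqrt{T}$ on $\|(E^{(k)}_{\mathrm{up}})^\top g\|_2$ leaves an unabsorbed $\sqrt{N/N_{1k}}$ in the first claim, I will instead use the term $\sigma^2\sqrt{T(r+\zeta_N)}/(\gamma_{\min}\rho_N^{1/2})\|g\|_2$ of~\eqref{eq:Rup-bound}. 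That term appears directly and is absorbed via $\theta$. All events are intersected, giving probability $1-\mathcal O(p_N^{-10})$.
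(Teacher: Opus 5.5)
Your proposal is correct and follows the paper's proof essentially step for step. It uses the same four-piece restriction of the expansion from Lemma~\ref{lem:up-cen}, the same deterministic vector $g$ (the zero-extension of $U_{1k}(U_{1k}^\top U_{1k})^{-1}U_{2k}^\top x$) for the first bound, and operator-norm Gaussian bounds after applying $(U_{1k}^\top U_{1k})^{-1/2}U_{1k}^\top$ or the projector $I_{N_{1k}}-U_{1k}(U_{1k}^\top U_{1k})^{-1}U_{1k}^\top$ for the other two bounds.

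For the second bound, take your second option, as the paper does: apply Lemma~\ref{lemma:bilinearGaussian} directly to the rank-$r$ map, giving $\|(U_{1k}^\top U_{1k})^{-1/2}U_{1k}^\top E_{\mathrm{up}}^{(k)}\|_{\mathrm{op}}\lesssim\sigma\sqrt{T}$. Your first option, a union bound of the fixed-$g$ statement of Lemma~\ref{lem:up-cen} over a $9^r$-point net, would not by itself keep the failure probability at $\mathcal O(p_N^{-10})$, because $r$ may be much larger than $\zeta_N$.
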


\begin{proof}
Restricting~\eqref{eq:up-decomp} to the subsets 
$\mathcal I_k^\mathrm{up}$ and $\mathcal J_k$ and using the definition of $\Phi_k$ in~\eqref{eq:decomopistionPhi_k} immediately yield $\Phi_k = (\Phi_\mathrm{up})_{\mathcal I_k^\mathrm{up}, \mathcal J_k}$. In order to prove~\eqref{eq:R1-sharp-bound-corrected}, set $B_k:=U_{2k}(U_{1k}^\top U_{1k})^{-1}U_{1k}^\top$ and define $g \in \mathbb R^{N_{1, \mathrm{p}}} $ and $\bar y \in \mathcal S^{T-1}$ to be the vectors with entries $g_i = (B_k^\top x)_{i - s_k} \mathbbm 1\{i \in \mathcal{I}_k^\mathrm{up}\}$ and $\bar y_t = y_{t - T_{1k}} \, \mathbbm 1 \{ t \in \mathcal J_k\}$, respectively. This ensures that $V^\top \bar y = V_{2k}^\top \, y$ and that $|g^\top \Phi_\mathrm{up} \bar y|$ is equal to the left-hand side of~\eqref{eq:R1-sharp-bound-corrected}. Furthermore, from  \eqref{assump:subblock-conditioning} we have  $\|g\|_2 = \|B_k^\top x\|_2 = \|U_{1k}(U_{1k}^\top U_{1k})^{-1} U_{2k}^\top x\|_2 \leq \|U_{1k}(U_{1k}^\top U_{1k})^{-1}\|_{\mathrm{op}} \|U_{2k}^\top x\|_2 \leq c_\ell^{-1/2}\sqrt{N/N_{1k}}\,\|U_{2k}^\top x\|_2$, and $\|W_\mathrm{up}^\top g\|_2 =  \| \mathcal C_{\bullet, \bullet, k}^\top U_{1k}^\top B_k^\top\, x\|_2 = \| \mathcal C_{\bullet, \bullet, k}^\top U_{2k}^\top \, x\|_2  \leq \gamma_\mathrm{max} \| U_{2k}^\top \, x\|_2$. Combining these with~\eqref{eq:Rup-bound} and further simplifying the resulting bound using \eqref{assump:sampleSize},  \eqref{assump:smallNoise}  proves \eqref{eq:R1-sharp-bound-corrected}.

It remains to prove the last two bounds. Using the expression for $\Phi_\mathrm{up}$ from the proof of Lemma~\ref{lem:up-cen} we get 
\begin{align}\label{eq:decompPhi_ky}
    \Phi_{k} \, y
=
 (E_\mathrm{up}^\mathrm{p})_{\mathcal I^\mathrm{up}_k, \bullet}V\delta_V^\top \bar y
+
 (Y_\mathrm{up}^\mathrm{p})_{\mathcal I^\mathrm{up}_k, \bullet} \delta_VV^\top \, \bar y 
+
(Y_\mathrm{up}^\mathrm{p})_{\mathcal I^\mathrm{up}_k, \bullet}\delta_V\delta_V^\top \bar y
+ U_{1k}\mathcal{C}_{\bullet, \bullet, k} \, \Psi_V^\top \, \bar y.
\end{align} We next bound the norms of the four terms separately under the action of $(U_{1k}^\top U_{1k})^{-1/2}U_{1k}^\top$. As for the first one, using Lemma~\ref{lemma:bilinearGaussian} and the bounds for $\|\delta_V^\top \bar y\|_2$ and $\|\delta_V\|_\mathrm{op}$ in~\eqref{eq:deltaVbound}, we get
\begin{align*}
    \|(U_{1k}^\top U_{1k})^{-1/2}U_{1k}^\top (E_\mathrm{up}^\mathrm{p})_{\mathcal I^\mathrm{up}_k, \bullet}V\delta_V^\top \bar y\|_2 &  \leq \|(U_{1k}^\top U_{1k})^{-1/2}U_{1k}^\top (E_\mathrm{up}^\mathrm{p})_{\mathcal I^\mathrm{up}_k, \bullet}V\|_\mathrm{op} \, \|\delta_V^\top \bar y\|_2 \\
    & \leq c_1 \sigma \sqrt{r + \zeta_N} \, \|\delta_V^\top \bar y\|_2 \\
    & \leq c_1 \sigma \sqrt{r + \zeta_N} \left( \frac{\sigma \sqrt{r + \zeta_N}}{\gamma_\mathrm{min} \, \rho_N^{1/2}} + \frac{\sigma^2 T}{\gamma_\mathrm{min}^2 \, \rho_N} \, \|V_{2k}^\top y\|_2 \right).
\end{align*}
Similarly, for the second and third terms in~\eqref{eq:decompPhi_ky} we have 
\begin{align*}
    \|(U_{1k}^\top U_{1k})^{-1/2}U_{1k}^\top \, & (Y_\mathrm{up}^\mathrm{p})_{\mathcal I^\mathrm{up}_k, \bullet} \, \delta_V \, (V^\top \bar y+\delta_V^\top \bar y)\|_2 \\
    & \leq \|(U_{1k}^\top U_{1k})^{-1/2}U_{1k}^\top \, \{(M_\mathrm{up}^\mathrm{p})_{\mathcal I^\mathrm{up}_k, \bullet} + (E_\mathrm{up}^\mathrm{p})_{\mathcal I^\mathrm{up}_k, \bullet} \} \, \delta_V \, (V^\top \bar y+\delta_V^\top \bar y)\|_2 \\
    & \leq \|(U_{1k}^\top U_{1k})^{-1/2}U_{1k}^\top \, \{(M_\mathrm{up}^\mathrm{p})_{\mathcal I^\mathrm{up}_k, \bullet} + (E_\mathrm{up}^\mathrm{p})_{\mathcal I^\mathrm{up}_k, \bullet} \} \|_\mathrm{op} \|\delta_V\|_\mathrm{op} \|V^\top \bar y+\delta_V^\top \bar y\|_2 \\
    & = \|(U_{1k}^\top U_{1k})^{1/2} \mathcal{C}_{\bullet, \bullet, k} \, V^\top + (U_{1k}^\top U_{1k})^{-1/2} U_{1k}^\top E_\mathrm{up}^{(k)} \|_\mathrm{op} \|\delta_V\|_\mathrm{op} \|V^\top \bar y+\delta_V^\top \bar y\|_2 \\
    & \leq c_1 \left(\kappa \,  \gamma_\mathrm{min}\sqrt{\frac{N_{1k}}{N}} + \sigma \sqrt{T} \right) \, \frac{\sigma \sqrt{T}}{\gamma_\mathrm{min} \, \rho_N^{1/2}} \, \left(\frac{\sigma \sqrt{r + \zeta_N}}{\gamma_\mathrm{min} \rho_N^{1/2}} + \|V_{2k}^\top y\|_2 \right).
\end{align*}
Finally, using~\eqref{eq:PsiVranspose} in Corollary~\ref{lemma13_Transpose} to bound $\|\Psi_V^\top \bar y\|_2 = \|\Psi_{V,2k}^\top \, y\|_2$, with probability at least $1 - \mathcal O(p_N^{-10})$ we have
\begin{align*}
    \|&(U_{1k}^\top U_{1k})^{-1/2}U_{1k}^\top \, U_{1k}\mathcal{C}_{\bullet, \bullet, k} \, \Psi_V^\top \, \bar y\|_2  \leq c_1 \gamma_\mathrm{max} \, \sqrt{\frac{N_{1k}}{N}} \, \|\Psi_V^\top \bar y\|_2 = c_1 \kappa \gamma_\mathrm{min} \, \sqrt{\frac{N_{1k}}{N}} \, \|\Psi_{V,2k}^\top \, y\|_2 \\
    & \leq c_1 \gamma_\mathrm{min} \, \sqrt{\frac{N_{1k}}{N}} \left\{\frac{\sigma^2\sqrt{(T+N_{1,\mathrm{p}})(r+\zeta_N)}}{
\gamma_{\min}^2\rho_N}
+
\left(
\frac{\sigma^2T}{\gamma_{\min}^2\rho_N}
+
\frac{\sigma\sqrt{r+\zeta_N}}{
\gamma_{\min}\rho_N^{1/2}}
\right)
\|V_{2k}^\top y\|_2 \right\}.
\end{align*}
Combining the last three displays and further simplifying the bound using \eqref{assump:sampleSize},  \eqref{assump:smallNoise} proves~\eqref{eq:prokection_1Phi_k}. 

In order to prove~\eqref{eq:prokection_2Phi_k}, we will make use of the fact that $I_{N_{1k}}-U_{1k} \{U_{1k}^\top U_{1k}\}^{-1} \, U_{1k}^\top$ is the orthogonal projector onto the orthogonal complement of $\operatorname{col}(U_{1k})$. This also implies that $\|I_{N_{1k}}-U_{1k} \{U_{1k}^\top U_{1k}\}^{-1} \, U_{1k}^\top\|_\mathrm{op} = 1$ and $\operatorname{rank}(I_{N_{1k}}-U_{1k} \{U_{1k}^\top U_{1k}\}^{-1} \, U_{1k}^\top) = N_{1k} - r$. This implies that the signal contribution from the second and third terms in~\eqref{eq:decompPhi_ky} vanishes, and we are left with the error matrix only. More precisely, we have 
\begin{align*}
    \|(I_{N_{1k}}-U_{1k}& \{U_{1k}^\top U_{1k}\}^{-1} \, U_{1k}^\top) \, (Y_\mathrm{up}^\mathrm{p})_{\mathcal I^\mathrm{up}_k, \bullet} \, \delta_V \, (V^\top \bar y+\delta_V^\top \bar y)\|_2 \\
    & = \|(I_{N_{1k}}-U_{1k} \{U_{1k}^\top U_{1k}\}^{-1} \, U_{1k}^\top) \, (E_\mathrm{up}^\mathrm{p})_{\mathcal I^\mathrm{up}_k, \bullet} \, \delta_V \, (V^\top \bar y+\delta_V^\top \bar y)\|_2 \\
    & = \|(I_{N_{1k}}-U_{1k} \{U_{1k}^\top U_{1k}\}^{-1} \, U_{1k}^\top) \, E_\mathrm{up}^{(k)} \, \delta_V \, (V^\top \bar y+\delta_V^\top \bar y)\|_2 \\
    & \leq \|(I_{N_{1k}}-U_{1k} \{U_{1k}^\top U_{1k}\}^{-1} \, U_{1k}^\top) \, E_\mathrm{up}^{(k)} \|_\mathrm{op}\, \|\delta_V\|_\mathrm{op} \, \|V^\top \bar y+\delta_V^\top \bar y\|_2 \\
    & \leq c_1 \sigma \sqrt{T + (N_{1k} - r) + \zeta_N} \, \, \|\delta_V\|_\mathrm{op} \, \|V^\top \bar y+\delta_V^\top \bar y\|_2 \\
     & \leq c_1 \sigma \sqrt{N_{1k} + T} \,  \frac{\sigma \sqrt{T}}{\gamma_\mathrm{min} \, \rho_N^{1/2}} \, \left(\frac{\sigma \sqrt{r + \zeta_N}}{\gamma_\mathrm{min} \rho_N^{1/2}} + \|V_{2k}^\top y\|_2 \right).
\end{align*}
The fourth term in~\eqref{eq:decompPhi_ky} completely vanishes under the action of $I_{N_{1k}}-U_{1k} \{U_{1k}^\top U_{1k}\}^{-1} \, U_{1k}^\top$, while the first one gives
\begin{align*}
    \|&(I_{N_{1k}}-U_{1k} \{U_{1k}^\top U_{1k}\}^{-1} \, U_{1k}^\top) \, (E_\mathrm{up}^\mathrm{p})_{\mathcal I^\mathrm{up}_k, \bullet}V\delta_V^\top \bar y\|_2 \\
    & \leq  \|(I_{N_{1k}}-U_{1k}\{U_{1k}^\top U_{1k}\}^{-1} \, U_{1k}^\top) \, (E_\mathrm{up}^\mathrm{p})_{\mathcal I^\mathrm{up}_k, \bullet}V \|_\mathrm{op} \, \|\delta_V^\top \bar y\|_2 \\
    & \leq c_1 \sigma \sqrt{r + (N_{1k} - r) + \zeta_N} \, \, \|\delta_V^\top \bar y\|_2 \leq c_1 \sigma \sqrt{N_{1k}} \, \left( \frac{\sigma \sqrt{r + \zeta_N}}{\gamma_\mathrm{min} \, \rho_N^{1/2}} + \frac{\sigma^2 T}{\gamma_\mathrm{min}^2 \, \rho_N} \, \|V_{2k}^\top y\|_2 \right).
\end{align*}
Combining the last three displays and further simplifying the bound using \eqref{assump:sampleSize},  \eqref{assump:smallNoise} proves~\eqref{eq:prokection_2Phi_k}. This concludes the proof.
\end{proof}

\medskip\medskip
\noindent We now present the main results of this section, which give a first-order expansion of $\hat \mu_{xy}^{(k)} - \mu_{xy}^{(k)}$ as the sum of Gaussian terms and remainder terms that can be suitably bounded from above. In the special case $K=1$ with $x=\boldsymbol e_i$ and $y=\boldsymbol e_t$, analogous expansions were proved in \cite{yan2024entrywise}. Our results therefore generalize these earlier expansions to arbitrary $K \geq 1$, $x \in \mathcal S^{N_{2k}-1}$, and $y \in \mathcal S^{T_{2k}-1}$, while also providing remainder bounds obtained via different techniques. In particular, the approach in \cite{yan2024entrywise} relies on a leave-one-block-out argument, which could also be adapted to the present setting. However, this approach becomes suboptimal when $K$ grows: in particular, the resulting remainder term is negligible only under a signal-to-noise ratio condition that deteriorates with $K$. We therefore instead rely on the preceding lemmas, which yield analogous results under weaker conditions.
\begin{lemma}\label{lemma1}
    Grant Assumption   \eqref{assump:subblock-conditioning} with fixed constants $c_\ell, c_u$ satisfying $0 < c_\ell \leq c_u < \infty$, \eqref{assump:sampleSize} and \eqref{assump:smallNoise}. Suppose further that $0<\gamma_{\min}\le \sigma_{\min}(\mathcal C_{\bullet,\bullet,j})
\le \sigma_{\max}(\mathcal C_{\bullet,\bullet,j})\le \gamma_{\max}<\infty$  for all
$j\in[K]$, and let $\kappa:=\gamma_{\max}/\gamma_{\min}$. Fix $k \in [K]$, unit vectors $x \in \mathcal S^{N_{2k}-1}$, $y \in \mathcal S^{T_{2k}-1}$, and let $\hat \mu_{xy}^{(k)}$ be the output of Algorithm~\ref{alg:bilinear4block} run with $\tau \leq \frac{c_\ell \, N_{1k}}{2 \, N}$. Also write the decomposition $\hat \mu_{xy}^{(k)} -  \mu_{xy}^{(k)} = Z_{xy}^{(1)}+ Z_{xy}^{(2)} + Z_{xy}^{(3)}+ Z_{xy}^{(4)}+ \Delta_{xy} =: Z_{xy}+ \Delta_{xy}$, where
    \begin{align}\label{eq:Z_xy}
        Z_{xy}^{(1)} 
        & := x^\top\,
        (E_{\rm left}^{\rm p})_{\mathcal I_k, \bullet}\,W_{\rm left}\,(W_{\rm left}^\top W_{\rm left})^{-1} \mathcal{C}_{\bullet, \bullet, k} \, (W_{\rm up}^\top W_{\rm up})^{-1}W_{\rm up}^\top
            (E_{\rm up}^{\rm p})_{\bullet, \mathcal J_k} \,  \, y \nonumber \\
        Z_{xy}^{(2)} 
        & := x^\top\,
        U_{2k} \,  \mathcal{C}_{\bullet, \bullet, k} \, (W_{\rm up}^\top W_{\rm up})^{-1}W_{\rm up}^\top \, (E_{\rm up}^{\rm p})_{\bullet, \mathcal J_k} \,  \, y \nonumber \\
         Z_{xy}^{(3)} 
         & := x^\top\,
         U_{2k} (U_{1k}^\top  U_{1k})^{-1} \, U_{1k}^\top (E_{\rm up}^{\rm p})_{\mathcal I_k^\mathrm{up}, \bullet}
            \,V \, V_{2k}^\top \, \, y \nonumber \\
        Z_{xy}^{(4)} 
        & := x^\top\,
        (E_\mathrm{left}^\mathrm{p})_{\mathcal I_k, \bullet} \,\, W_\mathrm{left}(W_\mathrm{left}^\top W_\mathrm{left})^{-1}\mathcal C_{\bullet, \bullet, k} \, V_{2k}^\top  \, \, y.
    \end{align}
   There exists an event $\mathcal G_1$ with $\mathbb{P}(\mathcal G_1) \geq 1 - \mathcal{O}(p_N^{-10} + p_T^{-10})$ such that, under $\mathcal{G}_1$, the remainder satisfies \begin{align}\label{eq:BoundOnDeltaXY}
|\Delta_{xy}  | &\le
c_1 
\frac{\sigma^2\sqrt{(r+\zeta_T)(r+\zeta_N)}}{
\gamma_{\min}\rho_N^{1/2} \rho_T^{1/2}}                     
+
c_1
\frac{\sigma \sqrt{r+\zeta_T}}{
\rho_T^{1/2}}
\|V_{2k}^\top y\|_2   +
c_1
\frac{\sigma \sqrt{r+\zeta_N}}{
\rho_N^{1/2}}
\|U_{2k}^\top x\|_2                                 
   \nonumber \\
    & \qquad \qquad  
+
c_1
\frac{\sigma\sqrt{N}}{
\rho_T^{1/2}}
\|U_{2k}^\top x\|_2
\|V_{2k}^\top y\|_2 +
c_1
\frac{\sigma\sqrt{T}}{
\rho_N^{1/2}}
\|U_{2k}^\top x\|_2
\|V_{2k}^\top y\|_2
   \end{align}
   for a sufficiently large constant $c_1 \equiv c_1(c_\ell, c_u, c_0,  c_{\mathrm{dim}}, c_{\mathrm{blk}}, \kappa) > 0$. 
\end{lemma}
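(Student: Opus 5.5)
The plan is to work on the intersection $\mathcal G_1$ of the high-probability events of Lemma~\ref{lemma13_centered}, Corollary~\ref{cor:centered-left-perturbation}, Lemma~\ref{lemma15_centered}, Lemma~\ref{lemmaL_action_centered}, Corollary~\ref{lemma13_Transpose}, Lemma~\ref{lem:up-cen} and Corollary~\ref{cor:R1-sharp}. It also includes the event in \eqref{eq:HDaggerCoincidesWithInverse} on which $\hat H_{k,\tau}^{\mathrm{inv}}=\hat H_k^{-1}$. By a union bound this event has probability $1-\mathcal O(p_N^{-10}+p_T^{-10})$.

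On $\mathcal G_1$ the output is rotation invariant, so inserting $H_U$ gives
\[
\hat\mu_{xy}^{(k)}=x^\top \hat U_{2k}H_U\hat H_k^{-1}H_U^\top\hat U_{1k}^\top \hat M_b^{(k)} y ,
\]
with $\hat M_b^{(k)}$ as in Corollary~\ref{cor:R1-sharp}. Write $B_k:=U_{2k}H_k^{-1}U_{1k}^\top$ and $\hat B_k:=B_k+L$, with $L$ from Lemma~\ref{lemmaL_action_centered}. Since $\mu_{xy}^{(k)}=x^\top B_k\mathcal M^{(b)}_{\bullet,\bullet,k}y$, we get
\[
\hat\mu-\mu = x^\top B_k(\hat M_b^{(k)}-\mathcal M^{(b)})y + x^\top L\,\mathcal M^{(b)}y + x^\top L(\hat M_b^{(k)}-\mathcal M^{(b)})y .
\]
Next substitute the two first-order expansions: $\hat M_b^{(k)}-\mathcal M^{(b)}=G_b+\Phi_k$, with Gaussian part $G_b$ from \eqref{eq:decomopistionPhi_k}, and $L U_{1k}\mathcal C_{\bullet,\bullet,k}=G_L+\Delta_L$ from \eqref{eq:lemma16_centered_decomp_rewrite}. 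Use $\mathcal M^{(b)}=U_{1k}\mathcal C_{\bullet,\bullet,k}V_{2k}^\top$. The first term then produces $Z^{(3)}+Z^{(2)}$, because $B_kU_{1k}=U_{2k}$. The second term produces $Z^{(4)}$ plus $x^\top\Delta_L V_{2k}^\top y$. For the cross term, the $U_{1k}$-component of $G_b$, namely $U_{1k}\mathcal C(W_{\rm up}^\top W_{\rm up})^{-1}W_{\rm up}^\top E y$, combined with $G_L$ yields exactly $Z^{(1)}$.

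This leaves the remainder $\Delta_{xy}$, which is the sum of five pieces:
\begin{enumerate}
\item[(a)] $x^\top B_k\Phi_k y$, bounded by \eqref{eq:R1-sharp-bound-corrected};
\item[(b)] $x^\top\Delta_L V_{2k}^\top y$, bounded by \eqref{eq:lemma16_centered_bound_rewrite} times $\|V_{2k}^\top y\|_2$;
\item[(c)] $x^\top \Delta_L(W_{\rm up}^\top W_{\rm up})^{-1}W_{\rm up}^\top E y$, which is Lemma~\ref{lemmaL_action_centered} times a Gaussian bound of order $\sigma\sqrt{r+\zeta_N}/(\gamma_{\min}\rho_N^{1/2})$ from Lemmas~\ref{lemma:spectrumMp} and \ref{lemma:bilinearGaussian};
\item[(d)] $x^\top L\,(E_{\rm up}^{\rm p})_{\mathcal I_k^{\rm up},\bullet}VV_{2k}^\top y$;
\item[(e)] $x^\top L\Phi_k y$.
\end{enumerate}
Pieces (a)–(c) give the terms $\sigma\sqrt{r+\zeta_N}\rho_N^{-1/2}\|U_{2k}^\top x\|_2$, $\sigma\sqrt N\rho_T^{-1/2}\|U_{2k}^\top x\|_2\|V_{2k}^\top y\|_2$, the $\sigma\sqrt T$ analogue, and the product term $\sigma^2\sqrt{(r+\zeta_T)(r+\zeta_N)}$. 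The leftover factors in these bounds are absorbed using \eqref{assump:sampleSize} and \eqref{assump:smallNoise}.

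The main obstacle is (d) and (e), because $L$ has no Gaussian expansion on its own; we only control $L$ after right-multiplication by $U_{1k}\mathcal C$. The remedy is to split every vector $v\in\mathbb R^{N_{1k}}$ that $L$ acts on as $v=P_{U_{1k}}v+P_{U_{1k}}^\perp v$.
\begin{itemize}
\item On the range of $U_{1k}$, write $P_{U_{1k}}v=U_{1k}\mathcal C\,\mathcal C^{-1}H_k^{-1}U_{1k}^\top v$, so that $x^\top L P_{U_{1k}} v=x^\top (G_L+\Delta_L)\mathcal C^{-1}H_k^{-1}U_{1k}^\top v$. Then use \eqref{eq:prokection_1Phi_k}, and for (d) a bilinear Gaussian bound of order $\sigma\sqrt{r+\zeta_N}\,\|V_{2k}^\top y\|_2$.
\item On the complement, $B_kP^\perp_{U_{1k}}=0$, so $LP^\perp_{U_{1k}}=\hat B_kP^\perp_{U_{1k}}$. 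I would bound this by $\|H_U^\top\hat U_{2k}^\top x\|_2\,\|\hat H_k^{-1}\|_{\rm op}\,\|(\hat U_{1k}H_U-U_{1k})^\top P^\perp_{U_{1k}}\|_{\rm op}$, using \eqref{eq:lemma13_centered_block1} and \eqref{eq:boundU_vectorX}. This is multiplied by \eqref{eq:prokection_2Phi_k} for (e), or by $\|P^\perp_{U_{1k}}E^{(k)}_{\rm up}VV_{2k}^\top y\|_2\lesssim\sigma\sqrt{N_{1k}}\|V_{2k}^\top y\|_2$ for (d).
\end{itemize}
I expect the delicate part to be checking that these complement-side products, which carry the factor $N/N_{1k}$ from $\hat H_k^{-1}$, still fall below the target terms. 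That check should go through via the $\sqrt{NT/N_{1k}}$ entry of $\theta$ in \eqref{assump:smallNoise}; if it fails I would use the Haar compression of Lemma~\ref{lem:haar-frame-compression} on $P^\perp_{U_{1k}}$ to gain the extra factor.
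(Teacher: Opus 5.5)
Your proposal is correct and follows the paper's proof essentially step by step. The paper uses the same event $\mathcal G_1$ and the same $B_k$/$L$ expansion, which yields $Z^{(1)}_{xy},\dots,Z^{(4)}_{xy}$ plus exactly your five remainder pieces (a)--(e). For $x^\top L\Phi_k y$ it uses the same range/complement split with $z=\mathcal C_{\bullet,\bullet,k}^{-1}H_k^{-1}U_{1k}^\top\Phi_k y$, and the complement side is indeed closed through the $\sqrt{NT/N_{1k}}$ entry of $\theta$, so no Haar fallback is needed.

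The only difference is in piece (d). There the paper does not project; it writes $Lw=(\hat U_{2k}H_U-U_{2k})H_k^{-1}U_{1k}^\top w+\hat U_{2k}H_U D_k w$ and uses Corollary~\ref{cor:centered-left-perturbation} and Lemma~\ref{lemma15_centered}. Since $\|D_k(I-P_{U_{1k}})\|_{\rm op}\le\|\hat H_k^{-1}\|_{\rm op}\,\|\hat U_{1k}H_U-U_{1k}\|_{\rm op}$, your projection-split route gives the same bounds.
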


\begin{proof}
    Let $\mathcal G_1$ be the intersection of the high-probability events in Lemmas~\ref{lemma13_centered},~\ref{lemma15_centered},~\ref{lemmaL_action_centered},~\ref{lem:up-cen}, \ref{lemma:bilinearGaussian} and Corollaries~\ref{cor:centered-left-perturbation}, \ref{lemma13_Transpose}, \ref{cor:R1-sharp}, applied with the specific deterministic choices of projection matrices and vectors used below. By a union bound, we have $\mathbb{P}(\mathcal{G}_1) \geq 1 - \mathcal{O}(p_N^{-10} + p_T^{-10})$. In particular, arguing as in~\eqref{eq:HDaggerCoincidesWithInverse}, we know that, under $\mathcal{G}_1$, the matrix $\hat H_{k, \tau}^\mathrm{inv}$ used in Algorithm~\ref{alg:bilinear4block} coincides with $(\hat U_{1k}^\top \hat U_{1k})^{-1}$ whenever the algorithm is run with $\tau \leq \frac{c_\ell \, N_{1k}}{2 \, N}$. Recalling the notation $H_k =U_{1k}^\top U_{1k}, \,\hat H_k= H_U^\top \hat U_{1k}^\top\hat U_{1k}H_U, L=
\hat U_{2k}H_U
\hat H_k^{-1}
H_U^\top \hat U_{1k}^\top
-
U_{2k}H_k^{-1} U_{1k}^\top$, and using the expression~\eqref{eq:lemma16_centered_decomp_rewrite} and~\eqref{eq:decomopistionPhi_k} for $\Delta_L$ and $\Phi_k$, respectively, we can write
\begin{align}\label{eq:fullDecomposition}
   &\hat \mu_{xy}^{(k)} - \mu_{xy}^{(k)}
   = x^\top
    \hat U_{2k}(\hat U_{1k}^\top \hat U_{1k})^{-1}
    \hat U_{1k}^\top
    \hat U_\mathrm{up}^{(k)} \hat \Sigma_\mathrm{up} \hat V_{2k}^\top y
    -
    x^\top U_{2k}\mathcal C_{\bullet,\bullet,k}V_{2k}^\top y \nonumber \\
   &= x^\top
    \hat U_{2k}(\hat U_{1k}^\top \hat U_{1k})^{-1}
    \hat U_{1k}^\top \widehat M_b^{(k)} y
    -
    x^\top
    U_{2k}(U_{1k}^\top U_{1k})^{-1}
    U_{1k}^\top \mathcal M_{\bullet,\bullet,k}^{(b)} y \nonumber \\
   &= x^\top
    U_{2k}(U_{1k}^\top U_{1k})^{-1}
    U_{1k}^\top
    \bigl(\widehat M_b^{(k)}-\mathcal M_{\bullet,\bullet,k}^{(b)}\bigr)y
    +x^\top L\mathcal M_{\bullet,\bullet,k}^{(b)}y 
    +x^\top L
    \bigl(\widehat M_b^{(k)}-\mathcal M_{\bullet,\bullet,k}^{(b)}\bigr)y \nonumber \\[0.5em]
   &= x^\top
    U_{2k}(U_{1k}^\top U_{1k})^{-1}
    U_{1k}^\top
    \Bigl[
        (E_{\rm up}^{\rm p})_{\mathcal I_k^{\rm up},\bullet}VV_{2k}^\top
        +U_{1k}\mathcal C_{\bullet,\bullet,k}
        (W_{\rm up}^\top W_{\rm up})^{-1}W_{\rm up}^\top
        (E_{\rm up}^{\rm p})_{\bullet,\mathcal J_k}
        +\Phi_k
    \Bigr]y \nonumber \\
   &\qquad
    +x^\top L U_{1k}\mathcal C_{\bullet,\bullet,k}V_{2k}^\top y 
    +x^\top L
    \Bigl[
        (E_{\rm up}^{\rm p})_{\mathcal I_k^{\rm up},\bullet}VV_{2k}^\top
        +U_{1k}\mathcal C_{\bullet,\bullet,k}
        (W_{\rm up}^\top W_{\rm up})^{-1}W_{\rm up}^\top
        (E_{\rm up}^{\rm p})_{\bullet,\mathcal J_k}
        +\Phi_k
    \Bigr]y \nonumber \\[0.5em]
   &= x^\top
    U_{2k}(U_{1k}^\top U_{1k})^{-1}U_{1k}^\top
    (E_{\rm up}^{\rm p})_{\mathcal I_k^{\rm up},\bullet}
    VV_{2k}^\top y 
    +x^\top U_{2k}\mathcal C_{\bullet,\bullet,k}
    (W_{\rm up}^\top W_{\rm up})^{-1}W_{\rm up}^\top
    (E_{\rm up}^{\rm p})_{\bullet,\mathcal J_k}y \nonumber \\
   &\qquad
    +x^\top
    U_{2k}(U_{1k}^\top U_{1k})^{-1}U_{1k}^\top \Phi_k y 
    +x^\top L U_{1k}\mathcal C_{\bullet,\bullet,k}V_{2k}^\top y
    +x^\top L
    (E_{\rm up}^{\rm p})_{\mathcal I_k^{\rm up},\bullet}VV_{2k}^\top y \nonumber \\
   &\qquad
    +x^\top L U_{1k}\mathcal C_{\bullet,\bullet,k}
    (W_{\rm up}^\top W_{\rm up})^{-1}W_{\rm up}^\top
    (E_{\rm up}^{\rm p})_{\bullet,\mathcal J_k}y
    +x^\top L\Phi_k y \nonumber \\[0.5em]
   &= Z_{xy}^{(3)}+Z_{xy}^{(2)}
    +x^\top
    U_{2k}(U_{1k}^\top U_{1k})^{-1}U_{1k}^\top \Phi_k y \nonumber \\
   &\qquad
    +
        x^\top(E_{\rm left}^{\rm p})_{\mathcal I_k,\bullet}
        W_{\rm left}(W_{\rm left}^\top W_{\rm left})^{-1}
        \mathcal C_{\bullet,\bullet,k}V_{2k}^\top y
        +x^\top\Delta_L V_{2k}^\top y
      \nonumber \\
   &\qquad
    +x^\top L
    (E_{\rm up}^{\rm p})_{\mathcal I_k^{\rm up},\bullet}VV_{2k}^\top y +
        x^\top(E_{\rm left}^{\rm p})_{\mathcal I_k,\bullet}
        W_{\rm left}(W_{\rm left}^\top W_{\rm left})^{-1}
        \mathcal C_{\bullet,\bullet,k}
        (W_{\rm up}^\top W_{\rm up})^{-1}W_{\rm up}^\top
        (E_{\rm up}^{\rm p})_{\bullet,\mathcal J_k}y \nonumber \\
   & \qquad+x^\top\Delta_L
        (W_{\rm up}^\top W_{\rm up})^{-1}W_{\rm up}^\top
        (E_{\rm up}^{\rm p})_{\bullet,\mathcal J_k}y
    +x^\top L\Phi_k y \nonumber \\[0.5em]
   &= Z_{xy}^{(1)}+Z_{xy}^{(2)}+Z_{xy}^{(3)}+Z_{xy}^{(4)} 
    +x^\top
    U_{2k}(U_{1k}^\top U_{1k})^{-1}U_{1k}^\top\Phi_k y
    +x^\top\Delta_L V_{2k}^\top y \nonumber \\
   &\qquad 
    +x^\top\Delta_L
    (W_{\rm up}^\top W_{\rm up})^{-1}W_{\rm up}^\top
    (E_{\rm up}^{\rm p})_{\bullet,\mathcal J_k}y
    +x^\top L
    (E_{\rm up}^{\rm p})_{\mathcal I_k^{\rm up},\bullet}VV_{2k}^\top y +x^\top L\Phi_k y .
\end{align}
We will now bound each of the remainder terms individually. The first term is controlled directly by~\eqref{eq:R1-sharp-bound-corrected}, while for the second term it is enough to write $|x^\top \Delta_L V_{2k}^\top y| \leq \|x^\top \Delta_L\|_2 \|V_{2k}^\top y\|_2$, and bound the first factor using~\eqref{eq:lemma16_centered_bound_rewrite}. For the third one, start by observing that Lemma~\ref{lemma:bilinearGaussian} gives $\|(W_{\rm up}^\top W_{\rm up})^{-1}W_{\rm up}^\top
    (E_{\rm up}^{\rm p})_{\bullet,\mathcal J_k}y\|_2 \leq c_1 \sigma \, \gamma_\mathrm{min}^{-1}\sqrt{r +\zeta_N} \, \rho_N^{-1/2}$. Applying again~\eqref{eq:lemma16_centered_bound_rewrite} from Lemma~\ref{lemmaL_action_centered} then gives
    \begin{align*}
        |x^\top\Delta_L
    &(W_{\rm up}^\top W_{\rm up})^{-1}W_{\rm up}^\top
    (E_{\rm up}^{\rm p})_{\bullet,\mathcal J_k}y | \leq \|x^\top\Delta_L\|_2 \,\|
    (W_{\rm up}^\top W_{\rm up})^{-1}W_{\rm up}^\top
    (E_{\rm up}^{\rm p})_{\bullet,\mathcal J_k}y\|_2 \\
    &\leq c_1
\frac{\sigma^2\sqrt{(N+T_{1,\mathrm{p}})(r+\zeta_T)}}{
\gamma_{\min} \rho_T} \, \frac{\sigma \sqrt{r+ \zeta_N}}{\gamma_\mathrm{min} \, \rho_N^{1/2}} \, + \,  c_1 \frac{\sigma \sqrt{N}}{\rho_T^{1/2}}\, \|U_{2k}^\top x\|_2 \, \frac{\sigma \sqrt{r+ \zeta_N}}{\gamma_\mathrm{min} \, \rho_N^{1/2}} \\
& = c_1
\frac{\sigma^3 \sqrt{(N+T_{1,\mathrm{p}})(r+\zeta_N)(r + \zeta_T)}}{
\gamma_{\min}^2 \rho_N^{1/2} \rho_T} + c_1 \frac{\sigma \sqrt{N}}{\rho_T^{1/2}}\, \, \frac{\sigma \sqrt{r+ \zeta_N}}{\gamma_\mathrm{min} \, \rho_N^{1/2}} \|U_{2k}^\top x\|_2 \\
& \leq c_1
\frac{\sigma^2 \sqrt{(r +\zeta_N)(r+\zeta_T)}}{
\gamma_{\min} \rho_N^{1/2} \rho_T^{1/2}} + c_1 \, \frac{\sigma \sqrt{r+ \zeta_N}}{\rho_N^{1/2}} \|U_{2k}^\top x\|_2,
    \end{align*}
where the last inequality follows from \eqref{assump:smallNoise}. 
    
We next control the fourth term in~\eqref{eq:fullDecomposition}. By the definition of $L$, for any vector $w\in\mathbb R^{N_{1k}}$ we have $Lw
=
(\hat U_{2k}H_U-U_{2k})
H_k^{-1}U_{1k}^\top w
+
\hat U_{2k}H_U D_k w$, 
where $D_k=
\widehat H_k^{-1}(\hat U_{1k}H_U)^\top
-
H_k^{-1}U_{1k}^\top$. Applying this identity with $w=(E_{\rm up}^{\rm p})_{\mathcal I_k^{\rm up},\bullet}
VV_{2k}^\top y$ and using
$\hat U_{2k}H_U-U_{2k}
=
(E_{\rm left}^{\rm p})_{\{N_{1k}+1,\ldots,N\},\bullet}
W_{\rm left}(W_{\rm left}^\top W_{\rm left})^{-1}
+
\Psi_{U,2k}$ with~$\Psi_{U,2k}$ defined in Corollary~\ref{cor:centered-left-perturbation}, 
gives 
\begin{align}\label{eq:fourthTerm}
x^\top L
(E_{\rm up}^{\rm p})_{\mathcal I_k^{\rm up},\bullet}
VV_{2k}^\top y                                  & =
x^\top
(E_{\rm left}^{\rm p})_{\{N_{1k}+1,\ldots,N\},\bullet}
W_{\rm left}(W_{\rm left}^\top W_{\rm left})^{-1}
H_k^{-1}U_{1k}^\top
(E_{\rm up}^{\rm p})_{\mathcal I_k^{\rm up},\bullet}
VV_{2k}^\top y                                                 \nonumber  \\
&\quad
+
x^\top
\Psi_{U,2k}
H_k^{-1}U_{1k}^\top
(E_{\rm up}^{\rm p})_{\mathcal I_k^{\rm up},\bullet}
VV_{2k}^\top y                                  
+
x^\top
\hat U_{2k}H_U D_k
(E_{\rm up}^{\rm p})_{\mathcal I_k^{\rm up},\bullet}
VV_{2k}^\top y .
\end{align}
  We bound these three pieces separately. Since $(E_{\rm left}^{\rm p})_{\{N_{1k}+1, \ldots, N\},\bullet}$ and $(E_{\rm up}^{\rm p})_{\mathcal I_k^{\rm up},\bullet}$ are independent, conditionally on the upper-pooled noise
$(E_{\rm up}^{\rm p})_{\mathcal I_k^{\rm up},\bullet}$, the first term
is a Gaussian random variable with conditional variance $\sigma^2
\|
W_{\rm left}(W_{\rm left}^\top W_{\rm left})^{-1}
H_k^{-1}U_{1k}^\top
(E_{\rm up}^{\rm p})_{\mathcal I_k^{\rm up},\bullet}
VV_{2k}^\top y
\|_2^2$ and mean zero. Combining this with Lemma~\ref{lemma:bilinearGaussian} and a standard Gaussian tail bound gives
\begin{align*}
|x^\top &
(E_{\rm left}^{\rm p})_{\{N_{1k}+1,\ldots,N\},\bullet}
W_{\rm left}(W_{\rm left}^\top W_{\rm left})^{-1}
H_k^{-1}U_{1k}^\top
(E_{\rm up}^{\rm p})_{\mathcal I_k^{\rm up},\bullet}
VV_{2k}^\top y| \\
& \leq c_1 \sigma \sqrt{\zeta_T}\,
\left\|
W_{\rm left}(W_{\rm left}^\top W_{\rm left})^{-1}
(U_{1k}^\top U_{1k})^{-1} U_{1k}^\top
(E_{\rm up}^{\rm p})_{\mathcal I_k^\mathrm{up}, \bullet}
\,V \, V_{2k}^\top
y
\right\|_2 \\
& \leq c_1 \sigma \sqrt{\zeta_T }\,
\left\|
W_{\rm left}(W_{\rm left}^\top W_{\rm left})^{-1}
(U_{1k}^\top U_{1k})^{-1} U_{1k}^\top
(E_{\rm up}^{\rm p})_{\mathcal I_k^\mathrm{up}, \bullet}
\,V \right\|_\mathrm{op} \, \left\| V_{2k}^\top
y
\right\|_2 \\
& \leq
c_1 \sigma^2 \sqrt{\zeta_T\, (r + \zeta_N)}
\left\|W_{\rm left}(W_{\rm left}^\top W_{\rm left})^{-1} \right\|_\mathrm{op} \,
\left\| (U_{1k}^\top U_{1k})^{-1} U_{1k}^\top \right\|_\mathrm{op}
\, \left\| V_{2k}^\top
y
\right\|_2 \\
& \leq  c_1 \frac{\sigma^2}{\gamma_\mathrm{min}}
\sqrt{\frac{N \,\zeta_T\, (r + \zeta_N)}{N_{1k} \, \rho_T}}
\, \left\| V_{2k}^\top
y
\right\|_2 \leq c_1 \frac{\sigma \sqrt{r + \zeta_T}}{\rho_T^{1/2}} \, \left\| V_{2k}^\top
y
\right\|_2
\end{align*}
           with probability at least $1 - \mathcal{O}(p_N^{-10} + p_T^{-10})$,  where in the penultimate inequality we used Lemma~\ref{lemma:spectrumMp} to get $\|W_{\rm left}\,(W_{\rm left}^\top W_{\rm left})^{-1}\|_\mathrm{op} \leq \sigma_r^{-1}(M_\mathrm{left}^\mathrm{p}) \leq c_\ell^{-1/2} \gamma_\mathrm{min}^{-1} \, \rho_T^{-1/2}$, and  \eqref{assump:subblock-conditioning} to get $\left\| (U_{1k}^\top U_{1k})^{-1} U_{1k}^\top \right\|_\mathrm{op} \leq c_\ell^{-1/2} \sqrt{N/N_{1k}}$.  Similarly, for the second piece in~\eqref{eq:fourthTerm}, Lemma~\ref{lemma:bilinearGaussian},   \eqref{assump:subblock-conditioning},  \eqref{assump:sampleSize},  \eqref{assump:smallNoise} and~\eqref{cor:bound_Psi} give 
\begin{align*}
&\bigg|
x^\top
\Psi_{U,2k}
H_k^{-1}U_{1k}^\top
(E_{\rm up}^{\rm p})_{\mathcal I_k^{\rm up},\bullet}
VV_{2k}^\top y
\bigg|  \\                  & \le
\|\Psi_{U,2k}^\top x\|_2
\left\|
H_k^{-1}U_{1k}^\top
(E_{\rm up}^{\rm p})_{\mathcal I_k^{\rm up},\bullet}
VV_{2k}^\top y
\right\|_2 \leq  c_1 \, \sigma
\sqrt{\frac{N(r+\zeta_N)}{N_{1k}}}
\,
\|V_{2k}^\top y\|_2 \, \|\Psi_{U,2k}^\top x\|_2 \\
& \leq c_1 \, \sigma
\sqrt{\frac{N(r+\zeta_N)}{N_{1k}}}
\,
\|V_{2k}^\top y\|_2 \, \left\{\frac{\sigma^2\sqrt{(N+T_{1,\mathrm{p}})(r+\zeta_T)}}{
\gamma_{\min}^2\rho_T}
+
\left(
\frac{\sigma^2N}{\gamma_{\min}^2\rho_T}
+
\frac{\sigma\sqrt{r+\zeta_T}}{\gamma_{\min}\rho_T^{1/2}}
\right)
\|U_{2k}^\top x\|_2\right\} \\
& \leq c_1
\frac{\sigma^3
\sqrt{N(r+\zeta_N)(N+T_{1,\mathrm{p}})(r+\zeta_T)}}{
\gamma_{\min}^2\rho_T\sqrt{N_{1k}}}
\,
\|V_{2k}^\top y\|_2                                    \\
&\quad
+
c_1\sigma
\sqrt{\frac{N(r+\zeta_N)}{N_{1k}}}
\left(
\frac{\sigma^2N}{\gamma_{\min}^2\rho_T}
+
\frac{\sigma\sqrt{r+\zeta_T}}{\gamma_{\min}\rho_T^{1/2}}
\right)
\|U_{2k}^\top x\|_2
\|V_{2k}^\top y\|_2 \\
&  \leq c_1 \frac{\sigma \sqrt{r + \zeta_T}}{\rho_T^{1/2}} \, \left\| V_{2k}^\top
y
\right\|_2 + c_1 \frac{\sigma \sqrt{N}}{\rho_T^{1/2}}  \, \left\| U_{2k}^\top
x
\right\|_2  \, \left\| V_{2k}^\top
y
\right\|_2.
\end{align*}
For the third piece,~\eqref{eq:boundU_vectorX}, \eqref{assump:sampleSize},  \eqref{assump:smallNoise}, Lemmas~\ref{lemma15_centered} and \ref{lemma:bilinearGaussian} yield
\begin{align*}
\bigg|
x^\top
\hat U_{2k}H_U D_k
&(E_{\rm up}^{\rm p})_{\mathcal I_k^{\rm up},\bullet}
VV_{2k}^\top y
\bigg|                                                \le
\|H_U^\top\hat U_{2k}^\top x\|_2
\,
\|D_k\|_{\rm op}
\,
\left\|
(E_{\rm up}^{\rm p})_{\mathcal I_k^{\rm up},\bullet}
VV_{2k}^\top y
\right\|_2 \\
& \leq c_1
\left(
\frac{\sigma\sqrt{r+\zeta_T}}{\gamma_{\min}\rho_T^{1/2}}
+
\|U_{2k}^\top x\|_2
\right) \,
\frac{\sigma\sqrt N}{\gamma_{\min}\rho_T^{1/2}}
\sqrt{\frac{N}{N_{1k}}}
\,
\sigma\sqrt{N_{1k}}\,
\|V_{2k}^\top y\|_2 \\
& \leq c_1
\frac{\sigma^3 N\sqrt{r+\zeta_T}}{
\gamma_{\min}^2\rho_T}
\,
\|V_{2k}^\top y\|_2         +
c_1
\frac{\sigma^2N}{
\gamma_{\min}\rho_T^{1/2}}
\,
\|U_{2k}^\top x\|_2
\|V_{2k}^\top y\|_2 \\
& \leq c_1\frac{\sigma \sqrt{r + \zeta_T}}{\rho_T^{1/2}} \, \left\| V_{2k}^\top
y
\right\|_2 + c_1 \frac{\sigma \sqrt{N}}{\rho_T^{1/2}}  \, \left\| U_{2k}^\top
x
\right\|_2  \, \left\| V_{2k}^\top
y
\right\|_2.
\end{align*}
Combining the last three displays leads to 
\begin{align*}
\bigg|
x^\top L
&(E_{\rm up}^{\rm p})_{\mathcal I_k^{\rm up},\bullet}
VV_{2k}^\top y
\bigg|                                                  \leq c_1\frac{\sigma \sqrt{r + \zeta_T}}{\rho_T^{1/2}} \, \left\| V_{2k}^\top
y
\right\|_2 + c_1\frac{\sigma \sqrt{N}}{\rho_T^{1/2}}  \, \left\| U_{2k}^\top
x
\right\|_2  \, \left\| V_{2k}^\top
y
\right\|_2.
\end{align*}

Finally, we control the fifth term in~\eqref{eq:fullDecomposition}. Letting $P_{1k}:=U_{1k}(U_{1k}^\top U_{1k})^{-1}U_{1k}^\top$, we can write \begin{align}\label{eq:fifthTerm}
    x^\top L\Phi_k y
=
x^\top L P_{1k}\Phi_k y
+
x^\top L(I_{N_{1k}}-P_{1k})\Phi_k y.
\end{align} 
We will now bound the first projected component. Set $z:=
\mathcal C_{\bullet,\bullet,k}^{-1}
(U_{1k}^\top U_{1k})^{-1}U_{1k}^\top\Phi_k y$, so that $P_{1k}\Phi_k y
=
U_{1k}(U_{1k}^\top U_{1k})^{-1}U_{1k}^\top\Phi_k y
=
U_{1k}\mathcal C_{\bullet,\bullet,k}z$. Using the definition of $\Delta_L$ in~\eqref{eq:lemma16_centered_decomp_rewrite} we obtain
\begin{align*}
x^\top L P_{1k}\Phi_k y
&=
x^\top
(E_{\rm left}^{\rm p})_{\{N_{1k}+1,\ldots,N\},\bullet}
W_{\rm left}(W_{\rm left}^\top W_{\rm left})^{-1}
\mathcal C_{\bullet,\bullet,k}z
+
x^\top\Delta_L z                                                   \\
&=
x^\top
(E_{\rm left}^{\rm p})_{\{N_{1k}+1,\ldots,N\},\bullet}
W_{\rm left}(W_{\rm left}^\top W_{\rm left})^{-1}
(U_{1k}^\top U_{1k})^{-1}U_{1k}^\top\Phi_k y
+
x^\top\Delta_L z .
\end{align*}
We bound these two pieces separately. For the first one, Lemma~\ref{lemma:bilinearGaussian},  \eqref{assump:subblock-conditioning} and~\eqref{eq:prokection_1Phi_k} give 
\begin{align}\label{eq:fifth-term-left-projected-bound}
\bigg|
x^\top
&(E_{\rm left}^{\rm p})_{\{N_{1k}+1,\ldots,N\},\bullet}
W_{\rm left}(W_{\rm left}^\top W_{\rm left})^{-1}
(U_{1k}^\top U_{1k})^{-1}U_{1k}^\top\Phi_k y
\bigg|                                                  \nonumber \\
&\le
\left\|
x^\top
(E_{\rm left}^{\rm p})_{\{N_{1k}+1,\ldots,N\},\bullet}
W_{\rm left}(W_{\rm left}^\top W_{\rm left})^{-1}
(U_{1k}^\top U_{1k})^{-1/2}
\right\|_2                 \, 
\left\|
(U_{1k}^\top U_{1k})^{-1/2}U_{1k}^\top\Phi_k y
\right\|_2\nonumber \\
& \leq \frac{\sigma}{\gamma_{\min}}
\sqrt{\frac{N (r+\zeta_T)}{N_{1k} \, \rho_T}} \left\{ \frac{\sigma\sqrt{r+\zeta_N}}{\rho_N^{1/2}}
\sqrt{\frac{N_{1k}}{N}}
+
\frac{\sigma\sqrt T}{\rho_N^{1/2}}
\sqrt{\frac{N_{1k}}{N}}
\|V_{2k}^\top y\|_2 \right\} \nonumber\\
& \leq c_1
\frac{\sigma^2\sqrt{(r+\zeta_T)(r+\zeta_N)}}{
\gamma_{\min}\rho_N^{1/2} \rho_T^{1/2}}    
+
c_1
\frac{\sigma^2\sqrt{T(r+\zeta_T)}}{
\gamma_{\min}\rho_N^{1/2} \rho_T^{1/2}}
\|V_{2k}^\top y\|_2 \nonumber \\
& \leq c_1
\frac{\sigma^2\sqrt{(r+\zeta_T)(r+\zeta_N)}}{
\gamma_{\min}\rho_N^{1/2} \rho_T^{1/2}}    
+
c_1
\frac{\sigma \sqrt{r+\zeta_T}}{
\rho_T^{1/2}}
\|V_{2k}^\top y\|_2.
\end{align}
For the second projected piece, using
$\sigma_{\min}(\mathcal C_{\bullet,\bullet,k})\ge \gamma_{\min}>0$,  \eqref{assump:subblock-conditioning}, and~\eqref{eq:prokection_1Phi_k},
we have
\begin{align*}
\|z\|_2
&=
\left\|
\mathcal C_{\bullet,\bullet,k}^{-1}
(U_{1k}^\top U_{1k})^{-1}U_{1k}^\top\Phi_k y
\right\|_2                                                \\
&\le
\gamma_{\min}^{-1}
\left\|
(U_{1k}^\top U_{1k})^{-1/2}
\right\|_{\rm op}
\left\|
(U_{1k}^\top U_{1k})^{-1/2}U_{1k}^\top\Phi_k y
\right\|_2                                           \le
c_1
\frac{\sigma\sqrt{r+\zeta_N}}{\gamma_{\min}\rho_N^{1/2}}
+
c_1
\frac{\sigma\sqrt T}{\gamma_{\min}\rho_N^{1/2}}
\|V_{2k}^\top y\|_2 .
\end{align*}
Combining this with~\eqref{eq:lemma16_centered_bound_rewrite} gives
\begin{align}
|x^\top\Delta_L z|
&\le \|x^\top\Delta_L\|_2 \| z \|_2 \le
c_1
\frac{\sigma^2\sqrt{(N+T_{1,\mathrm{p}})(r+\zeta_T)}}{
\gamma_{\min}\rho_T}
\|z\|_2
+
c_1
\frac{\sigma\sqrt N}{\rho_T^{1/2}}
\|U_{2k}^\top x\|_2
\|z\|_2                                                   \notag\\
&\le
c_1
\frac{\sigma^3\sqrt{(N+T_{1,\mathrm{p}})(r+\zeta_T)(r+\zeta_N)}}{
\gamma_{\min}^2\rho_T\rho_N^{1/2}}                     
+
c_1
\frac{\sigma^3\sqrt{T(N+T_{1,\mathrm{p}})(r+\zeta_T)}}{
\gamma_{\min}^2\rho_T\rho_N^{1/2}}
\|V_{2k}^\top y\|_2                                       \notag\\
&\quad
+
c_1
\frac{\sigma^2\sqrt{N(r+\zeta_N)}}{
\gamma_{\min}\rho_T^{1/2}\rho_N^{1/2}}
\|U_{2k}^\top x\|_2         
+
c_1
\frac{\sigma^2\sqrt{NT}}{
\gamma_{\min}\rho_T^{1/2}\rho_N^{1/2}}
\|U_{2k}^\top x\|_2
\|V_{2k}^\top y\|_2 \nonumber \\
& \leq c_1
\frac{\sigma^2\sqrt{(r+\zeta_T)(r+\zeta_N)}}{
\gamma_{\min}\rho_N^{1/2} \rho_T^{1/2}}                     
+
c_1
\frac{\sigma \sqrt{r+\zeta_T}}{
\rho_T^{1/2}}
\|V_{2k}^\top y\|_2   +
c_1
\frac{\sigma \sqrt{r+\zeta_N}}{
\rho_N^{1/2}}
\|U_{2k}^\top x\|_2                                 
     \\
     & \qquad +
c_1
\frac{\sigma\sqrt{T}}{
\rho_N^{1/2}}
\|U_{2k}^\top x\|_2
\|V_{2k}^\top y\|_2,
\label{eq:fifth-term-delta-projected-bound}
\end{align}
and concludes the analysis of the first term in~\eqref{eq:fifthTerm}. It remains to control the component orthogonal to $U_{1k}$. In particular,  since $U_{1k}^\top(I_{N_{1k}}-P_{1k})\Phi_k y=0$, the population part in the decomposition of $L$ vanishes,  hence
\begin{align*}
L(I_{N_{1k}}-P_{1k})\Phi_k y
&=
\hat U_{2k}H_U
\hat H_k^{-1}
H_U^\top\hat U_{1k}^\top
(I_{N_{1k}}-P_{1k})\Phi_k y
-
U_{2k}H_k^{-1}U_{1k}^\top
(I_{N_{1k}}-P_{1k})\Phi_k y                              \\
&=
\hat U_{2k}H_U
\left(
\hat H_k^{-1}H_U^\top\hat U_{1k}^\top
-
H_k^{-1}U_{1k}^\top
\right)
(I_{N_{1k}}-P_{1k})\Phi_k y                              \\
&=
\hat U_{2k}H_U D_k(I_{N_{1k}}-P_{1k})\Phi_k y .
\end{align*}
Therefore, \eqref{eq:boundU_vectorX}, \eqref{eq:lemma15_centered_D_rewrite}, \eqref{eq:prokection_2Phi_k} and \eqref{assump:smallNoise} give
\begin{align}\label{eq:fifth-term-orthogonal-bound}
|
x^\top& L(I_{N_{1k}}-P_{1k})\Phi_k y
|
\le
\|H_U^\top\hat U_{2k}^\top x\|_2
\|D_k\|_{\rm op}
\|(I_{N_{1k}}-P_{1k})\Phi_k y\|_2 \nonumber \\
& \le
c_1
\left(
\frac{\sigma\sqrt{r+\zeta_T}}{\gamma_{\min}\rho_T^{1/2}}
+
\|U_{2k}^\top x\|_2
\right)
\frac{\sigma\sqrt N}{\gamma_{\min}\rho_T^{1/2}}
\sqrt{\frac{N}{N_{1k}}}                                      \notag\\
&\qquad\qquad\times
\left(
\frac{\sigma^3 T\sqrt{r+\zeta_N}}{\gamma_{\min}^2\rho_N}
+
\frac{\sigma^2\sqrt{N_{1k}(r+\zeta_N)}}{
\gamma_{\min}\rho_N^{1/2}}
+
\frac{\sigma^2\sqrt{T(N_{1k}+T)}}{
\gamma_{\min}\rho_N^{1/2}}
\|V_{2k}^\top y\|_2
\right) \nonumber \\
&=
c_1 \frac{
\sigma^5 N T
\sqrt{(r+\zeta_T)(r+\zeta_N)}
}{
\gamma_{\min}^4 \rho_T \rho_N \sqrt{N_{1k}}
}
+c_1
\frac{
\sigma^4 N
\sqrt{(r+\zeta_T)(r+\zeta_N)}
}{
\gamma_{\min}^3 \rho_T \rho_N^{1/2}
} 
\nonumber \\
     & \qquad \qquad + c_1
\frac{
\sigma^4 N
\sqrt{(r+\zeta_T)T(N_{1k}+T)}
}{
\gamma_{\min}^3 \rho_T \rho_N^{1/2}\sqrt{N_{1k}}
}
\|V_{2k}^\top y\|_2 \nonumber \\
&\qquad
+c_1
\frac{
\sigma^4 N T \sqrt{r+\zeta_N}
}{
\gamma_{\min}^3 \rho_T^{1/2}\rho_N \sqrt{N_{1k}}
}
\|U_{2k}^\top x\|_2 
+c_1
\frac{
\sigma^3 N \sqrt{r+\zeta_N}
}{
\gamma_{\min}^2 \rho_T^{1/2}\rho_N^{1/2}
}
\|U_{2k}^\top x\|_2
\nonumber \\
    & \qquad \qquad +c_1
\frac{
\sigma^3 N \sqrt{T(N_{1k}+T)}
}{
\gamma_{\min}^2 \rho_T^{1/2}\rho_N^{1/2}\sqrt{N_{1k}}
}
\|U_{2k}^\top x\|_2
\|V_{2k}^\top y\|_2 \nonumber \\
& \leq c_1
\frac{\sigma^2\sqrt{(r+\zeta_T)(r+\zeta_N)}}{
\gamma_{\min}\rho_N^{1/2} \rho_T^{1/2}}                     
+
c_1
\frac{\sigma \sqrt{r+\zeta_T}}{
\rho_T^{1/2}}
\|V_{2k}^\top y\|_2   +
c_1
\frac{\sigma \sqrt{r+\zeta_N}}{
\rho_N^{1/2}}
\|U_{2k}^\top x\|_2                                 
     \nonumber \\
    & \qquad \qquad +
c_1
\frac{\sigma\sqrt{N}}{
\rho_T^{1/2}}
\|U_{2k}^\top x\|_2
\|V_{2k}^\top y\|_2.
\end{align}

Combining~\eqref{eq:fifthTerm},
\eqref{eq:fifth-term-left-projected-bound},
\eqref{eq:fifth-term-delta-projected-bound}, and
\eqref{eq:fifth-term-orthogonal-bound}, we obtain
\begin{align*}
|x^\top L\Phi_k y|
&\le
c_1 
\frac{\sigma^2\sqrt{(r+\zeta_T)(r+\zeta_N)}}{
\gamma_{\min}\rho_N^{1/2} \rho_T^{1/2}}                     
+
c_1
\frac{\sigma \sqrt{r+\zeta_T}}{
\rho_T^{1/2}}
\|V_{2k}^\top y\|_2   +
c_1
\frac{\sigma \sqrt{r+\zeta_N}}{
\rho_N^{1/2}}
\|U_{2k}^\top x\|_2                                 
    \\
    & \qquad \qquad  +
c_1
\frac{\sigma\sqrt{N}}{
\rho_T^{1/2}}
\|U_{2k}^\top x\|_2
\|V_{2k}^\top y\|_2 +
c_1
\frac{\sigma\sqrt{T}}{
\rho_N^{1/2}}
\|U_{2k}^\top x\|_2
\|V_{2k}^\top y\|_2. 
\end{align*}
Combining all the previous inequalities gives a bound on $|\Delta_{xy}|$ and completes the proof.
\end{proof}

\medskip\medskip\medskip
\begin{lemma}\label{lemma:VarDominatesLemma1}
    Consider the setting of Lemma~\ref{lemma1}, and further assume \eqref{assump:Incoherence} with $\nu_x \neq 0, \nu_y \neq 0$. Also let $\mathcal G_1$ be the event such that $\mathbb P(\mathcal G_1)\geq 1 - \mathcal{O}(p_N^{-10} + p_T^{-10})$ under which~\eqref{eq:BoundOnDeltaXY} holds. Define 
    \[
    \Upsilon_{xy} := \frac{\sigma^2 (r + \zeta_N)}{\rho_N} \, \|U_{2k}^\top x\|_2^2 +\frac{\sigma^2 (r + \zeta_T)}{\rho_T} \, \|V_{2k}^\top y\|_2^2 + \frac{\sigma^2 N}{N_{1k}} \, \|U_{2k}^\top x\|_2^2 \, \|V_{2k}^\top y\|_2^2. 
    \]
    We have $\mathbb{E} [Z_{xy}^2]  \leq c_1 \, \Upsilon_{xy}$ for a sufficiently large constant $c_1 \equiv c_1(c_\ell, c_u, c_0,  c_{\mathrm{dim}}, c_{\mathrm{blk}}, \kappa, \nu_x, \nu_y) >~0$. Furthermore, under $\mathcal G_1$, the remainder satisfies $\Delta_{xy}^2 \leq c_1\Upsilon_{xy}$.
\end{lemma}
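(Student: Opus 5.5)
The lemma has two parts, and I will treat them separately: a second-moment bound for the Gaussian part $Z_{xy}$, and a deterministic comparison of the remainder bound \eqref{eq:BoundOnDeltaXY} with $\Upsilon_{xy}$.

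\emph{Second moment of $Z_{xy}$.} By Cauchy--Schwarz, $\mathbb E[Z_{xy}^2]\le 4\sum_{i=1}^4\mathbb E[(Z_{xy}^{(i)})^2]$, so it suffices to bound each term. Three of the four terms are linear in the Gaussian noise, so their second moments are $\sigma^2$ times a squared Euclidean norm of a deterministic vector.

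For $Z^{(2)}_{xy}$ the variance is
\[
\sigma^2\|\boldsymbol W_{\rm up}(\boldsymbol W_{\rm up}^\top\boldsymbol W_{\rm up})^{-1}\mathcal C_{\bullet,\bullet,k}^\top\boldsymbol U_{2k}^\top\boldsymbol x\|_2^2\,\|\boldsymbol y\|_2^2 .
\]
Lemma~\ref{lemma:spectrumMp} gives $\|\boldsymbol W_{\rm up}(\boldsymbol W_{\rm up}^\top\boldsymbol W_{\rm up})^{-1}\|_{\rm op}\le c_\ell^{-1/2}\gamma_{\min}^{-1}\rho_N^{-1/2}$, so this is at most $c_1\kappa^2\sigma^2\rho_N^{-1}\|\boldsymbol U_{2k}^\top\boldsymbol x\|_2^2$. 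That is the first term of $\Upsilon_{xy}$.

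Symmetrically, $Z^{(4)}_{xy}$ has variance at most $c_1\kappa^2\sigma^2\rho_T^{-1}\|\boldsymbol V_{2k}^\top\boldsymbol y\|_2^2$, matching the second term.

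For $Z^{(3)}_{xy}$ the variance is $\sigma^2\|\boldsymbol U_{1k}(\boldsymbol U_{1k}^\top\boldsymbol U_{1k})^{-1}\boldsymbol U_{2k}^\top\boldsymbol x\|_2^2\,\|\boldsymbol V\boldsymbol V_{2k}^\top\boldsymbol y\|_2^2$. Assumption~\eqref{assump:subblock-conditioning} bounds this by $c_\ell^{-1}\sigma^2(N/N_{1k})\|\boldsymbol U_{2k}^\top\boldsymbol x\|_2^2\|\boldsymbol V_{2k}^\top\boldsymbol y\|_2^2$, which is the third term.

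The product term $Z^{(1)}_{xy}$ involves two noise blocks. The blocks $(\boldsymbol E^{\rm p}_{\rm left})_{\mathcal I_k,\bullet}$ and $(\boldsymbol E^{\rm p}_{\rm up})_{\bullet,\mathcal J_k}$ overlap only if slice-$k$ entries are shared. The $\mathcal I_k$ rows of $\boldsymbol E_{\rm left}$ are the $c$-block rows $N_{1k}+1..N$, whereas $\boldsymbol E_{\rm up}$ only contains rows in $N_{1j}$. So the two blocks are disjoint, hence independent. Conditioning on one block then gives
\[
\mathbb E[(Z^{(1)}_{xy})^2]=\sigma^4\|\boldsymbol A\|_F^2,
\qquad
\boldsymbol A:=\boldsymbol W_{\rm left}(\boldsymbol W_{\rm left}^\top\boldsymbol W_{\rm left})^{-1}\mathcal C_{\bullet,\bullet,k}(\boldsymbol W_{\rm up}^\top\boldsymbol W_{\rm up})^{-1}\boldsymbol W_{\rm up}^\top .
\]
Since $\boldsymbol A$ has rank at most $r$, $\|\boldsymbol A\|_F^2\le r\kappa^2\gamma_{\min}^{-2}\rho_N^{-1}\rho_T^{-1}$.

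Incoherence (Assumption~\eqref{assump:Incoherence}) gives $\|\boldsymbol U_{2k}^\top\boldsymbol x\|_2^2\asymp r/N$ and $\|\boldsymbol V_{2k}^\top\boldsymbol y\|_2^2\asymp r/T$. Assumption~\eqref{assump:smallNoise} gives $\sigma^2\le c_0^2\gamma_{\min}^2\rho_T/N$. Combining these,
\[
\frac{\sigma^4 r}{\gamma_{\min}^2\rho_N\rho_T}\le c_0^2\,\frac{\sigma^2 r}{\rho_N N}\lesssim \frac{\sigma^2}{\rho_N}\|\boldsymbol U_{2k}^\top\boldsymbol x\|_2^2 ,
\]
with constants depending on $\nu_x$. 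So $Z^{(1)}_{xy}$ is also absorbed into $\Upsilon_{xy}$.

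\emph{Remainder.} I square \eqref{eq:BoundOnDeltaXY} and compare each of its five terms with $\Upsilon_{xy}$.
\begin{itemize}
\item Terms two and three are exactly square roots of the first two terms of $\Upsilon_{xy}$.
\item Term one, $\sigma^2\sqrt{(r+\zeta_T)(r+\zeta_N)}/(\gamma_{\min}\rho_N^{1/2}\rho_T^{1/2})$, is handled as for $Z^{(1)}_{xy}$: split it by AM--GM into a $\sigma^4(r+\zeta_N)/(\gamma_{\min}^2\rho_N\rho_T)$ piece and an $(r+\zeta_T)$ piece. Use $\sigma^2N/(\gamma_{\min}^2\rho_T)\le c_0^2$, together with incoherence ($1/N\lesssim\nu_x^{-2}\|\boldsymbol U_{2k}^\top\boldsymbol x\|_2^2/r$), to land in the first term of $\Upsilon_{xy}$. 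The analogous bound with $\sigma^2 T/(\gamma_{\min}^2\rho_N)\le c_0^2$ lands the other piece in the second term.
\item Term four is $\sigma\sqrt N\rho_T^{-1/2}\|\boldsymbol U_{2k}^\top\boldsymbol x\|\|\boldsymbol V_{2k}^\top\boldsymbol y\|$, whose square is $\sigma^2 (N/\rho_T)\|\boldsymbol U_{2k}^\top\boldsymbol x\|^2\|\boldsymbol V_{2k}^\top\boldsymbol y\|^2$. Incoherence gives $N\|\boldsymbol U_{2k}^\top\boldsymbol x\|^2\asymp\nu_x^2 r$, so this is $\lesssim\sigma^2 r\rho_T^{-1}\|\boldsymbol V_{2k}^\top\boldsymbol y\|^2$, inside the second term of $\Upsilon_{xy}$.
\item Term five is symmetric to term four and lands in the first term.
\end{itemize}

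I expect the main obstacle to be these absorptions. They are where incoherence and $\nu_x,\nu_y\neq0$ are genuinely needed: without them, terms four and five and the $\sigma^4$ piece could exceed $\Upsilon_{xy}$ when $\|\boldsymbol U_{2k}^\top\boldsymbol x\|$ is tiny. The constants must be tracked so that they depend only on the listed parameters. Checking that the two noise blocks in $Z^{(1)}_{xy}$ really are independent is the secondary point to verify.
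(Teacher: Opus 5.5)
\textbf{Verdict.} The overall structure matches the paper. However, two of your steps fail as written.

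\textbf{1. The independence claim for $Z^{(1)}_{xy}$ is false in general.} Theorem~\ref{thm:4BlockUBLinear} allows the splits to vary across slices. The rows $\mathcal I_k=\{N_{1k}+1,\dots,N\}$ are $c$-block rows only in slice $k$.
\begin{itemize}
\item Suppose some $j\neq k$ has $N_{1j}>N_{1k}$ and $T_{1j}>T_{1k}$. Then the entries $\mathcal E_{i,t,j}$ with $N_{1k}<i\le N_{1j}$ and $T_{1k}<t\le T_{1j}$ lie in the $a$-block of slice $j$.
\item These entries appear in both $(\boldsymbol E_{\rm left}^{\rm p})_{\mathcal I_k,\bullet}$ and $(\boldsymbol E_{\rm up}^{\rm p})_{\bullet,\mathcal J_k}$. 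So $Z^{(1)}_{xy}$ is in general not even centred, and the identity $\mathbb E[(Z^{(1)}_{xy})^2]=\sigma^4\|\boldsymbol A\|_F^2$ fails.
\end{itemize}
The paper avoids needing independence. Set $\boldsymbol a:=(\boldsymbol W_{\rm left}^\top\boldsymbol W_{\rm left})^{-1/2}\boldsymbol W_{\rm left}^\top(\boldsymbol E_{\rm left}^{\rm p})_{\mathcal I_k,\bullet}^\top\boldsymbol x$ and $\boldsymbol b:=(\boldsymbol W_{\rm up}^\top\boldsymbol W_{\rm up})^{-1/2}\boldsymbol W_{\rm up}^\top(\boldsymbol E_{\rm up}^{\rm p})_{\bullet,\mathcal J_k}\boldsymbol y$. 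Each is marginally $\mathcal N(\boldsymbol 0,\sigma^2\boldsymbol I_r)$. Cauchy--Schwarz then gives
\[
\mathbb E[(Z^{(1)}_{xy})^2]\le\|(\boldsymbol W_{\rm left}^\top\boldsymbol W_{\rm left})^{-1/2}\mathcal C_{\bullet,\bullet,k}(\boldsymbol W_{\rm up}^\top\boldsymbol W_{\rm up})^{-1/2}\|_{\rm op}^2\,\{\mathbb E\|\boldsymbol a\|_2^4\,\mathbb E\|\boldsymbol b\|_2^4\}^{1/2}\lesssim\frac{\sigma^4r^2}{\gamma_{\min}^2\rho_N\rho_T}.
\]
The extra factor $r$ is harmless. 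Your absorption step then yields a term of order $c_0^2\nu_x^{-2}\sigma^2 r\rho_N^{-1}\|\boldsymbol U_{2k}^\top\boldsymbol x\|_2^2$, which still lies inside the first term of $\Upsilon_{xy}$.

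\textbf{2. Your treatment of the first term of \eqref{eq:BoundOnDeltaXY} does not close.} You never use the second half of \eqref{assump:sampleSize}, namely $c_{\mathrm{blk}}\min(\zeta_N,\zeta_T)\le r$. Absorbing $\sigma^2\sqrt{(r+\zeta_T)(r+\zeta_N)}/(\gamma_{\min}\rho_N^{1/2}\rho_T^{1/2})$ through $\theta\le c_0$ and incoherence gives one of two bounds:
\[
\frac{c_0}{\nu_x}\sqrt{1+\frac{\zeta_T}{r}}\,\frac{\sigma\sqrt{r+\zeta_N}}{\rho_N^{1/2}}\|\boldsymbol U_{2k}^\top\boldsymbol x\|_2
\quad\text{or}\quad
\frac{c_0}{\nu_y}\sqrt{1+\frac{\zeta_N}{r}}\,\frac{\sigma\sqrt{r+\zeta_T}}{\rho_T^{1/2}}\|\boldsymbol V_{2k}^\top\boldsymbol y\|_2 .
\]
The second bound uses $\sigma\sqrt{T/\rho_N}\le\sigma\sqrt{NT/N_{1k}}\le c_0\gamma_{\min}$. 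In either case a log-over-rank factor survives on one side.
\begin{itemize}
\item Your AM--GM split sends an $(r+\zeta_N)$-piece to the first term and an $(r+\zeta_T)$-piece to the second.
\item This leaves factors $\sqrt{(r+\zeta_N)/r}$ and $\sqrt{(r+\zeta_T)/r}$, so it needs both logarithms to be $\mathcal O(r)$.
\item Assumption \eqref{assump:sampleSize} does not give this. One of $\zeta_N,\zeta_T$ can greatly exceed $r$ while the constraints still hold.
\end{itemize}
The fix, which is the paper's argument, is not to split. Keep the product and use whichever displayed bound carries the smaller logarithm. Its prefactor is then at most $c_0\sqrt{1+c_{\mathrm{blk}}^{-1}}/\nu_x$ or $c_0\sqrt{1+c_{\mathrm{blk}}^{-1}}/\nu_y$.

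The rest of your proposal matches the paper. This covers the variances of $Z^{(2)}_{xy}$, $Z^{(3)}_{xy}$, $Z^{(4)}_{xy}$ and the absorption of the remaining four remainder terms.
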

\begin{proof}
We will use $\mathbb E[Z_{xy}^2] \leq 4\sum_{i = 1}^4 \mathbb E [(Z_{xy}^{(i)})^2]$, and bound the second moment of each $Z_{xy}^{(i)}$ in~\eqref{eq:Z_xy} separately. First, from $Z_{xy}^{(2)} \sim \mathcal{N}(0, \sigma^2 \, \bigl\|x^\top U_{2k} \,  \mathcal{C}_{\bullet, \bullet, k} \, (W_{\rm up}^\top W_{\rm up})^{-1}W_{\rm up}^\top \bigr\|_2^2)$ we get
\begin{align*}
     \mathbb E [(Z_{xy}^{(2)})^2] = \operatorname{Var}(Z_{xy}^{(2)}) 
    & = \sigma^2 \, \bigl\|x^\top U_{2k} \,  \mathcal{C}_{\bullet, \bullet, k} \, (W_{\rm up}^\top W_{\rm up})^{-1}W_{\rm up}^\top \bigr\|_2^2 \\
    & \leq \sigma^2 \, \bigl\| U_{2k}^\top x\|_2^2 \,  \| \mathcal{C}_{\bullet, \bullet, k} \, (W_{\rm up}^\top W_{\rm up})^{-1}W_{\rm up}^\top\bigr\|_\mathrm{op}^2 \leq\frac{\kappa^2 \sigma^2 }{c_\ell \, \rho_N} \, \|U_{2k}^\top x\|_2^2,
\end{align*}
where the second inequality follows from Lemma~\ref{lemma:spectrumMp}. Furthermore, similar computations allow showing that $  \mathbb E [(Z_{xy}^{(4)})^2] = \operatorname{Var}(Z_{xy}^{(4)}) \leq \kappa^2 \sigma^2 \, c_\ell^{-1} \, \|V_{2k}^\top y\|_2^2 \, \rho_T^{-1}$ and $ \mathbb E [(Z_{xy}^{(3)})^2] = \operatorname{Var}(Z_{xy}^{(3)}) 
     \leq  \sigma^2 \, c_\ell^{-1} \, (N/N_{1k})\, \|U_{2k}^\top x \|_2^2 \, \|V_{2k}^\top y \|_2^2$. Finally, for the first term we have
\begin{align*} \mathbb E&[(Z_{xy}^{(1)})^2] \le \left\| (W_{\rm left}^\top W_{\rm left})^{-1/2} \mathcal C_{\bullet,\bullet,k} (W_{\rm up}^\top W_{\rm up})^{-1/2} \right\|_{\rm op}^2 \\ &\qquad \times \mathbb E\Bigg[ \left\| (W_{\rm left}^\top W_{\rm left})^{-1/2} W_{\rm left}^\top (E_{\rm left}^{\rm p})_{\mathcal I_k,\bullet}^{\top} x \right\|_2^2  \,  \left\| (W_{\rm up}^\top W_{\rm up})^{-1/2} W_{\rm up}^\top (E_{\rm up}^{\rm p})_{\bullet,\mathcal J_k} y \right\|_2^2 \Bigg] \\ &\le \left\| (W_{\rm left}^\top W_{\rm left})^{-1/2} \mathcal C_{\bullet,\bullet,k} (W_{\rm up}^\top W_{\rm up})^{-1/2} \right\|_{\rm op}^2 \\ 
&\qquad \times \Bigg\{ \mathbb E \left\| (W_{\rm left}^\top W_{\rm left})^{-1/2} W_{\rm left}^\top (E_{\rm left}^{\rm p})_{\mathcal I_k,\bullet}^{\top} x \right\|_2^4 \Bigg\}^{1/2} \, \Bigg\{ \mathbb E \left\| (W_{\rm up}^\top W_{\rm up})^{-1/2} W_{\rm up}^\top (E_{\rm up}^{\rm p})_{\bullet,\mathcal J_k} y \right\|_2^4 \Bigg\}^{1/2} \\  
& =  \sigma^4 r(r+2) \left\| (W_{\rm left}^\top W_{\rm left})^{-1/2} \mathcal C_{\bullet,\bullet,k} (W_{\rm up}^\top W_{\rm up})^{-1/2} \right\|_{\rm op}^2 \\ 
& \leq\frac{\gamma_{\max}^2}{c_\ell^2\gamma_{\min}^4} \frac{\sigma^4 r(r+2)}{\rho_T\rho_N} \le c_1 \frac{\sigma^4 r^2}{\gamma_{\min}^2\rho_T\rho_N}. \end{align*}
The second bound follows from the Cauchy--Schwarz inequality, the first and only equality uses the fact that
$\sigma^{-1} (W_{\rm left}^\top W_{\rm left})^{-1/2} W_{\rm left}^\top (E_{\rm left}^{\rm p})_{\mathcal I_k,\bullet}^{\top} x$
and
$\sigma^{-1} (W_{\rm up}^\top W_{\rm up})^{-1/2} W_{\rm up}^\top (E_{\rm up}^{\rm p})_{\bullet,\mathcal J_k} y$
are standard normal vectors in~$\mathbb R^r$, and hence have fourth moment $r(r+2)$ in squared Euclidean norm, while the penultimate inequality uses Lemma~\ref{lemma:spectrumMp} to obtain
$\lambda_{\min}(W_{\rm left}^\top W_{\rm left}) \ge c_\ell \gamma_{\min}^2 \rho_T$
and
$\lambda_{\min}(W_{\rm up}^\top W_{\rm up}) \ge c_\ell \gamma_{\min}^2 \rho_N$. Furthermore, we can show that this term is dominated by either one of the first two terms in $\Upsilon_{xy}$. Indeed, we have 
\begin{align*}
    \frac{\sigma^4 r^2}{\gamma_{\min}^2\rho_T\rho_N} & = \frac{\sigma^4 r^2}{\gamma_{\min}^2\rho_T\rho_N} \frac{N}{\nu_x^2 \, r} \|U_{2k}^\top x\|_2^2 = \frac{\sigma^2 N}{\gamma_\mathrm{min}^2 \, \rho_T}\, \frac{1}{\nu_x^2} \frac{\sigma^2 r}{\rho_N} \|U_{2k}^\top x\|_2^2 \leq \frac{c_0^2}{\nu_x^2} \frac{\sigma^2 (r+\zeta_N)}{\rho_N} \|U_{2k}^\top x\|_2^2.
\end{align*}
This, combined with the previous bounds and $\min(r + \zeta_N, r + \zeta_T) \geq 1$, gives $ \mathbb E [Z_{xy}^2] \leq c_1 \, \Upsilon_{xy}$.

Coming now to bounding the remainder, we observe that the second and third terms in~\eqref{eq:BoundOnDeltaXY} appear in the definition of $\Upsilon_{xy}$. It thus remains to control the other three. Using the definition of $\nu_x$ in \eqref{assump:Incoherence}, for the first one we get 
\begin{align*}
    \frac{\sigma^2\sqrt{(r+\zeta_T)(r+\zeta_N)}}{
\gamma_{\min}\rho_N^{1/2} \rho_T^{1/2}}   & = \frac{\sigma^2\sqrt{(r+\zeta_T)(r+\zeta_N)}}{
\gamma_{\min}\rho_N^{1/2} \rho_T^{1/2}}    \frac{\sqrt{N}}{\nu_x \sqrt{r}} \, \|U_{2k}^\top x\|_2 \\
& = \frac{\sigma \sqrt{N}}{\gamma_\mathrm{min} \rho_T^{1/2}} \frac{\sigma \sqrt{(r+\zeta_N)(1 + \zeta_T/r)}}{\nu_x \, \rho_N^{1/2}} \, \|U_{2k}^\top x\|_2 \\
& \leq \frac{c_0}{\nu_x}  \frac{\sigma \sqrt{(r+\zeta_N)(1 + \zeta_T/r)}}{\rho_N^{1/2}} \, \|U_{2k}^\top x\|_2.
\end{align*}
Arguing by symmetry and using $c_{\mathrm{blk}}  \min(\zeta_N, \zeta_T) \leq r$, we can thus conclude 
\[
\frac{\sigma^2\sqrt{(r+\zeta_T)(r+\zeta_N)}}{
\gamma_{\min}\rho_N^{1/2}\rho_T^{1/2}}
\le
c_0\sqrt{1+c_{\mathrm{blk}}^{-1}} \,
\max\left\{
\frac{1}{\nu_x}
\frac{\sigma\sqrt{r+\zeta_N}}{\rho_N^{1/2}}
\|U_{2k}^\top x\|_2,\,
\frac{1}{\nu_y}
\frac{\sigma\sqrt{r+\zeta_T}}{\rho_T^{1/2}}
\|V_{2k}^\top y\|_2
\right\}.
\]
Similarly, we can bound the fourth term  in~\eqref{eq:BoundOnDeltaXY} by
\begin{align*}
    \frac{\sigma \sqrt{N}}{ \rho_T^{1/2}} \, \|U_{2k}^\top x\|_2 \, \|V_{2k}^\top y\|_2 &= \frac{\sigma \sqrt{N}}{ \rho_T^{1/2}} \, \frac{\nu_x\, \sqrt{r}}{\sqrt{N}} \, \|V_{2k}^\top y\|_2  = \nu_x \, \frac{\sigma \sqrt{r}}{  \rho_T^{1/2}} \, \|V_{2k}^\top y\|_2.
\end{align*}
An analogous bound holds for the fifth term, thereby showing that $|\Delta_{xy}  | \leq c_1 \sqrt{\Upsilon}_{xy}$ on the event where~\eqref{eq:BoundOnDeltaXY} holds. This completes the proof.
 \end{proof}

\section{Additional results}\label{appendix:additionalResults}

\subsection{Sufficient conditions for  \eqref{assump:subblock-conditioning}}\label{sec:sufficientA1}
We comment further on   \eqref{assump:subblock-conditioning} by providing two sufficient conditions under which it holds, either deterministically or with high probability.
\begin{lemma}
\label{lem:A1-deterministic}
Let $U\in\mathbb{R}^{N\times r}$ have orthonormal columns and assume there exists $\mu\ge 2$ such that $\|U^\top \boldsymbol{e}_j\|_2^2 \le \mu r/N $ for all $j\in[N]$. Let~$U_1$ be any submatrix formed by selecting $N_1$ rows of $U$. If $N_1 \ge \frac{2\mu r}{2\mu r + 1}\,N$ then assumption \emph{  \eqref{assump:subblock-conditioning}} holds with $c_\ell=\frac12$ and $c_u=\frac54$.
\end{lemma}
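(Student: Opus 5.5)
The plan is to work with the complementary rows rather than with $U_1$ directly. Let $U_2\in\mathbb R^{(N-N_1)\times r}$ collect the $N_2:=N-N_1$ rows of $U$ not selected in $U_1$. Orthonormality of the columns of $U$ gives, after a row permutation (which does not change Gram matrices),
\[
U_1^\top U_1 = I_r - U_2^\top U_2 .
\]
So both spectral bounds in \eqref{assump:subblock-conditioning} reduce to controlling $\|U_2^\top U_2\|_{\mathrm{op}}$ and comparing constants with $N_1/N$.

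For the lower bound, I would bound the operator norm of the positive semidefinite matrix $U_2^\top U_2=\sum_{j\notin \text{rows}(U_1)}U^\top \boldsymbol e_j\boldsymbol e_j^\top U$ by its trace. The incoherence hypothesis then gives
\[
\|U_2^\top U_2\|_{\mathrm{op}}\le \sum_{j} \|U^\top\boldsymbol e_j\|_2^2\le \frac{N_2\,\mu r}{N}.
\]
The assumption $N_1\ge \frac{2\mu r}{2\mu r+1}N$ is equivalent to $N_2\le \frac{N}{2\mu r+1}$. Hence $\|U_2^\top U_2\|_{\mathrm{op}}\le \frac{\mu r}{2\mu r+1}<\frac12$, and therefore $U_1^\top U_1\succeq \frac12 I_r\succeq \frac12\frac{N_1}{N}I_r$, using $N_1\le N$. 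This gives $c_\ell=1/2$.

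For the upper bound, $U_1^\top U_1\preceq I_r$ trivially, so it suffices to check that $1\le \frac54\frac{N_1}{N}$, i.e.\ $N_1/N\ge 4/5$. Since $\mu\ge2$ and $r\ge1$, we have $2\mu r\ge4$. The map $s\mapsto s/(s+1)$ is increasing, so $N_1/N\ge \frac{2\mu r}{2\mu r+1}\ge \frac45$, which gives $c_u=5/4$.

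There is no real obstacle in this argument. The only point requiring care is arithmetic: confirming that the threshold on $N_1$ simultaneously forces $\mu rN_2/N<1/2$ and $N_1/N\ge 4/5$. This is exactly where the hypothesis $\mu\ge2$ is used. The bound is deterministic and holds for any choice of the $N_1$ rows.
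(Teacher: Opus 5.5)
Your proposal is correct and follows the paper's proof essentially step for step. Both arguments use the complement identity $U_1^\top U_1 = I_r - U_2^\top U_2$ and bound $\|U_2^\top U_2\|_{\mathrm{op}}$ by the trace via incoherence to get $c_\ell = 1/2$. Both then combine $U_1^\top U_1 \preceq I_r$ with $N_1/N \ge 2\mu r/(2\mu r+1) \ge 4/5$ (from $\mu r \ge 2$) to get $c_u = 5/4$.
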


\begin{proof}
Write the row vectors $u_j := U^\top \boldsymbol{e}_j\in\mathbb{R}^r$, so that $\sum_{j=1}^N u_j u_j^\top = U^\top U = I_r$.
Let $S\subset[N]$ index the $N_1$ selected rows, and let $S^\complement$ be the complement with $N_2:=|S^\complement|=N-N_1$.
Then
\[
U_1^\top U_1 = \sum_{j\in S} u_j u_j^\top
= I_r - \sum_{j\in S^\complement} u_j u_j^\top
= I_r - U_2^\top U_2,
\]
where $U_2$ is the submatrix of the remaining $N_2$ rows. Since $U_2^\top U_2\succeq 0$ we immediately get $U_1^\top U_1 \preceq I_r$.

For the lower bound, combining $\lambda_{\max}(U_2^\top U_2)\le \mathrm{tr}(U_2^\top U_2)=\|U_2\|_F^2$ with incoherence gives 
\[
\|U_2\|_\mathrm{op}^2 \leq \|U_2\|_F^2 = \sum_{j\in S^\complement} \|u_j\|_2^2 \le N_2 \,  \frac{\mu r}{N},
\]
which leads to  
\[
U_1^\top U_1 = I_r - U_2^\top U_2 \succeq \Bigl(1-\frac{\mu r\,N_2}{N}\Bigr) I_r \succeq \frac{1}{2} I_r \succeq \frac{1}{2} \frac{N_1}{N}I_r.
\]
In particular, the last step follows from $N_1/N \leq 1$, while in the penultimate inequality we used $N_1 \ge \frac{2\mu r}{2\mu r+1}N$, which implies $N_2/N \le \frac{1}{2\mu r+1}$, and thus $1-\frac{\mu r\,N_2}{N}  \ge 1-\frac{\mu r}{2\mu r+1}  = \frac{\mu r+1}{2\mu r+1}  \ge \frac12$.

For the upper bound, it is useful to notice that $\frac54\frac{N_1}{N} \ge \frac54 \frac{2\mu r}{2\mu r+1} \ge 1$, where the middle inequality follows from $\mu r\ge 2$. This, together with $U_1^\top U_1\preceq I_r$, shows that
$U_1^\top U_1 \preceq I_r \preceq \frac54 \frac{N_1}{N} I_r$, and concludes the proof.
\end{proof}

\medskip \medskip

\begin{lemma}
\label{lem:A1-random}
Let $U\in\mathbb{R}^{N\times r}$ have orthonormal columns and satisfy 
$\|U^\top \boldsymbol{e}_j\|_2^2 \le \mu r/N$ for all $j\in[N]$ for some $\mu\ge 1$. Let $U_1$ be formed by selecting $N_1$ rows from $U$ uniformly at random without replacement. For all $\varepsilon \in (0,1)$ we have
\[
\mathbb{P} \left\{(1-\varepsilon)\frac{N_1}{N}I_r \ \preceq\ U_1^\top U_1 \ \preceq\ (1+\varepsilon)\frac{N_1}{N}I_r \right\} \geq 1 - 2r\exp \left\{-\frac{N_1\varepsilon^2}{3\mu r}\right\}.
\]
\end{lemma}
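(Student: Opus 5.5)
The plan is to write $U_1^\top U_1$ as a sum of rank-one matrices sampled without replacement and then apply a matrix Chernoff bound. Let $u_j:=U^\top \boldsymbol e_j$, so that $\sum_{j=1}^N u_ju_j^\top=I_r$. If $S$ is the random subset of size $N_1$, then $U_1^\top U_1=\sum_{j\in S}u_ju_j^\top$. Each summand is positive semidefinite, and incoherence gives $\lambda_{\max}(u_ju_j^\top)=\|u_j\|_2^2\le \mu r/N=:R$. The mean is $\mathbb E[U_1^\top U_1]=\frac{N_1}{N}\sum_j u_ju_j^\top=\frac{N_1}{N}I_r$. Hence $\mu_{\min}=\mu_{\max}=N_1/N$ in the notation of \citet[Theorem~1.1]{tropp2012userfriendly}.

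The only non-routine point is that the summands are drawn \emph{without} replacement, so they are not independent. I would invoke the result of \citet{GrossNesme2010NoteSamplingCoRR}: the matrix Laplace transform $\mathbb E\operatorname{tr}\exp(\theta\sum_{j\in S}X_j)$ under sampling without replacement is dominated by the same quantity under sampling with replacement. This is the matrix analogue of Hoeffding's reduction. Tropp's proof of the matrix Chernoff inequality uses only bounds on this Laplace transform, so every tail estimate for the i.i.d.\ case (uniform draws from $\{u_ju_j^\top\}$, each with mean $I_r/N$) transfers verbatim. This is the step I expect to need the most care. The domination has to be applied to $\operatorname{tr}\exp$ of the sum, which is a convex function of each summand, and also to the negated sum used for the lower tail.

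With this reduction in hand, the matrix Chernoff bounds give
\[
\mathbb P\{\lambda_{\min}(U_1^\top U_1)\le(1-\varepsilon)N_1/N\}\le r\Bigl[\tfrac{e^{-\varepsilon}}{(1-\varepsilon)^{1-\varepsilon}}\Bigr]^{N_1/(NR)}\le r\exp\{-\varepsilon^2N_1/(2\mu r)\},
\]
and
\[
\mathbb P\{\lambda_{\max}(U_1^\top U_1)\ge(1+\varepsilon)N_1/N\}\le r\Bigl[\tfrac{e^{\varepsilon}}{(1+\varepsilon)^{1+\varepsilon}}\Bigr]^{N_1/(NR)}\le r\exp\{-\varepsilon^2N_1/(3\mu r)\}.
\]
Both use $N_1/(NR)=N_1/(\mu r)$ together with the elementary scalar inequalities $\frac{e^{-\varepsilon}}{(1-\varepsilon)^{1-\varepsilon}}\le e^{-\varepsilon^2/2}$ and $\frac{e^{\varepsilon}}{(1+\varepsilon)^{1+\varepsilon}}\le e^{-\varepsilon^2/3}$ for $\varepsilon\in(0,1)$.

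Finally, I bound the weaker lower-tail exponent by the upper one, $e^{-\varepsilon^2N_1/(2\mu r)}\le e^{-\varepsilon^2N_1/(3\mu r)}$, and take a union bound over the two tails. This yields the stated probability $1-2r\exp\{-N_1\varepsilon^2/(3\mu r)\}$ for the two-sided sandwich $(1-\varepsilon)\frac{N_1}{N}I_r\preceq U_1^\top U_1\preceq(1+\varepsilon)\frac{N_1}{N}I_r$.
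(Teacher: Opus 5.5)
Your proposal is correct and follows the paper's proof essentially step for step. Both use the same rank-one decomposition, the Gross--Nesme reduction from sampling without replacement to sampling with replacement, and Tropp's matrix Chernoff bound with $R=\mu r/N$ and $\mu_{\min}=\mu_{\max}=N_1/N$. Both then weaken the lower-tail exponent $\varepsilon^2/2$ to $\varepsilon^2/3$ via the same scalar inequalities and finish with a union bound over the two tails.
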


\begin{proof}
For each $j\in[N]$, define $u_j:=U^\top \boldsymbol e_j\in\mathbb{R}^r$. Let $S$ denote the random set of $N_1$ row indices sampled uniformly at random without replacement from $[N]$, so that $U_1^\top U_1=\sum_{j\in S}u_j u_j^\top$. Equivalently, we may write $U_1^\top U_1=\sum_{i=1}^{N_1}X_i$, where $X_1,\dots,X_{N_1}$ are sampled uniformly without replacement from $\{u_j u_j^\top:j\in[N]\}$.

Each $u_j u_j^\top$ is positive semidefinite, and by the incoherence assumption we have $\lambda_{\max}(u_j u_j^\top)=\|u_j\|_2^2\le \mu r/N=:B$ for all $j\in[N]$. Moreover, each $X_i$ has marginal distribution uniform on $\{u_j u_j^\top:j\in[N]\}$, so
\[
\mathbb E X_i=\frac1N\sum_{j=1}^N u_j u_j^\top=\frac1N U^\top U=\frac1N I_r .
\]
Writing $\lambda_-:=\lambda_{\min}(\sum_{i=1}^{N_1}\mathbb E X_i)$ and $\lambda_+:=\lambda_{\max}(\sum_{i=1}^{N_1}\mathbb E X_i)$, we thus have $\lambda_-=\lambda_+=N_1/N$.

Although the matrices $X_i$ are dependent, the trace-moment argument in \citet{GrossNesme2010NoteSamplingCoRR} allows the usual matrix Chernoff bounds to be applied to sampling without replacement from a finite collection. Hence, using \citet[][Theorem~1.1]{tropp2012userfriendly} with dimension $r$, norm bound $B=\mu r/N$, and mean eigenvalues $\lambda_-=\lambda_+=N_1/N$, gives, for $\varepsilon\in(0,1)$,
\begin{align*}
\mathbb{P}\left\{\lambda_{\min}(U_1^\top U_1)\le (1-\varepsilon)\frac{N_1}{N}\right\}
&\le r\left\{\frac{e^{-\varepsilon}}{(1-\varepsilon)^{1-\varepsilon}}\right\}^{\lambda_-/B}
\\&= r\left\{\frac{e^{-\varepsilon}}{(1-\varepsilon)^{1-\varepsilon}}\right\}^{N_1/(\mu r)}
\le r\exp\left\{-\frac{N_1\varepsilon^2}{3\mu r}\right\},
\end{align*}
\begin{align*}
\mathbb{P}\left\{\lambda_{\max}(U_1^\top U_1)\ge (1+\varepsilon)\frac{N_1}{N}\right\}
&\le r\left\{\frac{e^\varepsilon}{(1+\varepsilon)^{1+\varepsilon}}\right\}^{\lambda_+/B}
\\ &= r\left\{\frac{e^\varepsilon}{(1+\varepsilon)^{1+\varepsilon}}\right\}^{N_1/(\mu r)}
\le r\exp\left\{-\frac{N_1\varepsilon^2}{3\mu r}\right\}.
\end{align*}
In the last inequalities we used the standard bounds
\[
\frac{e^{-\varepsilon}}{(1-\varepsilon)^{1-\varepsilon}}
\le
\exp\left\{-\frac{\varepsilon^2}{2}\right\}
\le
\exp\left\{-\frac{\varepsilon^2}{3}\right\},
\qquad
\frac{e^\varepsilon}{(1+\varepsilon)^{1+\varepsilon}}
\le
\exp\left\{-\frac{\varepsilon^2}{3}\right\},
\]
both of which hold for $\varepsilon\in(0,1)$. A union bound over the lower- and upper-tail events concludes the~proof.
\end{proof}

\medskip\medskip
Note that both lemmas become uninformative as soon as the rank $r$ is of the same order as the sampled dimension $N_1$. In Lemma~\ref{lem:A1-deterministic}, the sufficient condition $N_1 \ge \frac{2\mu r}{2\mu r+1}\,N$ forces $N_1/N \approx 1$ when $r$ is large. Interpreted in the four-block setting used in the main body, this means that the fraction of rows containing missing entries must be exceptionally small. In Lemma~\ref{lem:A1-random}, while $U_1^\top U_1$ concentrates around $(N_1/N)I_r$ when~$N_1$ is much larger than $\mu r \log r$, a Marchenko--Pastur-type heuristic suggests that $\lambda_{\min}(U_1^\top U_1) \approx \frac{N_1}{N}(1-\sqrt{\gamma})^2$ as $r/N_1 \to \gamma \in (0,1)$, so the lower constant $c_\ell$ in   \eqref{assump:subblock-conditioning} deteriorates and can be arbitrarily small when $r$ is too large relative to $N_1$. 

Returning to Lemma~\ref{lem:A1-deterministic}, the requirement that the missing block be small has close analogues in the MNAR causal-panel matrix-completion literature. For instance, horizontal regression \citep[see, e.g.,][for a discussion]{athey2021matrix} in the unconfoundedness literature is most appropriate when there are many control units relative to the number of periods, whereas vertical regression in the synthetic-control literature is most appropriate when there are many pre-treatment periods relative to the number of donor units. More precisely, horizontal regression is essentially feasible only when $N\gg T$, whereas vertical regression is viable when $T\gg N$. In either case, estimation is reliable only if the corresponding regression design matrices and the implied factor structure are sufficiently well-conditioned. Moreover, settings with limited missing values are also a central building block of \citet{choi2024matrix}. They first show that nuclear-norm regularization can accurately estimate the low-rank signal under MNAR when the total number of missing entries is sufficiently small (Assumption~(iii) in Theorem~2.1). They then extend to more general MNAR patterns by partitioning the missing set into small groups so that each subproblem has few missing entries.

\subsection{Hardness results for $c_\ell = 0$}\label{app:hardness}
We illustrate why restricting our attention to $c_\ell > 0$ in  \eqref{assump:subblock-conditioning} is essential for $\mu_{xy}^{(k)}$ to be identifiable. Indeed, when $c_\ell = 0$, the restricted Gram matrices $U_{1j}^\top U_{1j}$ and $V_{1j}^\top V_{1j}$ are allowed to be singular. This creates directions in which the slice-specific core $\mathcal C_{\bullet,\bullet,k}$ can be perturbed so that only the unobserved $d$-block of $\mathcal{M}_{\bullet, \bullet, k}$ changes, while all observed entries across all slices remain the same. Therefore, two elements of the class can induce the same distribution while having different values of $\mu_{xy}^{(k)}$, so consistent estimation is impossible. 

We recall
$\mu_{xy}^{(k)}(\mathcal M)=x^\top \mathcal M_{\bullet,\bullet,k}^{(d)}y$ and
$Z_\Omega=\{\,\mathcal M_{itj}+\mathcal E_{itj}:\Omega_{i,t,j}=~1,\ (i,t,j)\in[N]\times [T] \times [K]\,\}$, with the mask $\Omega$ fixed and known, and write $\mathbb P_\mathcal M$ and $\mathbb E_\mathcal M$ for probability and expectation under the law of $Z_\Omega$.

\begin{prop}\label{prop:hard1}
 Fix an index $k\in[K]$, constants
$\gamma_{\max}>\gamma_{\min}>0$, and unit vectors
$x\in \mathcal S^{N_{2k}-1}$, $y\in \mathcal S^{T_{2k}-1}$.
Let $\bar c_u := \max(
N/\min_{j\in[K]}N_{1j}, \, 
T /\min_{j\in[K]}T_{1j})$.
Then
\[
\inf_{\phi}
\sup_{\mathcal M\in\mathcal F(0,\bar c_u)}
\mathbb E_\mathcal M
\left[
\left\{\phi(Z_\Omega)-\mu_{xy}^{(k)}(\mathcal M)\right\}^2
\right]
\ge
\frac{(\gamma_{\max}-\gamma_{\min})^2}{4},
\]
where the infimum is over all Borel-measurable functions
$\phi$ of the observed entries $Z_\Omega$.
\end{prop}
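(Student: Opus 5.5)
The plan is a two-point (Le Cam) argument. We construct $\mathcal M^{(0)},\mathcal M^{(1)}\in\mathcal F(0,\bar c_u)$ that induce \emph{identical} laws of $Z_\Omega$ but satisfy $|\mu_{xy}^{(k)}(\mathcal M^{(1)})-\mu_{xy}^{(k)}(\mathcal M^{(0)})|\ge \gamma_{\max}-\gamma_{\min}$. For any $\phi$, the triangle inequality then gives
\[
\max_{i\in\{0,1\}}\mathbb E_{\mathcal M^{(i)}}\bigl[\{\phi-\mu^{(i)}\}^2\bigr]\ge \tfrac12\,\mathbb E\bigl[(\phi-\mu^{(0)})^2+(\phi-\mu^{(1)})^2\bigr]\ge \tfrac14(\mu^{(1)}-\mu^{(0)})^2 ,
\]
because the expectation is taken under a single common law. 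This yields the bound $(\gamma_{\max}-\gamma_{\min})^2/4$.

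\textbf{Construction.} Choose orthonormal $U,V$ whose first column $u_1$ (resp.\ $v_1$) is supported only on the rows of $\mathcal I_k$ (resp.\ the columns of $\mathcal J_k$), with $U_{2k}e_1=x$ and $V_{2k}e_1=y$, that is, $u_1=(0;x)$ and $v_1=(0;y)$. Then the remaining columns should form an orthonormal set so that the $U_{1j}^\top U_{1j}$ are $\preceq \bar c_u (N_{1j}/N) I_r$. The upper bound is automatic, since $U_{1j}^\top U_{1j}\preceq I_r\preceq (N/\min_j N_{1j})(N_{1j}/N)I_r\preceq \bar c_u (N_{1j}/N) I_r$. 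The lower bound is trivial because $c_\ell=0$. For the cores, take $\mathcal C^{(0)}_{\bullet,\bullet,j}=\gamma_{\min}I_r$ for all $j$, and take $\mathcal C^{(1)}$ equal to $\mathcal C^{(0)}$ except in slice $k$, where we set $\mathcal C^{(1)}_{\bullet,\bullet,k}=\gamma_{\min}I_r+(\gamma_{\max}-\gamma_{\min})e_1e_1^\top$. Both cores have singular values in $[\gamma_{\min},\gamma_{\max}]$, so both tensors lie in $\mathcal F(0,\bar c_u)$.

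\textbf{Key step: indistinguishability.} The only difference between the two tensors is $(\gamma_{\max}-\gamma_{\min})u_1v_1^\top$ in slice $k$. This matrix is supported on $\mathcal I_k\times\mathcal J_k$, which is exactly the missing $d$-block of $\Omega_{\bullet,\bullet,k}$. All observed entries, in every slice, therefore coincide, and since the noise is the same Gaussian law, $\mathbb P_{\mathcal M^{(0)}}=\mathbb P_{\mathcal M^{(1)}}$ on $Z_\Omega$. Meanwhile the target changes by
\[
\mu^{(1)}-\mu^{(0)}=(\gamma_{\max}-\gamma_{\min})\,(x^\top U_{2k}e_1)(e_1^\top V_{2k}^\top y)=\gamma_{\max}-\gamma_{\min}.
\]

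\textbf{Main obstacle.} The obstacle is completing $u_1,v_1$ to orthonormal $N\times r$ and $T\times r$ matrices. This is possible when $r\le\min(N,T)$, using any orthonormal completion in $u_1^\perp$ and $v_1^\perp$. If $r=1$, the single columns $u_1,v_1$ already suffice and $U_{1j}$ may vanish for $j=k$, which is exactly what $c_\ell=0$ permits. We also need to confirm that the other slices $j$, whose missing blocks differ, are unaffected. This holds because their cores are unchanged, so the observed data from those slices is identical under both models.
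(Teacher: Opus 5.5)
Your proposal is correct and follows essentially the same route as the paper's proof. Both use the zero-extended directions $(0;x)$ and $(0;y)$ completed to orthonormal $U,V$, and both put the core $\gamma_{\min}I_r$ in every slice, perturbed to $\gamma_{\min}I_r+(\gamma_{\max}-\gamma_{\min})e_1e_1^\top$ in slice $k$. Both then observe that the rank-one difference lies entirely in the missing $d$-block, so the laws of $Z_\Omega$ coincide, and conclude with the same elementary two-point averaging bound giving $(\gamma_{\max}-\gamma_{\min})^2/4$.
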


\begin{proof}
Define $\bar x\in \mathcal{S}^{N-1}$ and $\bar y\in\mathcal{S}^{T-1}$ to be the vectors with entries $\bar x_i := x_{\,i-N_{1k}} \, \mathbbm{1}\{i> N_{1k}\}$ and $\bar y_t := y_{\,t-T_{1k}} \, \mathbbm{1}\{t> T_{1k}\}$, respectively. Choose vectors $\{u_2,\dots,u_r\}\subset~\mathbb{R}^N$ and $\{v_2,\dots,v_r\}\subset\mathbb{R}^T$ such that $\{\bar x,u_2,\dots,u_r\}$ and $\{\bar y,v_2,\dots,v_r\}$ are orthonormal sets. In particular, this is possible since $r\le \min(N,T)$. Define $U := \big(\,\bar x \;\vert\; u_2 \;\vert\; \cdots \;\vert\; u_r\,\big)\in\mathbb{R}^{N\times r},
\,\,
V := \big(\,\bar y \;\vert\; v_2 \;\vert\; \cdots \;\vert\; v_r\,\big) \in\mathbb{R}^{T\times r}$
so that $U^\top U=V^\top V=I_r$. For every slice $j\neq k$ we take $C_{\bullet, \bullet, j}=  \gamma_{\min} \, I_r$. For the specific slice $k$, writing $\boldsymbol{e}_1 = (1, 0, \ldots, 0)^\top \in \mathbb{R}^r$ for the first vector of the canonical basis in $\mathbb{R}^r$, we set $\mathcal C_{\bullet, \bullet, k}^- =  \gamma_{\min} \, I_r$ and $\mathcal C_{\bullet, \bullet, k}^+ =  \gamma_{\min} I_r + (\gamma_{\max} - \gamma_{\min})\,\boldsymbol{e}_1\boldsymbol{e}_1^\top$. Finally, we define $\mathcal M^\pm := \mathcal C^\pm \times_1 U \times_2 V \times_3 I_K$.

We first verify that $\mathcal M^+,\mathcal M^-\in\mathcal F(0,\bar c_u)$.
The orthonormality constraints on $U$ and $V$ hold by construction. Moreover,
for every $j\in[K]$,
\begin{align*}
0 \preceq U_{1j}^\top U_{1j} \preceq I_r
\preceq \bar c_u \frac{N_{1j}}{N} I_r,
\qquad
0 \preceq V_{1j}^\top V_{1j} \preceq I_r
\preceq \bar c_u \frac{T_{1j}}{T} I_r,
\end{align*}
by the definition of $\bar c_u$, hence Assumption  \eqref{assump:subblock-conditioning}
holds with $c_\ell=0$ and $c_u=\bar c_u$. Finally, for $j\neq k$,
all singular values of $\mathcal C_{\bullet,\bullet,j}$ are equal to
$\gamma_{\min}$; for $j=k$, the singular values of
$\mathcal C_{\bullet,\bullet,k}^-$ are all equal to $\gamma_{\min}$ and
those of $\mathcal C_{\bullet,\bullet,k}^+$ lie in
$[\gamma_{\min},\gamma_{\max}]$. Therefore both tensors belong to
$\mathcal F(0,\bar c_u)$.

Next, since $\bar x$ and $\bar y$ are supported on the row and column indices
of the missing $d$-block of slice $k$, the rank-one perturbation
$(\gamma_{\max}-\gamma_{\min})\bar x\bar y^\top$ is supported entirely on
that missing block. Hence $P_{\Omega_{\bullet, \bullet, j}}(\mathcal M_{\bullet,\bullet,j}^+)
=
P_{\Omega_{\bullet, \bullet, j}}(\mathcal M_{\bullet,\bullet,j}^-)$ for all $j\in[K]$. Since the noise distribution is the same under $\mathcal M^+$ and
$\mathcal M^-$, it follows that the induced laws of the observed data coincide, that is $\mathbb P_{\mathcal M^+} = \mathbb P_{\mathcal M^-}$. On the other hand, the corresponding target parameters are separated. Indeed,
\begin{align*}
\mu_{xy}^{(k)}(\mathcal M^+)-\mu_{xy}^{(k)}(\mathcal M^-)
&= x^\top
\left(\mathcal M_{\bullet,\bullet,k}^{+,(d)}
-\mathcal M_{\bullet,\bullet,k}^{-,(d)}\right)y = (\gamma_{\max}-\gamma_{\min})x^\top x\, y^\top y = \gamma_{\max}-\gamma_{\min}.
\end{align*}
Now, since
$\mathbb P_{\mathcal M^+}=\mathbb P_{\mathcal M^-}$, expectations under the
two laws are identical for every measurable function $\phi$ of~$Z_\Omega$. We can therefore bound
\begin{align*}
&\mathbb E_{\mathcal M^+}
\left[
\left\{\phi(Z_\Omega)-\mu_{xy}^{(k)}(\mathcal M^+)\right\}^2
\right]
+
\mathbb E_{\mathcal M^-}
\left[
\left\{\phi(Z_\Omega)-\mu_{xy}^{(k)}(\mathcal M^-)\right\}^2
\right] \\
&\qquad =
\mathbb E_{\mathcal M^+}
\left[
\left\{\phi(Z_\Omega)-\mu_{xy}^{(k)}(\mathcal M^+)\right\}^2
+
\left\{\phi(Z_\Omega)-\mu_{xy}^{(k)}(\mathcal M^-)\right\}^2
\right] \\
&\qquad \geq
\frac{1}{2}
\left\{
\mu_{xy}^{(k)}(\mathcal M^+)
-
\mu_{xy}^{(k)}(\mathcal M^-)
\right\}^2 =
\frac{1}{2}(\gamma_{\max}-\gamma_{\min})^2,
\end{align*}
where we used the elementary inequality $(a-b)^2+(a-c)^2\geq (b-c)^2/2$.
Consequently, we have
\begin{align*}
&\sup_{\mathcal M\in\mathcal F(0,\bar c_u)}
\mathbb E_\mathcal M
\left[
\left\{\phi(Z_\Omega)-\mu_{xy}^{(k)}(\mathcal M)\right\}^2
\right]
\\
&\geq
\frac{1}{2}
\left(
\mathbb E_{\mathcal M^+}
\left[
\left\{\phi(Z_\Omega)-\mu_{xy}^{(k)}(\mathcal M^+)\right\}^2
\right]
+
\mathbb E_{\mathcal M^-}
\left[
\left\{\phi(Z_\Omega)-\mu_{xy}^{(k)}(\mathcal M^-)\right\}^2
\right]
\right) \geq 
\frac{(\gamma_{\max}-\gamma_{\min})^2}{4}.
\end{align*}
Taking the infimum over all Borel-measurable functions $\phi$ gives the
claimed lower bound.
\end{proof}

The choice of $\bar c_u$ is to make the construction feasible over arbitrary missingness patterns. The same lower bound can be proved for each fixed $c_u>0$ when $N_{2k}\le N_{2j}$ and $T_{2k}\le T_{2j}$ for all $j\neq k$, using a similar construction.

\medskip
The second result shows that this issue is not merely an artifact of allowing slice-specific heterogeneity
in~$\mathcal C_{\bullet, \bullet, j}$. Even if we impose the strongest possible homogeneity assumption, i.e.~$\mathcal C_{\bullet, \bullet, 1}=\cdots=\mathcal C_{\bullet, \bullet, K}$, which is equivalent to $\mathcal{M}_{\bullet, \bullet, 1}=\cdots= \mathcal{M}_{\bullet, \bullet, K}$, setting $c_\ell = 0$ can still make the problem information-theoretically hard unless auxiliary layers provide enough
complementary information on the missing structure relevant to the target functional. We write $\mathcal{F}_{\mathrm{id}}(c_\ell, c_u) := \{\mathcal{M} \in \mathcal{F}(c_\ell, c_u) \, : \, \mathcal C_{\bullet, \bullet, 1}=\cdots=\mathcal C_{\bullet, \bullet, K}\}$.

\begin{prop}\label{prop:hard2}
     Fix an index $k\in[K]$, constants
$\gamma_{\max}\geq \gamma_{\min}>0$, and unit vectors
$x\in\mathcal S^{N_{2k}-1}$, $y\in\mathcal S^{T_{2k}-1}$.
Also let
$\bar x\in \mathcal{S}^{N-1}$ and $\bar y\in\mathcal{S}^{T-1}$ to be the vectors with entries $\bar x_i := x_{\,i-N_{1k}} \, \mathbbm{1}\{i> N_{1k}\}$ and $\bar y_t := y_{\,t-T_{1k}} \, \mathbbm{1}\{t> T_{1k}\}$, respectively. Define $
S_k(x,y)
:=
\sum_{j=1}^K
\left\|P_{\Omega_{\bullet, \bullet, j}}(\bar x\bar y^\top)\right\|_F^2$, and $\bar c_u := \max(
N/\min_{j\in[K]}N_{1j}, \, 
T /\min_{j\in[K]}T_{1j})$. Then
\[
\inf_{\phi}
\sup_{\mathcal M\in\mathcal F_{\mathrm{id}}(0,\bar c_u)}
\mathbb E_\mathcal M
\left[
\left\{
\phi(Z_\Omega)-\mu_{xy}^{(k)}(\mathcal M)
\right\}^2
\right]
\ge
\max_{\gamma\in[\gamma_{\min},\gamma_{\max}]}
\gamma^2
\left[
2-2\Phi\left(
\frac{\gamma\sqrt{S_k(x,y)}}{\sigma}
\right)
\right],
\]
where the infimum is over all Borel-measurable functions
$\phi$ of the observed entries $Z_\Omega$.
\end{prop}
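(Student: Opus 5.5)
The argument is a two-point (Le Cam) reduction along a rank-one core perturbation supported on $\bar x\bar y^\top$, mirroring the construction in the proof of Proposition~\ref{prop:hard1}. Now the perturbation must be shared across all slices, so it no longer leaves every observed entry unchanged. First build orthonormal $U=(\bar x\,|\,u_2|\cdots|u_r)$ and $V=(\bar y\,|\,v_2|\cdots|v_r)$ exactly as in Proposition~\ref{prop:hard1}. Assumption~\eqref{assump:subblock-conditioning} then holds with $c_\ell=0$ and $c_u=\bar c_u$, by the same argument: $0\preceq U_{1j}^\top U_{1j}\preceq I_r\preceq \bar c_u (N_{1j}/N) I_r$, and likewise for $V$.

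Fix $\gamma\in[\gamma_{\min},\gamma_{\max}]$. Take the common cores $C^-:=\gamma I_r-\gamma \boldsymbol e_1\boldsymbol e_1^\top$ and $C^+:=\gamma I_r$. The first has a zero singular value, which violates $\sigma_{\min}\ge\gamma_{\min}$. So I would instead use $C^{\pm}:=\gamma I_r \pm \tfrac{\gamma}{2}\boldsymbol e_1\boldsymbol e_1^\top$, or, more cleanly, $C^{\pm}$ with first diagonal entry $\pm\gamma$ and the remaining diagonal entries equal to $\gamma$. The latter has all singular values equal to $\gamma$, so both cores lie in the admissible class and are identical across slices, giving $\mathcal M^\pm\in\mathcal F_{\mathrm{id}}(0,\bar c_u)$. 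Then $\mathcal M^+_{\bullet,\bullet,j}-\mathcal M^-_{\bullet,\bullet,j}=2\gamma\,\bar x\bar y^\top$ for every $j$. Since $\bar x,\bar y$ are supported on the $d$-block of slice $k$, the targets satisfy $\mu^{(k)}_{xy}(\mathcal M^+)-\mu^{(k)}_{xy}(\mathcal M^-)=2\gamma$.

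Next, compare the two laws of $Z_\Omega$. Both are Gaussian with the same covariance $\sigma^2 I$, and their means differ by $2\gamma\,P_{\Omega_{\bullet,\bullet,j}}(\bar x\bar y^\top)$ stacked over $j$. The mean shift therefore has Euclidean norm $2\gamma\sqrt{S_k(x,y)}$, and the total variation distance is
\[
\mathrm{TV}(\mathbb P_{\mathcal M^+},\mathbb P_{\mathcal M^-})=2\Phi\bigl(\gamma\sqrt{S_k(x,y)}/\sigma\bigr)-1 .
\]
Apply the two-point bound. Let $\Delta:=2\gamma$ and let $\psi$ be the test that picks whichever of $\mu^\pm$ is closer to $\phi$. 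By Markov's inequality,
\[
\max_\pm \mathbb E_{\pm}(\phi-\mu^\pm)^2\ \ge\ \tfrac{\Delta^2}{4}\cdot\tfrac12\bigl(\mathbb P_+(\psi\ne+)+\mathbb P_-(\psi\ne-)\bigr)\ \ge\ \tfrac{\Delta^2}{8}(1-\mathrm{TV}).
\]
Substituting gives $\gamma^2\bigl(1-\Phi(\gamma\sqrt{S_k}/\sigma)\bigr)$.

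This is a factor of $2$ short of the stated $\gamma^2[2-2\Phi]$, and closing that gap is the main obstacle. To recover the exact constant, I would avoid Markov and instead lower bound the Bayes risk under the uniform prior on $\{\mathcal M^+,\mathcal M^-\}$ directly. Write $\mathbb E_\pm(\phi-\mu^\pm)^2\ge\int\bigl[(\phi-\mu^+)^2 p_+ +(\phi-\mu^-)^2p_-\bigr]/2\,\cdot 2$ appropriately, and use the pointwise bound $a(\phi-\mu^+)^2+b(\phi-\mu^-)^2\ge \tfrac{ab}{a+b}\Delta^2\ge \tfrac12\min(a,b)\Delta^2$. Then $\sup\ge \tfrac{\Delta^2}{4}\int\min(p_+,p_-)=\gamma^2(1-\mathrm{TV})=\gamma^2\,[2-2\Phi(\gamma\sqrt{S_k}/\sigma)]$, which matches the claim. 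Finally, the construction is valid for every $\gamma\in[\gamma_{\min},\gamma_{\max}]$, so taking the maximum over $\gamma$ and then the infimum over $\phi$ concludes.
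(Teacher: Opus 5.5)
Your proof is correct and is essentially the paper's argument. Flipping the sign of the first diagonal entry of the common core with $U,V$ fixed gives exactly the same pair $\mathcal M^\pm$ as the paper's sign flip of the first column of $U$ with core $\gamma I_r$, and the total-variation computation $2\Phi(\gamma\sqrt{S_k(x,y)}/\sigma)-1$ is identical. The one difference is the last step: the paper simply cites Le Cam's two-point method, whereas your direct Bayes-risk bound via $\tfrac{ab}{a+b}\ge\tfrac12\min(a,b)$ is what actually yields the constant $\gamma^2(1-\mathrm{TV})$, since the standard testing reduction, as you noticed, only gives $\tfrac{\gamma^2}{2}(1-\mathrm{TV})$.
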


\begin{proof}
Choose vectors $\{u_2,\dots,u_r\}\subset~\mathbb{R}^N$ and $\{v_2,\dots,v_r\}\subset\mathbb{R}^T$ such that $\{\bar x,u_2,\dots,u_r\}$ and $\{\bar y,v_2,\dots,v_r\}$ are orthonormal sets. In particular, this is possible since $r\le \min(N,T)$. Define $U^\pm := \big(\pm\,\bar x \;\vert\; u_2 \;\vert\; \cdots \;\vert\; u_r\,\big)\in\mathbb{R}^{N\times r},
\, \, \, 
V~:=\big(\,\bar y \;\vert\; v_2 \;\vert\; \cdots \;\vert\; v_r\,\big) \in\mathbb{R}^{T\times r}$
so that $({U^\pm})^\top U^\pm=V^\top V=I_r$. Also, for all $j \in [K]$ set $\mathcal C_{\bullet, \bullet, j} = \gamma I_r$, where $\gamma \in [\gamma_{\min}, \gamma_{\max}]$. 

These choices induce $\mathcal{M}^\pm \in \mathcal{F}_\mathrm{id}(0, \bar c_u)$ with $\mathcal{M}^{\pm}_{\bullet, \bullet, j} = \gamma \, U^\pm V^\top$. The precise computation follows an argument similar to the construction used in the proof of Proposition~\ref{prop:hard1}. Furthermore, we have
\[
\mu_{xy}^{(k)}(\mathcal M^\pm)
= x^\top \mathcal M_{\bullet,\bullet,k}^{\pm,(d)} y
= \bar x^\top \mathcal M_{\bullet,\bullet,k}^{\pm} \, \bar y
= \gamma \bar x^\top U^\pm V^\top \bar y
= \pm \gamma .
\]
hence $\{\mu_{xy}^{(k)}(\mathcal{M}^+) - \mu_{xy}^{(k)}(\mathcal{M}^-)\}^2 = 4 \gamma^2$. By Le Cam's two-point method~\citep[][Theorem~2.2]{tsybakov2009nonparametric},
for any measurable function $\phi$, the minimax risk is lower bounded by
$\gamma^2\{1-\operatorname{TV}(\mathbb P_{\mathcal M^+},
\mathbb P_{\mathcal M^-})\}$, where $\mathbb P_{\mathcal M^\pm}$ denotes
the law of the observed entries $Z_\Omega$ under the signal $\mathcal M^\pm$. In particular, under Gaussian noise with common variance~$\sigma^2$, independence of the errors across slices implies that the joint law of all observed entries across all slices is multivariate Gaussian with mean vector equal to the vectorization of $\{P_{\Omega_{\bullet, \bullet, j}}(\mathcal M^\pm_{\bullet, \bullet, j})\}_{j\in[K]}$ and covariance~$\sigma^2 I$. This, combined with $\mathcal{M}_{\bullet, \bullet, j}^+ - \mathcal M_{\bullet, \bullet, j}^- =\gamma \, (U^+-U^-)V^\top=2\gamma \, \bar x\bar y^\top$, gives 
\[
\operatorname{TV}(\mathbb P_{\mathcal M^+},\mathbb P_{\mathcal M^-}) = 2 \Phi\left( \frac{\sqrt{\sum_{j = 1}^K\| P_{\Omega_{\bullet, \bullet, j}}(2 \gamma \, \bar x \bar y^\top)\|_F^2}}{2\sigma}\right) - 1 = 2 \Phi\left( \frac{\gamma \, \sqrt{S_k(x,y)}}{\sigma}\right) - 1,
\]
and concludes the proof upon taking the maximum over $\gamma \in [\gamma_{\min}, \gamma_{\max}]$.
\end{proof}

Proposition~\ref{prop:hard2}  gives a lower bound on the minimax risk over $\mathcal F_{\mathrm{id}}(0,\bar c_u)$, where all
slices are identical. In particular, the lower bound depends on $S_k(x,y)
= \sum_{j=1}^K \left\|P_{\Omega_{\bullet, \bullet, j}}(\bar x\bar y^\top)\right\|_F^2
$, which quantifies how often the rank-one pattern $\bar x\bar y^\top$, supported on slice~$k$'s missing block, is observed across other layers. For example, when $x=N_{2k}^{-1/2}\mathbf 1_{N_{2k}}$ and $
y=T_{2k}^{-1/2}\mathbf 1_{T_{2k}}$
we have
\[
S_k(x,y)
=
\frac{1}{N_{2k}T_{2k}}
\sum_{j=1}^K
\left\|(1-\Omega_{\bullet, \bullet, k})\odot \Omega_{\bullet, \bullet, j}\right\|_0,
\]
so $S_k(x,y)$ is the fraction of slice $k$'s
missing block that is observed elsewhere.  When a slice $j \neq k$ has a much
smaller missing block, this overlap increases and drives the lower bound to zero. On the other hand, in general, if $S_k(x,y)\lesssim \sigma^2/\gamma^2$, the two alternatives in the
proof remain statistically close, and the minimax risk is of constant order. The extreme case corresponds to $S_k(x,y)=0$, which occurs when the entire $d$-block missing under
slice $k$ is also missing under every other slice; for example, this holds when
$N_{2k}\leq N_{2j}$ and $T_{2k}\leq T_{2j}$ for all $j\neq k$.

Taken together, Propositions~\ref{prop:hard1} and~\ref{prop:hard2} highlight two distinct failure modes when $c_\ell=0$. Proposition~\ref{prop:hard1} shows that with slice-specific cores, the target functional can be non-identifiable, as different parameter values can induce the same distribution of observed entries while yielding different $\mu_{xy}^{(k)}$. Proposition~\ref{prop:hard2} shows that even if the slices share a common core, the target may still be hard to estimate, as the minimax risk can remain bounded away from zero unless the missing entries of slice~$k$ are sufficiently observed in other slices.

\subsection{Background on tensors}\label{appendix:tensor}

We now provide a brief background on tensors to familiarize the reader with the notation used in our model $\mathcal{M}=\mathcal{C}\times_1 U\times_2 V\times_3 I_K$. Although the main body of the paper considers only order-$3$ tensors, we keep this section fairly general at first. We then state and prove a result connecting the Tucker2 model with standard low-rank matrix factorizations.

A tensor is a multidimensional array $\mathcal{X}\in\mathbb{R}^{n_1\times\cdots\times n_d}$ with entries $x_{i_1\ldots i_d}$, where $1\le i_j\le n_j$ for $j \in [d]$; order-$1$ tensors are vectors and order-$2$ tensors are matrices. Fixing all indices except $i_j$ yields a mode-$j$ fiber, which is the higher-order analogue of rows and columns. Fixing all but two indices yields a slice, which forms a two-dimensional subarray, e.g.\ for an order-$3$ tensor $\mathcal{X}$ the frontal slice $\mathcal{X}_{\bullet, \bullet, j}$ fixes the third index. 

To connect tensor algebra to matrix algebra, it is convenient to use matricization and define the mode-$j$ unfolding $X_{(j)}\in\mathbb{R}^{n_j\times \prod_{k\ne j} n_{k}}$ as a rearrangement of $\mathcal{X}$ so that the mode-$j$ fibers become the columns of a matrix. Under this representation, the $j$-mode product \citep[][Section~2.5]{Kolda2009Tensor} reduces to ordinary matrix multiplication. Specifically, for $A\in\mathbb{R}^{n^\prime \times n_j}$, the tensor $\mathcal{Y}=\mathcal{X}\times_j A$ has dimensions $n_1 \times \cdots \times n_{j-1} \times n^\prime \times n_{j+1} \times \cdots \times n_d$ and entries
\[
(\mathcal{X}\times_j A)_{i_1\ldots i_{j-1} k\, i_{j+1}\ldots i_d}
=\sum_{i_j=1}^{n_j} x_{i_1\ldots i_d}\,a_{k i_j}.
\]
Furthermore, its unfolding satisfies $(\mathcal{X}\times_j A)_{(j)}=A X_{(j)}$. 

A core model in multilinear algebra is the Tucker decomposition, which represents a tensor as a low-dimensional core transformed along each mode and is of the form $\mathcal{X}=\mathcal{G}\times_1 A^{(1)}\times_2\cdots\times_d A^{(d)}$, where the core $\mathcal{G}$ encodes interactions among latent components and the factor matrices map these components to the ambient spaces \citep[][Equations 4.1--4.2]{Kolda2009Tensor}. Setting one factor matrix to the identity yields the Tucker2 model, introduced in Section~\ref{sec:4block}; in our notation this gives $\mathcal{M}=\mathcal{C}\times_1 U\times_2 V\times_3 I_{K}$, so mode 3 is left unchanged, i.e.~slices are not mixed across $k$, while $U$ and $V$ act along modes 1 and 2, respectively.

\medskip
We next provide an equivalent characterization for this model. 
\begin{prop}\label{prop:Tucker2Equivalent}
    The following are equivalent: 
    \begin{enumerate}
        \item $M^{(j)}=U R_jV^\top$ for all $j \in [K]$, with common orthonormal $U \in \mathbb{R}^{N \times r}, V \in \mathbb{R}^{T \times r}$ and $ R_j \in \mathbb{R}^{r\times r}$;
        \item The column (resp.~row) spaces of all $M^{(j)}$ are contained in a common subspace $\mathcal{U}$ (resp.~$\mathcal{V}$) of dimension at most $r$;
        \item Stacking the matrices $M^{(j)}$ yields a tensor $\mathcal{M}\in\mathbb{R}^{N\times T\times K}$ that admits a Tucker2 decomposition $\mathcal{M}=\mathcal{C}\times_1 U\times_2 V\times_3 I_K$, with core $\mathcal{C}\in\mathbb{R}^{r\times r\times K}$ and shared mode-$1$/mode-$2$ orthonormal factor matrices $U$ and $V$.
    \end{enumerate} 
\end{prop}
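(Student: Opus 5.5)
I will prove the cycle of implications $1\Rightarrow 3\Rightarrow 2\Rightarrow 1$. Each step is linear algebra, so the plan is mainly about choosing the right objects.

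For $1\Rightarrow 3$, I define the core tensor slice by slice through $\mathcal C_{\bullet,\bullet,j}:=R_j$. The unfolding identity $(\mathcal X\times_j A)_{(j)}=AX_{(j)}$ recalled above, applied mode by mode, gives
\[
(\mathcal C\times_1 U\times_2 V\times_3 I_K)_{\bullet,\bullet,j}=U\,\mathcal C_{\bullet,\bullet,j}\,V^\top .
\]
To check this, I write the entrywise formula
\[
\sum_{a,b}U_{ia}\,\mathcal C_{abj}\,V_{tb}=(U\mathcal C_{\bullet,\bullet,j}V^\top)_{it},
\]
and note that the identity in mode $3$ leaves the slice index untouched. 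The same computation read backwards gives $3\Rightarrow 1$ directly, with $R_j:=\mathcal C_{\bullet,\bullet,j}$. This also proves the equivalence of the two formulations that is asserted in Section~\ref{sec:4block}.

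For $3\Rightarrow 2$ (equivalently $1\Rightarrow 2$), the factorisation $M^{(j)}=UR_jV^\top$ shows that $\operatorname{col}(M^{(j)})\subseteq\operatorname{col}(U)=:\mathcal U$ and $\operatorname{row}(M^{(j)})\subseteq\operatorname{col}(V)=:\mathcal V$. Both subspaces have dimension $r$, which is at most $r$ as required.

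For $2\Rightarrow 1$, I start from subspaces $\mathcal U,\mathcal V$ with $\dim\le r$.
\begin{enumerate}
\item I choose an orthonormal basis of $\mathcal U$ and, if $\dim\mathcal U<r$, complete it to $r$ orthonormal vectors in $\mathbb R^N$. This is possible because $r\le\min(N,T)$, the standing assumption, which I will cite. I call the resulting matrix $U$, and build $V$ from $\mathcal V$ in the same way.
\item Since $UU^\top$ is the orthogonal projector onto a space containing $\operatorname{col}(M^{(j)})$, we have $UU^\top M^{(j)}=M^{(j)}$. Likewise $M^{(j)}VV^\top=M^{(j)}$, because the rows of $M^{(j)}$ lie in $\mathcal V\subseteq\operatorname{col}(V)$.
\item Therefore
\[
M^{(j)}=UU^\top M^{(j)}VV^\top=U\,(U^\top M^{(j)}V)\,V^\top ,
\]
and I set $R_j:=U^\top M^{(j)}V$.
\end{enumerate}

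No step is a real obstacle. The only care needed is the padding in step 1 when the common subspaces have dimension strictly less than $r$, and that is handled by the condition $r\le\min(N,T)$.
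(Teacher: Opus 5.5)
Your proposal is correct and follows essentially the same route as the paper: the factorisation $M^{(j)}=UR_jV^\top$ gives the containment of column and row spaces, projecting with $UU^\top$ and $VV^\top$ gives the converse with $R_j=U^\top M^{(j)}V$, and the Tucker2 equivalence is the slice-wise identity $\mathcal M_{\bullet,\bullet,j}=U\,\mathcal C_{\bullet,\bullet,j}V^\top$. Your padding of the orthonormal bases to exactly $r$ columns, using $r\le\min(N,T)$, is a small improvement on the paper's step (2)$\Rightarrow$(1), which takes $U,V$ to be bases of $\mathcal U,\mathcal V$ and so tacitly assumes these subspaces have dimension exactly $r$.
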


\begin{proof}
(1) $\Rightarrow$ (2): If $M^{(j)} = U R_j V^\top$ with $U\in\mathbb R^{N \times r}$, $V\in\mathbb R^{T \times r}$ column-orthonormal, then 
$\operatorname{col}(M^{(j)}) \subseteq \operatorname{col}(U)=:\mathcal U$ and 
$\operatorname{row}(M^{(j)}) \subseteq \operatorname{col}(V)=:\mathcal V$ for all $j \in [K]$, so (2) holds. \medskip

\noindent (2) $\Rightarrow$ (1): Let $\mathcal U,\mathcal V$ be subspaces containing all column and row spaces, with $\dim(\mathcal U)\le r$ and $\dim(\mathcal V)\le r$, and let $U,V$ be orthonormal bases of $\mathcal U,\mathcal V$. Denote the orthogonal projections by $P_U := UU^\top$ and $P_V := VV^\top$. For each $j \in [K]$, the assumptions imply $P_U M^{(j)} = M^{(j)}$ and $M^{(j)} P_V = M^{(j)}$, hence
\[
M^{(j)} = P_U M^{(j)} P_V = U\bigl(U^\top M^{(j)} V\bigr)V^\top.
\]
Setting $R_j := U^\top M^{(j)} V$ gives (1). \medskip

\noindent (1) $\Rightarrow$ (3): Stack the matrices as a tensor ${\cal M} \in\mathbb R^{N \times T \times K}$ with frontal slices ${\cal M}_{\bullet, \bullet, j} = M^{(j)}$. Define a core tensor $\mathcal C \in\mathbb R^{r\times r\times K}$ by $\mathcal C_{\bullet, \bullet, j} := R_j$. Then $\mathcal M  = \mathcal C \times_1 U \times_2 V \times_3 I_{K}$, which is a Tucker2 decomposition with shared mode-1/mode-2 orthonormal factors $U,V$. \medskip

\noindent (3) $\Rightarrow$ (1): Conversely, suppose $\cal M$ has a Tucker decomposition
$\mathcal M = \mathcal C \times_1 U \times_2 V \times_3 I_K$
with $U\in\mathbb R^{N\times r}$, $V\in\mathbb R^{T\times r}$ column-orthonormal and $\mathcal C\in\mathbb R^{r\times r\times K}$. Writing $\mathcal C_{\bullet, \bullet, j}$ for the $j$-th frontal slice of $\mathcal C$, the $j$-th slice of $\cal M$ is
\[
\mathcal M_{\bullet, \bullet, j}
= \sum_{k=1}^{K} \delta_{jk}\, U \, \mathcal C_{\bullet, \bullet, k} \, V^\top
= U\Bigl(\sum_{k=1}^{K} \delta_{jk}\,\mathcal C_{\bullet, \bullet, k}\Bigr)V^\top = U \, \mathcal C_{\bullet, \bullet, j} V^\top.
\]
This concludes the proof.
\end{proof}

\medskip\medskip\medskip
\section{Auxiliary probabilistic results}\label{appendix:AuxilliaryRes}
In this appendix we collect some useful results that are used in the proofs of our main results. 

\medskip
We begin with two tail probability bounds. For $\sigma>0$, a random variable $X$ with mean $\mu=\mathbb{E}[X]$ is said to be $\sigma$-subgaussian if $\mathbb{E}[e^{\lambda(X-\mu)}] \leq e^{\sigma^2 \lambda^2 / 2}$ for all $\lambda \in \mathbb{R}$.

\begin{lemma}\label{lemma:bilinearGaussian}
Let $E \in \mathbb{R}^{n_1 \times n_2}$ be a random matrix with mean-zero independent $\sigma$-subgaussian entries. For any fixed
matrices $X \in \mathbb{R}^{n_1 \times p_1}$ and $Y \in \mathbb{R}^{n_2 \times p_2}$, for all $\delta \in (0,1)$ there exists an absolute constant $c_1>0$ such that
\[
\|X^\top E Y\|_\mathrm{op}
\;\le\;  c_1\,\sigma\,\|X\|_\mathrm{op}\,\|Y\|_\mathrm{op}\,
\sqrt{\operatorname{rank}(X)+\operatorname{rank}(Y)+\log (\delta^{-1})}
\]
with probability at least $1 - \delta$.
\end{lemma}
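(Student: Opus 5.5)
The plan is to reduce to a bound on a Gaussian-type chaos and then use an $\varepsilon$-net argument over the low-dimensional subspaces spanned by $X$ and $Y$.

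\textbf{Reduction to orthonormal factors.} Write compact SVDs $X = P_X D_X Q_X^\top$ and $Y = P_Y D_Y Q_Y^\top$, with $P_X\in\mathbb R^{n_1\times a}$, $P_Y\in\mathbb R^{n_2\times b}$ having orthonormal columns, where $a=\operatorname{rank}(X)$ and $b=\operatorname{rank}(Y)$. Then
\[
\|X^\top E Y\|_{\mathrm{op}} \le \|X\|_{\mathrm{op}}\|Y\|_{\mathrm{op}}\,\|P_X^\top E P_Y\|_{\mathrm{op}}.
\]
It therefore suffices to show that $G:=P_X^\top E P_Y\in\mathbb R^{a\times b}$ satisfies $\|G\|_{\mathrm{op}}\le c_1\sigma\sqrt{a+b+\log\delta^{-1}}$ with probability at least $1-\delta$. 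If $a=0$ or $b=0$ the claim is trivial.

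\textbf{Fixed-direction tail bound.} Fix unit vectors $u\in\mathcal S^{a-1}$ and $v\in\mathcal S^{b-1}$, and set $\alpha=P_Xu$ and $\beta=P_Yv$, which are unit vectors. Then
\[
u^\top G v = \sum_{i,t}\alpha_i\beta_t E_{it}
\]
is a linear combination of independent mean-zero $\sigma$-subgaussian variables with squared coefficient sum $\|\alpha\|_2^2\|\beta\|_2^2=1$. Hence it is $\sigma$-subgaussian, and
\[
\mathbb P(|u^\top G v|>s)\le 2e^{-s^2/(2\sigma^2)}.
\]

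\textbf{Net argument.} Take $1/4$-nets $\mathcal N_a$ and $\mathcal N_b$ of the two spheres, with cardinalities at most $9^a$ and $9^b$. The standard argument gives
\[
\|G\|_{\mathrm{op}}\le 2\max_{u\in\mathcal N_a,\,v\in\mathcal N_b}|u^\top G v|.
\]
A union bound then gives
\[
\mathbb P\bigl(\|G\|_{\mathrm{op}}>2s\bigr)\le 2\cdot 9^{a+b}e^{-s^2/(2\sigma^2)}.
\]
Choosing $s=\sigma\sqrt{2\{(a+b)\log 9+\log(2/\delta)\}}$ makes the right-hand side equal to $\delta$. Since $\log(2/\delta)\le \log 2+\log\delta^{-1}$ and $a+b\ge 2$, the bound simplifies to $s\le c\,\sigma\sqrt{a+b+\log\delta^{-1}}$ for an absolute constant $c$, which proves the claim.

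\textbf{Main obstacle.} The argument has no serious obstacle. The only care needed is, first, that independence of the entries of $E$ makes the linear combination subgaussian with variance proxy $\sigma^2\sum_{i,t}\alpha_i^2\beta_t^2$, and second, that the net is taken in the rank-dimensional coordinates $u,v$ rather than in the ambient dimensions $p_1,p_2$. The second point is exactly what makes the bound depend on $\operatorname{rank}(X)+\operatorname{rank}(Y)$ and not on $n_1,n_2,p_1,p_2$.
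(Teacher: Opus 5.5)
Your proposal is correct and follows essentially the same route as the paper: the same compact-SVD reduction to $P_X^\top E P_Y$, the same fixed-direction subgaussian (Hoeffding) tail, and the same $1/4$-nets of size at most $9^{\operatorname{rank}}$ with $\|G\|_{\mathrm{op}}\le 2\max$ over the nets followed by a union bound. Your explicit choice of $s$ and the handling of the degenerate rank-zero case are fine.
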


\begin{proof}
Write the compact singular value decompositions $X = U_X \Sigma_X V_X^\top$ and $Y = U_Y \Sigma_Y V_Y^\top$, where
$U_X \in \mathbb{R}^{n_1 \times r_X}, V_X \in \mathbb{R}^{p_1 \times r_X}, U_Y \in \mathbb{R}^{n_2 \times r_Y}, V_Y \in \mathbb{R}^{p_2 \times r_Y}$ have orthonormal columns,
$r_X:=\operatorname{rank}(X)$ and $r_Y:=\operatorname{rank}(Y)$, and $\|\Sigma_X\|_\mathrm{op}=\|X\|_\mathrm{op}$, $\|\Sigma_Y\|_\mathrm{op}=\|Y\|_\mathrm{op}$. Then, we can write $X^\top E Y
= V_X \Sigma_X \bigl(U_X^\top E U_Y\bigr)\Sigma_Y V_Y^\top$, and by the submultiplicativity and orthonormal invariance of the spectral norm, we have $\|X^\top E Y\|_\mathrm{op} \le \|X\|_\mathrm{op}\,\|Y\|_\mathrm{op}\,\|U_X^\top E U_Y\|_\mathrm{op}$. It remains to bound $\|U_X^\top E U_Y\|_\mathrm{op}$. For any $x \in \mathcal S^{r_X-1}$ and
$y \in \mathcal S^{r_Y-1}$, define 
\[
Z(x,y) := x^\top U_X^\top E U_Y y
= \sum_{i=1}^{n_1}\sum_{j=1}^{n_2} E_{ij}\,(U_X x)_i \, (U_Y y)_j, 
\]
so that $\|U_X^\top E U_Y\|_\mathrm{op} = \sup_{x \in \mathcal S^{r_X-1}, y \in \mathcal S^{r_Y-1}} |Z(x,y)|$.
For every fixed pair $(x,y)$ and any $\delta \in (0,1)$, Hoeffding's inequality for sums of subgaussian random variables and the fact that $\|U_X \, x\|_2 = \|U_Y \, y\|_2 = 1$ imply that there exists an absolute constant $c_1>0$ such that
\begin{equation}\label{eq:hoeffding_Z(x,y)}
    \mathbb{P}\!\left\{\,|Z(x,y)| \;>\; c_1 \sigma \sqrt{\log(\delta^{-1})}\right\} \le \delta.
\end{equation}
In order to deal with the supremum, we will combine the above display with a standard netting argument \citet[][Chapter 5]{wainwright2019high}. In particular, let $\mathcal{N}_X$ and $\mathcal{N}_Y$ be $\tfrac{1}{4}$-nets of $\mathcal S^{r_X-1}$ and
$\mathcal S^{r_Y-1}$, respectively. Their cardinalities satisfy
$|\mathcal{N}_X|\le 9^{r_X}$ and $|\mathcal{N}_Y|\le 9^{r_Y}$ \citep[Equation 4.20]{vershynin19hdp}. Now, for all $x_1 \in \mathcal S^{r_X-1}$ and $y_1 \in \mathcal S^{r_Y-1}$, we can find $x_2 \in \mathcal N_X$, $y_2 \in \mathcal N_Y$ such that $\|x_1 - x_2\|_2 \leq 1/4$ and $\|y_1 - y_2\|_2 \leq 1/4$. This, combined with $Z(x_1, y_1) = \{Z(x_1, y_1) - Z(x_2, y_1)\} + \{Z(x_2, y_1)-Z(x_2, y_2)\} + Z(x_2, y_2)$, allows showing that $\sup_{x \in \mathcal S^{r_X-1}, y \in \mathcal S^{r_Y-1}} |Z(x,y)| \leq 2 \, \max_{x \in \mathcal N_X, y \in \mathcal N_Y} |Z(x,y)|$ after taking the supremum on both sides. We thus get 
\begin{align*}
    \|U_X^\top E U_Y\|_\mathrm{op}
&= \sup_{x\in \mathcal S^{r_X-1},\,y\in \mathcal S^{r_Y-1}} |Z(x,y)|
\;\le\; 2 \max_{x\in\mathcal{N}_X,\;y\in\mathcal{N}_Y} |Z(x,y)| \\
& \leq c_1 \sigma \sqrt{\log (\,|\mathcal{N}_X|\,|\mathcal{N}_Y| \,/ \, \delta}) \leq c_1 \, \sigma \sqrt{r_X + r_Y + \log(\delta^{-1})}
\end{align*}
with probability at least $1-\delta$, where the second inequality follows from an application of~\eqref{eq:hoeffding_Z(x,y)} with $\delta/(|\mathcal{N}_X||\mathcal{N}_Y|)$ in place of $\delta$, and a union bound over
$\mathcal{N}_X \times \mathcal{N}_Y$. This concludes the proof.
\end{proof}

\medskip \medskip
\begin{lemma}\label{lemma:ExpOpNormE}
Let $E \in \mathbb{R}^{n_1 \times n_2}$ have independent entries distributed as $\mathcal N(0,\sigma^2)$. For every $p \ge 1$ we have
\[
\bigl(\mathbb{E}\|E\|_{\mathrm{op}}^p\bigr)^{1/p}
\le
\sigma\bigl(\sqrt {n_1}+\sqrt {n_2} + c_1\sqrt p\bigr),
\]
where $c_1>0$ is an absolute constant. 
\end{lemma}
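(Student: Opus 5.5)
The bound follows from Gaussian concentration of the operator norm together with the standard bound on its mean. The map $E\mapsto \|E\|_{\mathrm{op}}$ is $1$-Lipschitz with respect to the Frobenius norm, since $\bigl|\|A\|_{\mathrm{op}}-\|B\|_{\mathrm{op}}\bigr|\le \|A-B\|_{\mathrm{op}}\le \|A-B\|_F$. Writing $E=\sigma G$ with $G$ a matrix of independent standard Gaussians, $f(G):=\|G\|_{\mathrm{op}}$ is therefore a $1$-Lipschitz function of a standard Gaussian vector in $\mathbb R^{n_1n_2}$.

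The plan has three steps.

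\emph{Step 1 (mean).} We invoke the Gordon/Slepian comparison bound $\mathbb E\|G\|_{\mathrm{op}}\le \sqrt{n_1}+\sqrt{n_2}$ (see \citet{DavidsonSzarek2001}).

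\emph{Step 2 (fluctuations).} The Gaussian concentration inequality for Lipschitz functions (e.g.\ \citet{vershynin19hdp}, Theorem~5.2.2, or the Tsirelson--Ibragimov--Sudakov inequality) gives
\[
\mathbb P\bigl(f(G)-\mathbb Ef(G)>t\bigr)\le e^{-t^2/2}\qquad\text{for all } t\ge 0.
\]
Hence $X:=(f(G)-\mathbb Ef(G))_+$ has $L^p$ norm bounded by the moments of a subgaussian variable. Integrating the tail gives
\[
\mathbb E X^p=\int_0^\infty p\,t^{p-1}\,\mathbb P(X>t)\,dt\le p\int_0^\infty t^{p-1}e^{-t^2/2}\,dt=p\,2^{p/2-1}\Gamma(p/2),
\]
so $(\mathbb E X^p)^{1/p}\le c_1\sqrt p$ by Stirling's formula.

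\emph{Step 3 (combine).} Since $f\ge 0$, we have $f\le \mathbb Ef+X$. Minkowski's inequality in $L^p$ then gives
\[
(\mathbb E f^p)^{1/p}\le \mathbb Ef+(\mathbb EX^p)^{1/p}\le \sqrt{n_1}+\sqrt{n_2}+c_1\sqrt p.
\]
Multiplying by $\sigma$ yields the claim.

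The only mildly delicate point is Step 1. Obtaining exactly the constant $1$ in front of $\sqrt{n_1}+\sqrt{n_2}$ requires Gordon's inequality rather than a net argument, and I would simply cite it. A net argument would only yield $C(\sqrt{n_1}+\sqrt{n_2})$, which is weaker than the statement as written.
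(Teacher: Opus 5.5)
Your proof is correct and follows essentially the same route as the paper. Both arguments combine the mean bound $\mathbb E\|G\|_{\mathrm{op}}\le\sqrt{n_1}+\sqrt{n_2}$, a subgaussian tail for the positive part of the deviation, a layer-cake/Stirling computation, and Minkowski's inequality. The only difference is that you center at $\mathbb E\|G\|_{\mathrm{op}}$ via Lipschitz Gaussian concentration, whereas the paper centers directly at $\sqrt{n_1}+\sqrt{n_2}$ using the packaged tail bound in Corollary~7.3.2 of \citet{vershynin19hdp}, which is itself derived from exactly your two ingredients.
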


\begin{proof}
By homogeneity, it suffices to consider the case $\sigma^2=1$. Indeed, writing $E=\sigma H$ with independent $H_{ij}\sim\mathcal N(0,1)$, we have $\|E\|_{\mathrm{op}}=\sigma\|H\|_{\mathrm{op}}$, and therefore $\bigl(\mathbb E \, \|E\|_{\mathrm{op}}^p\bigr)^{1/p}
=\sigma\bigl(\mathbb E \, \|H\|_{\mathrm{op}}^p\bigr)^{1/p}$. Thus it is enough to prove that $\bigl(\mathbb E\|H\|_{\mathrm{op}}^p\bigr)^{1/p}\le \sqrt{n_1}+\sqrt{n_2}+c_1\sqrt p$. In this regard, writing $c > 0$ for an absolute constant, we recall from \citet[][Theorem 7.3.1 and Corollary 7.3.2]{vershynin19hdp} that $\mathbb{E} \, \|H\|_{\mathrm{op}} \le \sqrt{n_1}+\sqrt{n_2}$, and $\mathbb{P}\bigl(\|H\|_{\mathrm{op}}\ge \sqrt{n_1}+\sqrt{n_2}+t\bigr)\le 2e^{-ct^2}$ for all $t \geq 0$. Furthermore, letting $Y:=(\|H\|_{\mathrm{op}}-\sqrt{n_1}-\sqrt{n_2})_+$, we also have $\|H\|_{\mathrm{op}}\le \sqrt{n_1}+\sqrt{n_2}+Y$ and $(\mathbb{E}\, \|H\|_{\mathrm{op}}^p)^{1/p}\le\sqrt{n_1}+\sqrt{n_2} + (\mathbb E Y^p)^{1/p}$, where the latter bound follows from Minkowski's inequality. 

It remains to bound $(\mathbb EY^p)^{1/p}$. Using the layer-cake formula, the change of variables $u=ct^2$, and Stirling's approximation, we obtain
\begin{align*} \mathbb EY^p &= \int_0^\infty p\,t^{p-1}\,\mathbb P(Y> t)\,dt = \int_0^\infty p\,t^{p-1}\,\mathbb P\bigl(\|H\|_{\mathrm{op}}> \sqrt{n_1}+\sqrt{n_2}+t\bigr)\,dt \\ &\le 2p\int_0^\infty t^{p-1}e^{-ct^2}\,dt = p\,c^{-p/2}\int_0^\infty u^{p/2-1}e^{-u}\,du \\ &= p\,c^{-p/2}\Gamma(p/2) = 2 \,c^{-p/2} \, \Gamma(p/2+1) \le (c_1\sqrt p)^p, 
\end{align*}
for some constant $c_1>0$ depending only on $c$. Combining the above bounds gives
\[
(\mathbb{E}\, \|H\|_{\mathrm{op}}^p)^{1/p}\le \sqrt{n_1}+\sqrt{n_2} + (\mathbb E Y^p)^{1/p} \leq \sqrt{n_1}+\sqrt{n_2} \, + c_1 \sqrt{p},
\]
thereby completing the proof.
\end{proof}

\medskip \medskip
We next recall Weyl's inequality for singular values and eigenvalues~\citep[][Lemmas 2.2--2.3]{spectralMethods}.
\begin{lemma}\label{lemma:Weyl} Let $A,E \in \mathbb{R}^{n \times m}$. Then, for every $1 \leq i \leq \min(n,m)$, the $i$-th largest singular values of $A$ and $A+E$ satisfy \[ \left|\sigma_i(A+E)-\sigma_i(A)\right| \leq \|E\|_{\mathrm{op}} . \] Moreover, if $n=m$ and $A,E \in \mathbb{R}^{n \times n}$ are symmetric, then, for every $1 \leq i \leq n$, the $i$-th largest eigenvalues of $A$ and $A+E$ satisfy \[ \left|\lambda_i(A+E)-\lambda_i(A)\right| \leq \|E\|_{\mathrm{op}} . \] \end{lemma}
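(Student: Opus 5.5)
The statement is Weyl's inequality, so the plan is to derive both parts from variational characterizations of eigenvalues and singular values.

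\textbf{Symmetric eigenvalue case.} First prove the second claim using the Courant--Fischer min--max formula,
\[
\lambda_i(B)=\max_{\dim\mathcal S=i}\ \min_{v\in\mathcal S,\ \|v\|_2=1} v^\top B v .
\]
For $A,E$ symmetric and any unit $v$, we have $v^\top(A+E)v\le v^\top Av+\|E\|_{\mathrm{op}}$. Taking the min over $v\in\mathcal S$ and then the max over $i$-dimensional subspaces gives $\lambda_i(A+E)\le\lambda_i(A)+\|E\|_{\mathrm{op}}$. Applying the same bound with $A+E$ in place of $A$ and $-E$ in place of $E$ gives the reverse inequality. This step is routine.

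\textbf{Singular value case.} Reduce to the symmetric case via the Hermitian dilation
\[
\mathcal D(B):=\begin{pmatrix}0&B\\ B^\top&0\end{pmatrix}\in\mathbb R^{(n+m)\times(n+m)} .
\]
The eigenvalues of $\mathcal D(B)$ are $\pm\sigma_1(B),\dots,\pm\sigma_{\min(n,m)}(B)$, together with $|n-m|$ zeros. Hence for $1\le i\le\min(n,m)$ the $i$-th largest eigenvalue of $\mathcal D(B)$ equals $\sigma_i(B)$, since all remaining eigenvalues are $\le 0\le\sigma_i(B)$. Moreover $\mathcal D$ is linear and $\|\mathcal D(E)\|_{\mathrm{op}}=\|E\|_{\mathrm{op}}$. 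Applying the symmetric result to $\mathcal D(A)$ and $\mathcal D(E)$ then yields $|\sigma_i(A+E)-\sigma_i(A)|\le\|E\|_{\mathrm{op}}$.

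\textbf{Main obstacle.} The only point needing care is the spectrum of the dilation. To establish it, use the SVD $B=\sum_j\sigma_j u_jv_j^\top$. The vectors $(u_j;\pm v_j)/\sqrt2$ are eigenvectors with eigenvalues $\pm\sigma_j$. The orthogonal complement of their span consists of vectors $(a;b)$ with $B^\top a=0$ and $Bb=0$, on which $\mathcal D(B)$ acts as zero. This also justifies the ordering claim above.
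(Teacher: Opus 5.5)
Your proof is correct. The paper does not prove this lemma; it quotes it as a standard fact from \citep[][Lemmas~2.2--2.3]{spectralMethods}. Your argument therefore serves as a self-contained replacement for that citation: Courant--Fischer handles the symmetric case, and the Hermitian dilation transfers it to singular values.

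One small precision point concerns the dilation spectrum. If you use the full SVD ($j=1,\dots,\min(n,m)$), the orthogonal complement of the span of the vectors $(u_j;\pm v_j)/\sqrt2$ is \emph{contained in}, not equal to, $\{(a;b): B^\top a=0,\ Bb=0\}$. Equality holds for the compact SVD over the positive singular values. In both cases $\mathcal D(B)$ vanishes on that complement, so your eigenvalue count and ordering claim stand.

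You could also skip the dilation. The min--max formula $\sigma_i(B)=\max_{\dim\mathcal S=i}\min_{v\in\mathcal S,\|v\|_2=1}\|Bv\|_2$ (Courant--Fischer applied to $B^\top B$), combined with $\bigl|\|(A+E)v\|_2-\|Av\|_2\bigr|\le\|E\|_{\mathrm{op}}$, gives the singular-value bound by the same two-line argument you used for eigenvalues. The cost is relying on that variational formula instead of computing the spectrum of $\mathcal D(B)$.
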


\medskip\medskip\medskip
We recall that the Moore--Penrose pseudoinverse of $A=U\operatorname{diag}(\sigma_1,\ldots,\sigma_r)V^\top$, with column orthonormal $U \in \mathbb R^{n_1 \times r}, V \in \mathbb R^{n_2 \times r}$ and $\sigma_i>0$, is $A^\dagger=V\operatorname{diag}(\sigma_1^{-1},\ldots,\sigma_r^{-1}) \,U^\top$. The following lemma gives an exact identity for how the Moore--Penrose inverse changes when a full-column-rank matrix $A$ is perturbed to $B=A+\Delta$, as well as a simple operator-norm bound and a useful formula for the action of $B^\dagger-A^\dagger$ on $A$.

\begin{lemma}\label{lem:pinv_exact_identity}
	Let $A,B\in\mathbb{R}^{n_1\times n_2}$ have full column rank, and let $\Delta:=B-A$. Then
	$B^\dagger-A^\dagger
	=
	-\,A^\dagger\Delta B^\dagger
	+
	A^\dagger(A^\dagger)^\top\Delta^\top(I_{n_1}-BB^\dagger)$.
	Consequently,
	\[
		\|B^\dagger-A^\dagger\|_{\rm op}
		\le
		\|A^\dagger\|_{\rm op}\,\|\Delta\|_{\rm op}\,\|B^\dagger\|_{\rm op}
		+
		\|A^\dagger\|_{\rm op}^2\,\|\Delta\|_{\rm op}.
	\]
	Moreover, we have $(B^\dagger-A^\dagger)A=-\,B^\dagger\Delta$.
\end{lemma}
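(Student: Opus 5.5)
The identity follows from two standard facts about the Moore--Penrose inverse of a full-column-rank matrix. First, $A^\dagger=(A^\top A)^{-1}A^\top$, so $A^\dagger A=I_{n_2}$ and $B^\dagger B=I_{n_2}$. Second, $AA^\dagger$ and $BB^\dagger$ are the orthogonal projectors onto $\operatorname{col}(A)$ and $\operatorname{col}(B)$. The plan is to split $B^\dagger-A^\dagger$ with the resolution of the identity $I_{n_1}=BB^\dagger+(I_{n_1}-BB^\dagger)$, writing
\[
B^\dagger-A^\dagger=(B^\dagger-A^\dagger)BB^\dagger+(B^\dagger-A^\dagger)(I_{n_1}-BB^\dagger),
\]
and to treat the two pieces separately.

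For the first piece, $B^\dagger B=I$ gives $(B^\dagger-A^\dagger)B=I-A^\dagger B=I-A^\dagger(A+\Delta)=-A^\dagger\Delta$. Hence the first piece equals $-A^\dagger\Delta B^\dagger$.

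For the second piece, $B^\dagger(I-BB^\dagger)=0$, since the rows of $B^\dagger$ lie in $\operatorname{col}(B)^\top$. It remains to compute $-A^\dagger(I-BB^\dagger)$. Write $A^\dagger=(A^\top A)^{-1}A^\top=A^\dagger(A^\dagger)^\top A^\top$; this uses $(A^\dagger)^\top A^\top=(AA^\dagger)^\top=AA^\dagger$ together with $A^\dagger AA^\dagger=A^\dagger$. Then substitute $A^\top=B^\top-\Delta^\top$ and use $B^\top(I-BB^\dagger)=\big((I-BB^\dagger)B\big)^\top=0$. This gives
\[
-A^\dagger(I-BB^\dagger)=A^\dagger(A^\dagger)^\top\Delta^\top(I-BB^\dagger).
\]
Adding the two pieces yields the claimed identity. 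This rewriting of $A^\dagger$ is the only non-mechanical step and the one I expect to be the main obstacle: one has to see that inserting the projector $AA^\dagger$ turns $A^\top$ into $B^\top-\Delta^\top$, and that the $B^\top$ part is then annihilated by $I-BB^\dagger$.

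The operator-norm bound follows from the triangle inequality and submultiplicativity. Here $\|(A^\dagger)^\top\|_{\rm op}=\|A^\dagger\|_{\rm op}$, and $\|I-BB^\dagger\|_{\rm op}\le 1$ because $I-BB^\dagger$ is an orthogonal projector.

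For the last claim, either multiply the identity on the right by $A$, or argue directly: $(B^\dagger-A^\dagger)A=B^\dagger A-I=B^\dagger(B-\Delta)-I=-B^\dagger\Delta$, using $A^\dagger A=I$ and $B^\dagger B=I$.
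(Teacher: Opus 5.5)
Your proposal is correct and follows essentially the paper's own argument: the same split of $B^\dagger-A^\dagger$ along $BB^\dagger$ and $I_{n_1}-BB^\dagger$, the same rewriting of $-A^\dagger(I_{n_1}-BB^\dagger)$ via $A^\top=B^\top-\Delta^\top$ and $B^\top(I_{n_1}-BB^\dagger)=0$ (you use $A^\dagger=A^\dagger(A^\dagger)^\top A^\top$, the paper uses $A^\dagger=(A^\top A)^{-1}A^\top$ with $(A^\top A)^{-1}=A^\dagger(A^\dagger)^\top$), and the same norm bound and direct computation for $(B^\dagger-A^\dagger)A=-B^\dagger\Delta$. For that last claim, your direct argument is the one to use; multiplying the identity on the right by $A$ also works but does not simplify immediately.
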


\begin{proof}
	Since $A$ and $B$ have full column rank, $A^\dagger A=I_{n_2}$, $B^\dagger B=I_{n_2}$, and $A^\dagger=(A^\top A)^{-1}A^\top$. Therefore,
	$B^\dagger-A^\dagger
	=
	(B^\dagger B-A^\dagger B)B^\dagger
	-
	A^\dagger(I_{n_1}-BB^\dagger)
	=
	-\,A^\dagger\Delta B^\dagger
	-
	A^\dagger(I_{n_1}-BB^\dagger)$.
	It remains to rewrite the last term. Since $BB^\dagger$ is the orthogonal projector onto $\operatorname{col}(B)$, we have $B^\top(I_{n_1}-BB^\dagger)=0$. As $B=A+\Delta$, this gives $A^\top(I_{n_1}-BB^\dagger)=-\Delta^\top(I_{n_1}-BB^\dagger)$, and hence
	$-A^\dagger(I_{n_1}-BB^\dagger)=(A^\top A)^{-1}\Delta^\top(I_{n_1}-BB^\dagger)$. Using $A^\dagger(A^\dagger)^\top=(A^\top A)^{-1}$ proves the first identity.

	The norm bound follows from this identity and $\|I_{n_1}-BB^\dagger\|_{\rm op}\le 1$. Finally, $(B^\dagger-A^\dagger)A=B^\dagger A-I_{n_2}=B^\dagger A-B^\dagger B=-B^\dagger\Delta$.
\end{proof}

\medskip \medskip
Finally, we present the auxiliary results on the Stiefel manifold used in the proofs. These include standard facts on the Haar measure and its generation via Gaussian QR decompositions; see, for example, \cite{Stewart1980,Mezzadri2007,Chikuse2003}. For integers $1\le q\le d$, the Stiefel manifold is
\[
\mathrm{St}(d,q)
:=
\{Q\in\mathbb R^{d\times q}:Q^\top Q=I_q\}.
\]
Thus, $\mathrm{St}(d,q)$ is the set of
$d\times q$ matrices whose columns are orthonormal. The special cases $q=1$ and $q=d$ reduce to
the unit sphere $\mathcal S^{d-1}$ and the orthogonal group $\mathbb O(d)$, respectively.

Although $\mathrm{St}(d,q)$ is not a group when $q<d$, it carries a natural probability measure that
is invariant under left multiplication by orthogonal matrices. A random matrix
$Q\in \mathrm{St}(d,q)$ is said to be Haar-distributed or uniformly distributed on the Stiefel manifold if $OQ \stackrel{d}{=} Q$ for every deterministic $O \in \mathbb O(d)$. Such a left-orthogonally invariant probability measure on $\mathrm{St}(d,q)$ is unique~\citep[e.g.][Theorem~1.2.2 and Section~1.3.1]{Chikuse2003}. In words, multiplying $Q$ by any deterministic rotation or reflection does not change its law, hence a
Haar-distributed element of $\mathrm{St}(d,q)$ may be viewed as a uniformly random matrix with orthonormal columns. A standard construction of such matrices is obtained from a Gaussian matrix. If
$G\in\mathbb R^{d\times q}$ has i.i.d. $\mathcal N(0,1)$ entries and
$G=QR$ is its thin QR decomposition with the diagonal entries of $R$ taken positive, then
$Q\in\mathrm{St}(d,q)$ is Haar-distributed. 

The following lemma formalizes a stability property of Haar-distributed
Stiefel matrices. If $Q$ is uniformly distributed on $\mathrm{St}(d,q)$,
then multiplying it on the right by any fixed
$H \in \mathrm{St}(q,\ell)$ produces an
$\ell$-dimensional orthonormal system which is itself uniformly distributed
on $\mathrm{St}(d,\ell)$. The lemma also gives a high-probability bound for
the size of this random orthonormal system after applying a fixed linear map
$A$. In particular, when $\ell$ is small relative to $d$, the operator
norm $\|AQH\|_{\mathrm{op}}$ is at most of the order
\[
\|A\|_{\mathrm{op}}
\sqrt{\frac{\operatorname{rank}(A)+\ell}{d}},
\]
with high probability, up to universal constants.

\begin{lemma}
\label{lem:haar-frame-compression}
Let $d,q,p\in\mathbb N$, and let $1\le \ell\le q\le d$. Let
$Q\in\mathbb R^{d\times q}$ be Haar-distributed on $\mathrm{St}(d,q)$,
and let~$\mathcal F$ be a sigma-field independent of $Q$. Let
$H \in \mathrm{St}(q,\ell)$ be $\mathcal F$-measurable, and let $A\in\mathbb R^{p\times d}$ be deterministic. Then, conditional on $\mathcal F$, the matrix $QH$ is Haar-distributed
on $\mathrm{St}(d,\ell)$. Moreover, there is a universal constant
$c_1>0$ such that, for every $t\ge 0$ with
$\sqrt{\ell}+t<\sqrt d$, we have
\begin{align}
\mathbb P\left\{
\|AQH\|_{\rm op}
\le
c_1\|A\|_{\rm op}
\frac{\sqrt{\operatorname{rank}(A)+\ell+t^2}}
{\sqrt d-\sqrt\ell-t}
\;\middle|\; \mathcal F
\right\}
\ge 1-2e^{-t^2/2}.
\label{eq:haar-frame-compression-raw}
\end{align}
In particular, if $\sqrt\ell+t\le \sqrt d/2$, then 
\begin{align}
\mathbb P\left\{
\|AQH\|_{\rm op}
\le
2 \,c_1\|A\|_{\rm op}
\sqrt{\frac{\operatorname{rank}(A)+\ell+t^2}{d}}
\;\middle|\; \mathcal F
\right\}
\ge 1-2e^{-t^2/2}.
\label{eq:haar-frame-compression-simple}
\end{align}
\end{lemma}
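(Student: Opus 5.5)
The plan has two parts. First, identify the conditional law of $QH$ by an invariance-plus-uniqueness argument. Second, realise that law concretely through a Gaussian matrix, so that $\|AQH\|_{\rm op}$ reduces to a ratio of two standard Gaussian singular-value quantities.

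\emph{Conditional law.} Fix a deterministic $O\in\mathbb O(d)$. Since $Q$ is independent of $\mathcal F$ and $H$ is $\mathcal F$-measurable, conditioning on $\mathcal F$ freezes $H$ while leaving the law of $Q$ unchanged. Because $OQ\stackrel{d}{=}Q$, we get $OQH\stackrel{d}{=}QH$ conditionally on $\mathcal F$. Moreover $(QH)^\top QH=H^\top H=I_\ell$, so $QH\in\mathrm{St}(d,\ell)$. Hence the conditional law of $QH$ is a left-orthogonally invariant probability measure on $\mathrm{St}(d,\ell)$. By the uniqueness result quoted before the lemma (Chikuse), it is the Haar measure, and this law does not depend on the value of $H$.

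\emph{Gaussian realisation.} Let $G\in\mathbb R^{d\times\ell}$ have i.i.d.\ $\mathcal N(0,1)$ entries, and take its polar factor $Z:=G(G^\top G)^{-1/2}$, which is well defined almost surely since $\ell\le d$. For any $O\in\mathbb O(d)$ the map $G\mapsto OG$ sends $Z$ to $OZ$, and $OG\stackrel{d}{=}G$. So $Z$ is left-invariant on $\mathrm{St}(d,\ell)$, hence Haar (alternatively, use the QR construction recalled before the lemma). Consequently, conditionally on $\mathcal F$, $AQH\stackrel{d}{=}AZ$. It therefore suffices to bound
\[
\|AZ\|_{\rm op}\le \|AG\|_{\rm op}\,\|(G^\top G)^{-1/2}\|_{\rm op}=\frac{\|AG\|_{\rm op}}{\sigma_{\min}(G)} .
\]

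\emph{Two Gaussian bounds.} For the denominator, the Davidson--Szarek / Gordon lower bound (Vershynin, Thm.~4.6.1 type) gives $\sigma_{\min}(G)\ge\sqrt d-\sqrt\ell-t$ with probability at least $1-e^{-t^2/2}$. This is where the condition $\sqrt\ell+t<\sqrt d$ enters. For the numerator, write the compact SVD $A=U_A\Sigma_AV_A^\top$ with $V_A\in\mathbb R^{d\times \operatorname{rank}(A)}$. Then $\|AG\|_{\rm op}\le\|A\|_{\rm op}\|V_A^\top G\|_{\rm op}$. By rotational invariance, $V_A^\top G$ is a $\operatorname{rank}(A)\times\ell$ standard Gaussian matrix, so its norm is at most $\sqrt{\operatorname{rank}(A)}+\sqrt\ell+t$ with probability at least $1-e^{-t^2/2}$. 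A union bound combines the two events with probability at least $1-2e^{-t^2/2}$. The elementary inequality $\sqrt a+\sqrt b+t\le\sqrt3\sqrt{a+b+t^2}$ then yields \eqref{eq:haar-frame-compression-raw}. For \eqref{eq:haar-frame-compression-simple}, note that $\sqrt\ell+t\le\sqrt d/2$ forces the denominator to be at least $\sqrt d/2$.

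The main obstacle is the conditioning bookkeeping. One must make sure that $H$, although random, can be treated as fixed, and that the reduction to a fresh Gaussian matrix is legitimate. Both points are settled by the first step: the conditional law of $QH$ is Haar regardless of the value of $H$, so the probability statements may be made about the unconditional variable $Z$ and then transferred back to $AQH$ conditionally on $\mathcal F$.
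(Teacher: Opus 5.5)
Your proof is correct and follows the paper's argument essentially step for step: invariance plus uniqueness for the conditional law of $QH$, the Gaussian polar representation $G(G^\top G)^{-1/2}$, the bound $\|AG\|_{\rm op}/\sigma_{\min}(G)$, Davidson--Szarek for the denominator, and a union bound. The only difference is the numerator. You reduce to the $\operatorname{rank}(A)\times\ell$ Gaussian matrix $V_A^\top G$ and apply the sharp Gordon/Davidson--Szarek upper bound, which gives the explicit constant $c_1=\sqrt3$, whereas the paper uses the netting-based Lemma~\ref{lemma:bilinearGaussian}. One citation fix: the sharp lower bound $\sigma_{\min}(G)\ge\sqrt d-\sqrt\ell-t$ comes from Davidson--Szarek (Theorem~II.13), not from Vershynin's Theorem~4.6.1, which only has unspecified constants.
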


\begin{proof}
Since $Q^\top Q=I_q$ and $H^\top H=I_\ell$, we have
$(QH)^\top(QH)=H^\top Q^\top QH=I_\ell$. Hence
$QH\in\mathrm{St}(d,\ell)$.

We now identify the conditional law of $QH$. Fix $O\in\mathbb O(d)$. Since
$Q$ is Haar-distributed on $\mathrm{St}(d,q)$ and is independent of
$\mathcal F$, its conditional law given $\mathcal F$ is still Haar. Hence
$OQ\stackrel d=Q$ conditionally on $\mathcal F$. Since $H$ is
$\mathcal F$-measurable, it is fixed after conditioning on $\mathcal F$,
and therefore $OQH\stackrel d=QH$ conditionally on $\mathcal F$. Thus the conditional law of $QH$ is invariant under left multiplication by
every deterministic orthogonal matrix $O\in\mathbb O(d)$. By uniqueness of
the left-orthogonally invariant probability measure on $\mathrm{St}(d,\ell)$~\citep[e.g.][Theorem~1.2.2 and Section~1.3.1]{Chikuse2003},
this conditional law is the Haar measure on $\mathrm{St}(d,\ell)$.

Let $G\in\mathbb R^{d\times \ell}$ have independent
$\mathcal N(0,1)$ entries and be independent of $\mathcal F$. By the Gaussian representation of the Haar
measure on the Stiefel manifold \citep[Theorem~2.4.3]{Chikuse2003},
$QH\stackrel d=G(G^\top G)^{-1/2}$ conditionally on $\mathcal F$. Hence
\[
\|AQH\|_{\rm op}
\stackrel d=
\|AG(G^\top G)^{-1/2}\|_{\rm op}
\le
\frac{\|AG\|_{\rm op}}{\sigma_\ell(G)}
\]
conditionally on $\mathcal F$. By Lemma~\ref{lemma:bilinearGaussian}, applied with $X=A^\top$,
$Y=I_\ell$, $\sigma=1$, and $\delta=e^{-t^2/2}$, we have $\|AG\|_{\rm op}
\le
c_1\|A\|_{\rm op}
\sqrt{\operatorname{rank}(A)+\ell+t^2}$
with probability at least $1-e^{-t^2/2}$. Also, the standard lower-tail
bound for the smallest singular value of a Gaussian matrix \citep[Theorem~II.13]{DavidsonSzarek2001} gives
$\sigma_\ell(G)\ge \sqrt d-\sqrt\ell-t$ with probability at least
$1-e^{-t^2/2}$. On the intersection of these two events, which has
probability at least $1-2e^{-t^2/2}$, the bound
\eqref{eq:haar-frame-compression-raw} follows.

Finally, if $\sqrt\ell+t\le \sqrt d/2$, then
$\sqrt d-\sqrt\ell-t\ge \sqrt d/2$. Substituting this lower bound into
\eqref{eq:haar-frame-compression-raw} gives~\eqref{eq:haar-frame-compression-simple}.
\end{proof}
\end{document}